\newcounter{enumitemp}
\newenvironment{enumeratecontinue}{
 \setcounter{enumitemp}{\value{enumi}}
 \begin{enumerate}
 \setcounter{enumi}{\value{enumitemp}}
}
{
 \end{enumerate}
}
\newcommand\pref[1]{(\ref{#1})}
\newtheorem{thm}{Theorem}[section]
\newtheorem{theorem}[thm]{Theorem}
\newtheorem{lemma}[thm]{Lemma}
\newtheorem{corollary}[thm]{Corollary}
\newtheorem{proposition}[thm]{Proposition}
\newtheorem*{proposition*}{Proposition}
\newtheorem{fact}[thm]{Fact}
\theoremstyle{definition}
\newtheorem{definition}[thm]{Definition} 
\newtheorem*{defn*}{Definition}
\newtheorem{defns}[thm]{Definitions}
\newtheorem{remark}[thm]{Remark}
\theoremstyle{remark}
\newcounter{remarks}
{\paragraph*{Remarks}\smallskip
 \begin{list}{\arabic{remarks}. }{\usecounter{remarks}%
 \setlength{\leftmargin}{0in}%
 \setlength{\rightmargin}{0in}%
 \setlength{\labelsep}{0pt}%
 \setlength{\labelwidth}{0pt}%
 \setlength{\listparindent}{0pt}%
 }
}
{
\end{list}
}
\newcommand\from\colon
\newcommand\inv{{-1}}
\newcommand\subgroup{<}
\newcommand\infinity\infty
\newcommand\na{\text{na}}
\newcommand\supp{\text{supp}}
\newcommand\disjunion\coprod
\DeclareMathOperator{\Fix}{Fix}
\DeclareMathOperator{\Per}{Per}
\DeclareMathOperator{\cl}{cl}
\DeclareMathOperator\core{core}
\newcommand{\N}{{\mathbb N}}
\newcommand{\Z}{{\mathbb Z}}
\newcommand{\C}{{\mathcal C}}
\newcommand{\E}{{\mathcal E}}
\newcommand{\Out}{\mathsf{Out}}
\newcommand{\Aut}{\mathsf{Aut}}
\newcommand{\Inn}{\mathsf{Inn}}
\newcommand{\Stab}{\mathsf{Stab}}
\newcommand{\F}{\mathcal F}
\newcommand{\rtt}{relative train track map}
\renewcommand\L{\mathcal L}
\newcommand\LS{LS}
\def\B{\mathcal B}
\newcommand{\A}{\mathcal A}
\newcommand{\fG} {f : G \to G}
\newcommand{\ti} {\tilde}
\newcommand{\iNp} {indivisible Nielsen path}
\newcommand{\eg}{EG}
\newcommand{\noneg}{NEG}
\renewcommand\neg\noneg
\newcommand{\wt}{\widetilde}
\newcommand{\ct}{CT}
\newcommand{\cts}{CTs}
\newcommand{\comment}[1]{}
\newcommand\BH{\cite{BestvinaHandel:tt}}
\newcommand\BookZero{\cite{BFH:laminations}}
\newcommand\BookOne{\cite{BFH:TitsOne}}
\newcommand\BookTwo{\cite{BFH:TitsTwo}}
\newcommand\recognition{\cite{FeighnHandel:recognition}}
\DeclareMathOperator\interior{int}
\newcommand\bdy\partial
\newcommand\geod[2]{\overleftrightarrow{#1#2}}
\newcommand\intersect\cap
\newcommand\union\cup
\newcommand\<\langle
\renewcommand\>\rangle
\newcommand\meet\wedge
\newcommand\composed{\circ}
\newcommand\cross\times
\newcommand\restrict{\bigm |}
\newcommand\wh{\widehat}
\newcommand\inject\hookrightarrow
\newcommand\reals{\mathbf{R}}
\DeclareMathOperator\Length{Length}
\newcommand\abs[1]{\left|#1\right|}
\newcommand\Id{\text{Id}}
\DeclareMathOperator\CV{\mathcal{X}}
\DeclareMathOperator\rank{rank}
\DeclareMathOperator\BCC{BCC} 
 \newcommand\surjection\twoheadrightarrow
\newcommand\nc[1]{\<\!\< #1 \>\!\>}
\DeclareMathOperator\CAT{CAT}
\newcommand\suchthat{\bigm|}
\newcommand\hyp{\mathbf{H}}
\DeclareMathOperator\MCG{\mathcal{MCG}}
\DeclareMathOperator\Homeo{Homeo}
\title{Subgroup classification in $\Out(F_n)$}
\author{Michael Handel and Lee Mosher}
\begin{document}

\maketitle

\begin{abstract}
For any subgroup $H$ of $\Out(F_n)$, either $H$ has a finite index subgroup that fixes the conjugacy class of some proper, nontrivial free factor of $F_n$, or $H$ contains a fully irreducible element $\phi$, meaning that no positive power of $\phi$ fixes the conjugacy class of any proper, nontrivial free factor of $F_n$.
\end{abstract}

\tableofcontents

\break

\section{Introduction} 

Define a subgroup $H \subgroup \Out(F_n)$ to be \emph{reducible} if there exists a nontrivial free factorization $F_n = A_1 * \cdots * A_k * B$ such that $H$ preserves the set of conjugacy classes of the subgroups $A_1,\ldots,A_k$. If $H$ is not reducible then it is \emph{irreducible}, and if every finite index subgroup of $H$ is irreducible then $H$ is \emph{fully irreducible}. Note that for any finite index subgroup $H_0 \subgroup H$, $H$ is fully irreducible if and only if $H_0$ is fully irreducible. Reducible and irreducible are defined in the same manner for an individual element $\phi \in \Out(F_n)$, and $\phi$ is \emph{fully irreducible}\footnote{also known in the literature as ``irreducible with irreducible powers'' or ``iwip''} if $\phi^k$ is irreducible for every integer $k \ge 1$, equivalently the cyclic subgroup generated by $\phi$ is fully irreducible.

\begin{theorem} \label{thm:main}
Every fully irreducible subgroup of $\Out(F_n)$ contains a fully irreducible element.
\end{theorem}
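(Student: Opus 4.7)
The plan is to study the action of $H$ on the free factor complex $\mathcal{FF}_n$. This complex is Gromov hyperbolic (Bestvina--Feighn), and an element $\phi \in \Out(F_n)$ acts loxodromically on $\mathcal{FF}_n$ if and only if $\phi$ is fully irreducible. I will argue by contrapositive: assume $H \subgroup \Out(F_n)$ contains no fully irreducible element; I then aim to produce a finite-index subgroup of $H$ preserving the conjugacy class of some proper nontrivial free factor, contradicting full irreducibility of $H$.

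Under this assumption no element of $H$ acts loxodromically on $\mathcal{FF}_n$, so by Gromov's classification of isometric group actions on hyperbolic spaces, $H$ must be elementary on $\mathcal{FF}_n$: either (i) $H$ has a bounded orbit, or (ii) $H$ preserves a one- or two-point subset of the Gromov boundary $\partial\mathcal{FF}_n$. In case (i) I would upgrade boundedness of the orbit to finiteness by exploiting coarse properness of the $\Out(F_n)$-action on $\mathcal{FF}_n$ at bounded scales (acylindricity); a finite orbit in the vertex set then gives, after passing to a finite-index subgroup, a fixed vertex of $\mathcal{FF}_n$, i.e.\ an invariant conjugacy class of a proper nontrivial free factor.

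Case (ii) is where the substantive work lies. Points of $\partial\mathcal{FF}_n$ are represented by arational $F_n$-trees $T$ modulo the Bestvina--Reynolds equivalence, and the stabilizer of such a boundary point admits a stretching-factor homomorphism to $\mathbb{R}_{>0}$ whose kernel acts by isometries on $T$. After passing to a finite-index subgroup $H_0 \le H$ lying in this kernel, I would analyze the dual lamination $L(T)$ together with the finite set of attracting laminations of the exponentially growing elements of $H_0$. Since no element of $H_0$ is fully irreducible, no such attracting lamination has free factor support equal to $\{[F_n]\}$; the strategy is to use the CT machinery of \recognition\ and the lamination theory of \BookZero\ to extract from the common free factor support a proper $H_0$-invariant free factor system. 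The main obstacle I anticipate is the degenerate sub-case where the arational tree $T$ itself has free factor support $\{[F_n]\}$: one must then find some other mechanism---beyond the straightforward support argument---by which the absence of a loxodromic element on $\mathcal{FF}_n$ forces an invariant proper free factor system for $H_0$, presumably via a ping-pong-style argument on $\partial\mathcal{FF}_n$ whose failure must be explained by a structural degeneracy that itself provides the desired invariant system.
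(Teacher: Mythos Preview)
Your approach via the free factor complex $\mathcal{FF}_n$ is natural, but as written it has genuine gaps in both cases. In Case~(i), the upgrade from bounded orbit to finite orbit via ``acylindricity'' fails on two counts: the $\Out(F_n)$-action on $\mathcal{FF}_n$ is not known to be acylindrical (Bestvina--Feighn establish only WPD for fully irreducible elements), and even acylindricity does not force elliptic subgroups to have finite orbits in a hyperbolic graph. In Case~(ii) you yourself flag the obstacle: when the fixed boundary point corresponds to an arational tree there is no evident invariant proper free factor, and you offer only the hope that ``some other mechanism'' intervenes. That is precisely the crux of the theorem, and your sketch does not supply it.

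The paper's proof is entirely different and constructive, making no use of $\mathcal{FF}_n$ or its boundary. It starts from the Kolchin-type theorem of \BookTwo\ to obtain an exponentially growing element of $H$, then develops weak attraction theory and the nonattracting vertex group system $\A_\na\Lambda^\pm$ (Section~\ref{SectionWeakAttraction}). A ping-pong argument combined with a descending chain condition on vertex group systems (Propositions~\ref{PropVDCC} and~\ref{PropSmallerComplexity}) produces an element of $H$ with a \emph{universally attracting} lamination pair, i.e.\ one with trivial $\A_\na$ (Proposition~\ref{PropUniversallyAttracting}). A second ping-pong on a pair of such elements, together with the theory of singular lines (Section~\ref{SectionSingular}) and a filling obstruction (Proposition~\ref{two filling sets}), then yields the fully irreducible element. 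This lamination-level machinery is, in effect, the ``other mechanism'' your Case~(ii) is missing.
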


\paragraph{Comparison with Mapping Class Groups.} In the mapping class group of a surface~$S$, an element is reducible if it preserves some essential curve system on~$S$, and a subgroup $H$ is reducible if there exists an essential curve system on $S$ which is preserved by every element of $H$. Thurston classified mapping classes by showing that each one is either finite order, reducible, or pseudo-Anosov, the latter meaning that it has an invariant pair of geodesic laminations with strong filling and attraction properties. The analogous classification of elements of $\Out(F_n)$ is given in \cite{BFH:laminations}, where it is proved that if $\phi \in \Out(F_n)$ is fully irreducible then $\phi$ has an invariant pair of laminations with strong filling and attraction properties. 

In the context of mapping class groups, subgroup classification was proved by Ivanov in \cite{Ivanov:subgroups}: every subgroup is either finite, reducible, or contains a pseudo-Anosov element. In particular, every finite index subgroup of an infinite irreducible subgroup is irreducible, a stronger conclusion for mapping class groups than the conclusion for $\Out(F_n)$ in Theorem~\ref{thm:main}. But a stronger conclusion of this fashion is false in $\Out(F_n)$, because there exist infinite order irreducible mapping classes having a positive power which is not irreducible, and hence there exist infinite irreducible subgroups which contain no fully irreducible elements.

\subsection*{Outline of the contents and the proof of Theorem~\ref{thm:main}} 

Section~\ref{SectionPrelim} contains a detailed and self-contained account of preliminary material taken from the literature on $\Out(F_n)$ and relative train tracks. In particular, following \recognition, we focus on \emph{rotationless} outer automorphisms, and on their representation by \emph{completely split improved relative train tracks} or \emph{\cts}. We also review attracting laminations and geometric strata.

Section~\ref{SectionSingular} introduces \emph{singular lines} of a certain class of outer automorphisms, which are generalizations to $\Out(F_n)$ of singular lines for pseudo-Anosov surface homeomorphisms. Among other results, in Lemma~\ref{LemmaSingularLine} we explain how to characterize singular lines very explicitly using a \ct.

Section~\ref{SectionVertexGroups} introduces the study of \emph{vertex group systems} in $F_n$. These are a special kind of \emph{subgroup system} which in general is just a finite set of conjugacy classes of finitely generated subgroups of $F_n$; see Subsection~\ref{SectionLams}. A vertex group system is, by definition, the system of maximal elliptic subgroups of some very small, minimal, $\reals$-tree action of $F_n$. The main result here is Proposition~\ref{PropVDCC}, a descending chain condition for vertex groups, whose proof was suggested to us by Mark Feighn. 

\bigskip

In order to outline the remainder of the paper and explain the methods of proof, we recall briefly some facts about $\Out(F_n)$. Recall from \BookOne\ that $\phi \in \Out(F_n)$ has \emph{exponential growth} if it satisfies any of the following equivalent properties: there exists a conjugacy class in $F_n$ whose cyclically reduced length grows exponentially under iteration by $\phi$; some (every) relative train track representative of $\phi$ has an \eg\ stratum; the set $\L(\phi)$ of attracting laminations is nonempty. Recall also that there is natural bijection between $\L(\phi)$ and $\L(\phi^\inv)$, the \emph{dual pairing} under which $\Lambda^+ \in \L(\phi)$ and $\Lambda^- \in \L(\phi^\inv)$ correspond if and only if they have the same free factor support in $F_n$; see Subsection~\ref{SectionLams} for a review.

Theorem~\ref{thm:main} is proved by the following steps. Given a fully irreducible subgroup $H \subgroup \Out(F_n)$ one produces a fully irreducible element of $H$ in steps. The starting point is to quote the main result of \BookTwo, a Kolchin type theorem which produces an element of $H$ having exponential growth. Step~1 is to use that element to produce another element $\phi \in H$ of exponential growth, and an attracting lamination $\Lambda \in \L(\phi)$, such that every nontrivial conjugacy class in $F_n$ is weakly attracted to $\Lambda$ under iteration by $\phi$ (see Proposition~\ref{PropUniversallyAttracting} below for a fuller statement of Step~1).  Step~2 is to use \emph{that} element to produce a fully irreducible element of $H$. The proofs of these two steps share common methods for understanding the action of $\Out(F_n)$ on geodesics: weak attraction theory, developed in Section~\ref{SectionWeakAttraction}; and ping-pong arguments, developed in Section~\ref{SectionPingPong}. 

The outer automorphisms and laminations that are produced in Step~1 of the above outline play a central role in this paper --- we can think of a lamination $\Lambda$ satisfying the conclusion of Step~1 as being ``universally attracting''. Outer automorphisms that have a universally attracting lamination are a considerably broader class than those which are fully irreducible, and yet they retain many analogies with pseudo-Anosov surface homeomorphisms. In particular, the theory of singular lines developed in Section~\ref{SectionSingular} of the paper is applicable to any outer automorphism that has a universally attracting lamination.

We turn now to a fuller outline of the proof of Theorem~\ref{thm:main}. 

\bigskip

Section~\ref{SectionWeakAttraction} contains a significant generalization of the weak attraction theory which was originally introduced in Section~6 of \BookOne\ for purposes of proving the Tits alternative. Given $\phi \in \Out(F_n)$ and a dual lamination pair $\Lambda^\pm$ with $\Lambda^+ \in \L(\phi)$ and $\Lambda^- \in \L(\phi^\inv)$, the main problem of weak attraction theory is to analyze the dynamics of the action of $\phi$ on the set of geodesics, in order to determine which geodesics are weakly attracted to $\Lambda^+$ under iteration by $\phi$ and/or to $\Lambda^-$ under iteration by $\phi^\inv$. Theorem~6.0.1 of \BookOne\ is a special weak attraction result that applies only in the special case of topmost lamination pairs and birecurrent geodesics, but here we drop the ``topmost'' and ``birecurrent'' hypotheses, developing weak attraction theory in full generality. 

In Subsection~\ref{SectionAsubNA} we construct the \emph{nonattracting subgroup system} $\A_\na\Lambda^\pm$, which is the focus of weak attraction theory. Subsection~\ref{SectionNASysIsVertexSys} contains the proof that $\A_\na\Lambda^\pm$ is a vertex group system. Our two main weak attraction results are stated in Subsection~\ref{SectionWAResults}, and their proofs are in Subsections~\ref{SectionNonattrFullHeight} and~\ref{SectionNonattrGeneral}. The first weak attraction result, Proposition~\ref{prop:WA1}, is the basis of the ping pong arguments used in Step 1 above. It says in part that for each geodesic $\gamma$, at least one of the following holds: $\gamma$ is weakly attracted to $\Lambda^+$ under iteration of $\phi$; or $\gamma$ is carried by $\A_\na\Lambda^\pm$; or the weak closure of $\gamma$ contains~$\Lambda^-$. The full statement Proposition~\ref{prop:WA1} also contains a uniform version of these conclusions. The second result, Proposition~\ref{prop:WA2}, is the basis of ping pong arguments used to prove Step~2 above. It applies under the stronger hypothesis that the nonattracting subgroup system $\A_\na\Lambda^\pm$ is trivial, equivalently $\Lambda^+$ (and $\Lambda^-$) is a ``universally attracting'' lamination. The conclusions are stronger as well, saying that for any geodesic $\gamma$ exactly one of the following holds: $\gamma$ is weakly attracted to $\Lambda^+$ under iteration of $\phi$; or $\gamma$ is a generic leaf of some lamination in $\L(\phi^\inv)$ or one of the singular leaves of $\phi^\inv$ studied in Section~\ref{SectionSingular}.

Section~\ref{transversality} contains a result about the vertex group systems $\A_\na \Lambda^\pm$ which will be important in carrying out the ping pong arguments of Section~\ref{SectionPingPong}. This result says roughly that the subgroup of $\Out(F_n)$ that stabilizes the vertex group system $\A_\na\Lambda^\pm$ also virtually stabilizes some natural free factor system. This result is trivial in the case that strata corresponding to $\Lambda^\pm$ are nongeometric, for then $\A_\na\Lambda^\pm$ is itself a free factor system, as is shown in Subsection~\ref{SectionNASysIsVertexSys}. But in the case that the corresponding strata are geometric, the proof requires a delicate transversality argument. 

Section~\ref{SectionPingPong} contains the statements and proofs of our ping pong arguments, in particular Proposition~\ref{PropSmallerComplexity} in Subsection~\ref{SectionPingPongArgument}. The idea of Proposition~\ref{PropSmallerComplexity} is that if one is given $\phi,\psi \in \Out(F_n)$ and lamination pairs $\Lambda^\pm_\phi \in \L^\pm(\phi)$, $\Lambda^\pm_\psi \in \L^\pm(\psi)$ which are sufficiently independent in an appropriate sense, then for large $m,n$ one can show that $\xi = \psi^m \phi^n$ has a lamination pair $\Lambda^\pm_\xi$ which is in some sense ``more complicated'' than either of $\Lambda^\pm_\phi$ or $\Lambda^\pm_\psi$, perhaps by attracting more geodesics, or perhaps by filling up more of the group. 

Subsection~\ref{SectionProofUnivAttr} also contains the application of ping pong arguments to the proof of the following central result:

\begin{proposition}[Constructing Universally Attracting Laminations]
\label{PropUniversallyAttracting} If $H \subgroup \Out(F_n)$ is fully irreducible then there exists $\phi \in H$ and a lamination pair $\Lambda^\pm \in \L^\pm(\phi)$ whose nonattracting vertex group system $\A_\na\Lambda^\pm$ is trivial. 
\end{proposition}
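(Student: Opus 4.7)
The plan is to prove the proposition by iterating a ping-pong construction that strictly shrinks the nonattracting vertex group system at each step, starting from an element of exponential growth in $H$ and terminating via the descending chain condition Proposition~\ref{PropVDCC}.

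The starting point is the Kolchin-type theorem of \BookTwo. Full irreducibility of $H$ rules out the possibility that every element of $H$ grows polynomially (else $H$ would virtually fix a proper nontrivial free factor system), so that theorem produces an element $\phi_0 \in H$ of exponential growth, with $\L(\phi_0) \ne \emptyset$. Fix a dual lamination pair $\Lambda^\pm_0 \in \L^\pm(\phi_0)$. If $\A_\na\Lambda^\pm_0$ is already trivial we are done, so assume otherwise, and we proceed to build a new element of $H$ whose nonattracting system is properly smaller.

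To produce a companion with a distinct nonattracting system, I would invoke the transversality result of Section~\ref{transversality}: the $\ofn$-stabilizer of $\A_\na\Lambda^\pm_0$ virtually preserves a proper nontrivial free factor system. Since $H$ is fully irreducible, no finite index subgroup of $H$ preserves such a free factor system, so $\mathrm{Stab}_H(\A_\na\Lambda^\pm_0)$ has infinite index in $H$. Pick $h \in H$ with $h \cdot \A_\na\Lambda^\pm_0 \ne \A_\na\Lambda^\pm_0$, and set $\psi = h\phi_0 h^\inv \in H$; then $\psi$ carries the lamination pair $h \cdot \Lambda^\pm_0$ whose nonattracting vertex group system $h \cdot \A_\na\Lambda^\pm_0$ differs from that of $\phi_0$. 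Next I would apply the ping-pong result Proposition~\ref{PropSmallerComplexity} to $(\phi_0,\psi)$. The required independence hypothesis is checked using the weak attraction dichotomy Proposition~\ref{prop:WA1}: because the two nonattracting systems differ, generic leaves of each lamination pair fail to be carried by the nonattracting system of the other, so Proposition~\ref{prop:WA1} forces them to be weakly attracted across. For $m,n$ large, the element $\xi = \psi^m \phi_0^n \in H$ then acquires a lamination pair $\Lambda^\pm_\xi$ whose nonattracting vertex group system is strictly smaller (in the chain sense of Proposition~\ref{PropVDCC}) than both $\A_\na\Lambda^\pm_0$ and $h \cdot \A_\na\Lambda^\pm_0$. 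Replacing $\phi_0$ by $\xi$ and iterating, the DCC forces termination in finitely many steps at an element $\phi \in H$ with $\A_\na\Lambda^\pm$ trivial.

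The hard part will be arranging that Proposition~\ref{PropSmallerComplexity} delivers a \emph{strictly} smaller nonattracting vertex group system rather than merely a different one; this is what makes the termination argument succeed. The difficulty is sharpest in the geometric case, where $\A_\na\Lambda^\pm_0$ is not itself a free factor system, and one cannot directly use full irreducibility of $H$. This is precisely why the transversality result of Section~\ref{transversality} is essential: it converts the free-factor statement of full irreducibility into the subgroup-system statement that $\mathrm{Stab}_H(\A_\na\Lambda^\pm_0)$ has infinite index. Actually executing the ping-pong calculation carefully enough to see that complexity drops, and not merely shifts to a different incomparable system, is the technical core of this step.
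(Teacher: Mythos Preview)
Your overall architecture matches the paper's: start from an exponentially growing element via \BookTwo, conjugate to get a companion with a different nonattracting system, apply Proposition~\ref{PropSmallerComplexity}, and descend via Proposition~\ref{PropVDCC}. Two genuine gaps remain, however.

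First, your verification of hypotheses (i)--(iv) of Proposition~\ref{PropSmallerComplexity} is not valid as written. The assertion ``because the two nonattracting systems differ, generic leaves of each lamination pair fail to be carried by the nonattracting system of the other'' does not follow: knowing $\A_\na\Lambda^\pm_0 \ne h\cdot\A_\na\Lambda^\pm_0$ says nothing about whether $h(\gamma^+_0)$ is carried by $\A_\na\Lambda^\pm_0$. The paper handles this separately via Proposition~\ref{PropConjugator}, which produces a finite index subgroup $H_0 \subgroup H$ so that for every $\zeta \in H_0$ neither $\zeta(\gamma^+)$ nor $\zeta(\gamma^-)$ is carried by $\A_\na\Lambda^\pm_\phi$; the conjugator must be chosen in $H_0$ \emph{and} (via Lemma~\ref{action on sets}) so that $\zeta(\Lambda^+_\phi)\ne\Lambda^-_\phi$ and $\zeta(\Lambda^-_\phi)\ne\Lambda^+_\phi$, the latter being needed to rule out alternative~(3) of Proposition~\ref{prop:WA1}.

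Second, you never supply the auxiliary data $\theta,\omega,W^-_\theta,W^+_\omega$ that Proposition~\ref{PropSmallerComplexity} requires, nor the geometric/nongeometric dichotomy in its third bulleted hypothesis. These are not decorative: they are what allows the proof of Proposition~\ref{PropSmallerComplexity} to invoke Lemma~\ref{independence of plus or minus} and conclude that $\Lambda^+_\xi$ and $\Lambda^-_\xi$ are dual, without which conclusion~(1) fails. The paper resolves this by fixing a reference $\theta$ once and for all (nongeometric if any such exists in $H$) and iterating not over arbitrary lamination pairs but over a set $X$ defined by compatibility conditions (a),(b) with $\theta$; at each step one must check that the new $\Lambda^\pm_\xi$ remains in $X$, which is what items $(4^\pm)$, $(5)$, $(6)$ in the paper's proof accomplish. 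Your outline omits this bookkeeping entirely.
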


\noindent
As mentioned above, the proof starts by applying the main result of \BookTwo\ to obtain some element $\phi \in H$ of exponential growth. Choosing a lamination pair $\Lambda^\pm$ for $\phi$, the proof proceeds by induction on the ``complexity'' of the vertex group system $\A_\na\Lambda^\pm$, the base case being of course when $\A_\na\Lambda^\pm$ is trivial.  The induction step is carried out by applying ping-pong arguments. First one conjugates $\phi$ to an element $\psi \in H$ with a lamination pair $\Lambda^\pm_\psi$ that is sufficiently independent from the given lamination pair $\Lambda^\pm_\phi$.  Then, assuming that neither of $\A_\na\Lambda^\pm_\phi$, $\A_\na\Lambda^\pm_\psi$ is trivial, our ping pong argument Proposition~\ref{PropSmallerComplexity} applies to show that for all sufficiently large positive $m,n$, the element $\xi = \phi^m \psi^n$ has a lamination pair $\Lambda^\pm_\xi$ such that the vertex group system $\A_\na\Lambda^\pm_\xi$ is strictly nested in both $\A_\na\Lambda^\pm_\phi$ and $\A_\na\Lambda^\pm_\psi$. The descending chain condition for vertex group systems, Proposition~\ref{PropVDCC}, finishes the proof.


Section~\ref{SectionLooking} contains the proof of Theorem~\ref{thm:main}, an outline of which is found at the beginning of that section, with the ping-pong argument Proposition~\ref{PropSmallerComplexity} again playing a key role.

\subsection*{Further results} 

In the sequel to this paper, still in preparation, we will prove the following relative version of Theorem~\ref{thm:main}: 

\begin{theorem}
For any subgroup $H \subgroup \Out(F_n)$ and any proper free factor $A$ of $F_n$, if $H$ fixes the conjugacy class of $A$, and if no finite index subgroup of $H$ fixes the conjugacy class of any proper free factor that properly contains $A$, then there exists $\phi \in H$ such that no nonzero power of $\phi$ fixes the conjugacy class of any proper free factor that properly contains $A$.
\end{theorem}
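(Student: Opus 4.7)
The plan is to adapt the proof architecture of Theorem~\ref{thm:main} to the relative setting, working inside the subgroup $\Stab([A]) \subgroup \Out(F_n)$ and systematically replacing ``proper free factor of $F_n$'' with ``proper free factor of $F_n$ properly containing $A$''. The relative conjugacy class of $A$ should be treated as the boundary of a relative outer space on which the restriction $H \subgroup \Stab([A])$ acts, and every preceding tool --- laminations, CTs, nonattracting vertex group systems, weak attraction, ping-pong --- should be reinterpreted relative to the free factor system $\{[A]\}$.

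First, in the analog of Step~1, I would produce an element $\phi \in H$ and a lamination pair $\Lambda^\pm \in \L^\pm(\phi)$ that is ``universally attracting relative to $A$'', in the sense that the nonattracting subgroup system $\A_\na\Lambda^\pm$ is precisely the free factor system $\{[A]\}$ (equivalently, every conjugacy class of $F_n$ not carried by $A$ is weakly attracted to $\Lambda^+$ under iteration of $\phi$). The starting element is produced by the relative Kolchin-type theorem of \BookTwo: because no finite index subgroup of $H$ fixes any free factor properly containing $A$, $H$ cannot be virtually unipotent in the relative sense, so it contains an element of exponential growth with an attracting lamination whose support properly contains $[A]$. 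The induction step is the relative version of Proposition~\ref{PropUniversallyAttracting}: conjugate $\phi$ by an element of $H$ to obtain a sufficiently independent $\psi \in H$ (using again the hypothesis that $H$ does not virtually stabilize any free factor properly containing $A$), and then apply Proposition~\ref{PropSmallerComplexity} to $\xi = \phi^m \psi^n$ with $m,n$ large, obtaining a lamination pair whose nonattracting vertex group system is strictly nested below both $\A_\na\Lambda^\pm_\phi$ and $\A_\na\Lambda^\pm_\psi$, but still contains $\{[A]\}$. The descending chain condition Proposition~\ref{PropVDCC}, restricted to vertex group systems containing $[A]$, terminates the induction precisely at the relative minimum $\{[A]\}$.

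Second, in the analog of Step~2, I would use this universally attracting (rel $A$) element $\phi$ together with Proposition~\ref{prop:WA2} (appropriately relativized) and one more ping-pong with a conjugate $\psi = h\phi h^\inv$ for suitably chosen $h \in H$ to produce an element whose powers fix no free factor properly containing $A$. The idea is that after relative universal attraction is achieved, the only geodesics escaping from $\Lambda^+$ under $\phi$-iteration are generic leaves of laminations in $\L(\phi^\inv)$ or singular leaves, and these obstructions can be killed by combining $\phi$ with a sufficiently independent conjugate. Any free factor $B$ with $A \subsetneq B \subsetneq F_n$ that were fixed by $\xi^k$ would have to carry a $\xi$-invariant geodesic structure incompatible with the ping-pong dynamics, yielding a contradiction.

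The main obstacle will be the relativization of Section~\ref{transversality}: in the geometric stratum case, one must show that the ``rel $A$'' nonattracting subgroup system virtually determines a free factor system that is compatible with $[A]$, so that the ping-pong machinery from Section~\ref{SectionPingPong} can be run in $\Stab([A])$ rather than all of $\Out(F_n)$. A secondary obstacle is ensuring, at each ping-pong step, that the elements produced remain in $H$ and continue to fix $[A]$; this requires that all conjugators used be drawn from $H \cap \Stab([A]) = H$, which is where the hypothesis that no finite index subgroup of $H$ fixes a larger free factor is used repeatedly to guarantee the availability of genuinely independent test elements inside~$H$.
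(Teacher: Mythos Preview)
The paper does not prove this theorem. It is stated in the ``Further results'' subsection of the introduction as a result to be proved \emph{in the sequel to this paper, still in preparation}; no proof or sketch is given here. So there is no proof in the paper to compare your proposal against.

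That said, your proposal is not a proof but an outline of a relativization program, and as such it is reasonable and almost certainly aligned with what the sequel carries out: replace ``proper free factor'' by ``proper free factor properly containing $A$'' throughout, rerun Proposition~\ref{PropUniversallyAttracting} aiming for $\A_\na\Lambda^\pm = \{[A]\}$ instead of the trivial system, and then rerun Section~\ref{SectionLooking}. You correctly flag the two places where real work is needed. First, the relative Kolchin-type input: you need that a subgroup of $\Stab([A])$ with no virtually invariant free factor properly containing $A$ contains an exponentially growing element whose lamination is not carried by $[A]$; this is not quite what \BookTwo\ states, and the sequel presumably proves or extracts the needed relative statement. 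Second, and more seriously, the entire apparatus of Sections~\ref{SectionSingular} and~\ref{SectionWAResults}---singular lines, $\LS(\phi)$, Proposition~\ref{prop:WA2}, Corollary~\ref{minimal set}---is developed under the standing assumption that $\phi$ has \emph{no} periodic conjugacy classes, which is exactly what fails in the relative setting (every conjugacy class carried by $A$ is $\phi$-periodic). Relativizing these results is the substantive content of the sequel, not a formality; your sketch names the obstacle but does not address it.
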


This relative version can be combined with the results of Bestvina and Feighn in \cite{BestvinaFeighn:HypComplex}, to obtain applications to the 2nd bounded cohomology of an arbitrary subgroup of $\Out(F_n)$.

\paragraph{Acknowledgements.} The first author was supported in part by NSF grant DMS0706719. The second author was supported in part by NSF grant DMS0706799.

\section{Preliminaries}
\label{SectionPrelim}

In this section we give self-contained account of basic results about $\Out(F_n)$ needed in this paper, citing outside sources for the reader to verify the correctness of statements. With that intent, we try to be as succinct as possible. We shall state numerous ``Facts'' which are either cited from or proved quickly using results from the literature, and a few ``Lemmas'' whose proofs require a bit more care. 

\subsection{$F_n$ and its automorphisms, trees, lines, and subgroups.}
\label{SectionTheBasics}

For $n \ge 2$ let $R_n$ denote the rose of rank $n$, a graph with one vertex and $n$ directed edges labelled $E_1,\ldots,E_n$. Let $F_n$ denote the rank $n$ free group $\pi_1(R_n)$, having $E_1,\ldots,E_n$ as a free basis. We use the notation $[\cdot]$ to denote conjugacy classes of elements and of subgroups of $F_n$. Let $\C$ be the set of conjugacy classes of nontrivial elements of~$F_n$. 

Let $\bdy F_n$ denote the Gromov boundary of~$F_n$, identified with the Cantor set of ends of $\wt R_n$, the universal covering tree of~$R_n$. For any finitely generated subgroup $G \subgroup F_n t \bdy F_n$ denote its boundary --- by definition this is the set of accumulation points in $\bdy F_n$ of the orbit of some (any) element of $\wt R_n \union \bdy F_n$ under the action of $\Phi$ on the compactified space $\wt R_n \union \bdy F_n$, and it is equal to the closure of the set of ideal endpoints of axes of elements of $G$ acting on the tree $\wt R_n$.

\paragraph{Outer automorphisms and outer space.} Let $\Out(F_n) = \Aut(F_n) / \Inn(F_n)$ denote the outer automorphism group of the free group $F_n$ of rank $n$. The action of $\Aut(F_n)$ on elements and subgroups of $F_n$ induces an action of $\Out(F_n)$ on conjugacy classes of elements and subgroups. 

The Culler-Vogtmann outer space $\CV_n$ and its natural compactification $\overline\CV_n$ can be understand in terms of minimal actions of $F_n$ on $\reals$-trees for which no point or end of the tree is fixed by the whole action, what we refer to as \emph{$F_n$-trees}. Each $F_n$-tree determines an element of $\reals^\C$ using translation lengths as coordinates, and a theorem of Culler-Morgan \cite{CullerMorgan:Rtrees} states that two $F_n$-trees $T,T'$ determine the same point of $\reals^\C$ if and only if there is an isometry $T \mapsto T'$ that conjugates the $F_n$ actions; it follows that $T,T'$ determine the same point of projective space $P\reals^\C$ if and only if there is a homothety $T \mapsto T'$ that conjugates the $F_n$ actions. For each $F_n$-tree $T$, the \emph{stabilizer} of $A \subset T$ is $\Stab(A) = \{g \in F_n \suchthat g \cdot x = x \,\,\,\text{for all}\,\,\, x \in A\}$, and the \emph{fixed set} of $G \subset F_n$ is $\Fix(G) = \{x \in T \suchthat g \cdot x = x \,\,\,\text{for all}\,\,\, g \in G\}$.

Outer space $\CV_n$ is defined to be the space of free simplicial $F_n$ trees modulo conjugation by homothety, with topology induced by the translation length injection $\CV_n \inject P \reals^\C$. 

An $F_n$-tree $T$ is \emph{small} if for each nondegenerate arc $\alpha$ the subgroup $\Stab(\alpha)$ is either trivial or cyclic, from which it follows that for each $x \in T$ the subgroup $\Stab(x)$ has finite rank \cite{GJLL:index}. A small $F_n$-tree $T$ is \emph{very small} \cite{CohenLustig:verysmall} if for each triod $\tau \subset T$ the subgroup $\Stab(\tau)$ is trivial (a triod being a finite simplicial tree with one vertex of valence~3 and three vertices of valence~1), and for each $g \in F_n$ and each $i \ge 1$ we have $\Fix(g) = \Fix(g^i)$. It follows that for each nondegenerate arc $\alpha$ the subgroup $\Stab(\alpha)$ is either trivial or maximal infinite cyclic. Also, for each $x \in T$ the subgroup $\Stab(x)$ is maximal among the set of subgroups $G \subgroup F_n$ in the commensurability class of $\Stab(x)$, meaning that $G \intersect \Stab(x)$ has finite index in $G$ and in $\Stab(x)$.

Compactified outer space $\overline\CV_n$ is the space of very small $F_n$ trees modulo conjugation by homothety, again topologized using the natural translation length injection $\overline\CV_n \inject P\reals^\C$. By a theorem of Culler-Morgan \cite{CullerMorgan:Rtrees} the space $\overline\CV_n$ is compact, and by a theorem of Bestvina and Feighn \cite{BestvinaFeighn:OuterLimits}, $\overline\CV_n$ is the closure in $P\reals^\C$ of $\CV_n$. The compact space $\bdy \CV_n = \overline \CV_n - \CV_n$ is called the \emph{boundary} of outer space.

Outer space itself can also be understood in terms of marked graphs.
A \emph{marked graph} is a graph $G$ having no vertices of valence~1 equipped with a path metric and a homotopy equivalence $\rho \from G \to R_n$. The fundamental group $\pi_1(G)$ is therefore identified with $F_n$ up to inner automorphism, and so the conjugacy classes of $\pi_1(G)$ are naturally identified with $\C$. Each conjugacy class $c$ is represented by a unique shortest closed curve in $G$, and the length of this curve for each $c$ determines an element of $\reals^\C$. Two marked graphs $\rho \from G \to R_n$, $\rho' \from G' \to R'_n$ are equivalent if there is a homothety $\theta \from G \to G'$ such that $\rho' \theta$ is homotopic to $\rho$, and this happens if and only if the two corresponding elements of $P\reals^\C$ are equal. $\CV_n$ may therefore be defined as the space of marked graphs modulo equivalence, and the two definitions of $\CV_n$ correspond naturally under the universal covering relation between free, simplicial $F_n$-trees and marked graphs.

\paragraph{Paths and circuits.} A \emph{path} in a finite graph $K$ is a locally injective, continuous map $\gamma \from J \to K$ from a closed, connected, nonempty subset $J \subset \reals$ to $K$ such that any lift to the universal cover $\ti\gamma \from J \to \wt K$ is proper; it follows that if $J$ is noncompact then $\ti\gamma$ induces an injection from the ends of $J$ to the ends of $\wt K$. Two paths are \emph{equivalent} if they are equal up to an orientation preserving homeomorphism between their domains. A path is \emph{trivial} if its domain is a point. Every nontrivial path can be expressed as a concatenation of edges and partial edges, concatenated at vertices, in one of the following forms: a \emph{finite path} is a finite concatenation $E_0 E_1 \ldots E_n$ for $n \ge 0$ where only $E_0$ and $E_n$ may be partial edges; a \emph{positive ray} is a singly infinite concatenation $E_0 E_1 E_2 \ldots$ where only $E_0$ may be partial; a \emph{negative ray} is a singly infinite concatenation $\ldots E_{-2} E_{-1} E_0$ where only $E_0$ may be partial; and a \emph{line} is a bi-infinite concatenation $\ldots E_{-2} E_{-1} E_0 E_1 E_2 \ldots$ with no partial edges. This expression is unique, up to translation of the index in the case of a line. In each case we have $E_i \ne \bar E_{i+1}$, by the locally injective requirement. In general we will denote a negative ray by an expression like $\overline R$, which means the orientation reversal of a positive ray $R$.

A \emph{circuit} in $K$ is a continuous, locally injective map of an oriented circle into~$K$. Two circuits are \emph{equivalent} if they are equal up to some orientation preserving homeomorphism of domains. Any circuit can be written as a concatenation of edges which is unique up to cyclic permutation.

Consider a continuous function $\gamma \from J \to K$, such that $J \subset \reals$ is closed, connected and nonempty, and if $J$ is noncompact then each lift $\ti\gamma$ is proper and induces an injection from the ends of $J$ to the ends of $K$. If $J$ is a compact interval then either $\gamma$ is homotopic rel endpoints to a unique nontrivial path denoted $[\gamma]$, or $\gamma$ is homotopic rel endpoints to a constant path in which case $[\gamma]$ denotes the corresponding trivial path. If $J$ is noncompact then there is a unique path $[\gamma]$ to which $\gamma$ is homotopic by a homotopy which is \emph{proper} meaning that its lifts to the universal cover $\wt K$ are all proper; equivalently, $\gamma$ and the path $[\gamma]$ have lifts to $\wt K$ with the same finite endpoints and the same infinite ends. Also, every continuous, homotopically nontrivial function $c \from S^1 \to K$ is freely homotopic to a unique circuit denoted $[c]$. 

Given a homotopy equivalence $f \from K \to K'$ between two finite graphs, for each path or circuit $\gamma$ in $K$ we denote $f_\#(\gamma) = [f(\gamma)]$.  

\begin{fact}[Bounded Cancellation Lemma \cite{Cooper:automorphisms}, \BookZero]
For any homotopy equivalence between marked graphs $f \from G \to G'$ there exists a constant $\BCC(f)$ such that for any lift $\ti f \from \wt G \to \wt G'$ to universal covers and any path $\gamma$ in $\wt G$, the path $[f_\#(\gamma)]$ is contained in the $\BCC(f)$ neighborhood of the image $f(\gamma)$. \qed
\end{fact}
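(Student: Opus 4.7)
The statement says that the tightened path lies in a bounded neighborhood of the raw image, and in the universal-cover / tree setting this reduces to an elementary $\reals$-tree observation. I expect in fact that $\BCC(f) = 0$ already suffices for this direction of containment.

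First I would establish the tree observation: for any $\reals$-tree $T$ and any continuous map $\sigma \from J \to T$ from a connected set $J \subset \reals$, the $T$-geodesic joining the images of the two ends of $J$ is contained, as a set, inside the image $\sigma(J)$. Indeed, any point $z$ on such a geodesic separates $T$ into open components with the two ends of $\sigma$ on opposite sides, so by connectedness the image $\sigma(J)$ must hit $z$.

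Next I would apply this to $\tilde f \composed \gamma \from J \to \tilde G'$. For a compact interval $J = [a,b]$, the tight path $[\tilde f_\#(\gamma)]$ is by definition the $\tilde G'$-geodesic from $\tilde f(\gamma(a))$ to $\tilde f(\gamma(b))$, so the tree observation gives $[\tilde f_\#(\gamma)] \subset \tilde f(\gamma(J))$ as subsets of $\tilde G'$, which is far stronger than a $\BCC(f)$-neighborhood statement. I would then handle noncompact $J$ (rays and lines) by exhausting $[\tilde f_\#(\gamma)]$ with compact subpaths: the properness built into the definition of a path, together with continuity of $\tilde f$, ensures that $\tilde f \composed \gamma$ and $[\tilde f_\#(\gamma)]$ share the same ends in $\tilde G' \cup \bdy \tilde G'$, so any point $z$ on the tight ray or line lies on the $\tilde G'$-geodesic between two interior points of $\tilde f \composed \gamma$; applying the compact case to the corresponding restriction of $\gamma$ puts $z$ into $\tilde f(\gamma)$.

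\textbf{Anticipated obstacle.} The only nontrivial point is checking in the infinite case that the ends of $\tilde f \composed \gamma$ in $\tilde G' \cup \bdy \tilde G'$ really do coincide with those of $[\tilde f_\#(\gamma)]$; this rests on properness of $\gamma$ and continuity of $\tilde f$ at the universal-cover level, which are built into the setup of this subsection. Once that is in hand, no cancellation estimate is needed for the direction of containment the lemma asserts. What Cooper's classical bound on cancellation stubs provides is instead the reverse containment --- that the raw image $\tilde f(\gamma)$ lies in a bounded neighborhood of the tight path $[\tilde f_\#(\gamma)]$ --- a statement which does require a positive constant but which is not what the lemma as worded demands.
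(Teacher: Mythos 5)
Your tree-separation argument is sound, and it does establish the containment exactly as the Fact is worded --- indeed with constant $0$: the image $\ti f(\ti\gamma)$ is a connected subset of the tree $\wt G'$ containing the endpoints (and, in the noncompact case, the ends) of the tightened path, so it contains the whole geodesic. But this very observation shows you have proved only the vacuous direction of the statement, not the Bounded Cancellation Lemma that the paper is citing from Cooper and from \cite{BFH:laminations} and then actually uses. The content of the lemma --- the reason a positive constant $\BCC(f)$ appears at all --- is the reverse containment: $\ti f(\ti\gamma)$ lies in the $\BCC(f)$-neighborhood of $\ti f_\#(\ti\gamma)$, equivalently, when a concatenation $f_\#(\alpha)\,f_\#(\beta)$ is tightened to $f_\#(\alpha\beta)$, at most $\BCC(f)$ edges cancel on either side of the juncture. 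That is the form invoked repeatedly later in the paper (for instance in the proof of Lemma~\ref{LemmaEGUnifPathSplitting}, in Definition~\ref{def:DoubleSharp} where the bound on the trimmed segments defining $f_{\#\#}$ is asserted, and in the proof of Proposition~\ref{two filling sets}), and none of it follows from the inclusion you prove, since knowing every point of the geodesic is near the image says nothing about how far the image can wander from the geodesic.

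So the gap is that your proposal, by design, declines to prove the substantive statement: it supplies no usable constant $\BCC(f)$ and would leave all the later cancellation estimates unsupported. The wording of the Fact should be read (or corrected) with the two sides interchanged, and a blind proof needs the genuine argument --- e.g., factor $f$, after homotoping it to send vertices to vertices and edges to paths, into a finite sequence of Stallings folds, observe that a single fold identifies segments of uniformly bounded length and hence cancels a uniformly bounded amount at any juncture, and compound these bounds over the finitely many folds (alternatively, run Cooper's original argument using a Lipschitz constant for $\ti f$ and the quasi-isometry constants of a homotopy inverse). Your handling of the noncompact case via properness of lifts is fine and would carry over, but the finite-path cancellation bound is the missing core of the lemma.
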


Given any finite graph $K$ let $\wh \B(K)$ be the compact space of equivalence classes of circuits in $K$ and paths in $K$ whose endpoints, if any, are at vertices of~$K$, where this space is given the \emph{weak topology} with one basis element $\wh N(K,\alpha)$ for each finite path $\alpha$ in $K$, consisting of all paths and circuits having $\alpha$ as a subpath. Let $\B(K) \subset \wh\B(K)$ be the compact subspace consisting of all lines in $K$, with basis elements $N(K,\alpha) = \wh N(K,\alpha) \intersect \B(K)$.

Following \BookOne, for marked graphs $G$ we give a coordinate free description of $\B(G)$ as follows. Let $\wt\B$ be the set of two-elements subsets of $\bdy F_n$, equivalently
$$\wt\B = (\bdy F_n \cross \bdy F_n - \Delta) / (\Z/2)
$$
where $\Delta$ is the diagonal and $\Z/2$ acts by interchanging factors. We put the \emph{weak topology} on $\wt\B$, induced from the usual Cantor topology on $\bdy F_n$. The group $F_n$ acts on $\wt\B$ with compact but non-Hausdorff quotient space $\B = \wt\B / F_n$; we also refer to this quotient topology as the \emph{weak topology}. Elements of $\B$ are called \emph{lines}. A \emph{lift} of a line $\gamma \in \B$ is an element $\ti\gamma \in \wt\B$ that projects to $\gamma$, and the two elements of $\ti\gamma$ are called its \emph{endpoints}. (Notation, used only rarely: the element of $\wt\B = (\bdy F_n \cross \bdy F_n - \Delta) / (\Z/2)$ corresponding to an ordered pair $(\xi,\eta) \in \bdy F_n \cross \bdy F_n - \Delta$ is denoted $\geod{\xi}{\eta}$.)

Given any marked graph $G$, its space of lines $\B(G)$ is naturally homeomorphic to $\B$, via the natural identification between the $\bdy F_n$ and the space of ends of the universal covering $\wt G$. Given $\gamma \in \B$ the corresponding element of $\B(G)$ is called the \emph{realization} of $\gamma$ in $G$. Keeping this homeomorphism in mind, we use the term ``line'' ambiguously, to refer either to an element of $\B$ or an element of $\B(G)$ in a context where a marked graph $G$ is under discussion. This allows us to speak, for example, of the lift of an element of $\B(G)$ to an element of $\wt\B$.

The group $\Out(F_n)$ acts naturally on $\B$, induced by the natural action of $\Aut(F_n)$ on $\wt\B$ which in turn is induced by the action of $\Aut(F_n)$ on $\bdy F_n$. For any marked graphs $G,G'$ and any homotopy equivalence $f \from G \to G'$ the induced map $f_\# \from \wh\B(G) \to \wh\B(G')$ is continuous, and the restricted map $\B(G) \to \B(G')$ is a homeomorphism. With respect to the identifications $\B(G) \approx \B \approx \B(G')$, if $f$ preserves marking then $f_\# \from \B(G) \to \B(G')$ agrees with the identity map on $\B$, whereas if $G=G'$ then $f_\#$ agrees with the homeomorphism $\B \to \B$ induced by the outer automorphism associated to $f$. 

We use the adjective ``weak'' to refer to concepts associated to the weak topology, although in contexts where this is clear we sometimes drop the adjective. For example, given a subset $A \subset \B$ its weak closure is denoted $\cl(A) \subset \B$. In particular, although $\wh\B(G)$ is not Hausdorff and limits of sequences are not unique, we use the phrase ``weak limit'' of a sequence to refer to any point to which that sequence weakly converges. Also, in any context where $\phi \in \Out(F_n)$ or $f \from G \to G$ are given, if $\alpha,\beta \in \B$ and $\phi^k(\alpha)$ converges weakly to $\beta$ as $k \to +\infinity$, or if $\alpha,\beta \in \wh\B(G)$ and $f^k_\#(\alpha)$ converges weakly to~$\beta$, then we say that \emph{$\alpha$ is weakly attracted to $\beta$} (under iteration by $\phi$ or by $f_\#$).

The \emph{accumulation set} of a ray $\rho$ in $G$ is the set of lines $\ell \in \B(G)$ which are elements of the weak closure of $\rho$ in the space $\wh\B(G)$; equivalently, every finite subpath of $\ell$ occurs infinitely often as a subpath of $\rho$. Two rays in $G$ are \emph{asymptotic} if they have equal subrays, in which case their accumulation sets are equal. The asymptotic equivalence classes of rays are called \emph{ends}, and the accumulation set of an end is well-defined as the accumulation set of any representative ray.

A line $\ell \in \B(G)$ is \emph{birecurrent} if $\ell$ is in the weak closure of some (any) positive subray of $\ell$ and $\ell$ is in the weak closure of some (any) negative subray of $\ell$. 

\paragraph{Attracting laminations.} For any marked graph $G$ the natural homeomorphism $\B \approx \B(G)$ induces a natural bijection between the set of closed subsets of $\B$ and the set of closed subsets of $\B(G)$, and an element of either of these sets is called a \emph{lamination}. In particular, given a lamination $\Lambda \subset \B$, the corresponding subset of $\B(G)$ is called the \emph{realization} of $\Lambda$ in $G$. Also, we use the term lamination to refer to $F_n$-invariant, closed subsets of $\wt B$; these are in natural bijection with laminations in~$\B$, under the $F_n$-orbit map $\wt B \to B$. An element of a lamination is called a \emph{leaf} of that lamination. A lamination $\Lambda$ is \emph{minimal} if each of its leaves is dense in $\Lambda$. In particular, every leaf of a minimal lamination is birecurrent.

Associated to each outer automorphism $\phi \in \Out(F_n)$ there is a finite, $\phi$-invariant set of laminations denoted $\L(\phi)$, called the \emph{attracting laminations} of $\phi$; see \BookOne\ Section 3.1. By definition, a lamination $\Lambda$ is an attracting lamination for $\phi$ if it is the weak closure of a line $\ell$ satisfying the following: $\ell$ is a birecurrent leaf of $\Lambda$; $\ell$ has an \emph{attracting neighborhood} which is a weak neighborhood $U \subset \B$ of $\ell$ such that each line in $U$ is weakly attracted to~$\ell$ (and so also to each leaf of $\Lambda$); and no lift $\ti\ell \in \B$ is the axis of a generator of a rank one free factor of $F_n$. Any such leaf $\ell$ is called a \emph{generic leaf} of $\Lambda$. For the proof of $\phi$-invariance of $\L(\phi)$ see \BookOne\ Lemma~3.1.16 (sic, located between Definitions 3.1.5 and 3.1.7). For finiteness see Lemma 3.1.13.

\paragraph{Directions, turns, and folds.} A \emph{direction} of a marked graph $G$ at a point $x \in G$ is a germ of finite paths with initial vertex~$x$. If $x$ is not a vertex then the number of directions at $x$ equals~2. If $x$ is a vertex then the number of directions equals the valence of $x$, in fact there is a natural bijection between the directions at $x$ and the oriented edges with initial vertex $x$, and we shall often identify a direction at $x$ with its corresponding oriented edge. Let $T_x G$ be the set of directions of $G$ at~$x$ and let $TG$ be the union of $T_x G$ over all vertices $x$ of $G$.  A \emph{turn} at $x \in G$ is an unordered pair of directions $\{d,d'\} \subset T_x G$. If $d \ne d'$ then the turn is \emph{nondegenerate}, otherwise it is \emph{degenerate}. 

For any homotopy equivalence of marked graphs $f \from G \to G'$ that takes edges to nontrivial paths, there is an induced map $Df \from TG \to TG'$ taking $T_x G$ to $T_{f(x)} G'$: for each $d \in T_x G$ represented by an oriented edge $E$ let $Df(d)$ be the direction represented by the path $f(E)$. 

Consider a homotopy equivalence $f \from G \to G'$ between two marked graphs, taking vertices to vertices, and such that for each edge $E \subset G$ the restriction of $f$ to $E$ is a nontrivial path in $G'$. We shall define folds and generalized folds of $f$.

Consider two oriented edges $E_1,E_2$ with the same initial point such that the turn $\{E_1,E_2\}$ is nondegenerate and its image $\{Df(E_1),Df(E_2)\}$ is degenerate. Let $\alpha_1 \subset E_1$ and $\alpha_2 \subset E_2$ be the longest initial segments such that $f(\alpha_1)$ and $f(\alpha_2)$ are the same path in $G'$; since $f$ is a homotopy equivalence, $\alpha_1,\alpha_2$ intersect only at their initial point. In this situation there exists a composition of homotopy equivalences $G \xrightarrow{h} H \xrightarrow{h'} G'$ such that $h$ is a quotient map that identifies $\alpha_1$ with $\alpha_2$, and $f$ is equal to $h' \composed h$. The map $h$ is called the \emph{fold} determined by $f$ and the turn $\{E_1,E_2\}$. If $\alpha_1 \ne E_1$ and $\alpha_2 \ne E_2$ then this fold is \emph{partial}, and otherwise it is \emph{full}. Supposing that the fold is full, if $\alpha_1 = E_1$ and $\alpha_2 = E_2$ then the fold is \emph{improper}, otherwise it is \emph{proper}. Note that the fold is proper if and only if, for some $i \ne j \in \{1,2\}$, the path $f(E_j)$ is a proper initial segment of $E_i$, in which case we say that \emph{$E_i$ folds properly over $E_j$}. 

Stallings Theorem \cite{Stallings:folding} says that any homotopy equivalence $f \from G \to G'$ as above may be factored~as
$$G=G_0 \xrightarrow{f_1} G_1 \xrightarrow{f_2} G_2 \to \cdots \to G_{k-1} \xrightarrow{f_k} G_k = G'
$$
such that each map $f_i$ is the fold determined by the map $G_{i-1} \xrightarrow{f_k \composed\cdots\composed f_i} G'$ and some turn of $G_{i-1}$.

Continuing the notation above, suppose that $E_i$ folds properly over $E_j$. Consider an initial subpath $\mu$ of $E_i$, and a path $\sigma$ which has $E_j$ as an an initial subpath, such that $\sigma$ is disjoint from the interior of $E_i$, $f(\mu)$ and $f_\#(\sigma)$ are the same path in $G'$, and this path has endpoints at vertices. Then there is a composition of homotopy equivalences $G \xrightarrow{k} K \xrightarrow{k'} G'$ such that $k$ is a quotient map that identifies $\mu$ with $\sigma$, and $f$ is homotopic rel vertices to $k' \composed k$. The map $k$ is called the \emph{generalized fold} determined by $\mu$ and $\sigma$.

\paragraph{Subgroup systems.} We recall from \BookOne\ some properties of free factor systems. Also, to lay the ground for the ``vertex group systems'' introduced in Section~\ref{SectionVertexGroups}, we discuss a general notion of subgroup systems.

Define a \emph{subgroup system} in $F_n$ to be a finite subset of the set of conjugacy classes of nontrivial, finite rank subgroups of $F_n$. Note that a subgroup system $\A$ is completely determined by the set of subgroups $A \subgroup F_n$ such that $[A] \in \A$. With this in mind, we often abuse terminology by writing $A \in \A$ when we really mean $[A] \in \A$.

A finite set of subgroups $\{A_1,\ldots,A_k\}$ of $F_n$ with pairwise trivial intersection is a \emph{free factorization} of $F_n$, written $F_n = A_1 * \ldots * A_k$, if each nontrivial element of $F_n$ can be written uniquely as a product of finite sequence of nontrivial elements $a_1 \ldots a_k$ so that for each $i$ there exists $j(i) \in \{1,\ldots,k\}$ such that $a_i \in A_{j(i)}$, and $j(i) \ne j(i+1)$.

A \emph{free factor system} is a subgroup system of the form $\{[A_1],\ldots,[A_k]\}$ such that there is a free factorization of the form $F_n = A_1 * \cdots * A_k * B$; the subgroup $B$ may or may not be trivial. The action of $\Out(F_n)$ on the collection of conjugacy classes of subgroups induces an action on the collection of free factor systems. There is a partial order on free factor systems defined by $\A \sqsubset \A'$ if for each subgroup $A \in \A$ there exists a subgroup $A' \in \A'$ such that $A$ is a free factor of $A'$. 

Section~2 of \BookOne\ contains the following fact and definition, which is a consequence of Grushko's Theorem:

\begin{fact}\label{FactGrushko}
Every collection $\{\A_i\}$ of free factor systems has a well-defined \emph{meet} $\meet\{\A_i\}$, which is the unique maximal free factor system $\A$ such that $\A \sqsubset \A_i$ for each~$i$. Furthermore, a subgroup is in $\meet\{\A_i\}$ if and only if it can be written as an intersection of subgroups each of which is in some $\A_i$. In particular it follows that the relation $\sqsubset$ satisfies the descending chain condition.
\end{fact}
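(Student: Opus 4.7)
The strategy reduces by induction on the cardinality of the collection to constructing the pairwise meet $\A_1 \meet \A_2$ of two free factor systems. First I would build a marked graph $G$ equipped with two subgraphs $H_1, H_2 \subseteq G$ whose noncontractible components realize $\A_1$ and $\A_2$ respectively; such a simultaneous realization exists by standard manipulations, starting from a marked graph whose subgraph realizes $\A_1$ and then subdividing and folding to incorporate $\A_2$ while arranging $H_1 \cap H_2$ to be an honest subgraph.

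The candidate meet $\A_1 \meet \A_2$ is then the subgroup system realized by the noncontractible components of $H_1 \cap H_2$. To see these correspond to free factors of $F_n$, so that we genuinely obtain a free factor system, I would invoke Bass--Serre theory: writing $F_n = A_1 * \cdots * A_k * B$ with $\A_1 = \{[A_i]\}$, each $A' \in \A_2$ acts on the Bass--Serre tree of this splitting and decomposes by Kurosh as a free product of intersections of the form $A' \cap g A_i g^\inv$ together with a free group. Each such intersection is a free factor of $A'$, and since $A'$ is itself a free factor of $F_n$, it is a free factor of $F_n$. The characterization ``a subgroup lies in $\meet\{\A_i\}$ iff it is an intersection of members of the $\A_i$'' then falls out of van Kampen applied to the components of $H_1 \cap H_2$.

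For maximality: if $\A''$ satisfies $\A'' \sqsubset \A_1$ and $\A'' \sqsubset \A_2$, then each $A'' \in \A''$ is a free factor of a member of each $\A_i$, and working inside the ambient marked graph one realizes $\A''$ by a subgraph of $H_1 \cap H_2$, yielding $\A'' \sqsubset \A_1 \meet \A_2$. Iteration then produces the meet of any finite collection. For the descending chain condition I would introduce the complexity $\mu(\A) = (\sum_{A \in \A} \rk(A),\, -\#\A)$ ordered lexicographically on $\Z \times \Z$. Given $\A \sqsubset \A'$ with $F_n = B * A_1' * \cdots * A_m'$ realizing $\A'$, refine each $A_i' = B_i * A_{i,1} * \cdots * A_{i,j_i}$ so that $\A = \{[A_{i,j}]\}$; then $\sum_{A' \in \A'}\rk(A') - \sum_{A \in \A}\rk(A) = \sum_i \rk(B_i) \ge 0$, with equality forcing each $B_i$ trivial, in which case $\A \sqsubsetneq \A'$ forces some $j_i \ge 2$ and hence $\#\A > \#\A'$. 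Thus $\mu$ strictly decreases under strict descent, and since its values are bounded below by constants depending only on $n$, DCC follows.

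The main obstacle is simultaneously carrying out the marked graph construction and verifying that intersection components genuinely correspond to free factors of $F_n$. The Bass--Serre/Kurosh argument supplies the algebraic content, but threading it through the topological picture to obtain the intersection characterization and the maximality clause together requires care.
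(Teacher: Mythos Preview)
The paper does not give a proof; it simply records this as a fact from Section~2 of \BookOne, attributing it to Grushko's theorem. Your Kurosh/Bass--Serre approach together with the complexity function for DCC is essentially the standard argument, so the outline is sound.

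There is one genuine gap. Your induction on cardinality produces the meet only for \emph{finite} collections, but the statement is for arbitrary collections $\{\A_i\}$. You do prove DCC, which is exactly what is needed to close this: for an arbitrary collection, the finite partial meets form a $\sqsubset$-decreasing directed family, and DCC forces stabilization at some finite subcollection, whose meet is then the meet of the whole family. You should either reorder the argument (establish DCC first, then the general meet) or at least make this reduction explicit.

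A smaller point: in your DCC argument you write ``refine each $A_i' = B_i * A_{i,1} * \cdots * A_{i,j_i}$ so that $\A = \{[A_{i,j}]\}$'' as though this were automatic from $\A \sqsubset \A'$. The definition only guarantees that each $A \in \A$ is a free factor of \emph{some} $A' \in \A'$; the assertion that the members of $\A$ lying (up to conjugacy) inside a fixed $A'_i$ can be assembled into a single free factorization of $A'_i$ is exactly the Kurosh step you already carried out in building the pairwise meet. It is correct, but it deserves a sentence pointing back to that argument rather than being taken for granted.
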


We often take the liberty of referring to a singleton free factor system $\{[A]\}$ as a \emph{free factor}, rather than the lengthier but more appropriate locution ``free factor conjugacy class''. This should not cause any confusion in a context where $\Out(F_n)$ is acting, since the action is always on conjugacy classes of subgroups rather than on subgroups themselves.

Given $\phi \in \Out(F_n)$ and a free factor $F$ of $F_n$, if $[F]$ is $\phi$-invariant then there exists $\Phi \in \Aut(F_n)$ representing $\phi$ such that $\Phi(F)=F$. The outer automorphism class of $\Phi \restrict F \in \Aut(F)$ is well-defined independent of the choice of $\Phi$, and we obtain an element of $\Out(F)$ denoted $\phi \restrict F$ called the \emph{restriction of $\phi$ to $F$}.

For any marked graph $G$ and any subgraph $H \subset G$, the fundamental groups of the noncontractible components of $H$ determine a free factor system that we denote~$[H]$. A subgraph of $G$ is called a \emph{core graph} if it has no valence~1 vertices. Every subgraph $H \subset G$ has a unique core subgraph $\core(H) \subset H$ which is a deformation retract of the union of the noncontractible components of $H$. Note that \hbox{$[\core(H)] = [H]$}.

A subgroup system $\A$ \emph{carries} conjugacy class $[g]$ in $F_n$ if there exists $[A] \in \A$ such that $g \in A$, and $\A$ \emph{carries} a line $\ell \in \B$ if any of the following equivalent conditions hold:
\begin{itemize}
\item For some (any) marked graph $G$, the realization of $\ell$ is the weak limit of a sequence of circuits carried by $\A$. 
\item There exists $[A] \in \A$ and a lift $\geod{\xi}{\eta} \in \wt\B$ of $\ell$ such that $\xi,\eta \in \bdy A$. 
\end{itemize}
When $\A$ is a free factor system the following additional condition is equivalent:
\begin{itemize}
\item For some (any) marked graph $G$ and a subgraph $H \subset G$ with $[H]=\A$, the realization of $\ell$ in $G$ is contained in $H$.
\end{itemize}



\noindent
Since $\bdy A$ is compact in $\bdy F_n$ when $A \subgroup F_n$ has finite rank, we have:

\begin{fact}
\label{FactLinesClosed}
For each subgroup system $\A$, the set of lines carried by $\A$ is a closed subset of~$\B$.
\qed\end{fact}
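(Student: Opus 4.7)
Suppose $(\ell_n)$ is a sequence of lines in $\B$, each carried by $\A$, with $\ell_n \to \ell$ weakly; I want to show that $\ell$ is carried by $\A$. Since $\A$ is a \emph{finite} set of conjugacy classes of subgroups, after passing to a subsequence I may assume there is a single conjugacy class $[A] \in \A$ such that every $\ell_n$ is carried by $[A]$. So the problem reduces to showing that, for a fixed finite rank subgroup $A$, the set of lines in $\B$ carried by $[A]$ is weakly closed.

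Fix a marked graph $G$, say $G = R_n$, and let $\pi_A \from \Gamma_A \to G$ denote the Stallings core-graph immersion of $A$: the graph $\Gamma_A$ is finite and has no valence-one vertices, $\pi_A$ is locally injective on edges, and $\pi_A$ induces an isomorphism of $\pi_1(\Gamma_A)$ onto a representative of $[A]$. The standard property of $\pi_A$ is that a line $\sigma \in \B(G)$ is carried by $[A]$ if and only if $\sigma$ lifts through $\pi_A$ to a (bi-infinite) line in $\Gamma_A$; this matches the boundary description of carriage, because the ends of the universal cover of $\Gamma_A$ are canonically identified with $\bdy A$. By K\"onig's lemma applied to the finitely branching tree of compatible partial lifts of longer and longer subpaths, a line $\sigma$ lifts to $\Gamma_A$ if and only if every finite subpath of $\sigma$ lifts through $\pi_A$.

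Now I fix any finite subpath $\alpha$ of the realization of $\ell$ in $G$. By the defining basis $\{N(G,\alpha)\}$ of the weak topology on $\B(G)$, the convergence $\ell_n \to \ell$ implies that $\alpha$ is a subpath of $\ell_n$ for all sufficiently large $n$, and since $\ell_n$ lifts to $\Gamma_A$ the subpath $\alpha$ lifts as well. Hence every finite subpath of $\ell$ lifts, so $\ell$ itself lifts to a line in $\Gamma_A$, so $\ell$ is carried by $[A]$ and therefore by $\A$. There is no serious obstacle: the heart of the argument is just the compactness of $\Gamma_A$ coupled with the definition of the weak topology on $\B$. The only point requiring a trace of care is the K\"onig's-lemma step that packages compatible lifts of every finite subpath into a single coherent global lift of the bi-infinite line $\ell$, which is routine because $\Gamma_A$ has only finitely many edges and vertices.
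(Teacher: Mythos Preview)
Your proof is correct. The paper's own argument is a single clause---``since $\bdy A$ is compact in $\bdy F_n$ when $A$ has finite rank''---which is really just a pointer to the underlying finiteness, whereas you have unpacked the details through the Stallings core graph $\Gamma_A$. These are two faces of the same fact: $A$ finitely generated is equivalent to $\bdy A$ being compact and to $\Gamma_A$ being finite, and either formulation drives the closure statement. Your K\"onig's-lemma step (finitely many lifts of each finite segment, hence a coherent global lift) is exactly the combinatorial shadow of the compactness the paper invokes. So the approaches are not genuinely different, but yours is the explicit version of what the paper leaves implicit, and it has the virtue of making the argument self-contained without needing to verify that the $F_n$-saturation of $(\bdy A)^{(2)}$ is closed in~$\wt\B$.
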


The \emph{free factor support} of a subset of $\B$ is defined by the following result from \BookOne:

\begin{fact}[\BookOne, Corollary 2.6.5]
\label{FactFFSupport}
For any subset $B \subset \B$ the meet of all free factor systems that carry $B$ is a free factor system that carries $B$ denoted $\A_\supp(B)$, called the \emph{free factor support} of $B$ and denoted $\A_\supp(B)$. If $B$ is a single line then $\A_\supp(B)$ is a single free factor.
\end{fact}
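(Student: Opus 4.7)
The plan is to combine Fact~\ref{FactGrushko} (the existence of meets of free factor systems) with the boundary criterion for a subgroup system carrying a line. Let $\mathcal{S}$ denote the collection of all free factor systems carrying $B$. Since the trivial decomposition $F_n = F_n$ shows $\{[F_n]\} \in \mathcal{S}$, the collection $\mathcal{S}$ is nonempty, and Fact~\ref{FactGrushko} immediately provides a well-defined free factor system $\meet \mathcal{S}$ together with its characterization as consisting of subgroups expressible as intersections of subgroups drawn from the members of $\mathcal{S}$.

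The main content is showing that $\meet \mathcal{S}$ carries $B$. I would verify this line by line: fix $\ell \in B$ and a lift $\geod{\xi}{\eta} \in \wt \B$. For each $\A \in \mathcal{S}$, the boundary criterion provides a conjugate $A_\A$ of some element of $\A$ with $\xi,\eta \in \bdy A_\A$. Using that finitely generated subgroups of the free group $F_n$ are quasiconvex, the boundary operator commutes with intersections: $\bdy\bigl(\bigcap_\A A_\A\bigr) = \bigcap_\A \bdy A_\A$, so both $\xi$ and $\eta$ lie in $\bdy\bigl(\bigcap_\A A_\A\bigr)$. By Fact~\ref{FactGrushko} this intersection (which is nontrivial since it contains the endpoints of a line in its boundary) represents a subgroup in $\meet\mathcal{S}$, so $\ell$ is carried. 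To handle a potentially infinite $\mathcal{S}$, I would appeal to the descending chain condition from Fact~\ref{FactGrushko}: only finitely many minimal elements of $\mathcal{S}$ are relevant, reducing to a finite intersection where the quasiconvex boundary calculation is standard.

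For the single-line case, suppose $B = \{\ell\}$ and write $\meet\mathcal{S} = \{[A_1], \ldots, [A_k]\}$ with $F_n = A_1 * \cdots * A_k * B_0$. Having just shown $\meet\mathcal{S}$ carries the single line $\ell$, the boundary criterion produces a lift $\geod{\xi}{\eta}$ and an index $j$ with $\xi,\eta \in \bdy A_j$ after conjugation. Then $\{[A_j]\}$ is itself a free factor system (witness $F_n = A_j * (A_1 * \cdots * \widehat{A_j} * \cdots * A_k * B_0)$) and it carries $\ell$, so $\{[A_j]\} \in \mathcal{S}$. The maximality of $\meet\mathcal{S}$ forces $\meet\mathcal{S} \sqsubset \{[A_j]\}$, meaning each $A_i$ is (conjugate to) a free factor of $A_j$. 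Combined with the free factorization $F_n = A_1 * \cdots * A_k * B_0$ in which distinct $A_i$'s have pairwise trivial intersections (and no one is conjugate into another), this forces $k=1$.

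The main obstacle I anticipate is the boundary-intersection identity $\bdy\bigl(\bigcap A_\A\bigr) = \bigcap \bdy A_\A$: this requires quasiconvexity of finitely generated subgroups of $F_n$ together with a careful reduction of a potentially infinite intersection to a finite one via the descending chain condition. The rest of the argument is essentially bookkeeping with the partial order $\sqsubset$ and the normal form in a free factorization.
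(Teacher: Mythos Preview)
The paper does not prove this statement: it is stated as a Fact with a citation to \BookOne, Corollary~2.6.5, and no argument is given in the present paper. So there is no in-paper proof to compare against; what follows is an assessment of your argument on its own.

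Your overall architecture is correct: use Fact~\ref{FactGrushko} to get the meet, then check line by line that the meet still carries $B$ via the boundary criterion, and finally use the partial order $\sqsubset$ together with a normal-form observation to show that a single line is supported by a single free factor. The reduction of the infinite family $\mathcal{S}$ to a finite one using the descending chain condition is the right move.

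The one genuinely delicate point is exactly the one you flag: the identity $\bdy\bigl(\bigcap_j A_j\bigr) = \bigcap_j \bdy A_j$ for finitely many free factors $A_j$. Invoking ``quasiconvexity of finitely generated subgroups'' is not quite enough as stated, since for general quasiconvex subgroups of a hyperbolic group the limit-set intersection formula requires an argument (and can fail for non-quasiconvex subgroups). For free factors the cleanest route is through the tree picture: in the Cayley tree $\wt R_n$, each $A_j$ has a minimal invariant subtree $T_j$ with $\bdy T_j = \bdy A_j$; the geodesic $\geod{\xi}{\eta}$ then lies in every $T_j$, hence in $\bigcap_j T_j$, and one checks (using that $A_1 \cap \cdots \cap A_k$ is again a free factor by Fact~\ref{FactGrushko}) that this intersection is the minimal subtree for $\bigcap_j A_j$. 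With that step filled in, your argument goes through.
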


A set of lines $B \subset \B$ is said to be \emph{filling} if $\A_\supp(B) = [F_n]$.

\bigskip

We need the following fact about weak convergence:

\begin{fact}
\label{FactWeakLimitLines} 
For any free factor $F$,
\begin{enumerate}
\item For every sequence of lines $\ell_i$, if every weak limit of every subsequence of $\ell_i$ is carried by $[F]$ then $\ell_i$ is carried by $[F]$ for all sufficiently large $i$.
\item The weak accumulation set of every end not carried by $[F]$ contains a line not carried by $[F]$.
\end{enumerate}
\end{fact}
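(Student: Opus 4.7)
Both parts rest on the following setup: choose a marked graph $G$ together with a core subgraph $H \subset G$ satisfying $[H] = \{[F]\}$, so that a line is carried by $[F]$ exactly when its realization in $G$ is contained in $H$, and an end is carried by $[F]$ exactly when some (equivalently, every) representative ray is eventually contained in $H$.

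For~(1), I argue by contradiction. After passing to a subsequence, assume that no $\ell_i$ is carried by $[F]$, so the realization of $\ell_i$ in $G$ contains some edge of $G - H$; since $G - H$ has only finitely many edges, a further subsequence makes this edge $e \not\subset H$ independent of $i$. Fix a lift $\ti e \subset \wt G$, and for each $i$ choose a lift $\ti\ell_i = \{\xi_i, \eta_i\} \in \wt\B$ of $\ell_i$ that crosses $\ti e$. The two sides of $\ti e$ in $\wt G$ determine disjoint clopen subsets of $\bdy F_n$, so by compactness a subsequence yields $\xi_i \to \xi$ and $\eta_i \to \eta$ with $\xi, \eta$ on opposite sides of $\ti e$. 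In particular $\xi \ne \eta$, so $\{\xi, \eta\}$ defines a line $\ell^* \in \B$, and the geodesic $[\xi, \eta] \subset \wt G$ still crosses $\ti e$. Then $\ell^*$ is a weak subsequential limit of $\ell_i$, so by hypothesis it is carried by $[F]$; yet its realization in $G$ contains $e \not\subset H$, a contradiction.

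For~(2), let $\rho$ represent the given end. Since the end is not carried by $[F]$, the ray $\rho$ is not eventually contained in $H$, so some edge $e \not\subset H$ appears in $\rho$ infinitely often. Selecting a sparse subsequence of crossings at positions $q_1 < q_2 < \cdots$ with gaps $q_{k+1} - q_k \geq 4(k+1)$, let $\alpha_k \subset \rho$ be the subpath of length $2k$ centered on the $k$-th chosen crossing; these are pairwise disjoint as subpaths of $\rho$. Lift and translate as in part~(1): choose an $F_n$-translate $g_k \ti\alpha_k \subset \wt G$ crossing a fixed lift $\ti e$, with endpoints on opposite sides of $\ti e$ at depth at least $k$. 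A subsequence gives endpoints converging to distinct ends $\xi^\pm$ of $\wt G$ on the respective sides, and the geodesic $[\xi^-, \xi^+]$ projects to a line $\ell^* \in \B$ whose realization contains $e$, hence is not carried by $[F]$. Since every fixed finite subpath of $[\xi^-, \xi^+]$ lies inside $g_k \ti\alpha_k$ for all sufficiently large $k$, its image in $G$ appears as a subpath of $\alpha_k$; the disjointness of the $\alpha_k$ within $\rho$ then forces this image to appear at infinitely many positions in $\rho$, placing $\ell^*$ in the weak accumulation set of $\rho$.

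The main obstacle I anticipate is the separation step: verifying that the two sides of a lifted edge $\ti e \subset \wt G$ determine disjoint clopen subsets of $\bdy F_n$. Once that is in hand, subsequential limits of endpoints on opposite sides automatically remain on opposite sides, and this is precisely what guarantees that the limit $\ell^*$ crosses $e$ and so fails to be carried by $[F]$ in both parts.
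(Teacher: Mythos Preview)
Your proof is correct and follows essentially the same route as the paper: pick a marked graph $G$ with subgraph $H$ realizing $[F]$, reduce to a single edge $e \subset G \setminus H$ appearing in every term (or infinitely often in the ray), and extract a weak limit line through $e$. The paper compresses the limiting step into a single sentence (``some weak limit of some subsequence is a line that contains $E$''), whereas you make it explicit by passing to the universal cover and using compactness of $\bdy F_n$ together with the clopen separation induced by $\ti e$; this is exactly the content the paper is suppressing, and your identification of the separation step as the main point is on target.
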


\begin{proof} If the conclusion of (1) fails then some subsequence $\ell_{i_n}$ is not carried by $[F]$. Choose a marked graph $G$ and subgraph $H$ representing $[F]$. The realization of $\ell_{i_n}$ in $G$ is not contained in $H$ and so, after passing to a subsequence, each $\ell_{i_n}$ contains some edge $E \subset G-H$. It follows that some weak limit of some subsequence is a line that contains $E$, and so is not contained in $H$ and is not carried by $[F]$. 

If the ray $r$ in $G$ represents an end not carried by $[F]$ then there is an edge $E \subset G-H$ that the ray $r$ crosses infinitely many times, and we can proceed similarly to prove~(2).
\end{proof}

\subsection{Relative train tracks and \cts}
\label{SectionRTT}

Following \cite{FeighnHandel:recognition}, our focus will be on the class of ``completely split improved relative train tracks'' or \cts. 
For quick reference later in the paper we shall state the basic definitions here in full, and we shall state numerous facts about \cts\ which are either cited from or quickly proved from results of  \cite{BestvinaHandel:tt},  \BookOne\ and \recognition, centered on the theorem that every rotationless outer automorphisms is represented by a \ct. We will also state a few lemmas whose proofs require a bit more care.

\paragraph{Principal automorphisms and rotationless outer automorphisms.}

The normal subgroup $\Inn(F_n)$ acts by conjugation on $\Aut(F_n)$, and the orbits of this action define an equivalence relation on $\Aut(F_n)$ called \emph{isogredience}, which is a refinement of the relation of being in the same outer automorphism class.

The group $\Aut(F_n)$ acts on $\bdy F_n$. Let $\wh\Phi \from \bdy F_n \to \bdy F_n$ denote the action of $\Phi \in \Aut(F_n)$, and $\Fix(\wh\Phi)$ its set of fixed points. Let $\Fix(\Phi)$ denote the subgroup of $F_n$ fixed by $\Phi$, a finite rank subgroup by \cite{Cooper:automorphisms}, and let $\bdy \Fix(\Phi) \subset \bdy F_n$ denote its boundary, either empty when $\Fix(\Phi)$ is trivial, 2 points when $\Fix(\Phi)$ has rank~1, or a Cantor set when $\Fix(\Phi)$ has rank~$\ge 2$. Let $\Fix_+(\wh\Phi)$ denote the set of attractors in $\Fix(\wh\Phi)$, a discrete subset consisting of points $\xi \in \Fix(\wh\Phi)$ such that for some neighborhood $U \subset \bdy F_n$ of $\xi$  we have $\wh\Phi(U) \subset U$ and the sequence $\wh\Phi^n(\eta)$ converges to $\xi$ for each $\eta \in U$. Let $\Fix_-(\wh\Phi) = \Fix_+(\wh\Phi^\inv)$ denote the set of repellers in $\Fix(\wh\Phi)$.

\begin{fact}[\cite{GJLL:index}, Proposition I.1]
\label{LemmaFixPhiFacts} For each $\Phi \in \Aut(F_n)$ the set $\Fix(\wh\Phi)$ is the union of $\bdy\Fix(\Phi)$, $\Fix_-(\wh\Phi)$, and $\Fix_+(\wh\Phi)$. If $\Fix(\Phi)$ does not have rank~1 then this is a disjoint union.
\qed
\end{fact}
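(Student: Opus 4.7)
The plan is to analyze the dynamics of $\wh\Phi$ near a fixed point on the boundary by passing to a topological representative. Pick a marked graph $G$ (say $R_n$) and a homotopy equivalence $f \from G \to G$ realizing $\Phi$, together with a lift $\ti f \from \wt G \to \wt G$ so that $\ti f(g \cdot x) = \Phi(g) \cdot \ti f(x)$. Then $\ti f$ extends continuously to the compactification $\wt G \cup \bdy F_n$, and the boundary extension is exactly $\wh\Phi$. The proof will use this together with the Bounded Cancellation Lemma to classify each $\xi \in \Fix(\wh\Phi)$.

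First I would set up the following trichotomy. Given $\xi \in \Fix(\wh\Phi)$, pick any base vertex $v \in \wt G$ and let $\rho$ be the geodesic ray from $v$ to $\xi$. Since $\wh\Phi(\xi) = \xi$, the tightened path $[\ti f \rho]$ is a geodesic ray from $\ti f(v)$ that also ends at $\xi$, so $\rho$ and $[\ti f \rho]$ share a common cofinal subray. Iterating, the sequence of vertices $\ti f^n(v)$ either (a) stays at bounded distance from $v$ in $\wt G$, or (b) eventually travels along $\rho$ toward $\xi$, or (c) eventually travels away from $\xi$. Using that $\wt G$ has locally finite vertex set, case (a) forces some iterate of $\ti f$ to fix a vertex; after conjugating by a deck transformation we may assume $\ti f$ itself fixes a vertex $w$, in which case the subgroup of $F_n$ carrying loops at $w$ fixed by $\ti f$ is $\Fix(\Phi)$, and $\xi$ lies in $\bdy \Fix(\Phi)$. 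The Bounded Cancellation Lemma is what guarantees that in cases (b) and (c), nearby rays (rays whose initial long subword agrees with that of $\rho$ on a sufficiently long initial segment) behave the same way under iteration, which yields an attracting neighborhood of $\xi$ for $\wh\Phi$ in case (b) and for $\wh\Phi^\inv$ in case (c). This establishes the union statement.

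For the disjointness claim when $\rk \Fix(\Phi) \ne 1$, the key observation is that if $\Fix(\Phi)$ is trivial then $\bdy \Fix(\Phi)$ is empty, while if $\rk \Fix(\Phi) \ge 2$ then $\bdy \Fix(\Phi)$ is a perfect Cantor set consisting entirely of fixed points of $\wh\Phi$. Neither an attractor nor a repeller can have a neighborhood of other fixed points, since an attractor (resp.\ repeller) requires nearby orbits to converge to (resp.\ move away from) it under forward iteration. Hence $\bdy \Fix(\Phi)$ is disjoint from $\Fix_\pm(\wh\Phi)$; and $\Fix_+$ and $\Fix_-$ are manifestly disjoint by definition. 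The rank~$1$ case is genuinely exceptional: there $\bdy \Fix(\Phi)$ consists of exactly the two endpoints of the axis of a generator of $\Fix(\Phi)$, and each of those endpoints may simultaneously serve as an attractor or repeller, so the union need not be disjoint.

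The technically most delicate step is the Bounded Cancellation analysis that promotes (b) and (c) into genuine attracting/repelling behavior for a full boundary neighborhood, not just along the single ray $\rho$. I expect this to be the main obstacle, because it requires choosing the neighborhood $U$ of $\xi$ carefully in terms of how far the common cofinal subray of $\rho$ and $[\ti f \rho]$ extends, and using the $\BCC(f)$ bound to propagate control from $\rho$ to all rays sharing a long enough initial segment with it. Once that is in place, the remaining bookkeeping is routine, and one recovers the statement exactly as cited from \cite{GJLL:index}.
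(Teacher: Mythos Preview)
The paper does not give its own proof; the fact is quoted from Proposition~I.1 of \cite{GJLL:index} and marked with a \qed. So there is only the GJLL argument to compare against.

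Your setup and ingredients are right---a lift $\ti f$ corresponding to $\Phi$, a ray $\rho$ to $\xi$, and the Bounded Cancellation Lemma---and this is indeed how GJLL proceed. But your trichotomy is the wrong one. You split cases according to the forward orbit $\{\ti f^n(v)\}$ of the \emph{initial} vertex of $\rho$, yet this orbit does not depend on $\xi$ at all. If $\wh\Phi$ has two distinct attracting fixed points $\xi_1,\xi_2$ (which certainly happens, e.g.\ for principal automorphisms), then $\ti f^n(v)$ can converge to at most one of them, and your case~(c) would wrongly declare the other to be a repeller. Likewise in case~(a), boundedness of $\{\ti f^n(v)\}$ is a property of $\ti f$ alone; your deduction that the specific point $\xi$ lies in $\bdy\Fix(\Phi)$ is unjustified, and ``conjugating by a deck transformation'' does not convert a periodic vertex of $\ti f$ into a fixed one.

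The GJLL trichotomy is instead \emph{asymptotic along $\rho$}: one examines vertices $v_k \to \xi$ on $\rho$ and asks, for each $k$, whether $\ti f(v_k)$ lands ahead of or behind $v_k$ relative to $\xi$ (bounded cancellation, applied to the subray of $\rho$ from $v_k$, makes this well defined up to a uniform constant). If eventually always ahead, $\xi$ is an attractor; if eventually always behind, a repeller. In the remaining case the direction flips infinitely often, and each flip traps a coarse fixed point of $\ti f$ within bounded distance of $\rho$; a pigeonhole argument on such coarse fixed points marching out to $\xi$ then produces nontrivial elements of $\Fix(\Phi)$ whose axes have an endpoint near $\xi$, giving $\xi \in \bdy\Fix(\Phi)$. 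This construction of fixed elements is the missing step in your outline. Your disjointness argument when $\rk\Fix(\Phi)\ne 1$ is fine.
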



Let $\Fix_N(\wh\Phi) = \Fix(\wh\Phi) - \Fix_-(\wh\Phi) = \bdy\Fix(\Phi) \union \Fix_+(\wh\Phi)$. Let $\Per(\wh\Phi) = \union_{k \ge 1} \Fix(\wh\Phi^k)$, and similarly for $\Per_+(\wh\Phi)$, $\Per_-(\wh\Phi)$, and $\Per_N(\wh\Phi)$.

We say that $\Phi \in \Aut(F_n)$ representing $\phi \in \Out(F_n)$ is a \emph{principal automorphism} if $\Fix_N(\wh\Phi)$ contains at least three points, or $\Fix_N(\wh\Phi)$ contains exactly two points which are neither the endpoints of some axis nor the endpoints of a lift of a generic leaf of an element of $\L(\phi)$. The set of principal automorphisms representing $\phi \in \Out(F_n)$ is denoted $P(\phi)$, and is invariant under isogredience.

We say that $\phi \in \Out(F_n)$ is \emph{(forward) rotationless} if $\Fix_N(\wh\Phi) = \Per_N(\wh\Phi)$ for all $\Phi \in P(\phi)$, and if for each $k \ge 1$ the map $\Phi \mapsto \Phi^k$ induces a bijection between $P(\phi)$ and $P(\phi^k)$.

We list here some properties of rotationless outer automorphisms $\phi \in \Out(F_n)$. 

\begin{fact} \label{FactRotationlessPower} (\recognition, Lemma 4.43)
There exists $K$ depending only on the rank $n$ such that for each $\phi \in \Out(F_n)$, $\phi^K$ is rotationless.
\end{fact}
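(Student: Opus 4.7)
The plan is to reduce to a bounded combinatorial problem via a relative train track representative, then take the least common multiple of the resulting cycle-length bounds. Begin by choosing a relative train track representative $f \colon G \to G$ of $\phi$. Since $G$ has rank $n$ and no valence-one vertices, the numbers of edges, vertices, strata of the filtration, and directions at vertices of $G$ are all bounded by an explicit function of $n$. Consequently the permutation actions of $f$ on the strata, on the periodic vertices, and on the periodic directions at periodic vertices all have orders dividing some integer $M_1 = M_1(n)$. After replacing $\phi$ by $\phi^{M_1}$, we may assume that $f$ already fixes every stratum, every periodic vertex, and every periodic direction at each fixed vertex.

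The next step is to upgrade this combinatorial fixedness into the first defining property of rotationless, that $\Fix_N(\wh\Phi) = \Per_N(\wh\Phi)$ for every $\Phi \in P(\phi)$. The Gaboriau-Jaeger-Levitt-Lustig index inequality \cite{GJLL:index} bounds, uniformly in $n$, both $\rk(\Fix(\Phi))$ and $\#\Fix_N(\wh\Phi)$ summed over isogredience classes of principal automorphisms; so for each such $\Phi$ the permutation of $\Per_N(\wh\Phi)$ induced by $\wh\Phi$ has order dividing a further integer $M_2 = M_2(n)$. Replacing $\phi$ by $\phi^{M_2}$ then achieves the first condition.

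The subtle step is the second defining property: that $\Phi \mapsto \Phi^k$ gives a bijection $P(\phi) \to P(\phi^k)$ for every $k \ge 1$. Injectivity is formal from the definition of isogredience, but surjectivity requires ruling out new isogredience classes of principal automorphisms appearing on passing to powers. Here I would appeal to the correspondence between principal automorphisms and principal lifts of a \ct\ representative of $\phi$, decorated by choices of fixed directions at principal vertices; the task is to show that, once all periodic combinatorial data of $f$ has been made fixed, no lift of $\phi^k$ to $\Aut(F_n)$ can acquire a fixed point in $\bdy F_n$ that is not already a fixed point of a corresponding lift of $\phi$. This is where I expect the main obstacle: one must trace every periodic attractor of $\wh\Phi^k$ back through the train track structure in $\wt G$ to periodic combinatorial data that has already been made fixed, ensuring that a corresponding lift $\Phi$ itself witnesses that attractor and lies in $P(\phi)$.

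Setting $K$ equal to the product of $M_1$, $M_2$, and any additional uniform bound produced in the last step yields the required universal rotationless power depending only on $n$.
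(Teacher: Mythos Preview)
The paper does not prove this statement at all; it is stated as a Fact with a bare citation to \recognition, Lemma~4.43. So there is no ``paper's own proof'' to compare against, and your task was in effect to reconstruct the argument from the cited source.

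Your overall plan --- reduce to bounded combinatorial data on a train track representative and take an LCM of cycle lengths --- is indeed the shape of the actual argument in \recognition. But two steps in your sketch are not yet on solid ground.

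First, your $M_2$ step is circular as written. You invoke the GJLL index inequality to bound the order of the permutation that $\wh\Phi$ induces on $\Per_N(\wh\Phi)$, but GJLL bounds $\#\Fix_N(\wh\Phi^k)$ for each fixed $k$; it does not directly bound $\#\Per_N(\wh\Phi) = \#\bigcup_k \Fix_N(\wh\Phi^k)$, nor the period after which this union stabilizes. Knowing that $\Per_N(\wh\Phi)$ is finite with period bounded in terms of $n$ is essentially the content of the theorem, so you cannot assume it here.

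Second, you correctly flag the surjectivity of $\Phi \mapsto \Phi^k$ from $P(\phi)$ to $P(\phi^k)$ as the crux, but you do not carry it out. In \recognition\ this is handled by working with a \ct\ representative and proving an equivalence: $\phi$ is rotationless (in the abstract sense) if and only if the \ct\ is rotationless in a purely combinatorial sense (principal periodic vertices are fixed and periodic directions at them are fixed by $Df$). One direction of that equivalence --- combinatorial rotationless implies abstract rotationless, including the surjectivity you worry about --- is the real work, and it requires tracking principal lifts through the \ct\ structure (Lemmas~3.21--3.30 and the discussion around Proposition~3.29 in \recognition). Once that equivalence is in hand, the uniform $K$ follows immediately from the uniform bound on the combinatorics of a \ct, exactly as in your first paragraph. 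Your sketch gestures at this route but does not supply it.
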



\begin{fact} \label{FactPeriodicIsFixed} (\cite{FeighnHandel:recognition}, Lemma 3.30 and Corollary 3.31)
If $\phi \in \Out(F_n)$ is rotationless then
\begin{enumerate}
\item \label{ItemPeriodicClassFixed}
Every conjugacy class in $F_n$ which is $\phi$-periodic is fixed by $\phi$. 
\item \label{ItemFreeFactorFixed}
Every free factor system in $F_n$ which is $\phi$-periodic is fixed by $\phi$.
\item If $F$ is a $\phi$-invariant free factor then $\phi \restrict F$ is rotationless.
\item $\phi$ fixes every element of $\L(\phi)$.
\end{enumerate}
\end{fact}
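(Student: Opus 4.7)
The plan is to exploit the two clauses of the rotationless definition simultaneously: the equality $\Fix_N(\wh\Phi) = \Per_N(\wh\Phi)$ for each principal lift $\Phi \in P(\phi)$, and the bijection $\Phi \mapsto \Phi^k$ from $P(\phi)$ onto $P(\phi^k)$. Each part reduces to showing that a piece of $\phi^k$-invariant data comes from a $\Phi$-invariant lift after transport through this bijection.

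For (1), given $\phi^k[c]=[c]$, I first adjust by an inner automorphism to obtain a representative $\Phi$ of $\phi$ with $\Phi^k(c)=c$, so that the endpoints $\xi,\eta \in \bdy F_n$ of the axis of $c$ lie in $\bdy\Fix(\Phi^k)$. When $\Fix(\Phi^k)$ has rank at least $2$, Fact~\ref{LemmaFixPhiFacts} places $\bdy\Fix(\Phi^k)\subset\Fix_N(\wh{\Phi^k})$ disjointly from $\Fix_\pm$, so $\Fix_N(\wh{\Phi^k})$ is infinite and $\Phi^k\in P(\phi^k)$. The bijection clause then puts $\Phi\in P(\phi)$, after which the equality clause yields $\xi,\eta \in \Per_N(\wh\Phi) = \Fix_N(\wh\Phi)$; here one uses that a repeller of $\wh\Phi^j$ is also a repeller of $\wh\Phi^{jk}$ to rule out repellers for $\wh\Phi$ via absence of repellers for $\wh{\Phi^k}$. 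Thus $\wh\Phi$ fixes both axis endpoints, forcing $\Phi(c)\in\langle c\rangle$, and then $\Phi^k(c)=c$ forces $\Phi(c)=c$, so $\phi[c]=[c]$. The main obstacle is the subcase in which $\Fix(\Phi^k) = \langle c\rangle$ is rank one and $\Phi^k$ has no further fixed points on $\bdy F_n$, where $\Phi^k$ need not be principal and the above argument breaks down; one resolves this by augmenting to a principal lift via a careful choice of isogredience class that introduces an additional attractor, following \recognition.

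Part (2) follows the same template: a $\phi^k$-fixed free factor $F$ of rank at least $2$ yields a lift $\Psi$ representing $\phi^k$ with $F\subset\Fix(\Psi)$, so $\bdy F\subset\bdy\Fix(\Psi)$ is a Cantor set and $\Psi$ is automatically principal; the bijection and equality clauses then produce a principal $\Phi\in P(\phi)$ fixing $\bdy F$ pointwise and hence preserving $[F]$. Rank-one free factors reduce to (1). A $\phi^k$-invariant free factor system is a finite set permuted by $\phi$, and the above applied orbit-by-orbit shows that the permutation is trivial, so the system is $\phi$-invariant.

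For (3), I restrict a principal lift $\Phi$ of $\phi$ with $\Phi(F)=F$ to get a lift of $\phi\restrict F$; the sets $\Fix_N(\wh{\Phi\restrict F})$ and $\Per_N(\wh{\Phi\restrict F})$ are obtained by intersecting the corresponding sets of $\wh\Phi$ with $\bdy F$, so both clauses of rotationlessness pass to $\phi\restrict F$. For (4), since $\L(\phi)$ is finite and $\phi$-invariant, each $\Lambda\in\L(\phi)$ is $\phi^k$-fixed for some $k$; the structure theory of attracting laminations encodes $\Lambda$ by an isogredience class of principal lifts of $\phi^k$ together with a distinguished attracting endpoint in $\Fix_+(\wh\Psi)$, and the $P(\phi)\leftrightarrow P(\phi^k)$ bijection combined with $\Fix_+ = \Per_+$ (a consequence of clause (a)) transports this data to a principal lift of $\phi$, showing $\phi(\Lambda)=\Lambda$.
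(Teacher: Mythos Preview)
The paper does not prove this Fact; it simply cites \cite{FeighnHandel:recognition}, Lemma~3.30 and Corollary~3.31. So there is no proof in the paper to compare against, and your proposal must stand on its own.

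There is a genuine error in your argument for (2). When $[F]$ is $\phi^k$-invariant, this means some lift $\Psi$ of $\phi^k$ satisfies $\Psi(F)=F$ \emph{setwise}; it does not mean $F\subset\Fix(\Psi)$. Your claim that $\bdy F\subset\bdy\Fix(\Psi)$, and hence that $\Psi$ is automatically principal, is therefore unfounded. (Take $F=F_n$ and any nontrivial $\phi$ for a counterexample.) The actual argument in \cite{FeighnHandel:recognition} proceeds quite differently: one shows that the free factor support of the finite set of singular lines $S_\phi$ is all of $F_n$ and is $\phi$-invariant, and then observes that a $\phi$-periodic free factor $F$ gives rise to $S_{\phi|F}\subset S_\phi$ whose free factor support recovers $[F]$; finiteness of $S_\phi$ then bounds the possibilities for $[F]$ and forces periodic free factors to be fixed.

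Your argument for (1) also has a logical gap in the order of operations. You begin by choosing a lift $\Phi$ of $\phi$ with $\Phi^k(c)=c$, but this is not a priori possible: one can only arrange a lift $\Psi$ of $\phi^k$ with $\Psi(c)=c$, and whether $\Psi$ is a $k$-th power of a lift of $\phi$ is exactly what needs to be established. The bijection clause of rotationlessness gives you surjectivity of $P(\phi)\to P(\phi^k)$, so once $\Psi$ is shown to be principal you obtain \emph{some} $\Phi\in P(\phi)$ with $\Phi^k=\Psi$; but you cannot first pick $\Phi$ and then conclude $\Phi\in P(\phi)$ from $\Phi^k\in P(\phi^k)$. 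You correctly flag the rank-one subcase as an obstacle, but ``augmenting to a principal lift via a careful choice of isogredience class'' is not an argument. Parts (3) and (4) are too sketchy to assess: in (3), not every principal lift of $\phi\restrict F$ need arise by restricting a principal lift of $\phi$, and the claimed identity $\Fix_N(\wh{\Phi\restrict F})=\Fix_N(\wh\Phi)\cap\bdy F$ requires proof.
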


\paragraph{Topological representatives and Nielsen paths.}
Given $\phi \in \Out(F_n)$ a \emph{topological representative} of $\phi$ is a map $f \from G \to G$ such that $G$ is a marked graph, $f$ is a homotopy equivalence, $f$ takes vertices to vertices and edges to paths, and $\bar\rho \composed f \composed \rho \from R_n \to R_n$ represents $\phi$, where $\rho$ is the marking of $G$ and $\bar\rho$ is a homotopy inverse. 

A nontrivial path $\gamma$ in $G$ is a \emph{periodic Nielsen path} if there exists $k$ such that $f^k_\#(\gamma)=\gamma$; the minimal such $k$ is the \emph{period}, and if $k=1$ then $\gamma$ is a \emph{Nielsen path}. A periodic Nielsen path is \emph{indivisible} if it cannot be written as a concatenation of two nontrivial periodic Nielsen paths. We will often abuse our notion of path equivalence when talking about Nielsen paths: many statements asserting the uniqueness of an indivisible Nielsen path $\rho$, such as Fact~\ref{FactEGNPUniqueness}, should really assert uniqueness of $\rho$ \emph{up to reversal of direction}; since $\rho$ determines its reversal $\bar\rho$ and vice versa, this abuse is easily detectable and harmless.

Given two subgraphs $H \subset K$ of a marked graph $G$ each with no isolated vertices, let $K \setminus H$ denote the subgraph which is the union of the edges of $K$ that are not in~$H$. 

\paragraph{Filtrations and strata.}
A \emph{filtration} of a marked graph $G$ is a strictly increasing sequence of subgraphs $G_0 \subset G_1 \subset \cdots \subset G_k = G$, each with no isolated vertices. The subgraph $H_k = G_k \setminus G_{k-1}$ is called the \emph{stratum of height $k$}. The \emph{height} of subset of $G$ is the minimum $k$ such that the subset is contained in $G_k$. The height of a map to $G$ is the height of the image of the map. The height of an end equals $s$ if every representative ray in $G$ has a subray of height $s$, equivalently the end is represented by a ray of height $s$ in $G$ that contains infinitely many edges of $H_s$. A \emph{connecting path} of a stratum $H_k$ is a nontrivial finite path $\gamma$ of height $< k$ whose endpoints are contained in $H_k$.

Given a topological representative $f \from G \to G$ of $\phi \in \Out(F_n)$, we say that $f$ \emph{respects} the filtration or that the filtration is \emph{$f$-invariant} if $f(G_k) \subset G_k$ for all $k$. If this is the case then we also say that the filtration is \emph{reduced} if for each free factor system $\A$ which is invariant under $\phi^i$ for some $i \ge 1$, if $[G_{r-1}] \sqsubset \A \sqsubset [G_r]$ then either $\A = [G_{r-1}]$ or $\A = [G_r]$.

Given an $f$-invariant filtration, for each stratum $H_k$ with edges $\{E_1,\ldots,E_m\}$, define the \emph{transition matrix} of $H_k$ to be the square matrix whose $i,j$ entry equals the number of times that the edge $E_j$ occurs in the path $f(E_i)$. If the transition matrix is irreducible --- meaning that for each $i,j$ there exists $p$ such that the $i,j$ entry of the $p^{\text{th}}$ power of the matrix is nonzero --- then we say that the stratum $H_k$ is \emph{irreducible}. If furthermore one can choose $p$ independently of $i,j$ then the stratum $H_k$ is \emph{aperiodic}. If $M_k$ is the zero matrix then we say that $H_k$ is a \emph{zero stratum}. 

\emph{Henceforth} we make the assumption that every stratum is either irreducible or a zero stratum --- if this is not already true then the filtration may be refined to make it true.

Let $H_k$ be an irreducible stratum with transition matrix $M$. The Perron-Frobenius theorem \cite{Hawkins:PerronFrobenius}
says that there exists a unique $\lambda \ge 1$ with the property that $M$ has a positive eigenvector with eigenvalue $\lambda$; we call this the \emph{Perron-Frobenius eigenvalue} of $M$. Furthermore, if $\lambda>1$ then a positive eigenvector for $\lambda$ is unique up to scalar multiplication; we call such a vector a \emph{Perron-Frobenius eigenvector}. If $\lambda>1$ then we say that the irreducible stratum $H_k$ is an \emph{exponentially growing} stratum or \eg\ stratum, whereas if $\lambda=1$ then $H_k$ is a \emph{nonexponentially growing} or \neg\ stratum.

\emph{Henceforth} we make the assumption that the edges of each \neg\ stratum $H_k$ may be numbered and oriented as $E_1,\ldots,E_N$ so that $f(E_i) = E_{i+1} u_i$ where $u_i \subset G_{k-1}$ and indices are taken modulo $N$. To see why this assumption can be made, by irreducibility the transition matrix $M$ is the permutation matrix of a permutation with a single cycle in its cycle decomposition, and so if this assumption is not already true then we can make it true by subdividing each edge of $H_k$ into two edges and either leaving $H_k$ as is or subdividing it into a pair of \neg\ strata. If each $u_i$ is the trivial path then we say that $H_k$ is a \emph{periodic stratum} and each edge $E \subset H_k$ is a \emph{periodic edge}; furthermore, if $N=1$ then $H_k$ is a \emph{fixed stratum} and $E$ is a \emph{fixed edge}. If each $u_i$ is a periodic Nielsen path then we say that $H_k$ is a \emph{linear stratum} and each edge $E \subset H_k$ is a \emph{linear edge}.

\paragraph{Directions and turns.}
If a filtration of $G$ is given, the \emph{height} of a direction $d$ is well-defined as the height of any sufficiently short path representing $d$, equivalently the height of the oriented edge representing $d$. A nondegenerate turn is said to have height $r$ if \emph{each} of its directions has height $r$.

Given a marked graph $G$ and a homotopy equivalence $f \from G \to G$ that takes edges to paths, a turn $\{d,d'\}$ in $G$ is said to be \emph{legal} with respect to the action of $f$ if $\{(Df)^k(d),(Df)^k(d')\}$ is nondegenerate for all $k \ge 0$. For any path $\gamma$ and any turn $\{E,E'\}$ at a vertex, if $\bar E E'$ or its inverse $\bar E' E$ is a subpath of $\gamma$ then we say that the turn $\{E,E'\}$ is \emph{taken} by $\gamma$. If in addition an $f$-invariant filtration is given, a path $\gamma$ is \emph{$r$-legal} if it has height $\le r$ and each turn of height $r$ taken by $\gamma$ is legal.

\begin{definition} \textbf{Relative train track map.} Given $\phi \in \Out(F_n)$ and a topological representative $f \from G \to G$ with invariant filtration $G_0 \subset G_1 \subset \cdots \subset G_k = G$, we say that $f$ is a \emph{relative train track} if for each \eg\ stratum $H_r$ the following hold:
\begin{description}
\item[RTT-(i)] $Df$ maps the set of directions of height $r$ to itself. In particular, each turn consisting of a direction of height $r$ and one of height $<r$ is legal.
\item[RTT-(ii)] For each connecting path $\gamma$ of $H_r$, $f_\#(\gamma)$ is a connecting path of $H_r$.
\item[RTT-(iii)] For each $r$-legal path $\alpha$, the path $f_\#(\alpha)$ is $r$-legal.
\end{description}
\end{definition}

\paragraph{Principal vertices.} Consider a relative train track map $f \from G \to G$ with filtration $\emptyset = G_0 \subset G_1 \subset \cdots \subset G_N$. Two periodic points $x \ne y \in G$ are \emph{Nielsen equivalent} if there exists a periodic Nielsen path with endpoints $x,y$. A periodic point $x$ is \emph{principal} if and only if \emph{neither} of the following hold:
\begin{itemize}
\item\label{ItemEGNonprincipal} $x$ is the unique periodic point in its Nielsen class, there are exactly two periodic directions based at $x$, and both of those directions are in the same \eg\ stratum.
\item\label{ItemBadCircle} $x$ is contained in a topological circle $C \subset G$ such that each point of $C$ is periodic and has exactly two periodic directions.
\end{itemize}



\paragraph{Splittings and complete splittings.} Let $f \from G \to G$ be a relative train track map with invariant filtration $\emptyset = G_0 \subset G_1 \subset \cdots \subset G_N = G$. A \emph{splitting} of a path or circuit $\sigma$ in $G$ is a decomposition of $\sigma$ into a concatenation of subpaths $\sigma_1 \ldots \sigma_k$, where $k \ge 1$ if $\sigma$ is a circuit and $k \ge 2$ if $\sigma$ is a path, such that for all $i \ge 1$ the (path or circuit) $f^i_\#(\sigma)$ decomposes as $f^i_\#(\sigma_1) \ldots f^i_\#(\sigma_k)$. The notation $\sigma = \sigma_1 \cdot \ldots \cdot \sigma_k$ means that this decomposition is a splitting of $\sigma$, and the paths $\sigma_1,\ldots,\sigma_k$ are called the \emph{terms} of the splitting. 

Certain paths will be atomic terms of splittings, including the following two types. 

Given a zero stratum $H_i$, a path $\tau$ in $H_i$, and an irreducible stratum $H_j$ with $j>i$, we say that $\tau$ is \emph{$j$-taken} if there exists an edge $E \in H_j$ and $k \ge 1$ such that $\tau$ is a maximal subpath in $H_i$ of the path $f^k_\#(E)$. We also say that $\tau$ is \emph{taken} if it is $j$-taken for some $j$. 

Given two linear edges $E_i, E_j$, a root-free closed Nielsen path $w$, and integers $m_i,m_j > 0$ such that $f_\#(E_i) = E_i w^{m_i}$ and $f_\#(E_j) = E_j w^{m_j}$, each path of the form $E_i w^p \overline E_j$ for $p \in \Z$ is called an \emph{exceptional path}. Note that the height of $w^p$ is less than the heights of $E_i$ and $E_j$.

\begin{definition} \textbf{Complete Splittings.}
\label{DefCompleteSplitting}
A splitting $\sigma = \sigma_1 \cdot \ldots \cdot \sigma_k$ is called a \emph{complete splitting} if each term $\sigma_i$ satisfies one of the following: $\sigma_i$ is an edge in an irreducible stratum; $\sigma_i$ is an indivisible Nielsen path; $\sigma_i$ is an exceptional path; there is a zero stratum $H_j$ such that $\sigma_i$ is a maximal subpath of $\sigma$ in $H_j$, and $\sigma_i$ is taken. A path is said to be \emph{completely split} if it has a complete splitting.
\end{definition}


\paragraph{Enveloping of zero strata.} Consider a relative train track map $f \from G \to G$ with filtration $\emptyset = G_0 \subset G_1 \subset \cdots \subset G_N=G$ and two strata $H_u,H_r$ with $1 \le u<r \le N$. Suppose that the following hold:
\begin{enumerate}
\item $H_u$ is irreducible.
\item $H_r$ is EG and each component of $G_r$ is noncontractible.
\item Each $H_i$ with $u<i<r$ is a zero stratum that is a component of $G_{r-1}$, and each vertex of $H_i$ has valence $\ge 2$ in $G_r$. 
\end{enumerate}
In this case we say that the zero strata $H_i$ with $u<i<r$ are \emph{enveloped} by the \eg\ stratum $H_r$, and we write $H^z_r = \union_{i=u+1}^r H_i$.

\paragraph{Nielsen paths of \eg\ height.} Consider a relative train track map $f \from G \to G$ with filtration $\emptyset = G_0 \subset G_1 \subset \cdots \subset G_K=G$, aperiodic \eg\ stratum $H_r$. Every indivisible height~$r$ periodic Nielsen path $\rho$ may be written uniquely as a concatenation $\rho = \alpha\beta$ so that the two directions $DF(\bar\alpha)$, $Df(\beta)$ have height $r$ and the turn $\{Df(\bar\alpha),Df(\beta)\}$ is degenerate (\cite{BestvinaHandel:tt} Lemma~5.11). 

Given an indivisible height~$r$ periodic Nielsen path $\rho = \alpha\beta$, the fold which is determined by $f$ and the turn $\{\bar\alpha,\beta\}$ is referred to as \emph{the fold determined by $f$ and $\rho$}. If this fold is proper then, by Lemma~2.22 of \recognition, there is a relative train track map $f' \from G' \to G'$ with filtration $\emptyset = G'_0 \subset G'_1 \subset \cdots \subset G'_K = G'$, an aperiodic \eg\ stratum $H'_r$, an indivisible height $r$ Nielsen path $\rho' = \alpha'\beta'$, and a filtration preserving homotopy equivalence $F \from G \to G'$, which are characterized by the following properties: letting $E_1,E_2$ be the initial edges of $\bar\alpha,\beta$, respectively, and assuming the notation chosen so that $E_1$ folds over $E_2$, we have:
\begin{itemize}
\item $\bar\alpha = E_1 b E_3 \ldots$, where $b$ is a (possibly trivial) subpath of $G_{r-1}$ and $E_3$ is an edge in $H_r$.
\item $E_2 = e''_2 e'_2$ where $f(e''_2) = f_\#(E_1 b)$
\item $F \from G \to G'$ is the generalized fold determined by $E_1 b$ and $e''_2$.
\item $G'_k = F(G_k)$ for $k=1,\ldots,K$, 
\item $\rho' = F_\#(\rho)$
\item $f' \composed F$ and $F \composed f$ are homotopic rel vertices.
\end{itemize}
We say that $F$ is the \emph{proper extended fold} determined by $f \from G \to G$, $H_r$, and $\rho$.
If the fold determined by $f'$ and $\rho'$ is proper then this process can be repeated a second time. Repeating this process $L$ times if possible, and obtaining a sequence $G=G^0 \xrightarrow{F = F_1} G'=G^1 \xrightarrow{F_2} \cdots \xrightarrow{F_L} G^L$, we say that $F_L \composed \cdots \composed F_1 \from G \to G^L$ is a \emph{composition of proper extended folds defined by iteratively folding $\rho$.} Notice that in this situation each of the restrictions $F_l \from G^{l-1}_{r-1} \to G^l_{r-1}$ is a homeomorphism, and so we identify each of these filtration elements with~$G_{r-1}$.


\bigskip

\begin{definition} \textbf{CT --- Completely split improved relative train track map.}
\label{DefCT}
Given a relative train track map $f \from G \to G$ and an $f$-invariant filtration 
$$\emptyset = G_0 \subset G_1 \subset \cdots \subset G_N = G
$$
we say that $f$ is a \ct\ if $f$ satisfies the following properties:
\begin{enumerate}
\item \textbf{(Rotationless)} Each principal vertex is fixed by $f$ and each periodic direction at a principal vertex is fixed by $Df$.
\item \textbf{(Completely Split)} For each edge $E$ in each irreducible stratum, the path $f(E)$ is completely split. For each taken connecting path $\sigma$ in each zero stratum, the path $f_\#(\sigma)$ is completely split.
\item \textbf{(Filtration)} The filtration $\emptyset = G_0 \subset G_1 \subset \cdots \subset G_N = G$ is reduced. For each~$i$ there exists $j \le i$ such that $G_i = \core(G_j)$.
\item \textbf{(Vertices)} The endpoints of all indivisible Nielsen paths are vertices. The terminal endpoint of each nonfixed \neg\ edge is principal.
\item \textbf{(Periodic Edges)} Each periodic edge is fixed and each endpoint of a fixed edge is principal. If $E = H_r$ is a fixed edge and $E$ is not a loop then $G_{r-1}$ is a core graph and both ends of $E$ are contained in $G_{r-1}$.
\item \label{ItemZeroStrata}
\textbf{(Zero Strata)} Each zero stratum $H_i$ is enveloped by some \eg\ stratum $H_r$, each edge in $H_i$ is $r$-taken, and each vertex in $H_i$ is contained in $H_r$ and has link contained in $H_i \union H_r$.
\item \textbf{(Linear Edges)} For each linear edge $E_i$ there exists a closed root-free Nielsen path $w_i$ such that $f(E_i) = E_i w_i^{d_i}$ for some $d_i \ne 0$. If $E_i,E_j$ are distinct linear edges, and if the closed path $w_i$ is freely homotopic to the closed path $w_j$ or its inverse, then $w_i=w_j$ and $d_i \ne d_j$.
\item \label{ItemNEGNielsenPaths}
\textbf{(\neg\ Nielsen Paths)} If $H_i = \{E_i\}$ is an \neg\ stratum and $\sigma$ is an indivisible Nielsen path of height $i$ then $E_i$ is linear and, with $w_i$ as in (Linear Edges), there exists $k \ne 0$ such that $\sigma = E_i w^k_i \overline E_i$.
\item \textbf{(\eg\ Nielsen Paths)} If $H_r$ is \eg, if $\rho$ is an indivisible Nielsen path of height~$r$, and if $G^0_r$ is the component of $G_r$ containing $H_r$, then there exists a map $f_r \from G^0_r \to \Gamma^1$ which is a composition of proper extended folds defined by iteratively folding $\rho$, and a map $f_{r-1} \from \Gamma^1 \to \Gamma^2$ which is a composition of folds involving only edges of height $\le r-1$, and a homeomorphism $\theta \from \Gamma^2 \to G^0_r$, such that $f \restrict G^0_r$ is homotopic rel vertices to $\theta \composed f_{r-1} \composed f_r$.
\end{enumerate}
\end{definition}


The following is the main technical result of \cite{FeighnHandel:recognition}. Given a relative train track map $f \from G \to G$ with filtration $\emptyset = G_0 \subset G_1 \subset G_K=G$, and given an increasing sequence $\C = (\F_1 \sqsubset \cdots \sqsubset \F_L)$ of free factor systems, we say that $f$ \emph{realizes} $\C$ if there exists a sequence $1 \le i_1 < \cdots < i_L \le K$ such that $\F_{l} = [G_{i_l}]$ for each $l=1,\ldots,L$.

\begin{theorem}\label{TheoremCTExistence}
For each rotationless $\phi \in \Out(F_n)$ and each increasing sequence $\C$ of $\phi$-invariant free factor systems, there exists a \ct\ $f \from G \to G$ that represents $\phi$ and realizes $\C$. \qed
\end{theorem}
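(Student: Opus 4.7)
The plan is to upgrade an improved relative train track representative into a CT through a carefully sequenced chain of elementary modifications. The starting point is to invoke the main existence theorem for improved relative train track maps from \BookOne: there is some $f_0 \from G_0 \to G_0$ representing $\phi$ whose filtration can be arranged to realize any preassigned increasing sequence of $\phi$-invariant free factor systems, in particular our given $\C$. Refining the filtration where necessary (and observing by Fact~\ref{FactPeriodicIsFixed}\itemref{ItemFreeFactorFixed} that the relevant free factor systems are genuinely $\phi$-invariant, not just periodic, because $\phi$ is rotationless) gives a topological representative realizing $\C$ that already enjoys strong but incomplete structure.

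The next step is to run through the nine CT axioms one at a time, applying moves that establish each successive axiom without destroying those already in hand. The Rotationless axiom is unlocked by the hypothesis on $\phi$ together with Fact~\ref{FactPeriodicIsFixed}: passing to the forward-invariant set of principal periodic points and subdividing/reorienting so every such point is a vertex, one fixes each principal vertex and each periodic direction there. The Filtration and Vertices axioms are then arranged by reducing the filtration where possible (using Fact~\ref{FactGrushko} to ensure the reduction is well-defined), by subdividing at endpoints of indivisible Nielsen paths and at terminal endpoints of nonfixed \neg\ edges, and by collapsing invariant forests that sit between the free factor systems listed in $\C$. The Periodic Edges and Linear Edges axioms are then obtained by identifying that each irreducible \neg\ stratum is, after passing through Fact~\ref{FactRotationlessPower} and using the rotationless hypothesis, either fixed (in which case its endpoints can be shown principal) or linear with $f(E_i) = E_i w_i^{d_i}$ for a root-free closed Nielsen path; a bounded cancellation/pigeonhole argument on iterates pins down $w_i$ uniquely, and distinct linear edges with freely homotopic $w_i, w_j$ can be reconciled by sliding one endpoint along the other's twist path so the $w$-words actually coincide.

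The heart of the argument is installing the Zero Strata, \neg\ Nielsen Paths, \eg\ Nielsen Paths, and Completely Split axioms, each of which constrains how iteration interacts with the stratification. For zero strata one envelops each zero stratum $H_i$ into its associated \eg\ stratum $H_r$, and then uses a subdivide-collapse move (eliminating edges in $H_i$ that are never taken in any $f^k(E)$ for $E \in H_r$) to ensure every remaining edge of $H_i$ is $r$-taken and that vertex links sit in $H_i \cup H_r$. For indivisible Nielsen paths of \eg\ height~$r$, one iteratively folds as described in the setup preceding Definition~\ref{DefCT}: each improper fold terminates the sequence, and by Lemma~2.22 of \recognition\ finitely many proper extended folds suffice, yielding the factorization $f \restrict G^0_r \simeq \theta \circ f_{r-1} \circ f_r$. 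Finally, complete splittings of $f(E)$ and of $f_\#(\sigma)$ for $\sigma$ a taken connecting path in a zero stratum are produced by splitting at occurrences of fixed edges, (exceptional path) interiors, indivisible Nielsen paths, and maximal subpaths in zero strata, and by checking that the list of atomic splitting units is closed under $f_\#$.

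The main obstacle is the bookkeeping: each of the nine axioms constrains the moves available for the others, and the moves that install the \eg\ Nielsen Paths axiom (iterative proper extended folds) can reshuffle lower strata and therefore threaten previously established properties such as the Filtration, Linear Edges, and Zero Strata axioms. The right way to handle this is to order the moves so that they act from the bottom of the filtration upward, establishing axioms for $H_1$, then $H_2$, and so on, verifying at each stage that lower-stratum axioms are preserved because all moves introduced at height $r$ either are supported in $G^0_r$ or only affect $G_{r-1}$ through the height-$\le r-1$ fold $f_{r-1}$. The realization of $\C$ is preserved throughout because every move we apply either stays within a single filtration element or refines the filtration in a way compatible with the free factor systems of $\C$, so the free factor system sequence realized by the final map still contains $\C$ as a subsequence. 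This completes the construction; the detailed verification that each move preserves the running list of axioms is the bulk of the proof and is carried out in \recognition.
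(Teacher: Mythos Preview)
The paper does not prove this theorem at all: it is stated with a terminal \qed\ and introduced as ``the main technical result of \cite{FeighnHandel:recognition}.'' There is no proof in the paper to compare against.

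Your proposal is a reasonable high-level sketch of the strategy in \recognition\ --- starting from an improved relative train track map from \BookOne, then upgrading axiom by axiom --- and you yourself acknowledge in the final sentence that the actual verification is carried out there. As a standalone proof this is too thin: the genuinely hard content (that the \eg\ Nielsen Path folds terminate and preserve all prior axioms, that complete splittings exist and are stable, that the bottom-up induction closes) is precisely what you defer to \recognition. So your write-up functions as an expanded citation rather than a proof, which matches the paper's own treatment of the statement, but if the intent was to supply an independent argument, the core technical work is absent.
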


\paragraph{Facts about \cts.} Let $f \from G \to G$ be a \ct\ with filtration $\emptyset = G_0 \subset G_1 \subset \cdots \subset G_k$, representing a rotationless $\phi \in \Out(F_n)$. 

\bigskip

\begin{fact}\label{FactEGAperiodic} If $H_r$ is an \eg\ stratum then $H_r$ is aperiodic and $G_r$ is a core graph.
\end{fact}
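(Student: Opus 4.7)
The statement splits into two claims, which I address separately.

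\emph{$G_r$ is a core graph.} This follows directly from axiom (Filtration) of Definition~\ref{DefCT}, which asserts that for each index $i$ there exists $j \le i$ with $G_i = \core(G_j)$. Since $\core(G_j) \subset G_j \subset G_i$, the equality forces $j=i$ and hence $G_i = \core(G_i)$; thus every filtration element of a \ct\ is itself a core graph, and in particular so is $G_r$. Note that the hypothesis that $H_r$ is \eg\ is not needed for this half.

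\emph{$H_r$ is aperiodic.} Let $p \ge 1$ denote the period of the irreducible transition matrix of $H_r$. When $p > 1$, the edges of $H_r$ partition into $p$ cyclic subclasses $\mathcal E_1, \ldots, \mathcal E_p$ that $f_\#$ permutes cyclically, with $f^p_\#$ restricted to each $\mathcal E_k$ irreducible and aperiodic. Each such subclass then generates its own attracting lamination for $\phi^p$, and these $p$ sublaminations are cyclically permuted by $\phi$. Their $\phi$-orbit union is an element of $\L(\phi)$ whose minimal $\phi^p$-invariant sublamination decomposition is nontrivially permuted by $\phi$. But Fact~\ref{FactPeriodicIsFixed}(4) states that the rotationless $\phi$ fixes every element of $\L(\phi)$ setwise; pushing this through to the minimal sublamination decomposition forces $p=1$, so $H_r$ is aperiodic.

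The main obstacle is the bookkeeping for the aperiodicity step: one must carefully connect the combinatorial period $p$ of the transition matrix of $H_r$ to a $\phi$-periodic structure on $\L(\phi)$, and then extract a genuine contradiction from rotationlessness rather than just the setwise fixing of each lamination. An alternative route via axiom (Rotationless) --- which forces every periodic direction at every principal vertex to be $Df$-fixed and thus rules out a cyclic permutation of $H_r$-directions --- works quickly once one verifies that enough endpoints of $H_r$-edges are principal vertices, with the mild edge case of vertices carrying exactly two periodic directions both lying in $H_r$ handled separately by examining the associated EG structure at that vertex.
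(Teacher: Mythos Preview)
Your core-graph argument overreaches. As you yourself note, your reading of (Filtration) would make \emph{every} $G_i$ a core graph, with no use of the \eg\ hypothesis; but this directly contradicts Fact~\ref{FactContrComp}, which says $G_i$ has a contractible component (hence a valence-$1$ vertex, since filtration elements have no isolated vertices) precisely when $H_i$ is a zero stratum. The clause in (Filtration) should be read as ``$\core(G_i) = G_j$ for some $j \le i$'' (the core of each filtration element is again a filtration element), and from that alone one cannot conclude $G_r = \core(G_r)$. The paper's proof appeals in addition to \recognition\ Lemma~2.20(2), which is where the \eg\ hypothesis enters.

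For aperiodicity, your lamination argument is, as you yourself flag, incomplete: you never establish that the $p$ cyclic sublaminations are $p$ \emph{distinct} elements of $\L(\phi)$ (note that Fact~\ref{FactLamsAndStrata} already assumes aperiodicity, so it cannot be invoked here), and setwise $\phi$-invariance of their union does not by itself contradict a cyclic permutation of the pieces. Your proposed alternative via (Rotationless) is exactly the paper's route, and the ingredient you say must be ``verified'' --- a principal vertex of $H_r$ carrying a principal direction in $H_r$ --- is supplied directly by \recognition\ Lemma~3.19. With that lemma in hand, (Rotationless) makes that direction $Df$-fixed, so the transition matrix of $H_r$ has a nonzero diagonal entry and is therefore aperiodic; no separate case analysis of vertices with two periodic $H_r$-directions is needed.
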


\begin{proof} By \recognition\ Lemma~3.19, $H_r$ contains a principal vertex $v$ whose link contains a principal direction in $H_r$. By (Rotationless), that direction is fixed. The transition matrix of $H_r$ therefore has a nonzero element on the diagonal, which implies that $H_r$ is aperiodic. By (Filtration) and \recognition\ Lemma~2.20~(2), $G_r$ is a core graph.
\end{proof}

\begin{fact}[\recognition, Remark 4.9]
\label{FactUsuallyPrincipal} A vertex of $G$ whose link contains edges in more than one irreducible stratum is principal. \qed
\end{fact}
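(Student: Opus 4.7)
The plan is to prove the claim by contradiction, showing that if the link of $v$ contains edges from two distinct irreducible strata $H_r$ and $H_s$ (say $r > s$), then $v$ is periodic and neither of the two non-principal clauses in the definition of principal can hold. The argument is by case analysis on the stratum types, with direct appeals to (Periodic Edges) and (Vertices) handling the easy sub-cases.

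First I would dispose of the direct cases: if $v$ is an endpoint of a fixed edge, then (Periodic Edges) makes $v$ principal immediately, and if $v$ is the terminal endpoint of a nonfixed \neg\ edge, then (Vertices) does so. Thereafter I may assume that any \neg\ edge in the link of $v$ is a nonfixed edge with $v$ as its initial endpoint. For each of the two edges in question (one in $H_r$, one in $H_s$) I would then show that the corresponding direction at $v$ is periodic under $Df$. For an \neg\ edge $E_i$ belonging to a cycle $f(E_j) = E_{j+1} u_j$ with $v = v_i$ the initial endpoint of $E_i$, this is immediate, since $Df$ cyclically permutes $d_{E_1}, \ldots, d_{E_N}$ along the initial-endpoint orbit. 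For an \eg\ edge $E$ at $v$, Fact~\ref{FactEGAperiodic} together with RTT-(i) confines $(Df)^k(d_E)$ to directions of the same stratum height for all $k$; combined with the rotationless hypothesis and finiteness of such directions, one concludes $d_E$ is periodic. In particular $v$ itself is seen to be periodic.

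The upshot is that $v$ has periodic directions in each of the distinct strata $H_r$ and $H_s$. Non-principal clause (a) --- exactly two periodic directions at $v$, both lying in a single \eg\ stratum --- is then immediately violated. For non-principal clause (b), $v$ would lie on a topological circle $C$ every vertex of which is periodic with exactly two periodic directions; those two directions at $v$ must be tangent to $C$, forcing $C$ to contain an edge of $H_r$ and an edge of $H_s$ meeting at $v$. Walking around $C$ and using the two-periodic-directions constraint at each vertex, together with RTT-(i) pinning height-$r$ directions inside $H_r$ under $Df$, forces an inconsistency in how the strata change along $C$.

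The main obstacle is the \eg\ step --- promoting preperiodicity to genuine periodicity for the direction of an \eg\ edge at $v$ --- and the circle analysis in clause (b). Both should reduce, after unwinding RTT-(i), aperiodicity, and the rotationless hypothesis, to tracking $Df$-orbits of directions of a single height; I would likely lean on \recognition\ Lemma~3.19, which furnishes principal vertices with principal directions inside each \eg\ stratum.
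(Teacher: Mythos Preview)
The paper does not give its own proof of this fact; the \qed\ signals that it is simply cited from \recognition\ Remark~4.9. So the comparison is between your attempted argument and a bare citation.

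Your overall structure is right: show $v$ is periodic, then observe that neither non-principal clause can hold. The direct cases are fine --- (Periodic Edges) and (Vertices) dispose of any \neg\ edge at $v$, and in a \ct\ a nonfixed \neg\ stratum is a single edge $E_i$ with $f(E_i)=E_i\cdot u_i$ (Fact~\ref{FactNEGEdgeImage}), so if $v$ is its initial vertex then $v$ is already fixed (your ``cycle'' picture is the general RTT setup, not the \ct\ setup, but the conclusion survives). Also, your clause-(b) circle analysis is unnecessary: the remark following Fact~\ref{FactPrincipalVertices} in the paper already shows that such circles cannot occur in a \ct.

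The genuine gap is the all-\eg\ case: $v$ has edges in two \eg\ strata $H_r$ and $H_s$ and in no \neg\ stratum. Your argument that RTT-(i) confines $(Df)^k(d_E)$ to the finite set of height-$r$ directions only yields that $d_E$ is \emph{preperiodic}; it does not make $d_E$ --- or $v$ --- periodic. The (Rotationless) axiom says that principal vertices are fixed and that periodic directions at principal vertices are fixed; it says nothing that promotes a preperiodic direction to a periodic one, so invoking it here is circular. Your fallback, Lemma~3.19 of \recognition, asserts that each \eg\ stratum contains \emph{some} principal vertex with a principal direction in that stratum; it does not say that the particular attaching vertex $v\in H_s\cap G_{s-1}$ you are looking at is periodic. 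You have correctly identified this step as the main obstacle, but you have not supplied the missing ingredient. Filling it requires a separate fact about \cts\ --- that vertices whose link meets two irreducible strata are periodic --- which is part of what \recognition\ establishes en route to Remark~4.9 and is not derivable from the axioms you invoke.
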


The next fact is used repeatedly without reference:

\begin{fact}[\cite{FeighnHandel:recognition}, Lemma 4.6]
\label{FactComplSplitStable}
If $\sigma$ is a path in $G$ with endpoints at vertices and if $\sigma$ is completely split then $f_\#(\sigma)$ is completely split. \qed
\end{fact}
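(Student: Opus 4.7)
The plan is to reduce to showing that the image $f_\#(\sigma_i)$ of each atomic term of a complete splitting is itself completely split, and then concatenate. Write $\sigma = \sigma_1 \cdot \ldots \cdot \sigma_k$ for the given complete splitting. By the definition of splitting, applying $f_\#$ gives $f_\#(\sigma) = f_\#(\sigma_1) \cdot \ldots \cdot f_\#(\sigma_k)$ with no cancellation at the junctures, and this remains a splitting, since $f^j_\#$ of the right-hand side equals $f^{j+1}_\#(\sigma)$, which splits in the same pattern. A standard consequence of the definition of splitting is that refining each $f_\#(\sigma_i)$ into its own splitting yields a splitting of $f_\#(\sigma)$. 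Thus it suffices to show that each $f_\#(\sigma_i)$ is completely split.

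I would carry out a case analysis on the four types of atomic term allowed in Definition~\ref{DefCompleteSplitting}. First, if $\sigma_i$ is an edge in an irreducible stratum, then the (Completely Split) axiom directly gives that $f(\sigma_i) = f_\#(\sigma_i)$ is completely split. Second, if $\sigma_i$ is an indivisible Nielsen path, then $f_\#(\sigma_i) = \sigma_i$ (using that $\phi$ is rotationless so the ``indivisible Nielsen paths'' occurring as splitting terms have period one), and a single indivisible Nielsen path is a one-term complete splitting. Third, if $\sigma_i = E_p w^q \overline{E_r}$ is an exceptional path coming from linear edges $E_p, E_r$ with $f(E_p) = E_p w^{d_p}$ and $f(E_r) = E_r w^{d_r}$ and $w$ a root-free closed Nielsen path, a direct reduction computation gives $f_\#(\sigma_i) = E_p w^{q+d_p-d_r} \overline{E_r}$, which is again an exceptional path and hence a one-term complete splitting.

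The fourth case is where a small verification is needed: $\sigma_i$ is a taken maximal subpath of $\sigma$ in a zero stratum $H_j$. Here I would invoke the (Completely Split) axiom of CT, which asserts that $f_\#$ of any taken connecting path in a zero stratum is completely split. To apply it I must identify $\sigma_i$ as a connecting path in the required sense: by the (Zero Strata) axiom, $H_j$ is enveloped by some \eg\ stratum $H_r$ with $j<r$, each vertex of $H_j$ lies in $H_r$, and the link of each such vertex is contained in $H_j \cup H_r$. Consequently, the endpoints of a maximal subpath of $\sigma$ in $H_j$ lie in $H_r$, so $\sigma_i$ has height $j<r$ with endpoints in $H_r$, i.e.\ it is a connecting path of $H_r$. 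That $\sigma_i$ is ``taken'' in the sense required by (Completely Split) is a direct consequence of being an atomic term of a complete splitting. Combining the four cases, each $f_\#(\sigma_i)$ is completely split, and refining the splitting $f_\#(\sigma)=f_\#(\sigma_1)\cdot \ldots \cdot f_\#(\sigma_k)$ by inserting these completes the proof.

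The main obstacle I anticipate is the fourth case: carefully confirming that a maximal subpath of a completely split path in a zero stratum really satisfies the hypotheses of the (Completely Split) axiom. Once the (Zero Strata) axiom is used to locate the endpoints in the enveloping \eg\ stratum and the ``taken'' condition is traced through the definition of complete splitting, the remaining work is bookkeeping.
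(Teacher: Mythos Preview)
The paper does not give its own proof of this fact; it is quoted as a black box from \recognition\ Lemma~4.6, so there is nothing in the paper to compare your argument against. Your approach is the natural one and is essentially what that cited lemma does: reduce to the atomic terms, invoke the \textbf{(Completely Split)} axiom for edges in irreducible strata and for taken connecting paths in zero strata, and observe that indivisible Nielsen paths and exceptional paths are sent to paths of the same type.

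One point you should make explicit in the concatenation step: after refining $f_\#(\sigma)=f_\#(\sigma_1)\cdot\ldots\cdot f_\#(\sigma_k)$ by the complete splittings of the pieces, you obtain a decomposition into terms of the four allowed types, with legal turns at every juncture (legality at the new internal junctures comes from each piece's complete splitting, and at the old junctures from the fact that $f_\#(\sigma_1)\cdot\ldots\cdot f_\#(\sigma_k)$ is a splitting). To conclude this is \emph{the} complete splitting you still need the maximality condition on zero-stratum terms, i.e.\ that no two adjacent terms in the refined decomposition lie in the same zero stratum. This is exactly what Fact~\ref{FactFinestSplitting}\pref{ItemCompleteCharacterized} packages: a decomposition into the four types with legal turns is automatically the complete splitting. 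So your ``bookkeeping'' remark is accurate, but you should either invoke that characterization explicitly or argue directly (via \textbf{(Zero Strata)} and RTT-(i)) that a zero-stratum term cannot abut another zero-stratum term across a legal turn.
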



\begin{fact} \label{FactFinestSplitting}
If $\sigma$ is a finite path in $G$ with endpoints at vertices then:
\begin{enumerate}
\item \label{ItemCompleteCharacterized}
If $\sigma = \sigma_1 \ldots \sigma_m$ is a decomposition into subpaths, each of which is either a single edge in an irreducible stratum, an indivisible Nielsen path, an exceptional path, or a taken connecting path in a zero stratum, and if each turn $(\bar\sigma_i,\sigma_{i+1})$ is legal, then $\sigma = \sigma_1 \cdot \ldots \cdot \sigma_m$ is the unique complete splitting of $\sigma$.
\item \label{ItemRefined}
Any splitting of $\sigma$ at vertices is refined by the complete splitting of $\sigma$.
\end{enumerate}
\end{fact}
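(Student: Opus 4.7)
For part (1) the plan is to prove first that $\sigma = \sigma_1 \cdot \ldots \cdot \sigma_m$ is a splitting and then to note that it is a complete splitting by hypothesis. The splitting property amounts to showing $f^k_\#(\sigma) = f^k_\#(\sigma_1) \cdot \ldots \cdot f^k_\#(\sigma_m)$ with no cancellation at any junction for every $k \ge 0$. The legality hypothesis on each turn $\{\bar\sigma_i,\sigma_{i+1}\}$ says $(Df)^k$ sends this pair to a nondegenerate turn for all $k$; since the terminal direction of $f^k_\#(\sigma_i)$ is $(Df)^k$ applied to the terminal direction of $\sigma_i$, and analogously on the other side, this rules out cancellation at the $\sigma_i/\sigma_{i+1}$ junction. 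With each $\sigma_j$ of one of the four types allowed by Definition~\ref{DefCompleteSplitting}, the resulting splitting is complete.

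For part (2), let $\sigma = \tau_1 \cdot \ldots \cdot \tau_l$ be any splitting at vertices and let $\sigma = \sigma_1 \cdot \ldots \cdot \sigma_m$ be a complete splitting of the form produced by part~(1). The plan is to argue by contradiction: suppose some common endpoint $v$ of adjacent $\tau_i$'s lies in the interior of some $\sigma_j$, and proceed by cases on the type of $\sigma_j$. A single edge contributes no interior vertex. For an \eg-height \iNp\ $\sigma_j = \alpha\beta$, where by Lemma~5.11 of \BH\ the $\alpha/\beta$ turn is the unique illegal turn of height $r$, splitting at the midpoint yields immediate cancellation under $f_\#$ contradicting the $\tau$-splitting, while splitting at any other interior vertex leaves one piece whose $f^k_\#$-image grows exponentially (by $r$-legality of the relevant subpath) against a bounded $f^k_\#(\sigma_j)=\sigma_j$, forcing eventual cancellation. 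For an \neg-height \iNp\ $\sigma_j = E_i w^k \bar E_i$, a direct computation using $f_\#(E_i) = E_i w^{m_i}$ shows that one side of any interior split accumulates positive $w$-powers while the other accumulates negative $w$-powers, so for large $n$ the junction carries a $w\cdot\bar w$ cancellation, contradicting the $\tau$-splitting. The exceptional-path case $\sigma_j = E_i w^p \bar E_j$ runs parallel, using the (Linear Edges) axiom $m_i \ne m_j$ with common root-free Nielsen loop $w$ to drive the same eventual cancellation at any interior split. For a taken connecting path in a zero stratum $H_i$, the (Zero Strata) axiom places $\sigma_j$ inside an enveloping \eg\ stratum, and the complete-splitting structure guaranteed by (Completely Split) rules out interior splits analogously.

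The uniqueness statement in part~(1) then follows by applying the argument of part~(2) in both directions: any other complete splitting is in particular a splitting, so its split points must lie among those of $\sigma_1\cdot\ldots\cdot\sigma_m$, and vice versa, forcing the two decompositions to coincide. The main obstacle I anticipate is executing the case analysis for part~(2) cleanly and uniformly, particularly controlling the $w$-exponent arithmetic in the exceptional-path and \neg-height \iNp\ cases; there the structural CT axioms (Linear Edges), (\neg\ Nielsen Paths), and (Zero Strata) do the principal work.
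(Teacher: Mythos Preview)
Your approach differs substantially from the paper's, and the comparison is instructive. The paper cites item~\pref{ItemCompleteCharacterized}, including uniqueness, from Lemma~4.12 of \BookOne, and then derives item~\pref{ItemRefined} in one line: given any splitting $\sigma = \tau_1 \cdot\ldots\cdot \tau_l$, subdivide each $\tau_i$ into its own complete splitting; the resulting decomposition satisfies the hypothesis of \pref{ItemCompleteCharacterized}, so by uniqueness it \emph{is} the complete splitting of $\sigma$, which therefore refines the $\tau$-splitting. You invert this dependence, proving \pref{ItemRefined} directly by case analysis and then deducing uniqueness. Your route is more self-contained but considerably longer, and it carries a real gap.

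The gap is the zero-stratum case of \pref{ItemRefined}. Your one sentence (``(Zero Strata) \ldots\ rules out interior splits analogously'') does no work: nothing in the axioms immediately prevents a taken connecting path $\tau$ in a zero stratum $H_i$ from splitting at an interior vertex. Indeed, $f$ maps edges of $H_i$ entirely out of $H_i$, so there is no growth-versus-boundedness tension to exploit as in your Nielsen-path and exceptional-path cases, and the $w$-exponent bookkeeping has no analogue. Filling this case honestly would require tracking $f^k_\#(\tau)$ through successive zero strata and into irreducible strata, which is essentially as hard as the uniqueness statement you are trying to avoid citing. The paper's method sidesteps this entirely.

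A smaller point: in your argument for \pref{ItemCompleteCharacterized}, the assertion that the terminal direction of $f^k_\#(\sigma_i)$ equals $(Df)^k$ applied to the terminal direction of $\sigma_i$ is not automatic---tightening could in principle eat into the ends. For edges, indivisible Nielsen paths, and exceptional paths this is straightforward to verify (the initial and terminal directions are $Df$-fixed), but for taken connecting paths it again requires care, and you should say so rather than asserting it uniformly.
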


\begin{proof}
Item \pref{ItemCompleteCharacterized} is contained in Lemma~4.12 of \BookOne. Item \pref{ItemRefined} is proved by noticing that for any splitting of $\sigma$ at vertices, if each term of the given splitting is further subdivided into its complete splitting, the resulting subdivision of $\sigma$ satisfies the hypothesis of~\pref{ItemCompleteCharacterized}. 
\end{proof}



%

\begin{fact}[\cite{FeighnHandel:recognition}, Lemma 4.26]
\label{FactEvComplSplit}
If $\sigma$ is a path in $G$ with endpoints at vertices then $f^k_\#(\sigma)$ is completely split for all sufficiently large~$k \ge 1$. \qed
\end{fact}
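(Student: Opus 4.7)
My plan is to propagate completeness of splittings from individual edges of $\sigma$ out to the whole path, using the Bounded Cancellation Lemma to bound interference between pieces. Subdivide $\sigma$ at the vertices of $G$ as $\sigma = \alpha \cdot E_1 \cdot E_2 \cdots E_{m-1} \cdot \beta$, where each interior $E_i$ is a single oriented edge of $G$ and $\alpha,\beta$ are possibly partial boundary edges (one easily reduces to the case $\alpha = \beta = \emptyset$ after one iteration, since terminal endpoints of nonfixed \neg\ edges are principal, hence fixed). Because $f$ takes vertices to vertices, $f_\#(\sigma)$ is the tightened concatenation of the paths $f(E_i)$, with some cancellation at each junction.

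The next step is to bound, independently of $k$, the total amount of cancellation at each internal junction accumulated over all iterations from $1$ through $k$. Applying the Bounded Cancellation Lemma inductively, each single application of $f_\#$ introduces at most $\BCC(f)$ new cancellation per junction; previously cancelled material has already been removed and contributes nothing further. Summing a geometric series controlled by the Perron--Frobenius growth of the strata involved (or by the linear structure in the \neg\ case) yields a uniform bound $C'$ such that for every internal edge $E_i$ and every $k$, the path $f^k_\#(E_i)$ appears as a subpath of $f^k_\#(\sigma)$ obtained by removing a prefix and a suffix of length at most $C'$.

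For each internal edge $E_i$ in an irreducible stratum the (Completely Split) axiom gives a complete splitting of $f(E_i)$, and Fact~\ref{FactComplSplitStable} shows this complete splitting is preserved, so $f^k(E_i)$ is completely split for all $k \ge 1$; for $E_i$ in a zero stratum, the (Zero Strata) axiom together with (Completely Split) applied to taken connecting paths handles the analogous claim after one further iteration. The key final observation is that for $k$ sufficiently large, each term in the complete splitting of $f^k(E_i)$ is either long (EG edges and taken zero-stratum connecting paths, whose lengths grow without bound) or has a controlled structure that persists under iteration (fixed edges, Nielsen paths, exceptional paths, and linear edges), so removing the prefix and suffix of length $\le C'$ cuts off only whole splitting terms rather than slicing across one. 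The remaining trimmed complete splittings of the $f^k(E_i)$ can then be concatenated, and Fact~\ref{FactFinestSplitting}\pref{ItemCompleteCharacterized} assembles them into a complete splitting of $f^k_\#(\sigma)$, provided each intervening turn is legal; but those intervening turns are precisely the ones that survive cancellation forever, hence are legal by definition. The main obstacle I anticipate is the uniform cancellation bound $C'$ and the verification that no splitting term is truncated, which together comprise the technical heart of the argument.
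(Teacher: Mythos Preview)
The paper does not give its own proof of this fact: it is stated with a citation to \cite{FeighnHandel:recognition}, Lemma~4.26, and closed with a \qed. So there is no proof in the paper to compare against; I will simply assess your argument on its merits.

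Your approach has a genuine gap at its core: the uniform cancellation bound $C'$ that you posit does not exist. If the turn between $E_i$ and $E_{i+1}$ is illegal, the cancellation between $f^k_\#(E_i)$ and $f^k_\#(E_{i+1})$ typically grows without bound in $k$ --- indeed, this unbounded cancellation is exactly the mechanism that produces Nielsen paths. Concretely, if $\sigma$ contains an indivisible Nielsen path $\rho = \alpha\bar\beta$ as a subpath, then the images of the edges comprising $\alpha$ and those comprising $\bar\beta$ cancel more and more with each iteration, so that $f^k_\#(\rho) = \rho$ while the individual $f^k_\#(E_i)$ grow exponentially. Your ``geometric series'' justification conflates the per-step bound $\BCC(f)$ with a bound on cumulative cancellation; the former holds, the latter does not. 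Consequently the later claim that trimming removes only whole splitting terms also fails.

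The actual proof in \cite{FeighnHandel:recognition} proceeds quite differently, by induction on the height of $\sigma$. At the top stratum one invokes splitting results of the sort recorded in this paper as Lemma~\ref{LemmaEGPathSplitting} (which in turn rests on Lemmas~4.2.5--4.2.6 of \BookOne): after enough iterates, $f^k_\#(\sigma)$ splits into edges and indivisible Nielsen paths of top height together with subpaths in lower strata, and one then applies the inductive hypothesis to those lower pieces. The point is that one does not try to control cancellation edge-by-edge; one instead uses the structure theory of illegal turns to locate genuine splitting points.
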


\begin{fact}\label{FactPrincipalVertices}
For each periodic vertex $x \in G$, exactly one of the following holds:
\begin{enumerate}
\item $x$ is principal (and therefore fixed); 
\item $x$ is the unique periodic point in its Nielsen class, there are exactly two periodic directions based at $x$, and both of those directions are in the same \eg\ stratum.
\end{enumerate}
\end{fact}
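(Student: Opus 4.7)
The two alternatives (1) and (2) are mutually exclusive, since (2) is precisely the first non-principal clause in the definition of a principal vertex and so asserts that $x$ is not principal. If $x$ is principal then (Rotationless) gives (1); if $x$ is non-principal then at least one of the two defining clauses of non-principality must hold, and the first clause is exactly (2). So it suffices to show that in a \ct\ the second, ``circle'' clause never holds: no periodic vertex can lie on a topological circle $C = E_1 \cdots E_m$ with vertices $v_1,\dots,v_m$ each of whose points is periodic and has exactly two periodic directions, which at $v_i$ are $E_i$ and $\bar E_{i-1}$.

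No $E_i$ lies in a zero stratum $H_s$, since otherwise every iterate $f^k(E_i)$ lies in $G_{s-1}$ and so cannot begin with $E_i$, contradicting periodicity of the direction $E_i$ at $v_i$. Each $v_i$ is non-principal by hypothesis, so Fact~\ref{FactUsuallyPrincipal} forces all irreducible-stratum edges in the link of $v_i$ into a single irreducible stratum; propagating this around $C$ yields a common irreducible stratum $H_r$ containing every $E_i$. If $H_r$ is \neg\ then, since periodic edges in a \ct\ are fixed by (Periodic Edges), $H_r$ consists of a single edge and so $C = E$ is a loop with $f(E) = Eu$, $u \subset G_{r-1}$, whence $f^K(E) = E\sigma_K$ with $\sigma_K \subset G_{r-1}$ for every $K$. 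Periodicity of the direction $\bar E$ at the opposite endpoint of $E$ forces the last edge of $f^K(E)$ to be $E$; since no edge of $\sigma_K$ is $E$, this forces $\sigma_K$ trivial, so $f^K(E) = E$. Thus $E$ is periodic and therefore fixed by (Periodic Edges), which by the same axiom makes the endpoints of $E$ principal, contradicting non-principality of the vertices of $C$.

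If $H_r$ is \eg\ I invoke the ``every point'' reading of the circle clause, so that every interior point of each $E_i$ is periodic. Let $K$ be a common period of the vertices $v_i$, so $f^K$ fixes both endpoints of $E_i$. If $f^K(E_i) \ne E_i$ as paths, then the Perron--Frobenius expansion of $H_r$ makes $f^K(E_i)$ strictly longer than $E_i$ and $f^K \restrict E_i$ is a nondegenerate piecewise-affine map onto a longer path, whose set of fixed points in $E_i$ is finite; taking unions over higher iterates yields only a countable set of periodic points in $E_i$, contradicting that the uncountable set of interior points of $E_i$ are all periodic. Hence $f^K(E_i) = E_i$, and $f^K \restrict E_i$ is an orientation-preserving self-homeomorphism of $E_i$ fixing the endpoints; the standard observation that any non-fixed point of such a homeomorphism has a strictly monotone (hence non-periodic) forward orbit, combined with periodicity of every interior point, forces $f^K \restrict E_i = \mathrm{id}$. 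Thus $E_i$ is a periodic and therefore fixed edge, and (Periodic Edges) again makes $v_i$ principal, a contradiction. The main obstacle is precisely this \eg\ subcase, which hinges on reading ``each point of $C$'' to include interior points of edges; without that reading one would instead have to analyze periodic Nielsen paths between vertices of $C$ via (\eg\ Nielsen Paths) and (\neg\ Nielsen Paths) to conclude case~(2) directly.
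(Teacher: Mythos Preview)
Your argument is on the right track but misses a one-line observation that collapses the entire case analysis. The paper's proof is three sentences: once you know each edge of $C$ is a \emph{periodic edge} in the technical sense (an edge whose $f$-image is a single edge, cycling back), the axiom (Periodic Edges) immediately says each such edge is fixed and has principal endpoints, contradicting the circle clause. The observation you are missing is this: since $f$ takes vertices to vertices, if every interior point of an edge $E\subset C$ is periodic then no interior point of $E$ can map to a vertex (a non-vertex periodic point can never have a vertex in its forward orbit), so $f(E)$ is a single edge. Iterating, $f^k(E)$ is a single edge for all $k$, and since the midpoint of $E$ returns to itself, eventually $f^m(E)=E$. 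That is the whole argument --- no stratum-by-stratum analysis, no Fact~\ref{FactUsuallyPrincipal}, no separate \neg/\eg\ cases.

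Your \eg\ case has a genuine gap: you invoke a ``piecewise-affine'' structure on $f$, but the paper's definition of a topological representative only requires $f$ to be continuous and take edges to paths. Without piecewise-linearity, a continuous monotone map of an interval onto a longer path can have uncountably many fixed points (e.g.\ the identity on an initial segment followed by a stretch), so the countability argument fails as written. The gap is easily repaired --- periodicity of every point forces $f^k|_E$ to be injective (two points with the same image would have the same forward orbit, hence the same periodic limit), so $f^k(E)$ is an embedded arc whose length is bounded by the size of $G$, contradicting exponential growth --- but that repair is essentially the paper's argument in disguise. Also, your final sentence in the \eg\ case applies (Periodic Edges) to an \eg\ edge, which is a misuse of terminology; the contradiction is already that $f^K(E_i)=E_i$ is impossible in an \eg\ stratum.
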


\begin{proof} If $x$ is contained in a topological circle $C$ of periodic points then, applying (Periodic Edges), each edge of $C$ is fixed and (1) holds. If $x$ is contained in no such circle then, by definition of principal, either (1) or (2) holds. Items (1) and (2) are mutually exclusive by definition of principal.
\end{proof}

\textbf{Remark:} The above argument shows that circles as in the second item of the definition of principal vertices do not exist in \ct's.

\begin{fact}[\cite{FeighnHandel:recognition}, Lemma 4.22] 
\label{FactNEGEdgeImage}
Each nonfixed \neg\ stratum $H_i$ is a single edge $E_i$ satisfying $f(E_i) = E_i \cdot u_i$ for some nontrivial closed path $u_i$ of height $< i$ which forms a circuit such that the turn $\{u_i, \bar u_i\}$ is legal. \qed
\end{fact}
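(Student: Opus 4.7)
The plan is to reduce $H_i$ to a single edge via a principal-vertex argument, and then extract the required properties of $u_i$ from (Vertices), (Rotationless), (Completely Split), and Fact~\ref{FactFinestSplitting}. First I would invoke the standing assumption preceding Definition~\ref{DefCT} to number and orient the edges of $H_i$ as $E_1,\ldots,E_N$ with $f(E_j) = E_{j+1} u_j$, $u_j \subset G_{i-1}$, and indices taken modulo $N$. Let $v_j$ be the initial vertex of $E_j$. Then $f(v_j) = v_{j+1}$, so $v_j$ is a periodic vertex, and the direction at $v_j$ determined by $E_j$ is a periodic direction of $Df$ (with period dividing~$N$). By Fact~\ref{FactPrincipalVertices}, $v_j$ is either principal or else has exactly two periodic directions, both lying in the same EG stratum; the second alternative is incompatible with $E_j$ being a periodic direction at $v_j$ inside the \neg\ stratum $H_i$, so $v_j$ must be principal. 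By (Rotationless), $v_j$ is then fixed and the periodic direction $E_j$ at $v_j$ is fixed by $Df$; combined with $Df(E_j)=E_{j+1}$ this yields $E_j = E_{j+1}$, so $N=1$.

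Writing $H_i = \{E\}$ and $f(E) = E u$ with $u \subset G_{i-1}$, the remaining assertions follow in order. If $u$ were trivial then $E$ would be a fixed edge, contradicting the nonfixed hypothesis. By (Vertices) the terminal vertex $w$ of $E$ is principal, hence fixed by (Rotationless); since $u$ begins at $w$ (as the endpoint of $E$) and ends at $f(w)=w$, it is closed at $w$. The height bound $<i$ is just the containment $u \subset G_{i-1}$.

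For the final circuit and legality assertions I would use that $f(E)$ is completely split by (Completely Split), so that applying Fact~\ref{FactFinestSplitting}(2) to the vertex decomposition $f(E) = E \mid u$ at $w$ upgrades $f(E) = E \cdot u$ to a splitting in the technical sense; iterating then gives
\begin{equation*}
f^k(E) \;=\; E \cdot u \cdot f_\#(u) \cdot f^2_\#(u) \cdots f^{k-1}_\#(u)
\end{equation*}
as a splitting for every $k \ge 1$. The splitting property, combined with the fact that every periodic direction at the principal vertex $w$ is $Df$-fixed, forces the first direction $d_1$ of $u$ at $w$ and the reverse $d'_k$ of its last direction at $w$ to be distinct $Df$-fixed directions: non-cancellation at the splitting junctions rules out the bad coincidences, while preperiodic directions at $w$ feed under $Df$ into the $Df$-fixed pool. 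This distinctness is precisely $u$ forming a circuit at $w$, and the $Df$-fixedness of $d_1,d'_k$ upgrades nondegeneracy of the turn $\{d_1,d'_k\}$ to legality of the turn $\{u,\bar u\}$. The main obstacle will be this last step --- tracking the first and last directions of each iterate $f^j_\#(u)$ through the complete splitting of $u$, and verifying that tightenings inside those iterates do not perturb those directions at $w$.
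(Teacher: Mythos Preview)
The paper gives no proof of this statement: it is cited from \cite{FeighnHandel:recognition} and marked with a terminal \qed. So there is no argument in the paper to compare against; the paper simply imports the result.

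Your reduction to a single edge is correct and clean. The standing convention on \neg\ strata gives $f(E_j)=E_{j+1}u_j$, and since $E_j$ is a periodic direction at the periodic vertex $v_j$ lying in a non-\eg\ stratum, Fact~\ref{FactPrincipalVertices} forces $v_j$ principal; then (Rotationless) fixes both $v_j$ and the direction $E_j$, giving $N=1$. The arguments that $u$ is nontrivial, closed at $w$, and of height $<i$ are also fine.

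The gap is your use of Fact~\ref{FactFinestSplitting}\pref{ItemRefined}. That item takes as \emph{input} a splitting at vertices and asserts it is refined by the complete splitting; it does not promote an arbitrary vertex decomposition to a splitting. So it cannot ``upgrade'' the bare decomposition $f(E)=E\mid u$ to $f(E)=E\cdot u$. What is actually needed is that the first term of the complete splitting of $f(E)$ is the single edge $E$. Ruling out an indivisible Nielsen path as first term is easy via (\neg\ Nielsen Paths) (such a path would be $E w^k\bar E$, but $\bar E$ does not occur in $Eu$), and a zero-stratum term is immediate. Ruling out an exceptional path $E w^p\bar E_j$ as first term --- which would force $E$ linear with $f(E)=Ew^d$ --- requires a genuine argument, and this, together with the legality of the turn $\{u,\bar u\}$ that you rightly flag as the obstacle, is precisely the content supplied by Lemma~4.22 of \cite{FeighnHandel:recognition} rather than something one reads off the \ct\ axioms assembled here.
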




\begin{fact}
\label{FactNielsenBottommost} If $H_r$ is an \eg\ stratum then every indivisible periodic Nielsen path of height $r$ is fixed. If there exists an indivisible Nielsen path $\rho$ of height~$r$, and if $\rho = a_0 b_1 a_1 b_2 \cdots a_{k-1} b_k$ is the decomposition into maximal subpaths $a_i$ in $H_r$ and $b_i$ in $G_{r-1}$ then:
\begin{enumerate}
\item \label{ItemNoZeroStrata} No zero stratum is enveloped by $H_r$. 
\item Each $b_i$ is a Nielsen path.
\item \label{ItemBottommostEdges}
For each edge $E \subset H_r$ and each $k \ge 0$, the path $f^k_\#(E)$ splits into terms each of which is an edge of $H_r$ or one of the Nielsen paths $b_i$ in the decomposition of $\rho$. Furthermore, each term in the complete splitting of $f^k_\#(E)$ is an edge of $H_r$, a fixed edge, or an indivisible Nielsen path.
\end{enumerate}
\end{fact}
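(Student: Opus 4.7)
The plan is to exploit the CT structure --- in particular (Rotationless), (Completely Split), (Zero Strata), RTT-(i), RTT-(iii), and (EG Nielsen Paths) --- together with Fact~\ref{FactEGAperiodic} (so $H_r$ is aperiodic) and Fact~\ref{FactPrincipalVertices}. I would prove the first assertion first, then items (1), (2), (3) by an interlocking argument.

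For the first assertion (fixedness of any indivisible height-$r$ periodic Nielsen path $\rho$): the endpoints of $\rho$ are vertices by (Vertices), are periodic, and carry periodic initial/terminal directions of height $r$. Fact~\ref{FactPrincipalVertices} forces each endpoint to be either principal (hence fixed by $f$ with fixed periodic directions by (Rotationless)) or the unique periodic point in its Nielsen class (hence fixed because $f$ sends its Nielsen class to itself); applying the same dichotomy at the interior junction vertex of the canonical form $\rho=\alpha\beta$ (whose two height-$r$ directions are periodic) shows this vertex and its two directions are fixed. Thus $f_\#(\rho)$ and $\rho$ agree on endpoints, extreme directions, and the $\alpha\beta$-junction; since $\alpha$ and $\beta$ are $r$-legal, RTT-(iii) keeps $f_\#(\alpha)$ and $f_\#(\beta)$ $r$-legal with the same initial and terminal data, and aperiodicity of $H_r$ combined with bounded cancellation pins them down as $\alpha$ and $\beta$, yielding $f_\#(\rho)=\rho$.

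For items (1) and (2), I would proceed as follows. First, given $f_\#(\rho)=\rho$, the decomposition $\rho = a_0 b_1 a_1 \cdots$ into maximal $H_r$--$G_{r-1}$ subpaths has junction turns between directions of heights $r$ and $<r$, legal by RTT-(i), so it is a splitting. Iterating $f_\#$ preserves this splitting; $f$-invariance of the filtration keeps each $f_\#(b_j)$ in $G_{r-1}$. Matching maximal-height pieces on both sides of $f^k_\#(\rho)=\rho$, and using that $r$-legal $a_i$ pieces strictly stretch their $H_r$-content under iteration (by aperiodicity of $H_r$ and RTT-(iii)) while the total $H_r$-content of $\rho$ is conserved, forces each $b_j$ to be a Nielsen path, establishing (2). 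For item (1), suppose a zero stratum $H_j$ is enveloped by $H_r$. By (Zero Strata), every edge of $H_j$ is $r$-taken and appears as a maximal subpath of some $f^k_\#(E)$ with $E\subset H_r$. Combining the structural description of $f\restrict G^0_r$ supplied by (EG Nielsen Paths) --- a composition of proper extended folds iteratively folding $\rho$ followed by height $\le r-1$ folds --- with item (2), all $G_{r-1}$-content appearing in iterates of $H_r$-edges is concentrated in copies of the Nielsen-path pieces $b_j$ and of iNPs of lower height; since zero-stratum edges are not fixed and cannot belong to any iNP, no edge of $H_j$ can appear, contradicting $r$-takenness.

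For item (3), (Completely Split) and Fact~\ref{FactComplSplitStable} give $f^k_\#(E)$ completely split for every $E\subset H_r$ and $k\ge 0$. The terms of the complete splitting are edges in irreducible strata, iNPs, exceptional paths, or taken zero-stratum subpaths. Because $H_r$ is EG, no $H_r$-edge is linear, so no height-$r$ exceptional path appears; by item (1) no zero stratum enveloped by $H_r$ exists; and by the (EG Nielsen Paths) description the remaining lower-height exceptional paths and zero-stratum subpaths fit together into Nielsen-path pieces $b_j$. Combined with item (2), this yields the coarser splitting of $f^k_\#(E)$ into alternating edges of $H_r$ and Nielsen paths $b_j$; each $b_j$, being a Nielsen path in $G_{r-1}$, further splits (by its complete splitting) into fixed edges and iNPs. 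I expect item (1) to be the main obstacle, because the interplay between zero strata enveloped by $H_r$, the decomposition of $\rho$, and the (EG Nielsen Paths) folding description demands a careful argument that untangles the apparent circular dependency between items (1), (2), and (3).
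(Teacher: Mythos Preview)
The paper's proof is almost entirely by citation: the first sentence is attributed to \recognition\ Lemma~3.28 and Remark~4.8, and items~(1), (2), and the $k=1$ case of the first sentence of~(3) to \recognition\ Lemma~4.25; the only new content is an induction on~$k$ and an application of Fact~\ref{FactFinestSplitting}~\pref{ItemRefined} to derive the second sentence of~(3) from the first. Your proposal, by contrast, attempts a self-contained derivation from the \ct\ axioms. That is a legitimate project, but several steps do not go through as written.

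For the fixedness of~$\rho$, your argument is circular at the outset: (Vertices) asserts that endpoints of indivisible \emph{Nielsen paths} (period~1) are vertices, not that endpoints of indivisible \emph{periodic} Nielsen paths are vertices, so you cannot invoke it before establishing period~1. Likewise, the junction point of the canonical decomposition $\rho=\alpha\beta$ is in general a periodic point \emph{interior} to an edge of~$H_r$, not a vertex, so Fact~\ref{FactPrincipalVertices} does not apply there. And in case~(2) of Fact~\ref{FactPrincipalVertices} your inference ``unique periodic point in its Nielsen class $\Rightarrow$ fixed because $f$ sends its Nielsen class to itself'' is not justified: $f$ need not preserve a given Nielsen class, and uniqueness within a class does not by itself force period~1. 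The actual argument in \recognition\ uses more structure (aperiodicity of~$H_r$ together with the combinatorics of the $\alpha\beta$ fold) than your sketch captures.

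For items~(1)--(3), your argument for~(2) is too compressed: from the splitting $\rho = a_0\cdot b_1\cdot a_1\cdots$ and $f_\#(\rho)=\rho$ one gets $\rho = f_\#(a_0) f_\#(b_1) f_\#(a_1)\cdots$, but each $f_\#(a_i)$ is $r$-legal and typically acquires new maximal $G_{r-1}$-subpaths, so the maximal $H_r/G_{r-1}$ decompositions of the two sides need not match term-by-term; a counting argument on $H_r$-edges does not by itself pin down $f_\#(b_j)=b_j$. For item~(1) you correctly flag the circularity with~(3), but you do not break it: your proposed use of (EG Nielsen Paths) to exclude zero-stratum edges from iterates $f^k_\#(E)$ already presupposes the conclusion of~(3). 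In \recognition\ these items are handled together via the explicit proper-extended-fold factorization, and that analysis is genuinely where the work lies; your sketch gestures at it but does not carry it out.
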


\begin{proof} The first sentence follows from \cite{FeighnHandel:recognition}, Lemma~3.28 and Remark~4.8. The rest is all contained in \cite{FeighnHandel:recognition}, Lemma 4.25, except for the following parts of item \pref{ItemBottommostEdges}. The first sentence is only stated in Lemma~4.25 for the case $k=1$; the general case follows by induction on $k$. To prove the second sentence, by combining the first sentence with Fact~\ref{FactFinestSplitting}~\pref{ItemRefined} it follows that each term in the complete splitting of $f^k_\#(E)$ is either an edge of $H_r$ or a term in the complete splitting of one of the Nielsen paths $b_i$ in the decomposition of $\rho$. However, each term in the complete splitting of a Nielsen path is a Nielsen path that does not split at vertices, and so is either a fixed edge or an indivisible Nielsen path.
\end{proof}


\begin{fact} \label{FactContrComp}
For each filtration element $G_r$ the following are equivalent: 
\begin{enumerate}
\item \label{ItemHasContrComp}
$G_r$ has a contractible component; 
\item \label{ItemIsZeroStrat}
$H_r$ is a zero stratum;
\item \label{ItemIsContrComp}
$H_r$ is a contractible component of $G_r$.
\end{enumerate}
\end{fact}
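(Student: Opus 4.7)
The plan is to establish the cyclic chain $(3) \Rightarrow (1) \Rightarrow (2) \Rightarrow (3)$. The implication $(3) \Rightarrow (1)$ is immediate, since a contractible component of $G_r$ is in particular a component of $G_r$.

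For $(1) \Rightarrow (2)$, I argue by contrapositive. Suppose $H_r$ is irreducible. If $H_r$ is EG, then by Fact~\ref{FactEGAperiodic} the graph $G_r$ is a core graph, so it has no valence-$1$ vertices; combined with the filtration convention that filtration elements have no isolated vertices, each component of $G_r$ must contain a cycle and is therefore noncontractible. If $H_r$ is NEG, then by Fact~\ref{FactNEGEdgeImage} $H_r = \{E_r\}$ is a single edge, and the terminal endpoint of $E_r$ lies in $G_{r-1}$ (either by (Vertices) together with the form $f(E_r) = E_r \cdot u_r$ with $u_r$ a closed path in $G_{r-1}$, or by (Periodic Edges) in the fixed case). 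The ``enveloping'' definition forces zero strata to appear only in maximal consecutive runs strictly between an irreducible base and an EG cap, so no irreducible $H_r$ can sit inside such a run; by induction on $r$ this means $G_{r-1}$ already has no contractible components when $H_r$ is NEG, and attaching $E_r$ to $G_{r-1}$ through its terminal endpoint cannot create one.

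For $(2) \Rightarrow (3)$ I first show $H_r$ is a component of $G_r$, then show it is contractible. By (Zero Strata) the zero stratum $H_r$ is enveloped by some EG stratum $H_t$ with $t>r$, so $H_r$ is a component of $G_{t-1}$; the link condition in (Zero Strata) forces every vertex of $H_r$ to lie in $H_t$, hence outside $G_{r-1}$ (whose height is at most $r-1 < t$), and so $H_r$ is disjoint from $G_{r-1}$ and is itself a component of $G_r = G_{r-1}\cup H_r$. For contractibility, assume for contradiction that $H_r$ is noncontractible. Then $[\pi_1(H_r)]$ is a nontrivial free factor class lying in $[G_r]\setminus[G_{r-1}]$; since $\phi$ preserves both free factor systems and is rotationless, Fact~\ref{FactPeriodicIsFixed} makes the singleton $\{[\pi_1(H_r)]\}$ a $\phi$-fixed free factor system. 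But the zero-stratum hypothesis gives $f(H_r)\subset G_{r-1}$, so $\phi([\pi_1(H_r)])$ is carried by $[G_{r-1}]$; combining, $[\pi_1(H_r)]\sqsubset[G_{r-1}]$. This contradicts the fact that $H_r$ is disjoint from $G_{r-1}$ in $G$, which makes $\pi_1(H_r)$ and the free factors coming from $G_{r-1}$ mutually non-nested free factors of $F_n$.

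The main obstacle I expect is the bookkeeping in $(1)\Rightarrow(2)$: one must rule out the scenario in which $G_r$ inherits a contractible component from a partially assembled zero-stratum envelope lying below, while $H_r$ itself is irreducible. The key point is the rigid structure of envelopes --- zero strata appear only in maximal runs between an irreducible base $H_u$ and an EG cap $H_t$, with no irreducible $H_r$ allowed strictly inside such a run --- and the fact that once the EG cap is reached Fact~\ref{FactEGAperiodic} makes $G_t$ a core graph, so any previously waiting contractible components are absorbed exactly at $t$.
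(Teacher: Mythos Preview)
Your proof is correct, and the overall cycle $(3)\Rightarrow(1)\Rightarrow(2)\Rightarrow(3)$ works. There is one minor sloppiness: in $(2)\Rightarrow(3)$, saying that vertices of $H_r$ ``lie in $H_t$, hence outside $G_{r-1}$'' is not quite the right inference, since a vertex of $H_t$ could in principle also be a vertex of $G_{r-1}$. What actually gives disjointness is the link condition in (Zero Strata): the link of each vertex of $H_r$ lies in $H_r\cup H_t$, so no edge of $G_{r-1}$ is incident to such a vertex. This is easily repaired.

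Your route differs from the paper's in two places. For $(1)\Leftrightarrow(2)$ the paper simply cites Lemma~4.16 of \recognition, whereas you supply a self-contained inductive argument using the envelope structure of zero strata together with Fact~\ref{FactEGAperiodic} for the EG case and Fact~\ref{FactNEGEdgeImage}/(Periodic Edges) for the NEG case; this is a genuine gain in self-containment, and your anticipated ``bookkeeping obstacle'' is handled correctly by the observation that an irreducible $H_r$ cannot sit strictly inside an envelope. For the contractibility half of $(2)\Rightarrow(3)$, the paper uses the single observation that $f_\#$ is a bijection on circuits (so a circuit in $H_r$ would force $[G_r]$-carried circuits to coincide with $[G_{r-1}]$-carried ones), while you reach the same contradiction via $\phi$-invariance of $[\pi_1(H_r)]$ using Fact~\ref{FactPeriodicIsFixed}. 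The paper's version is shorter; yours makes the free-factor-system mechanism explicit. Both are valid.
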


\begin{proof}
The equivalence of \pref{ItemHasContrComp} and~\pref{ItemIsZeroStrat} is Lemma~4.16 of \recognition. That \pref{ItemIsZeroStrat} implies \pref{ItemIsContrComp} is a consequence of (Zero Strata) which implies that $H_r$ is a component of $G_r$, and the fact that $f_\#$ is a bijection on circuits. That \pref{ItemIsContrComp} implies \pref{ItemHasContrComp} is obvious.
\end{proof}

\begin{fact}\label{FactEdgeToZeroConnector}
For any edge $E \subset G$, if $f_\#(E)$ is contained in a zero stratum~$H_t$ enveloped by the \eg\ stratum $H_s$ then $E$ is an edge in some other zero stratum $H_{t'} \ne H_t$ that is also enveloped by $H_s$.
\end{fact}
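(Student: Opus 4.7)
The plan is to identify the stratum containing $E$ by elimination, and then to pin down the enveloping \eg\ stratum of that zero stratum via the link condition of (Zero Strata) combined with RTT-(i).

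First I rule out $E$ lying in any irreducible stratum $H_r$. If $H_r$ is \eg, then RTT-(i) would force the initial direction of $f_\#(E)$ to have height $r$, but this direction lies in $H_t$, giving $r = t$, which is impossible since $H_r$ is \eg\ and $H_t$ is a zero stratum. If $H_r$ is \neg, then either $E$ is a fixed edge or Fact~\ref{FactNEGEdgeImage} applies; in either case $f_\#(E)$ contains $E \in H_r \ne H_t$, so $f_\#(E) \not\subset H_t$. Hence $E$ lies in some zero stratum $H_{t'}$. Moreover $t' \ne t$: were $E$ in $H_t$, the zero transition matrix would force $f_\#(E)$ to contain no edge of $H_t$, so $f_\#(E) \subset H_t$ would make $f_\#(E)$ trivial; but as $E$ is $s$-taken in $H_t$, (Completely Split) provides a complete splitting of $f_\#(E)$, contradicting triviality.

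Let $H_{s'}$ be the \eg\ stratum enveloping $H_{t'}$, as provided by (Zero Strata). It remains to show $s' = s$. Since $E$ is $s'$-taken, choose $E_1 \in H_{s'}$ and $k \ge 1$ with $E$ a maximal $H_{t'}$-subpath of $f^k_\#(E_1)$. A short induction on $k$, using RTT-(iii) to propagate $s'$-legality and RTT-(i) to control directions of height $s'$ and the legality of mixed turns at height $s'$, shows that the initial and terminal edges of $f^k_\#(E_1)$ both lie in $H_{s'}$. Consequently $E$ is strictly interior to $f^k_\#(E_1)$, and there exists an edge $E_2'$ of $f^k_\#(E_1)$ immediately preceding $E$. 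Letting $v$ denote the initial vertex of $E$ (and the terminal vertex of $E_2'$), maximality of $E$ as an $H_{t'}$-subpath gives $E_2' \notin H_{t'}$, while (Zero Strata) puts the link of $v$ in $H_{t'} \cup H_{s'}$; hence $E_2' \in H_{s'}$.

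Finally, the directions $d_E$ (of height $t'$, in $H_{t'}$) and $\overline{E_2'}$ (of height $s'$, in $H_{s'}$) sit at $v$. Their $Df$-images sit at $f(v)$: $Df(d_E)$ is the initial direction of $f_\#(E) \subset H_t$, hence of height $t$; $Df(\overline{E_2'})$ has height $s'$ by RTT-(i) for $H_{s'}$. Since $f(v)$ is a vertex of $H_t$, (Zero Strata) applied to $H_t$ confines the link of $f(v)$ to $H_t \cup H_s$, forcing $Df(\overline{E_2'})$ to have height $t$ or $s$. As $H_{s'}$ is \eg\ and $H_t$ is zero, we conclude $s' = s$, so $H_{t'}$ is enveloped by $H_s$. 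The one delicate step is showing that $E$ is interior to $f^k_\#(E_1)$, i.e., that $E_2'$ exists; the rest is direct bookkeeping with the link conditions from (Zero Strata).
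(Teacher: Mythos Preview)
Your proof is correct and follows the paper's overall strategy: eliminate irreducible strata for $E$, place $E$ in a zero stratum $H_{t'}$ enveloped by some $H_{s'}$, then use RTT-(i) together with the (Zero Strata) link condition at $f(v)$ to force $s'=s$. The one place you work harder than necessary is in producing an $H_{s'}$-edge at the initial vertex $v$ of $E$: you manufacture $E_2'$ by analyzing the taken path $f^k_\#(E_1)$, but (Zero Strata) already asserts that every vertex of $H_{t'}$ lies in $H_{s'}$, so there is directly an oriented edge $E_0\subset H_{s'}$ with initial vertex $v$, and one can apply $Df$ to $E_0$ in place of your $\overline{E_2'}$. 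The paper also unifies your case split in the first paragraph by observing that if $E$ lay in any irreducible stratum $H_i$ then the complete splitting of $f_\#(E)$ would contain an edge of $H_i$, immediately contradicting $f_\#(E)\subset H_t$.
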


\begin{proof} Since the path $f_\#(E)$ is completely split and contained in the zero stratum $H_t$, its complete splitting must have just the single term $f_\#(E)$ which must be a taken connecting path of $H_s$. If the oriented edge $E$ is contained in an irreducible stratum $H_i$ then some term in the complete splitting of $f_\#(E)$ is an edge of $H_i$, contradiction. It follows that $E$ is contained in some zero stratum $H_{t'}$ enveloped by some \eg\ stratum~$H_{s'}$. 

By (Zero Strata) applied to $H_{t'}$ there exists an oriented edge $E_0 \subset H_{s'}$ having the same initial vertex as $E$. The paths $f_\#(E_0)$ and $f_\#(E)$ therefore have the same initial vertex, and by (Zero Strata) applied to $H_t$ the link of this vertex is contained in $H_{s} \union H_t$. By RTT-(i) the initial direction of $f_\#(E_0)$ is contained in $H_{s'}$, and so $s=s'$. 

If $H_t = H_{t'}$ then $f(H_t) \intersect H_t \ne \emptyset$, contradicting the definition of a zero stratum.
\end{proof}

\paragraph{Facts and lemmas about \eg\ strata.} As shown in \recognition\ Corollary 4.20, some facts about aperiodic \eg\ strata follow from the lone assumption of (EG Nielsen Paths) unaccompanied by other defining properties of a \ct. Of these we need only the following, which is part of Property~(eg)-i of \BookOne.

\begin{fact}\label{FactEGNPUniqueness}
If $f \from G \to G$ is a relative train track map and if $H_r$ is an aperiodic \eg\ stratum of $f$ satisfying (EG Nielsen Paths) then there exists at most one indivisible Nielsen path of height~$r$. \qed
\end{fact}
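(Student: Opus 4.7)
The plan is to argue by contradiction using the folding structure provided by (EG Nielsen Paths). Suppose $\rho$ and $\rho'$ are two distinct indivisible periodic Nielsen paths of height $r$; after replacing $f$ by an appropriate power we may assume each is an honest Nielsen path. By the Bestvina--Handel lemma cited just before the statement, each admits a unique central decomposition $\rho = \alpha\beta$ and $\rho' = \alpha'\beta'$ with degenerate height $r$ central turns $\{Df(\bar\alpha),Df(\beta)\}$ and $\{Df(\bar\alpha'),Df(\beta')\}$. Each of these turns is the sole illegal turn of height $r$ inside the corresponding Nielsen path.

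First I would apply (EG Nielsen Paths) to $\rho$: the restriction $f\restrict G^0_r$ is homotopic rel vertices to $\theta\composed f_{r-1}\composed f_r$, where $f_r = F_L\composed\cdots\composed F_1\from G^0_r\to \Gamma^L$ is a composition of proper extended folds obtained by iteratively folding $\rho$, the map $f_{r-1}$ involves only folds of edges of height below $r$, and $\theta$ is a homeomorphism. I would then push $\rho'$ through this factorization. Each $F_l$ satisfies the commutation $F_l\composed f^{(l-1)}\simeq f^{(l)}\composed F_l$, where $f^{(l)}$ is the induced relative train track map on $\Gamma^l$, so iterating yields that $(f_r)_\#(\rho')$ is a height $r$ Nielsen path for the map $\theta\composed f_{r-1}$ on $\Gamma^L$.

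The key observation is that the composition $\theta\composed f_{r-1}$ has no height $r$ illegal turns: $\theta$ is a homeomorphism and so preserves legality, while $f_{r-1}$ only folds edges below height $r$ and hence acts as a bijection on the height $r$ directions in each intermediate graph. Consequently any height $r$ Nielsen path for $\theta\composed f_{r-1}$ is $r$-legal, so $(f_r)_\#(\rho')$ carries no height $r$ illegal turn — the iterated folding of $\rho$ has already eliminated the central illegal turn of $\rho'$.

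The main obstacle, and the point where I expect the argument to require the most care, is verifying that the only height $r$ illegal turn eliminable by the iterative proper folding sequence defined from $\rho$ is the central illegal turn of $\rho$ itself; this requires tracking, at each stage $l$, that the proper extended fold $F_l$ folds precisely the current image of the central turn of $\rho$ and no other height $r$ illegal turn in $\Gamma^{l-1}$. Granting this, $\rho'$ and $\rho$ must share their central illegal turn, i.e.\ the same vertex and the same pair of incident height $r$ edges. A short argument then unwinds the folding sequence: the decomposition $\rho = \alpha\beta$ is determined on each side of the central turn by following the unique $r$-legal extensions compatible with being a Nielsen path, and the same is true of $\rho'$, forcing $\rho' = \rho$ and contradicting the hypothesis. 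As the paper indicates, this conclusion is in any case recorded in Property~(eg)-i of \BookOne\ and in Corollary~4.20 of \recognition, which one could simply cite.
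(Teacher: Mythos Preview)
The paper gives no proof of this fact; it is stated with a \qed\ and simply cited from Property~(eg)-i of \BookOne\ (via Corollary~4.20 of \recognition), exactly as you note in your final sentence. So there is nothing to compare your sketch against except that citation.

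Your sketch has the right strategic shape but contains real gaps. First, $\theta\circ f_{r-1}$ is a map $\Gamma^L\to G^0_r$, not a self-map, so ``Nielsen path for $\theta\circ f_{r-1}$'' is not well-defined; the induced self-map on $\Gamma^L$ is $f^{(L)}\simeq f_r\circ\theta\circ f_{r-1}$, which does fold height~$r$ edges and does still have a height~$r$ indivisible Nielsen path (namely $(f_r)_\#(\rho)$). What you can legitimately extract from the factorization is the weaker statement that for height~$r$ directions $d_1,d_2$ one has $Df(d_1)=Df(d_2)$ if and only if $Df_r(d_1)=Df_r(d_2)$, since $D(\theta\circ f_{r-1})$ is injective on height~$r$ directions. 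This does show that the illegal turn of $\rho'$ must be identified somewhere along the fold sequence $F_L\circ\cdots\circ F_1$.

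Second, your acknowledged ``main obstacle'' is genuine and unaddressed. Each $F_l$ identifies exactly one pair of height~$r$ directions in $\Gamma^{l-1}$, namely the current central turn of the $\rho$-image; but the successive folds also identify vertices and edges, so the fact that the image of $\rho'$'s illegal turn eventually coincides with the image of $\rho$'s illegal turn at some stage~$l$ does not by itself force the two turns to agree back in $G$. Tracking this carefully is exactly where the work in the cited references lies.

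Third, your unwinding step is not justified: there is no ``unique $r$-legal extension compatible with being a Nielsen path'' in general, so even granting a common illegal turn you have not shown $\rho=\rho'$. That equality is true, but it requires a separate dynamical argument (essentially that the lengths of the legal halves are determined by the growth rates under iteration).

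Given these gaps, the sound route is the one you already name at the end: cite the result, as the paper does.
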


The following two lemmas use none of the defining properties of a \ct, deriving instead from \BookOne\ Section~4.2. The next lemma is essentially the same as the \eg\ case of the proof of \recognition\ Lemma~4.6.

\begin{lemma}
\label{LemmaEGPathSplitting}
Let $f \from G \to G$ be a relative train track and $H_r \subset G$ an aperiodic \eg\ stratum. For each $L$ there exists an integer $k = k_L \ge 0$ such that for each height $r$ path or circuit $\sigma$ whose endpoints, if any, are vertices, if $\sigma$ has length $\le L$ then $f^k_\#(\sigma)$ splits into terms each of which is an edge or indivisible periodic Nielsen path of height~$r$ or a path in~$G_{r-1}$.
\end{lemma}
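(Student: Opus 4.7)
The plan is to apply the $r$-legal decomposition and critical-constant technology from Section~4.2 of \BookOne, together with the observation from RTT-(i) that any turn between a direction of height~$r$ and one of height~$<r$ is automatically legal. First, I would split $\sigma$ at every vertex where a height-$r$ edge meets a $G_{r-1}$-edge. By RTT-(i) every such turn is legal, so this decomposition is preserved under each $f^k_\#$, and its terms lie either entirely in $H_r$ or entirely in $G_{r-1}$. The $G_{r-1}$-terms and their $f^k_\#$-images already have the required form, so the task reduces to handling each maximal $H_r$-subpath~$\tau$.

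Next, I would decompose $\tau$ at its vertices into maximal $r$-legal subpaths $\alpha_0,\alpha_1,\ldots,\alpha_m$ (each lying in $H_r$) separated by illegal turns of height~$r$; the number $m$ is bounded by the length of $\tau$, hence by~$L$. Since each $\alpha_i$ is in $H_r$ and all its turns are legal of height~$r$, we have $f_\#(\alpha_i)=f(\alpha_i)$ with no internal cancellation, and the number of $H_r$-edges in $f^k_\#(\alpha_i)$ grows like $\lambda^k$, where $\lambda>1$ is the Perron-Frobenius eigenvalue of~$H_r$. All cancellation in $f^k_\#(\tau)$ therefore occurs at the $m$ junctions between the legal pieces, is bounded per iteration by $\BCC(f)$, and creates no new height-$r$ illegal turns; in particular the count of height-$r$ illegal turns is monotonically non-increasing.

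The key input is the critical-constant dichotomy from Section~4.2 of \BookOne: there exists $C_r$ such that for a height-$r$ path $\mu\nu$ with a single illegal turn of height~$r$ at the juncture of $r$-legal segments $\mu,\nu$ each of length $>C_r$, the illegal turn is absorbed into a subpath equal to an indivisible periodic Nielsen path of height~$r$ flanked by $r$-legal extensions; conversely, any illegal turn of height~$r$ not eventually absorbed in this way is cancelled after a bounded number of iterations. Choosing $k_L$ large enough that (i) every cancellable height-$r$ illegal turn in every $H_r$-subpath of $\sigma$ has already been cancelled, and (ii) every surviving $r$-legal segment has length exceeding~$C_r$, we conclude that $f^{k_L}_\#(\tau)$ splits into alternating $r$-legal segments and indivisible periodic Nielsen paths of height~$r$ meeting at legal turns. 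Each $r$-legal segment further splits at each of its interior vertices (all its turns being legal) into individual $H_r$-edges and maximal $G_{r-1}$-subpaths. Combining with the Step~1 splitting yields the desired decomposition of $f^{k_L}_\#(\sigma)$.

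The main obstacle is the critical-constant dichotomy itself --- that every height-$r$ illegal turn is eventually either cancelled or stabilized inside an indivisible periodic Nielsen path, uniformly in $\sigma$ over paths of length $\le L$. This is precisely what Section~4.2 of \BookOne\ supplies (via bounded cancellation combined with exponential growth of legal segments), and the plan invokes it as a black box rather than reproving it.
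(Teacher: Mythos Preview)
Your Steps~2--3 are essentially the paper's argument: decompose into maximal $r$-legal segments, observe that the illegal-turn count is nonincreasing and eventually stabilizes, then invoke the Section~4.2 machinery of \BookOne\ (Lemmas~4.2.5 and~4.2.6) to split $f^{K}_\#(\sigma)$ into $r$-legal pieces and pieces in the finite $f_\#$-invariant set $P_r$, the latter becoming periodic Nielsen paths after a further bounded iterate. That is exactly what the paper does, and your description of the critical-constant input is an acceptable paraphrase of that black box.

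The genuine gap is Step~1. Legality of a turn is a statement about iterates of the \emph{direction} map $Df$; it does not by itself make the concatenation point a split point. Concretely, if $\tau\subset H_r$ contains an illegal height-$r$ turn, the cancellation at that turn inside $f(\tau)$ can consume all of $f(E_1)$ and expose a $G_{r-1}$-prefix, so $f_\#(\tau)$ need not begin (or end) with an $H_r$-edge. Once that happens, $f_\#(\tau)$ and the adjacent $f_\#(\beta)\in G_{r-1}$ can cancel with each other, and your decomposition fails to be preserved by $f_\#$. (A toy model: with $f(a)=aub$ and $f(b)=\bar b\,\bar u\,\bar a\,v\,c$ one has $f_\#(ab)=vc$, which begins in $G_{r-1}$.) So the reduction to ``paths entirely in $H_r$'' is not available, and the parenthetical in Step~2 ``(each lying in $H_r$)'' is unsupported.

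The repair is simply to delete Step~1: the $r$-legal decomposition already accommodates $G_{r-1}$-subpaths, and Lemma~4.2.6 of \BookOne\ applies directly to $\sigma$ with no pre-processing. After that deletion your argument coincides with the paper's.
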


\begin{proof} Since there are only finitely many circuits and paths with endpoints at vertices that have length $\le L$, it suffices to consider just a single $\sigma$ and find a $K$ such that $f^K_\#(\sigma)$ satisfies the conclusions. 

Let $P_r$ be the set of paths $\rho$ of height $r$ such that for each $k \ge 0$ the path $f^k_\#(\rho)$ begins and ends with an edge in $H_r$ and has exactly one $r$-illegal turn, and such that the number of edges in $f^k_\#(\rho)$ is bounded independently of $k$. By \BookOne\ Lemma 4.2.5, $P_r$ is a finite $f_\#$ invariant set, from which it follows that there exists $K_1$ such that for each $\rho \in P_r$ the path $f^{K_1}_\#(\rho)$ is a periodic Nielsen path, which therefore splits into indivisible periodic Nielsen paths of height $r$ and paths in $G_{r-1}$. 

Since $f_\#$ applied to an $r$-legal path is $r$-legal, and since a subpath of an $r$-legal path is $r$-legal, it follows that the number of maximal $r$-legal paths in $f^i_\#(\sigma)$ is a nonincreasing function of~$i$. The number of illegal turns of $f^i_\#(\sigma)$ in $H_r$ is therefore nonincreasing, and so is constant for sufficiently large $i$, say $i \ge K_2$. Applying Lemma~4.2.6 of \BookOne\ it follows that $f^{K_2}_\#(\sigma)$ splits into subpaths each of which is $r$-legal or is one of the paths in $P_r$. Letting $K=K_1+K_2$ it follows that $f^K_\#(\sigma)$ splits into $r$-legal paths and indivisible Nielsen paths of height $r$. Since an $r$-legal path splits into edges in $H_r$ and paths in $G_{r-1}$, this finishes the proof.
\end{proof}

We also need a form of Lemma~\ref{LemmaEGPathSplitting} whose conclusion has a stronger uniformity. This will be used in the proof of Proposition~\ref{PropVerySmallTree}.

\begin{lemma}
\label{LemmaEGUnifPathSplitting}
Let $f \from G \to G$ be a relative train track and $H_r \subset G$ an aperiodic \eg\ stratum. For each $M$ there exists an integer $d \ge 0$ such that for each height $r$ path or circuit $\sigma$ with endpoints, if any, at vertices, if $\sigma$ contains at most $M$ edges in the subgraph $H_r$ then $f^d_\#(\sigma)$ splits into terms each of which is an edge or indivisible periodic Nielsen path of height $r$ or a path in~$G_{r-1}$.
\end{lemma}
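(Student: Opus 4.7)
The plan is to mirror the proof of Lemma~\ref{LemmaEGPathSplitting}, with one crucial new observation that replaces the ``length $\le L$'' hypothesis by the weaker ``at most $M$ edges in $H_r$'' hypothesis. The key point is that every $r$-illegal turn of $\sigma$ is by definition a turn both of whose directions have height $r$, hence both directions come from edges of $H_r$. Consequently every $r$-illegal turn of $\sigma$ occurs between two consecutive edges of $H_r$ inside the same maximal $H_r$-subpath of $\sigma$ (or, if $\sigma$ is a circuit entirely in $H_r$, possibly at its basepoint). This gives a bound on the number of $r$-illegal turns of $\sigma$ in terms of $M$ alone, independently of the lengths of the connecting paths in $G_{r-1}$.

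With that observation in place, I would repeat the two-step structure of the previous proof. Let $P_r$ be the finite set from \BookOne\ Lemma~4.2.5 (paths of height $r$ that begin and end with an $H_r$-edge, have exactly one $r$-illegal turn, and whose $f^k_\#$-images have uniformly bounded edge count). Finiteness of $P_r$ yields a single integer $K_1$, independent of $\sigma$, such that $f^{K_1}_\#(\rho)$ is a periodic Nielsen path for every $\rho \in P_r$; such a Nielsen path in turn decomposes into indivisible periodic Nielsen paths of height $r$ and subpaths in $G_{r-1}$. For the second step, use the fact from \BookOne\ Section~4.2 that the number of $r$-illegal turns of $f^k_\#(\sigma)$ is nonincreasing in $k$, so starting from at most (say) $M$ illegal turns the count stabilizes after $K_2 \le M$ iterations; then \BookOne\ Lemma~4.2.6 produces a splitting of $f^{K_2}_\#(\sigma)$ into $r$-legal subpaths and elements of $P_r$. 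Setting $d = K_1 + M$ and applying $f^{K_1}_\#$ to this splitting yields the desired decomposition, since $r$-legal pieces further split into edges of $H_r$ and subpaths of $G_{r-1}$.

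The point I expect to require the most care is checking that $r$-illegal turns truly are controlled by $M$ even after iteration, specifically that no new $r$-illegal turns are created by $f_\#$ at the junctions where maximal $H_r$-subpaths of $\sigma$ meet the $G_{r-1}$-connectors. The relevant verification uses RTT-(i): the initial direction of $f(E)$ for an oriented $H_r$-edge $E$ is itself an $H_r$-direction, so at such a junction the two new directions have different heights and the turn is not of height $r$, in particular no cancellation there can create or merge $r$-illegal turns. Once this is pinned down, the same monotonicity argument from the previous lemma gives the uniform $K_2$, and the finiteness of $P_r$ gives the uniform $K_1$, so that $d$ depends only on $M$ as required.
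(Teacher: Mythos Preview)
There is a genuine gap at the claim $K_2 \le M$. A nonincreasing sequence of nonnegative integers bounded above by $M$ need not stabilize within $M$ steps: the count can remain constant for arbitrarily many iterations before dropping. Concretely, even when Lemma~4.2.6 of \BookOne\ gives a splitting of $f^k_\#(\sigma)$ into $r$-legal pieces and pieces with a single $r$-illegal turn (because the count held steady from step $k$ to step $k+1$), nothing yet forces those single-illegal-turn pieces to lie in the finite set $P_r$; one of them may lose its illegal turn only many steps later, and then the total count drops later too. The underlying difficulty is that you need one $K_2$ valid for \emph{all} height~$r$ paths with at most $M$ edges in $H_r$, and there are infinitely many such paths because the $G_{r-1}$-connectors can be arbitrarily long. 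The finiteness argument that worked in Lemma~\ref{LemmaEGPathSplitting} (finitely many paths of length $\le L$) therefore does not transfer. Your final paragraph about RTT-(i) correctly rules out \emph{new} $r$-illegal turns at the $H_r$/$G_{r-1}$ junctions, but that does not supply the missing uniform stabilization bound.

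The paper obtains uniformity by a different mechanism: induction on $M$ together with the Bounded Cancellation Lemma. If $\sigma$ (after stripping its extremal $G_{r-1}$-segments so it begins and ends in $H_r$) has total length at most an explicit bound depending on $M$, then Lemma~\ref{LemmaEGPathSplitting} applies directly. Otherwise some maximal $G_{r-1}$-subpath $\beta$ of $\sigma$ is long enough that, after applying $f^D_\#$ with $D$ supplied by the inductive hypothesis for $M-1$, not all of $f^D_\#(\beta)$ is cancelled against the images of the two flanking subpaths; since each flank has strictly fewer $H_r$-edges, the inductive hypothesis gives the desired splitting for each flank, and RTT-(i) lets one reassemble. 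Circuits are reduced to the path case by the same device.
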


\begin{proof} In this proof, all paths and circuits are in $G$ and have endpoints, if any, at vertices.

If a path or circuit $\alpha$ of height $r$ has a splitting satisfying the conclusions of the lemma, the terms being edges and indivisible Nielsen paths of height $r$ and paths in $G_{r-1}$, then $f_\#(\alpha)$ also has a splitting satisfying the conclusions. We are therefore free to increase the exponent on $f_\#$ as needed in this proof, which we shall do without mention.

\subparagraph{Case 1: $\sigma$ is a height $r$ path.} We prove the lemma in this case by induction on~$M$. If $\sigma$ is $r$-legal, in particular if $M=1$, we can take $d=0$. Assume by induction that the exponent $D=d_{M-1}$ works when $\sigma$ has $\le M-1$ edges in $H_r$. 

Let $B \ge 0$ be a bounded cancellation constant for $f^D$. Let $L$ be a constant such that if $\beta$ is a path of length $> L$ then the path $f_\#(\beta)$ has length $\ge 2B+1$; the constant $L$ exists because the lift of $f$ to the universal cover of $G$ is a quasi-isometry. Given a path $\alpha \beta \gamma$, if $\beta$ has length $> L$ then not all of $f^D_\#(\beta)$ is cancelled when $f^D_\#(\alpha) f^D_\#(\beta) f^D_\#(\gamma)$ is tightened to $f^D_\#(\alpha \beta \gamma)$: at most $B$ initial edges of $f^D_\#(\beta)$ cancel with $f^D_\#(\alpha)$, and at most $B$ terminal edges cancel with $f^D_\#(\gamma)$. 

Let $d_M$ be the maximum of $D$ and the constant $K=k_{M+ML-L}$ from Lemma~\ref{LemmaEGPathSplitting}. Let $\sigma$ be a height $r$ path with exactly $M$ edges in $H_r$. 

We first reduce to the subcase that $\sigma$ begins and ends with edges in $H_r$. In the case of a general path $\sigma$, we can write $\sigma = \alpha \tau \beta$ where $\alpha,\beta$ are the longest initial and terminal segments in $G_{r-1}$. Knowing that the path $f^{d_M}_\#(\tau)$ satisfies the conclusions, it has a splitting $f^{d_M}_\#(\tau) = \alpha' \cdot \tau' \cdot \beta'$ where $\alpha',\beta'$ are its longest initial and terminal segments in $G_{r-1}$ and where $\tau'$ satisfies the conclusions, and so $f^{d_M}_\#(\sigma) = [f^{d_M}_\#(\alpha) \alpha'] \cdot \tau' \cdot [\beta' f^{d_M}_\#(\beta)]$ has a splitting that satisfies the conclusions, completing the reduction.


%

Suppose now that $\sigma$ begins and ends with edges in $H_r$. If $\sigma$ has length $\le M+ML-L$ then $h^K_\#(\sigma)$ satisfies the conclusions. If~$\sigma$ has length $>M+ML-L$ then, since $\sigma$ begins and ends with edges in $H_r$, we can write $\sigma = \alpha \beta \gamma$ where $\beta$ in $G_{r-1}$ has length $> L$ and $\alpha,\gamma$ each have between~$1$ and $M-1$ edges in $H_r$. The induction hypothesis implies that $f^D_\#(\alpha)$ and $f^D_\#(\gamma)$ each satisfy the conclusions, so there are splittings
\begin{align*}
f^D_\#(\alpha) &= \alpha' \cdot \beta_1 \\
f^D_\#(\gamma) &= \beta_2 \cdot \gamma'
\end{align*}
where $\beta_1$, $\beta_2$ are each maximal subpaths in $G_{r-1}$, the paths $\alpha',\gamma'$ satisfy the conclusions, and the terminal edge of $\alpha'$ and the initial edge of $\gamma'$ are both in~$H_r$. Our choice of $L$ guarantees that $\beta' =  [\beta_1 f^D_\#(\beta) \beta_2]$ is nontrivial, and so 
\begin{align*}
f^D_\#(\sigma) & = [f^D_\#(\alpha) f^D_\#(\beta) f^D_\#(\gamma)] \\
  &= \alpha' \beta' \gamma'
\end{align*}
This is a splitting of $f^D_\#(\sigma)$, because the turns $\{\bar\alpha',\beta'\}$, $\{\bar\beta',\gamma'\}$ are legal by RTT-(i), and $\alpha',\gamma'$ have splittings that satisfy the conclusions. It follows that $f^D_\#(\sigma)$ has a splitting that satisfies the conclusions.

\subparagraph{Case 2: $\sigma$ is a height $r$ circuit.} Suppose that $\sigma$ contains exactly $M$ edges in~$H_r$. Using the constant $d_M$ from Case~1, let $B'$ be a bounded cancellation constant for $f^{d_M}$. Let $L'$ be a constant such that if $\beta$ is a path of length $>L'$ then the path $f^{d_M}_\#(\beta)$ has length $\ge 2B'+1$. Let $d'_M$ be the maximum of $d_M$ and the constant $K'=k_{M(L'+1)}$ from Lemma~\ref{LemmaEGPathSplitting}. If $\sigma$ has length $\le M(L'+1)$ then $f^{K'}_\#(\sigma)$ satisfies the conclusions. If $\sigma$ has length $>M(L'+1)$ then $\sigma$ has a maximal subpath $\beta$ in $G_{r-1}$ of length $> L'$, with a complementary subpath $\tau$ that begins and ends in $H_r$ and has exactly $M$ edges in $H_r$. Applying Case~1, the path $f^{d_M}_\#(\tau)$ satisfies the conclusions, so there is a splitting 
$$f^{d_M}_\#(\tau) = \beta_1 \cdot \tau' \cdot \beta_2
$$
where $\beta_1,\beta_2$ are maximal subpaths in $G_{r-1}$, $\tau'$ satisfies the conclusions, and $\tau'$ begins and ends with edges of $H_r$. The choice of $L'$ guarantees that $\beta' = [\beta_2 f^{d_M}_\#(\beta) \beta_1]$ is nontrivial, and so $f^{d_M}_\#(\sigma) = \beta' \tau'$. This is a splitting of the circuit $f^{d_M}_\#(\sigma)$, because $\{\bar\beta',\tau'\}$ and $\{\bar\tau',\beta'\}$ are legal by RTT-(i) and $\tau'$ has a splitting that satisfies the conclusions. It follows that $f^{d_M}_\#(\sigma)$ has a splitting that satisfies the conclusions.

\bigskip

In conclusion, we have proved the lemma with $d = \max\{d_M,d'_M\}$.
\end{proof}

\subsection{Properties of Attracting Laminations}
\label{SectionLams} 

Attracting laminations were defined in Section~\ref{SectionTheBasics}. We review from Section~3 of \BookOne\ various properties of attracting laminations. Some proofs will use concepts of tiles from the same section.

\paragraph{Attracting laminations and \cts.} The following fact is compiled from results in \BookOne\ Section~3, from Definition~3.1.5 to Definition~3.1.12:

\begin{fact} \label{FactLamsAndStrata}
For any $\phi \in \Out(F_n)$ and any relative train track $f \from G \to G$ representing a positive power of $\phi$ such that each \eg\ stratum of $f$ is aperiodic (such as a \ct), the set $\L(\phi)$ is in one-to-one correspondence with the set of \eg\ strata $H_r \subset G$, where $\Lambda_r \in \L(\phi)$ corresponds to $H_r$ if and only if the realization in $G$ of each generic leaf of $\Lambda_r$ has height~$r$. 
\qed\end{fact}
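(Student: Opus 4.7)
The plan is to build a bijection between the set of EG strata of $f$ and $\L(\phi)$ using the tile construction of \BookOne\ Section~3. After replacing $\phi$ by a suitable positive power I may assume that $f$ represents $\phi$ itself.

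For each EG stratum $H_r$ fix an edge $E \subset H_r$ and consider the tiles $\tau_k = f^k_\#(E)$. Since $H_r$ is aperiodic with Perron-Frobenius eigenvalue $\lambda_r > 1$, the lengths of $\tau_k$ grow like $\lambda_r^k$. Pick basepoints near the midpoints of the $\tau_k$ and pass to a weak accumulation point in $\wh\B(G)$ to produce a line $\ell_r$ all of whose finite subpaths occur in some $\tau_k$. To recognize $\ell_r$ as a generic leaf of an attracting lamination I would verify three properties. (a) \emph{Birecurrence}: aperiodicity of the transition matrix forces every edge of $H_r$ to appear in $f^k_\#(E')$ for every $E' \subset H_r$ once $k$ is large, from which each finite subpath of $\ell_r$ reappears infinitely often in both directions. (b) \emph{Attracting neighborhood}: a weak neighborhood of $\ell_r$ is determined by a long subpath $\alpha \subset \ell_r$; such an $\alpha$ decomposes into $r$-legal arcs separated by a bounded number of $r$-illegal turns, and applying $f_\#$ multiplies the length of each $r$-legal arc by roughly $\lambda_r$ while, by the Bounded Cancellation Lemma combined with RTT-(i), only boundedly many edges are eaten at the $r$-illegal turns; iterating produces arbitrarily long $\ell_r$-subpaths inside the iterates of any line through $\alpha$. (c) \emph{Not a rank-one axis}: such an axis would be a periodic line, but the tile construction yields exponentially many distinct finite $\ell_r$-subpaths of each length. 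Setting $\Lambda_r := \cl(\ell_r)$ gives an element of $\L(\phi)$, and every leaf of $\Lambda_r$, being a weak limit of translates of $\ell_r$, has height exactly~$r$.

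Distinct EG strata yield generic leaves of distinct heights, hence the map $H_r \mapsto \Lambda_r$ is injective. For surjectivity, given $\Lambda \in \L(\phi)$ with generic leaf $\ell$ realized in $G$, let $r$ be the maximal height of any stratum crossed by $\ell$ (well defined since $G$ has finitely many strata). I would rule out $H_r$ being a zero stratum using property (Zero Strata), which forces any edge of $H_r$ in $\ell$ to be part of a longer subpath entering the enveloping EG stratum $H_s$ with $s > r$. I would rule out $H_r$ being NEG by observing that (NEG Nielsen Paths) and Fact~\ref{FactNEGEdgeImage} describe $f(E) = E \cdot u$ with $u$ of height $< r$ for the NEG edge $E = H_r$, so iterates of any subpath of $\ell$ grow in $H_r$-content only linearly, contradicting the exponential weak attraction at a generic leaf. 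Hence $H_r$ is EG; then some edge $E \subset H_r$ appears in $\ell$, so the tile $f^k_\#(E)$ appears in $f^k_\#(\ell)$ for all $k$, giving arbitrarily long $\ell_r$-subpaths in the weak closure of $\ell$ and hence $\Lambda \supset \Lambda_r$. Symmetrically, applying the attracting neighborhood property of $\ell_r$ to a long subpath of $\ell$ that lies near $\ell_r$ shows $\Lambda_r \supset \Lambda$, hence $\Lambda = \Lambda_r$.

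The main obstacle will be step (b): quantitatively balancing exponential stretching of $r$-legal arcs against the bounded cancellation occurring at $r$-illegal turns, to show that iterates of any line containing a sufficiently long subpath of $\ell_r$ weakly converge to $\ell_r$. The second delicate point is excluding NEG top-height strata in the surjectivity argument without circularity; the cleanest route, as indicated above, is to read off directly from (NEG Nielsen Paths) that in the NEG case the height-$r$ combinatorics of iterates of $\ell$ grow at most linearly, which is incompatible with an attracting neighborhood for a lamination whose generic leaf has top height~$r$.
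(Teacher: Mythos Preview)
The paper does not give its own proof of this fact; it is stated with a \qed\ and attributed wholesale to \BookOne\ Section~3 (Definition~3.1.5 through Definition~3.1.12). So there is no paper proof to compare against, only the construction in \BookOne. Your forward direction (EG stratum $\mapsto$ attracting lamination via tiles) is indeed the \BookOne\ construction and the sketch is broadly sound, though in (b) your phrasing is off: a subpath $\alpha$ of $\ell_r$ is already $r$-legal, so there are no $r$-illegal turns inside it; the bounded cancellation happens only at the two ends of $\alpha$ when it sits inside a larger line.

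There is a genuine gap in your surjectivity argument. The statement is about an arbitrary relative train track with aperiodic EG strata; CTs are mentioned only as an example. Yet to rule out the top stratum of a generic leaf being zero or NEG you invoke (Zero~Strata), (NEG~Nielsen~Paths), and Fact~\ref{FactNEGEdgeImage}, all of which are \ct-specific conclusions that are simply unavailable for a general relative train track. You need an argument that uses only the relative train track axioms (RTT-(i)--(iii)) and aperiodicity of EG strata. Separately, your final step ``applying the attracting neighborhood property of $\ell_r$ \ldots\ shows $\Lambda_r \supset \Lambda$'' does not do what you claim: knowing that $\ell$ lies in an attracting neighborhood of $\ell_r$ tells you $\phi^k(\ell)$ weakly converges to $\ell_r$, which gives $\ell_r \in \cl(\{\phi^k(\ell)\}) \subset \Lambda$, i.e.\ $\Lambda_r \subset \Lambda$, the inclusion you already had. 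To get $\ell \in \Lambda_r$ you must instead argue that some generic leaf of $\Lambda_r$ lies in an attracting neighborhood of $\ell$, then use $\phi$-invariance and closedness of $\Lambda_r$.
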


The next two results characterize leaves and generic leaves of attracting laminations in terms of relative train tracks. Recall from Definition~3.1.7 of \BookOne\ that a path of the form $f^k_\#(E)$, where $k \ge 1$ and $E$ is an edge of $H_r$, is called a \emph{$k$-tile of height $r$}; we may drop $k$ and/or $r$ if they are clear from the context. 


\begin{fact} \label{FactAttractingLeaves}
With the notation of Fact~\ref{FactLamsAndStrata}, for each \eg\ stratum $H_r \subset G$ we have:
\begin{enumerate}
\item \label{ItemTileDecomp} 
For each $k$, each generic leaf of $\Lambda_r$ has a decomposition into subpaths each of which is a $k$-tile of height $r$ or is a path in $G_{r-1}$. 
\item \label{ItemTileExhausted} 
Each generic leaf of $\Lambda_r$ can be exhausted by tiles of height $r$, meaning that it can be written as an increasing union of subpaths each of which is a tile of height $r$.
\item \label{ItemTilePF}
There exists $p$ such that each for each $k \ge 0$, each $k+p$-tile of height $r$ contains each $k$ tile of height $r$.
\item \label{ItemLeafAsLimit}
The leaves of $\Lambda_r$ are characterized as the set of lines to which some (any) edge of $H_r$ is weakly attracted under iteration by $f$.
\item \label{ItemLeafComplSplit} Every generic leaf $\ell$ of $\Lambda_r$ has a complete splitting, which refines the splitting of $\ell$ into edges of $H_r$ and maximal subpaths in $G_{r-1}$.
\end{enumerate}
\end{fact}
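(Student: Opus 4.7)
The approach is to build everything on Perron-Frobenius theory applied to the transition matrix $M$ of the aperiodic \eg\ stratum $H_r$, augmented by RTT-(i) through RTT-(iii) and, for item \pref{ItemLeafComplSplit}, the CT axiom (Completely Split). Aperiodicity means some power $M^p$ has strictly positive entries, and the Perron-Frobenius eigenvalue $\lambda>1$ guarantees exponential growth of tile length. These are the two facts that drive the proof.

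I would begin with item \pref{ItemTilePF}, the uniform containment statement. Choose $p$ so that $M^p$ has all positive entries; then for any edge $E$ of $H_r$ the path $f^p_\#(E)$ contains every edge $E'$ of $H_r$ as a subpath. The turns inside $f^p_\#(E)$ at the endpoints of $H_r$-edges (where they meet $G_{r-1}$-subpaths) are legal by RTT-(i), so applying $f^k_\#$ does not cancel these occurrences, yielding the inclusion $f^{k+p}_\#(E) \supset f^k_\#(E')$. For items \pref{ItemTileDecomp} and \pref{ItemTileExhausted}, I would work with a generic leaf $\ell$, which by Fact~\ref{FactLamsAndStrata} has height~$r$ and hence contains at least one edge $E$ of $H_r$ in its realization. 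Because $\ell$ is birecurrent and the lamination $\Lambda_r$ is $f$-invariant, the tiles $f^j_\#(E)$ (of length growing like $\lambda^j$) all appear as subpaths of $\ell$ centered around any recurrent copy of $E$; this gives the tile exhaustion in \pref{ItemTileExhausted}. Then \pref{ItemTileDecomp} is obtained from any such exhausting tile by decomposing it at the legal turns separating maximal $H_r$-subpaths from $G_{r-1}$-subpaths, which realizes $\ell$ as an alternation of $k$-tiles of height~$r$ with paths in $G_{r-1}$.

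Item \pref{ItemLeafAsLimit} would follow by combining \pref{ItemTileExhausted} with the tile-containment in \pref{ItemTilePF}: any weak limit of the tiles $f^k_\#(E)$ contains each edge of $H_r$ infinitely often and arises as an exhaustion by tiles, so qualifies as a leaf of $\Lambda_r$; conversely \pref{ItemTileExhausted} exhibits every leaf as such a limit. For item \pref{ItemLeafComplSplit} in the \ct\ setting, (Completely Split) gives a complete splitting of each edge of $H_r$, and Fact~\ref{FactComplSplitStable} then provides a complete splitting of each tile $f^k_\#(E)$; the exhaustion from \pref{ItemTileExhausted} then assembles these into a complete splitting of $\ell$, and uniqueness of complete splittings (Fact~\ref{FactFinestSplitting}~\pref{ItemCompleteCharacterized}) guarantees that the splittings of successive tiles are compatible along their overlap. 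The main obstacle I anticipate is verifying this last compatibility cleanly, together with the assertion that the resulting complete splitting refines the coarse $H_r$-vs-$G_{r-1}$ decomposition; this amounts to showing that the maximal $G_{r-1}$-subpaths in the complete splitting of a tile are exactly the $G_{r-1}$-pieces of \pref{ItemTileDecomp}, which should follow from Fact~\ref{FactFinestSplitting}~\pref{ItemRefined} applied to the coarse splitting at vertices.
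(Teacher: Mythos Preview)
Your approach is essentially the same as the paper's: item~\pref{ItemTilePF} via positivity of $M^p$, items~\pref{ItemTileDecomp} and~\pref{ItemTileExhausted} from the tile structure of generic leaves (the paper simply cites \BookOne\ Lemma~3.1.10 here rather than sketching), item~\pref{ItemLeafAsLimit} by combining \pref{ItemTileExhausted} with \pref{ItemTilePF}, and item~\pref{ItemLeafComplSplit} from (Completely Split), Fact~\ref{FactComplSplitStable}, and Fact~\ref{FactFinestSplitting}~\pref{ItemRefined}.

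Two small imprecisions are worth fixing. First, in your derivation of \pref{ItemTileDecomp} from \pref{ItemTileExhausted}: decomposing a large exhausting $j$-tile at the $H_r$/$G_{r-1}$ turns yields \emph{edges} of $H_r$, not $k$-tiles. What you need is that $f^j_\#(E) = f^k_\#(f^{j-k}_\#(E))$, and then apply the $H_r$/$G_{r-1}$ decomposition to $f^{j-k}_\#(E)$ before hitting with $f^k_\#$; RTT-(i)--(iii) then give the $k$-tile decomposition. Second, in \pref{ItemLeafAsLimit}: a line $\ell$ to which $E$ is weakly attracted does \emph{not} automatically ``arise as an exhaustion by tiles''; weak attraction only says each finite subpath of $\ell$ lies in some $f^k_\#(E)$. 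The paper's argument here is that such a subpath then lies (via \pref{ItemTilePF}) in some $f^{k+p}_\#(E')$, which by \pref{ItemTileDecomp} occurs inside a generic leaf $\beta$, so $\ell$ lies in the weak closure of $\beta$, namely $\Lambda_r$. For the converse direction you should also note that \pref{ItemTileExhausted} applies only to \emph{generic} leaves; arbitrary leaves of $\Lambda_r$ are handled by passing to the weak closure.
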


\subparagraph{Remark.} Nongeneric leaves of $\Lambda_r$ are completely split too. In fact, to be a completely split path is a closed property in the weak topology on $\B(G)$. We omit the proof.

\begin{proof} 
Items~\pref{ItemTileDecomp} and~\pref{ItemTileExhausted} are contained in Lemma 3.1.10 of \BookOne. Item~\pref{ItemTilePF} follows for $k=0$ by choosing $p$ so that the $p^{\text{th}}$ power of the transition matrix for $H_r$ is positive, and it follows for general $k$ by induction. 

We prove item~\pref{ItemLeafAsLimit}. If $\ell$ is a generic leaf of $\Lambda_r$ then by \pref{ItemTileExhausted} any subpath of $\ell$ is a subpath of some $k$-tile, which by \pref{ItemTilePF} is a subpath of some tile $f^{k+p}_\#(E)$ for some (any) edge $E$ of $H_r$. It follows that $E$ is weakly attracted to $\ell$ by iteration of $\phi$, and so $E$ is also weakly attracted to every other leaf of $\Lambda_r$, each of which is a weak limit of $\ell$. Conversely, given an edge $E$ of $H_r$, suppose $E$ is weakly attracted to a line $\ell$. Choose a generic leaf $\beta$ of~$\Lambda_r$. Each subpath of $\ell$ is contained in $f^k_\#(E)$ for some $k$, which by \pref{ItemTilePF} is contained in any $r$-tile of the form $f^{k+p}_\#(E')$. By \pref{ItemTileDecomp}, some such tile $f^{k+p}_\#(E')$ is a subpath of $\beta$, proving that $\ell$ is in the weak closure of $\beta$, which is~$\Lambda_r$.

To prove item~\pref{ItemLeafComplSplit}, by combining (Completely Split) with Fact~\ref{FactComplSplitStable} it follows that each tile is completely split. By definition of relative train track, the decomposition of a tile of height $r$ into edges of $H_r$ and maximal subpaths in $G_{r-1}$ is a splitting which, by Fact~\ref{FactFinestSplitting}~\pref{ItemRefined}, is refined by the complete splitting of that tile. Combining this with item~\pref{ItemTileExhausted}, it follows for each generic leaf $\ell$ that each maximal subpath of $\ell$ in $G_{r-1}$ is completely split, and taking these terms together with the edges of $H_r$ we obtain a complete splitting of $\ell$.
\end{proof}

\begin{fact}\label{FactTwoEndsGeneric}
With the notation of Fact~\ref{FactLamsAndStrata}, for any \eg\ stratum $H_r \subset G$ with corresponding lamination $\Lambda_r \in \L(\phi)$, and for any leaf $\ell$ of $\Lambda$, the following are equivalent:
\begin{enumerate}
\item \label{ItemGenericLeaf}
$\ell$ is a generic leaf of $\Lambda_r$.
\item \label{ItemBirecurrentLeaf}
$\ell$ is birecurrent and has height $r$.
\item \label{ItemDoubleHeightLeaf}
Both ends of $\ell$ have height $r$.
\end{enumerate}
\end{fact}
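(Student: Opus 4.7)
The proof cycles through $(1) \Rightarrow (2) \Rightarrow (3) \Rightarrow (1)$; the first two implications are short, while the third carries the substance.

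$(1) \Rightarrow (2)$: By definition every generic leaf is birecurrent, and Fact~\ref{FactAttractingLeaves}~\itemref{ItemTileExhausted} exhausts $\ell$ by tiles of height $r$, each of which lies in $G_r$ and crosses $H_r$, so $\ell$ has height exactly $r$. For $(2) \Rightarrow (3)$: since $\ell$ has height $r$ it contains some edge $E_0 \subset H_r$, and birecurrence forces $E_0$ to recur infinitely often in every positive and every negative subray, so both ends have height $r$.

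For $(3) \Rightarrow (1)$, fix a generic leaf $\ell_0$ of $\Lambda_r$. Since $\ell \in \Lambda_r = \cl(\ell_0)$, every finite subpath of $\ell$ is already a subpath of $\ell_0$. I would prove the following density statement: every finite subpath $\mu$ of $\ell_0$ appears arbitrarily far out in both ends of $\ell$. This yields the three defining properties of a generic leaf. Birecurrence: every finite subpath of $\ell$, being a subpath of $\ell_0$, recurs in both ends of $\ell$. Attracting neighborhood: for any long subpath of $\ell$ containing $H_r$-structure, the weak neighborhood it defines becomes an attractor, since $H_r$-edges of a line produce tiles under $f^k_\#$ that, by Fact~\ref{FactAttractingLeaves}~\itemref{ItemTilePF}, eventually contain any prescribed finite subpath of any generic leaf. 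No-axis condition: $\ell$ shares infinitely many distinct subpaths with $\ell_0$, far more than the axis of a rank-one free factor generator admits.

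To prove the density claim, fix $k$ so large that every $k$-tile $f^k_\#(E)$ with $E \subset H_r$ contains $\mu$ (possible by Fact~\ref{FactAttractingLeaves}~\itemref{ItemTileDecomp} and~\itemref{ItemTilePF}). Since $\phi$ fixes $\Lambda_r$ (Fact~\ref{FactPeriodicIsFixed}(4)) and acts bijectively on its leaves, choose the leaf $\ell_{-k}$ with $f^k_\#(\ell_{-k}) = \ell$; the condition of having both ends of height $r$ is $\phi$-invariant, so $\ell_{-k}$ has this property too. The remark following Fact~\ref{FactAttractingLeaves} equips $\ell_{-k}$ with a complete splitting, and each lone $H_r$-edge term $E$ of that splitting maps under $f^k_\#$ to a $k$-tile appearing as a subpath of $\ell$ and hence containing $\mu$. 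The main obstacle is confirming that lone $H_r$-edge terms occur arbitrarily far out in both ends of $\ell_{-k}$'s splitting; this amounts to ruling out leaves whose height-$r$ content is concentrated entirely in the unique indivisible height-$r$ Nielsen path (Fact~\ref{FactEGNPUniqueness}), which one handles by noting that such leaves would form a $\phi$-invariant set of concatenations of that Nielsen path with $G_{r-1}$-paths — a proper sublamination of $\Lambda_r$ — so a leaf of $\Lambda_r$ with both ends of height $r$ must carry lone $H_r$-edges in both directions.
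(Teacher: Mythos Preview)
Your $(1)\Rightarrow(2)\Rightarrow(3)$ matches the paper. For the return, the paper does not attempt $(3)\Rightarrow(1)$ directly: it proves $(3)\Rightarrow(2)$ and then cites \BookOne\ Lemma~3.1.15 for $(2)\Rightarrow(1)$. The $(3)\Rightarrow(2)$ argument stays entirely inside a single generic leaf $\beta$ and uses only tile structure: the $k$-tile decomposition of $\beta$ (Fact~\ref{FactAttractingLeaves}\pref{ItemTileDecomp}) gives $N_k$ such that any subpath of $\beta$ with at least $N_k$ edges in $H_r$ contains a $k$-tile; since every finite subpath of $\ell$ is already a subpath of $\beta$, the same bound applies to $\ell$; both ends of $\ell$ having height $r$ then puts a $k$-tile in every ray of $\ell$ for every $k$; and since any subpath $\sigma$ of $\ell$ sits inside some tile (Fact~\ref{FactAttractingLeaves}\pref{ItemTileExhausted},\pref{ItemTilePF}), $\sigma$ recurs in every ray. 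No preimages under $f_\#$, no complete splittings, and the argument works in the stated generality of an arbitrary relative train track with aperiodic \eg\ strata.

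Your route has a genuine gap at the ``main obstacle.'' You assert that leaves whose height-$r$ content lies entirely in copies of $\rho_r$ would form a proper $\phi$-invariant sublamination, and conclude that $\ell_{-k}$ carries lone $H_r$-edges in both directions; but nothing in that sentence prevents $\ell_{-k}$ itself from lying in such a proper sublamination, so the implication does not follow. The actual fix is different and shorter: every leaf of $\Lambda_r$ is $r$-legal, because tiles are $r$-legal and a height-$r$ illegal turn is a length-two subpath that can never occur in any tile, hence never in a weak limit of tiles. Thus no term in a complete splitting of $\ell_{-k}$ is a height-$r$ Nielsen path, and every $H_r$-edge is already a lone term. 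Two further points: your verification of the attracting-neighborhood and no-axis conditions is only gestured at---once you have birecurrence, just invoke the cited $(2)\Rightarrow(1)$; and relying on complete splittings and the unproved remark after Fact~\ref{FactAttractingLeaves} forces $f$ to be a \ct, which is narrower than the hypothesis of Fact~\ref{FactLamsAndStrata}.
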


\begin{proof} The implications \pref{ItemGenericLeaf} $\implies$ \pref{ItemBirecurrentLeaf} $\implies$ \pref{ItemDoubleHeightLeaf} are obvious, and \pref{ItemBirecurrentLeaf} $\implies$ \pref{ItemGenericLeaf} follows from \BookOne\ Lemma~3.1.15. 

We prove \pref{ItemDoubleHeightLeaf} $\implies$ \pref{ItemBirecurrentLeaf}. Let $\beta$ be a generic leaf of $\Lambda$. For each $k \ge 0$, applying Fact~\ref{FactAttractingLeaves}~\pref{ItemTileDecomp} to $\beta$ it follows that there exists $N_k > 0$ so that any subpath of $\beta$ that contains at least $N_k$ edges of $H_s$ contains a $k$-tile. Since every subpath of $\ell$ occurs as a subpath of $\beta$, every subpath of $\ell$ that contains at least $N_k$ edges of $H_s$ contains a $k$-tile. Since both ends of $\ell$ have height $s$ it follows that every initial or terminal ray of $\ell$ contains a $k$-tile for all~$k$. By Fact~\ref{FactAttractingLeaves}~\pref{ItemTileExhausted}, each subpath of $\beta$, and hence each subpath $\sigma$ of $\ell$, is contained in a subpath which is a tile, and hence $\sigma$ occurs as a subpath of every initial or terminal ray  of $\ell$. This proves that $\ell$ is birecurrent.
\end{proof}

\begin{remark} \label{basin is open} Suppose that $\fG$ is a \rtt\ and that $\Lambda^+ \in \L(\phi)$. Lemma~4.2.2 and Corollary~4.2.4 of \BookOne\ imply that being weakly attracted to $\Lambda^+$ under iteration by $f_\#$ is an open condition on paths in $\B(G)$. 
\end{remark}

\paragraph{Pairing of attracting laminations.} Consider any $\phi \in \Out(F_n)$. For any $\Lambda \in \L(\phi)$ and any generic leaf $\ell \in \Lambda$, their free factor supports $\A_\supp(\Lambda)$ and $\A_\supp(\ell)$ are equal and consist of a single free factor; see Definition~3.2.3 of \BookOne. 

By Lemma~3.2.4 of \BookOne, there is a bijection between $\L(\phi)$ and $\L(\phi^\inv)$ such that $\Lambda^+ \in \L(\phi)$ and $\Lambda^- \in \L(\phi^\inv)$ correspond under the bijection if and only if their free factor supports $\A_\supp(\Lambda^-)$, $\A_\supp(\Lambda^+)$ are equal. We use the notation $\L^\pm(\phi)$ to denote the set of ordered pairs $\Lambda^\pm = (\Lambda^+,\Lambda^-) \in \L(\phi) \cross \L(\phi^\inv)$ that correspond under this bijection, and we refer to the elements of $\L^\pm(\phi)$ as the \emph{dual lamination pairs} of~$\phi$.

\begin{fact}
\label{FactDualDifferent}
For each $\phi \in \Out(F_n)$ and each $\Lambda^\pm \in \L^\pm(\phi)$, $\Lambda^-$ is not a subset of $\Lambda^+$, equivalently no generic leaf of $\Lambda^-$ is a leaf of $\Lambda^+$.
\end{fact}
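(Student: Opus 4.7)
We argue by contradiction: suppose some generic leaf $\ell^-$ of $\Lambda^-$ is a leaf of $\Lambda^+$. The plan is to extract a contradiction using a \ct. Since the sets $\Lambda^\pm$ and their generic leaves are unchanged when $\phi$ is replaced by a rotationless positive power (Fact~\ref{FactRotationlessPower}), we may assume $\phi$ itself is rotationless. Choose a \ct\ $f\from G \to G$ representing $\phi$ that realizes the free factor system $\A_\supp(\Lambda^+)$ as a filtration element (Theorem~\ref{TheoremCTExistence}), and let $H_r \subset G$ be the \eg\ stratum of $f$ corresponding to $\Lambda^+$ under the bijection of Fact~\ref{FactLamsAndStrata}. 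Realize $\ell^-$ in $G$; since $\ell^-$ is a leaf of $\Lambda^+$, the height of this realization is at most $r$.

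The argument splits on whether that height is strictly less than $r$ or equal to $r$. In the first case the realization of $\ell^-$ is contained in $G_{r-1}$, so $\ell^-$ is carried by $[G_{r-1}]$ and hence $\A_\supp(\ell^-) \sqsubset [G_{r-1}]$. On the other hand, since $\ell^-$ is a generic leaf of $\Lambda^-$ we have $\A_\supp(\ell^-) = \A_\supp(\Lambda^-) = \A_\supp(\Lambda^+)$; and because the generic leaves of $\Lambda^+$ have height $r$ by Fact~\ref{FactLamsAndStrata}, they are not carried by $[G_{r-1}]$, so $\A_\supp(\Lambda^+) \not\sqsubset [G_{r-1}]$. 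This contradicts $\A_\supp(\ell^-) = \A_\supp(\Lambda^+) \sqsubset [G_{r-1}]$.

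In the second case the realization of $\ell^-$ has height $r$. Since $\ell^-$ is birecurrent as a generic leaf of $\Lambda^-$, Fact~\ref{FactTwoEndsGeneric} identifies $\ell^-$ as a generic leaf of $\Lambda^+$. Its weak closure is therefore both $\Lambda^+$ and $\Lambda^-$, forcing $\Lambda^+ = \Lambda^-$ as subsets of $\B$. To rule out this equality we would appeal to the characterization of endpoints of generic leaves as attracting fixed points of principal automorphisms on $\bdy F_n$ (see \BookOne\ Section~3): an endpoint of a generic leaf of $\Lambda^+$ lies in $\Fix_+(\wh\Phi)$ for some $\Phi \in P(\phi)$, whereas an endpoint of a generic leaf of $\Lambda^-$ lies in $\Fix_+(\wh{\Phi^{-1}}) = \Fix_-(\wh\Phi)$ for some $\Phi \in P(\phi)$. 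Matching these principal automorphisms using rotationlessness and the organization of principal data at the \eg\ stratum $H_r$ imposed by the \ct\ axioms, we would exhibit an endpoint of $\ell^-$ that lies in both $\Fix_+(\wh\Phi)$ and $\Fix_-(\wh\Phi)$ for a single $\Phi$, contradicting the disjointness recorded in Fact~\ref{LemmaFixPhiFacts}.

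The hard part will be the second case. Case~1 is essentially free-factor-support bookkeeping, but in Case~2 the bare equality $\Lambda^+ = \Lambda^-$ is not formally absurd on dynamical grounds alone, and one must extract the asymmetry between attractor and repeller laminations from the finer endpoint and principal-automorphism structure attached to a \ct.
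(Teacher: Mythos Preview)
Your reduction to $\Lambda^+ = \Lambda^-$ is correct and matches the paper; Case~1 is fine, though in fact unnecessary once you choose the \ct\ so that $[G_r] = \A_\supp(\Lambda^\pm)$, since then $\A_\supp(\ell^-) = [G_r]$ forces height $r$ immediately.

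The gap is in Case~2. Your proposed endpoint argument rests on two unsubstantiated claims. First, the assertion that endpoints of a generic leaf of $\Lambda^+$ lie in $\Fix_+(\wh\Phi)$ for some $\Phi \in P(\phi)$ is not a theorem in \BookOne\ Section~3, and in fact a generic leaf need not be $\phi$-periodic at all (there are uncountably many generic leaves), so its lifted endpoints need not lie in $\Fix(\wh\Phi)$ for any $\Phi$; moreover, when a generic leaf \emph{is} periodic, the representative fixing its lift is typically \emph{not} principal, by the very definition of principal automorphism. Second, and more seriously, even granting attractor status for some $\Phi$ from the $\Lambda^+$ side and some $\Psi$ from the $\Lambda^-$ side, the matching step---showing one can take $\Psi = \Phi^{-1}$ for the \emph{same} lift---requires comparing principal data across two different \cts\ (one for $\phi$, one for $\phi^{-1}$), and you give no mechanism for this. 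This is exactly the hard part you flag, and it is not clear the approach can be completed without importing substantial extra machinery.

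The paper bypasses all of this with a one-line invariant: the stretch factor homomorphism $\mu_\Lambda \from \Stab(\Lambda) \to \reals_+$ of \BookOne\ Definition~3.3.2 and Proposition~3.3.3, characterized by $\mu_\Lambda(\psi) > 1 \iff \Lambda \in \L(\psi)$. If $\Lambda^+ = \Lambda^-$ then $\mu_{\Lambda^+} = \mu_{\Lambda^-}$ as homomorphisms, whence $\mu_{\Lambda^+}(\phi) > 1$ and $\mu_{\Lambda^+}(\phi^{-1}) = \mu_{\Lambda^-}(\phi^{-1}) > 1$, giving $\mu_{\Lambda^+}(\Id) > 1$ and thus $\Lambda^+ \in \L(\Id) = \emptyset$. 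This encodes the attractor/repeller asymmetry you are reaching for, but as a group homomorphism rather than pointwise boundary dynamics, which is what makes the argument go through without any matching of lifts.
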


\begin{proof} The two conclusions are equivalent because $\Lambda^-$ is the closure of any of its generic leaves and $\Lambda^+$ is closed. 

Arguing by contradiction suppose that the generic leaf $\ell$ of $\Lambda^-$ is also a leaf of~$\Lambda^+$. Applying Theorem~\ref{TheoremCTExistence} choose a \ct\ $f \from G \to G$ representing a positive power of $\phi$ with \eg\ stratum $H_r$ corresponding to $\Lambda^+$ such that $[G_r] = \A_\supp(\Lambda^\pm) = \A_\supp(\ell)$, and so the realization of $\ell $ in $G_r$ has height $r$. By Fact~\ref{FactTwoEndsGeneric} the line $\ell$ is birecurrent, since it is generic leaf of $\Lambda^-$. Having shown that $\ell$ is a birecurrent height $r$ leaf of $\Lambda^+$, it is therefore a generic leaf of $\Lambda^+$ by Fact~\ref{FactTwoEndsGeneric}. It follows that $\Lambda^- = \Lambda^+$.

For any $\theta \in \Out(F_n)$ and any $\Lambda \in \L(\theta)$ let $\Stab(\Lambda)$ be the subgroup of all $\psi \in \Out(F_n)$ such that $\psi(\Lambda) = \Lambda$, so in particular $\theta,\theta^\inv \in \Stab(\Lambda)$. By \BookOne\ Definition~3.3.2 and Proposition~3.3.3 there is a natural homomorphism $\mu_\Lambda \from \Stab(\Lambda) \to \reals_+$ such that $\mu(\psi) > 1$ if and only if $\Lambda \in \L(\psi)$. Since $\Lambda^- = \Lambda^+$, by naturality we have equality of homomorphisms $\mu_{\Lambda^+} = \mu_{\Lambda^-}$. But $\mu_{\Lambda^+}(\phi) > 1$ and $\mu_{\Lambda^+}(\phi^\inv)=\mu_{\Lambda^-}(\phi^\inv) > 1$ so $\mu_{\Lambda^+}(\Id) = \mu_{\Lambda^+}(\phi) \cdot \mu_{\Lambda^+}(\phi^\inv) > 1$ implying that $\Lambda^+ \in \L(\Id)$, contradicting that $\L(\Id) = \emptyset$.
\end{proof}

\paragraph{Inclusion of attracting laminations.} The set $\L(\phi)$ of attracting laminations for $\phi \in \Out(F_n)$ is partially ordered by inclusion. Define an attracting lamination to be \emph{topmost} if it is not properly included in any other attracting lamination, and \emph{bottom-most} if it does not properly include any other attracting lamination. 

The following lemma shows how inclusion of attracting laminations can be detected on the level of \cts.

\begin{proposition}
\label{PropInclusion}
For any $\phi \in \Out(F_n)$, any \ct\ $f \from G \to G$ representing a rotationless positive power of $\phi$, and any two EG strata $H_r, H_s \subset G$ with corresponding attracting laminations $\Lambda_r, \Lambda_s \in \L(\phi)$, we have $\Lambda_r \subset \Lambda_s$ if and only if for some (every) edge $E$ of $H_s$ and some $k \ge 1$, some term in the complete splitting of $f^k_\#(E)$ is an edge $E'$ of $H_r$.
\end{proposition}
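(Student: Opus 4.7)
The plan is to establish the two implications separately, using the tile structure of generic leaves (Fact~\ref{FactAttractingLeaves}) and the uniqueness of complete splittings (Fact~\ref{FactFinestSplitting}); the ``some iff every'' clause will fall out of the same argument.

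For $(\Leftarrow)$: suppose $f^k_\#(E) = \sigma_1 \cdot \ldots \cdot \sigma_j$ is the complete splitting with $\sigma_i = E'$ an edge of $H_r$. Applying $f^m_\#$ termwise gives the splitting $f^{k+m}_\#(E) = f^m_\#(\sigma_1) \cdot \ldots \cdot f^m_\#(\sigma_j)$, so $f^m_\#(E')$ is a subpath of the $s$-tile $f^{k+m}_\#(E)$ for every $m \ge 0$. By Fact~\ref{FactAttractingLeaves}\pref{ItemLeafAsLimit}, a line is a leaf of $\Lambda_r$ precisely when every finite subpath of it eventually appears as a subpath of $f^m_\#(E')$, and similarly a line is a leaf of $\Lambda_s$ precisely when every finite subpath of it eventually appears in $f^m_\#(E)$. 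Since any finite subpath appearing in $f^m_\#(E')$ also appears in $f^{k+m}_\#(E)$, every leaf of $\Lambda_r$ is a leaf of $\Lambda_s$, giving $\Lambda_r \subset \Lambda_s$.

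For $(\Rightarrow)$: assume $\Lambda_r \subset \Lambda_s$ and pick a generic leaf $\ell$ of $\Lambda_r$. By Fact~\ref{FactLamsAndStrata} its realization has height $r$, and by Fact~\ref{FactAttractingLeaves}\pref{ItemLeafComplSplit} its complete splitting has edges of $H_r$ as terms; fix one such edge $E'$. Take a finite subpath $\tau$ of $\ell$ that contains $E'$ and is a concatenation of consecutive terms of the complete splitting of $\ell$, so $\tau$ has endpoints at vertices. Since $\ell$ is a leaf of $\Lambda_s$, $\tau$ appears as a subpath of some generic leaf $\ell_s$ of $\Lambda_s$, and by Fact~\ref{FactAttractingLeaves}\pref{ItemTileExhausted} $\tau$ occurs inside an $s$-tile $f^k_\#(E)$. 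The vertex endpoints of $\tau$ give a splitting $f^k_\#(E) = \alpha \cdot \tau \cdot \beta$ at vertices, so by Fact~\ref{FactFinestSplitting}\pref{ItemRefined} the complete splitting of $f^k_\#(E)$ refines this, and $\tau$ is a union of consecutive terms of that splitting. On the other hand, the splitting of $\tau$ inherited from the complete splitting of $\ell$ satisfies the hypothesis of Fact~\ref{FactFinestSplitting}\pref{ItemCompleteCharacterized}, hence is the unique complete splitting of $\tau$. Comparing the two splittings of $\tau$ forces $E'$ to appear as a term in the complete splitting of $f^k_\#(E)$.

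Finally, to upgrade ``some'' to ``every'': once the condition holds for one pair $(E_0,k_0)$, Fact~\ref{FactAttractingLeaves}\pref{ItemTilePF} guarantees that for suitable $p$ and any edge $E$ of $H_s$, the tile $f^{k_0+p}_\#(E)$ contains $f^{k_0}_\#(E_0)$ as a subpath with vertex endpoints; the same refinement/uniqueness argument transports the term $E'$ into the complete splitting of $f^{k_0+p}_\#(E)$. I expect the main obstacle to be the $(\Rightarrow)$ direction, specifically arguing that the two splittings of $\tau$ — the one inherited from $\ell$ and the one inherited from $f^k_\#(E)$ — must literally coincide, not merely both contain $E'$ as a subpath somewhere; this matching is exactly what the uniqueness clause of Fact~\ref{FactFinestSplitting}\pref{ItemCompleteCharacterized} delivers.
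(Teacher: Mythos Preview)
Your $(\Leftarrow)$ direction is fine and matches the paper's argument. The gap is in $(\Rightarrow)$, and it is not the obstacle you flagged. You write ``The vertex endpoints of $\tau$ give a splitting $f^k_\#(E) = \alpha \cdot \tau \cdot \beta$ at vertices,'' but having vertex endpoints gives only a \emph{decomposition}, not a splitting; Fact~\ref{FactFinestSplitting}\pref{ItemRefined} applies only to genuine splittings. Nothing rules out that the occurrence of $\tau$ inside $f^k_\#(E)$ lands in the interior of a single term of the complete splitting --- for instance, inside an indivisible Nielsen path $\rho$ of height~$r$. In that scenario $\alpha\tau\beta$ is not refined by the complete splitting, $\tau$ is not a union of consecutive terms of it, and your matching argument never gets started. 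The edge $E'$ can certainly appear as a subpath of $f^k_\#(E)$ without being a \emph{term}: the two legal halves of a height-$r$ Nielsen path contain $H_r$-edges.

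The paper avoids this by proving the contrapositive. Assuming no term of any $f^k_\#(E)$ is an $H_r$-edge, one shows by a height induction that every $H_r$-edge occurring in $f^k_\#(E)$ sits inside an indivisible Nielsen path of height~$r$ (using the structure of \neg\ Nielsen paths and Fact~\ref{FactNielsenBottommost} along the way). Then a generic leaf $\ell_r$ of $\Lambda_r$, being exhausted by $r$-tiles and hence $r$-legal, cannot contain any height-$r$ indivisible Nielsen path, so $\ell_r$ is not in the weak closure of $\ell_s$ and $\Lambda_r \not\subset \Lambda_s$. This is precisely the structural fact you need and did not use: edges of $H_r$ that are ``hidden'' inside Nielsen-path terms are invisible to the $r$-legal leaf $\ell_r$.
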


\begin{proof} The hypothesis of the ``if'' direction implies that for any line $\ell$ in $G$, if $E'$ is weakly attracted to $\ell$ by iteration of $f_\#$ then $E$ is also weakly attracted to $\ell$. By Fact~\ref{FactAttractingLeaves}~\pref{ItemLeafAsLimit} it follows that $\Lambda_r \subset \Lambda_s$.

Conversely, suppose that for each edge $E$ of $H_s$ and each $k \ge 1$, no term in the complete splitting of the height $s$ tile $f^k_\#(E)$ is an edge of $H_r$. We claim that if an edge $E'$ of $H_r$ is in $f^k_\#(E)$ then $E'$ is in a subpath $\rho$ of $f^k_\#(E)$ that is an indivisible Nielsen path of height $r$. To see why, let $\sigma$ be a term in the complete splitting of $f^k_\#(E)$ that contains $E'$, so $\sigma$ must be an exceptional path or an indivisible Nielsen path, and $\sigma$ has height $t$ with $r \le t < s$. If $\sigma$ is an exceptional path then $H_t$ is \neg\ and $r<t$ and so $E'$ is contained in a subpath of $\sigma$ which is an indivisible Nielsen path. Let $\rho$ be a subpath of $f^k_\#(E)$ of minimal height $s$ that contains $E'$ and is an indivisible Nielsen path, and arguing by contradiction suppose that $s>r$. If $H_s$ is \eg\ then, by Fact~\ref{FactNielsenBottommost}, $E'$ is contained in a subpath of $\rho$ which is an indivisible Nielsen path of height less than the height of $\rho$, contradicting minimality. If $H_s$ is \neg\ then the same contradiction is obtained from (\neg\ Nielsen Paths). This proves the claim.

Since each generic leaf $\ell_s$ of $\Lambda_s$ is exhausted by height $s$ tiles, it follows from the claim that the only way an edge of $H_r$ can occur in $\ell_s$ is in an indivisible height $r$ Nielsen path which is a subpath of $\ell_s$. However, if $\ell_r$ is a generic leaf of $\Lambda_r$ then, by Fact~\ref{FactAttractingLeaves}~\pref{ItemTileExhausted}, $\ell_r$ is exhausted by $r$-tiles, and since an $r$-tile is $r$-legal, $\ell_r$ cannot contain any indivisible height $r$ Nielsen path. Since $\Lambda_s$ is the weak closure of $\ell_s$, it follows $\ell_r$ is not a leaf of~$\Lambda_s$.
\end{proof}

\begin{corollary}\label{CorBottommost}
For any $\phi \in \Out(F_n)$, any $\Lambda \in \L(\phi)$, and any \ct\ $f \from G \to G$ representing a rotationless positive power of $\phi$, if $H_r$ is the \eg\ stratum corresponding to $\Lambda$, and if there exists an indivisible Nielsen path $\rho_r$ of height $r$, then $\Lambda$ is a bottommost element of $\L(\phi)$. 
\end{corollary}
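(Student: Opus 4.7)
The plan is to prove the contrapositive via Proposition~\ref{PropInclusion}, combined with the very strong structural control on the complete splitting of tiles of height $r$ that is supplied by Fact~\ref{FactNielsenBottommost} in the presence of an indivisible Nielsen path of height $r$.

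More precisely, suppose for contradiction that there exists $\Lambda' \in \L(\phi)$ with $\Lambda' \subsetneq \Lambda$. By Fact~\ref{FactLamsAndStrata}, $\Lambda'$ corresponds to some \eg\ stratum $H_{r'}$ of $f$, and since $\Lambda' \neq \Lambda$ we have $r' \neq r$. Applying Proposition~\ref{PropInclusion} with the roles $\Lambda_{r'} \subset \Lambda_r$, there exist an edge $E$ of $H_r$ and an integer $k \ge 1$ such that some term in the complete splitting of $f^k_\#(E)$ is a single edge $E'$ of $H_{r'}$.

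Now I would invoke Fact~\ref{FactNielsenBottommost}~(3), whose hypothesis is precisely the existence of an indivisible Nielsen path of height~$r$ in the \eg\ stratum $H_r$: it asserts that every term in the complete splitting of $f^k_\#(E)$ is either an edge of $H_r$, a fixed edge, or an indivisible Nielsen path. The edge $E'$ of $H_{r'}$ must fall into one of these three categories. It cannot be an edge of $H_r$, since $r' \neq r$. It cannot be a fixed edge, because fixed edges lie in fixed strata which, by (Periodic Edges), are \neg, whereas $H_{r'}$ is \eg. Finally, an indivisible Nielsen path that happens to be a single edge $E'$ must satisfy $f(E')=E'$, which would again force $E'$ to be a fixed edge in a \neg\ stratum, again contradicting $H_{r'}$ being \eg. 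This exhausts the possibilities and yields the desired contradiction, so no such $\Lambda'$ exists and $\Lambda$ is bottommost.

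The only subtle point is the last case, verifying that an indivisible Nielsen path cannot be a single edge of an \eg\ stratum; but this is immediate from the observation that such a single-edge Nielsen path would be fixed, hence periodic, hence (by the classification of strata recalled before Fact~\ref{FactEGAperiodic}) lie in a \neg\ periodic stratum rather than an \eg\ stratum. Thus the whole argument is a short direct deduction from Proposition~\ref{PropInclusion} and Fact~\ref{FactNielsenBottommost}, with no additional machinery required.
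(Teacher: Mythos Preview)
Your proof is correct and follows essentially the same route as the paper: apply Fact~\ref{FactNielsenBottommost}~\pref{ItemBottommostEdges} to constrain the complete splitting terms of $f^k_\#(E)$, then invoke Proposition~\ref{PropInclusion}. The paper's proof is simply the terse two-sentence version of what you wrote; your case analysis ruling out the ``indivisible Nielsen path'' possibility is harmless but not really needed, since Proposition~\ref{PropInclusion} already specifies the term in question is of type ``edge in an irreducible stratum''.
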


\begin{proof} For each edge $E \subset H_r$ and each $k \ge 0$, by Fact~\ref{FactNielsenBottommost}~\pref{ItemBottommostEdges} each term in the complete splitting of $f^k_\#(E)$ is an edge of $H_r$, a fixed edge, or an indivisible Nielsen path. The result follows from Proposition~\ref{PropInclusion}.
\end{proof}



It was proved in \BookOne\ Corollary~6.0.1 that a lamination in $\L(\phi)$ is topmost if and only if its dual lamination in $\L(\phi^\inv)$ is topmost. We extend this result to show that the duality relation respects inclusion:

\begin{lemma}\label{containmentSymmetry} If $\Lambda^\pm_i $ and $\Lambda^\pm_j$ are dual lamination pairs for $\phi \in \Out(F_n)$ then $ \Lambda^+_i \subset \Lambda_j^+$ if and only $\Lambda^-_i \subset \Lambda^-_j$. 
\end{lemma}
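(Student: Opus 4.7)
By swapping the roles of $\phi$ and $\phi^{-1}$ (equivalently $+$ and $-$), it suffices to prove the implication $\Lambda^+_i \subset \Lambda^+_j \implies \Lambda^-_i \subset \Lambda^-_j$. Fact \ref{FactRotationlessPower} allows passage to a rotationless positive power of $\phi$, preserving $\L(\phi)$, $\L(\phi^{-1})$, and the duality pairing.

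The plan is then to translate both hypothesis and conclusion into combinatorial statements via Proposition \ref{PropInclusion}, applied with a \ct\ on each side. Using Theorem \ref{TheoremCTExistence}, I would choose a \ct\ $f \from G \to G$ representing $\phi$ that realizes the chain $\A_\supp(\Lambda^+_i) \sqsubset \A_\supp(\Lambda^+_j)$; this is genuinely a chain by Fact \ref{FactFFSupport}, since $\Lambda^+_i \subset \Lambda^+_j$ implies that every free factor system carrying $\Lambda^+_j$ also carries $\Lambda^+_i$. Let $H_{r_i}, H_{r_j}$ be the corresponding \eg\ strata via Fact \ref{FactLamsAndStrata}. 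Proposition \ref{PropInclusion} rephrases the hypothesis $\Lambda^+_i \subset \Lambda^+_j$ as the assertion that, for some $k \ge 1$ and some edge $E_j$ of $H_{r_j}$, some term in the complete splitting of $f^k_\#(E_j)$ is an edge of $H_{r_i}$. Symmetrically, choose a \ct\ $g \from G' \to G'$ representing $\phi^{-1}$ realizing the same chain (which coincides with $\A_\supp(\Lambda^-_i) \sqsubset \A_\supp(\Lambda^-_j)$ by the duality), with corresponding \eg\ strata $H'_{s_i}, H'_{s_j}$. By Proposition \ref{PropInclusion}, the desired conclusion $\Lambda^-_i \subset \Lambda^-_j$ amounts to producing, for some $k' \ge 1$ and some edge $E'_j$ of $H'_{s_j}$, an edge of $H'_{s_i}$ among the terms in the complete splitting of $g^{k'}_\#(E'_j)$.

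The central step is the transfer of the combinatorial condition from $f$ to $g$. I would attempt this by reducing to the topmost situation handled by \BookOne\ Corollary~6.0.1. Let $[F_j] = \A_\supp(\Lambda^\pm_j)$, a $\phi$-invariant free factor by Fact \ref{FactPeriodicIsFixed}, so the restriction $\phi \restrict F_j \in \Out(F_j)$ is defined and rotationless. Both $\Lambda^+_j$ and $\Lambda^-_j$ become attracting laminations of $\phi \restrict F_j$ and $\phi^{-1} \restrict F_j$ with free factor support the full factor $[F_j]$, forming a dual pair in this restricted setting. Working with a generic leaf $\ell^-_i$ of $\Lambda^-_i$ (which is birecurrent of height $s_i$ by Fact \ref{FactTwoEndsGeneric} and carried by $[F_j]$), it suffices by Fact \ref{FactAttractingLeaves}\pref{ItemLeafAsLimit} to show that every finite subpath of $\ell^-_i$ occurs in some $g^{k'}_\#(E'_j)$ with $E'_j$ an edge of $H'_{s_j}$. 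The intended route is to pair this with the $+$-side tile exhaustion for a generic leaf of $\Lambda^+_j$, using the topmost duality of \BookOne\ Corollary~6.0.1 inside $F_j$ to convert the $+$-side tile-splitting statement into its $-$-side analogue.

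The main obstacle will be the concrete execution of this transfer. The two \cts\ $f$ and $g$ are independently chosen, their combinatorial data are related only through abstract invariants (free factor supports, generic leaves, and expansion dynamics), and $\Lambda^\pm_j$ need not actually be topmost within the restriction to $F_j$. Handling the non-topmost sub-case will presumably require an induction on the poset height of $\Lambda^+_j$ in $\L(\phi \restrict F_j)$, or an alternative combinatorial device relating the tile structure of $f$ directly to that of $g$ without routing through the topmost duality.
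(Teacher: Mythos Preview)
Your proposal has a genuine gap at exactly the point you flag: the ``transfer'' from the $f$-side combinatorics to the $g$-side combinatorics. Proposition~\ref{PropInclusion} lets you rephrase each inclusion in terms of a single \ct, but the two \cts\ $f$ and $g$ are built independently and share no combinatorial structure; nothing in the paper relates tiles of $f$ to tiles of $g$, and your sketch does not supply such a link. Invoking ``the topmost duality of \BookOne\ Corollary~6.0.1'' is not a mechanism --- Corollary~6.0.11 of \BookOne\ tells you only that $\Lambda^+_i$ is topmost if and only if $\Lambda^-_i$ is topmost, which by itself does not push subpaths of $\ell^-_i$ into iterates $g^{k'}_\#(E'_j)$.

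There is also a misconception worth correcting: after restricting to $F_j = \A_\supp(\Lambda^\pm_j)$, the lamination $\Lambda^+_j$ \emph{is} topmost in $\L(\phi \restrict F_j)$, because distinct attracting laminations have distinct free factor supports (this follows from the bijectivity of the duality pairing), and $\Lambda^+_j$ already fills~$F_j$. So your worry that ``$\Lambda^\pm_j$ need not actually be topmost within the restriction'' is unfounded; recognizing that it \emph{is} topmost is in fact the starting point of the paper's argument.

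The paper's proof takes a quite different route and never attempts your direct combinatorial transfer. After restricting so $\Lambda^+_j$ is topmost, it proves a \emph{dichotomy}: either $\Lambda^+_i$ is also topmost (equivalently $\Lambda^-_i$ is topmost, by \BookOne\ Corollary~6.0.11), in which case neither inclusion holds; or $\Lambda^+_i$ is not topmost, in which case \emph{both} inclusions hold. For the second case the paper argues by contradiction, assuming $\Lambda^+_i \not\subset \Lambda^+_j$, and introduces a \emph{third} lamination: a topmost $\Lambda^+_k$ containing $\Lambda^+_i$ with $\Lambda^+_k \not\subset \Lambda^+_j$. Corollary~\ref{CorBottommost} then forces the stratum for $\Lambda^+_k$ to have no indivisible Nielsen path, and the Weak Attraction Theorem~6.0.1 of \BookOne\ (applied to the topmost $\Lambda^+_k$) forces $\Lambda^+_j$ into a proper free factor, contradicting that it fills. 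The key device --- passing to an auxiliary topmost $\Lambda^+_k$ and applying weak attraction there --- is entirely absent from your outline.
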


\begin{proof} Passing to a power we may assume $\phi$ is rotationless. Let 
\begin{align*}
[F^i] &= \A_\supp(\Lambda^+_i) = \A_\supp(\Lambda^-_i) \\
[F^j] &= \A_\supp(\Lambda^+_j) = \A_\supp(\Lambda^-_j)
\end{align*} 
If $[F^j]$ does not carry $[F^i]$ then $\Lambda^+_j \not \supset \Lambda_i^+$ and $\Lambda^-_j \not \supset \Lambda_i^-$. We may therefore assume that $[F^j]$ carries $[F^i]$. Restricting $\phi$ to $F^j$ we may assume that $F^j = F_n$, which implies that $\Lambda^+_j$ is topmost (in $\L(\phi)$) and $\Lambda^-_j$ is topmost (in $\L(\phi^\inv)$). By Corollary~6.0.11 of \BookOne\ $\Lambda^+_i$ is topmost if and only if $\Lambda^-_i$ is topmost. In this topmost case, $\Lambda^+_j \not \supset \Lambda_i^+$ and $\Lambda^-_j \not \supset \Lambda_i^-$. We may therefore assume that neither $\Lambda^+_i$ nor $\Lambda^-_i$ is topmost, in which case it suffices to show that $\Lambda^+_j \subset \Lambda_i^+$ and that $\Lambda^-_j \subset \Lambda_i^-$; the two cases are similar, so it suffices to assume that $\Lambda^+_j \not \supset \Lambda_i^+$ and argue to a contradiction. Since $\Lambda^+_i$ is not topmost and is not contained in $\Lambda_j^+$, there is a topmost lamination $\Lambda^+_k \in \L(\phi)$ such that (A)~$\Lambda^+_i \subset \Lambda_k^+$ and (B)~$\Lambda_k^+\not \subset \Lambda_j^+$. Choose a \ct\ $f \from G \to G$ representing $\phi$ and let $H_r$ be the \eg\ stratum corresponding to $\Lambda^+_k$. Corollary~\ref{CorBottommost} together with (A) implies that there is no indivisible Nielsen path of height $r$, and (B) implies that $\Lambda_j^+$ is not weakly attracted to $\Lambda_k^+$, that is, no generic leaf of $\Lambda_j^+$ is weakly attracted to a generic leaf of $\Lambda_k^+$. The Weak Attraction Theorem 6.0.1 coupled with Remark 6.0.2 of \BookOne\ applied to the topmost lamination $\Lambda_k^+$ then implies that $\Lambda_j^+$ is carried by a proper free factor of $F_n$, which contradicts our assumption that $F^j = F_n$.
\end{proof}

\subsection{Geometric strata and their laminations}
\label{SectionGeometric}
In this section we review results about geometric strata and their laminations contained in \BookOne, and we give some slight improvements on those results. Proposition~\ref{PropGeomLams} below, which is implicit in the proof of the geometric case of Proposition 6.0.8 of \BookOne, relates the lamination pair of a geometric stratum to the stable and unstable laminations of the associated pseudo-Anosov surface homeomorphism.

\paragraph{Geodesic laminations and pseudo-Anosov mapping classes.} First we review pseudo-Anosov homeomorphisms and their stable and unstable geodesic laminations, collecting what we need in Fact~\ref{FactPsAnWeakAttr}, whose proof will depend on the Nielsen-Thurston theory. Our concern is \emph{solely} with surfaces having nonempty boundary and we will state results only in that case. We shall take the liberty to state definitions and results in a manner that illuminates the connection with geometric strata and laminations in free groups. The main reference for this section is \cite{CassonBleiler},  with various citations also made to \cite{FLP} and \cite{Miller:Nielsen}. 

Let $S$ be a compact hyperbolic surface with totally geodesic boundary~$\bdy S \ne \emptyset$. Identify the universal cover $\wt S$ with the convex hull of a Cantor set embedded in $\bdy_\infinity \hyp^2$ and denoted $\bdy_\infinity \wt S$ (identified with the Gromov boundary of $\wt S$). The deck transformation action of $\pi_1 S$ on $\wt S$ extends to a properly discontinuous, free, isometric action on $\hyp^2$ that is cocompact on $\wt S$ but not on $\hyp^2$. Let $\wt\B(\wt S) = (\bdy\wt S \cross \bdy \wt S - \Delta) / (\Z/2)$ be the \emph{space of geodesics} in $\wt S$, equipped with the \emph{weak topology} induced from $\bdy\wt S$. The action of $\pi_1 S$ on $\wt\B(\wt S)$ has compact quotient $\B(S)$, identified with the space of locally geodesic maps $\reals \to S$ modulo isometric reparameterization. There exists $\delta>0$ such that the topology on $\B(S)$ has a basis with one element for each $\gamma \in \B(S)$ and each finite subarc $\alpha$ of $\gamma$, consisting of all $\gamma' \in \B(S)$ for which there exists a finite subarc $\alpha'$ of $\gamma'$ such that $\alpha$ and $\alpha'$ are \emph{$\delta$-parallel}, meaning that there is a map $[a,b] \cross [c,d] \to Y$ whose restriction to $[a,b] \cross c$ parameterizes~$\alpha$, whose restriction to $[a,b] \cross d$ parameterizes~$\alpha'$, and whose restriction to each vertical fiber $s \cross [c,d]$ has length $<\delta$. A geodesic in $S$ is \emph{simple} if it has no transverse self-crossings, so it can be either an embedding or a covering of a simple closed geodesic.

A \emph{geodesic lamination} is a subset $\Lambda \subset \B(S)$ consisting of a disjoint family of simple geodesics contained in $S - \bdy S$ that are called the \emph{leaves} of $\Lambda$, such that $\Lambda$ is closed in $\B(S)$, equivalently the union of the leaves of $\Lambda$ is a closed subset of $S$. Its total lift $\wt \Lambda \subset \wt \B(\wt S)$ is a $\pi_1 S$-invariant, closed, disjoint union of geodesics in $\hyp^2$. The closures of the components of $\wt S - \wt \Lambda$ are called the \emph{principal regions} of $\wt \Lambda$, and their images downstairs in $S$ are called the principal regions of $\Lambda$. There are only finitely many \emph{boundary leaves} of $\Lambda$, which means leaves on the boundary of some principal region. A principal region $R$ is an \emph{ideal polygon} if any of its connected lifts to $\wt S$ is the convex hull in $\hyp^2$ of a finite subset of $\bdy_\infinity \wt S$, in which case the restricted covering map $\wt R \to R$ is an injective immersion. A principal region $R$ is a \emph{peripheral ideal crown} if $R$ has one compact boundary geodesic $c \subset \bdy S$, and for any connected lift $\wt R \subset \wt S$ with lifted boundary geodesic $\wt c \subset \bdy\wt S$, its stabilizer $\Stab(\wt R)$ in $\pi_1 S$ is equal to the infinite cyclic group $\Stab(\ti c)$, and $\wt R$ is the convex hull in $\hyp^2$ of the union of the two points $\bdy_\infinity \ti c$ with a discrete, $\Stab(\ti c)$-invariant, cofinite subset of $\bdy_\infinity \wt S - \bdy_\infinity c$; in this case the induced map $\wt R / \Stab(\ti c) \to S$ is an injective immersion. We say $\Lambda \subset S$ \emph{fills $S$} if no leaf is compact and every principal region is an ideal polygon or a peripheral ideal crown; if this is so then every leaf of $\Lambda$ is dense in $\Lambda$, and every closed geodesic in the interior of $S$ intersects $\Lambda$ nontrivially and transversely. 

The mapping class group $\MCG(S)$ is the quotient of the group of homeomorphisms $\Homeo(S)$ by the normal subgroup of homeomorphisms isotopic to the identity (isotopies need not be stationary on $\bdy S$). For any $\psi \in \MCG(S)$ and a representative $\Psi \in \Homeo(S)$, any lift $\wt\Psi \from \wt S \to \wt S$ extends continuously to a homeomorphism $\wh\Psi \from \bdy\wt S \to \bdy\wt S$ which respects the decomposition of $\bdy\wt S$ into $\pi_1 S$ orbits. This induces in turn a homeomorphism of $\wt\B(\wt S)$ that respects $\pi_1 S$ orbits and so descends to a homeomorphism $\psi \from \B(S) \to \B(S)$ depending only on $\psi$. This action respects simplicity and disjointness of geodesics and so induces an action on the set of geodesic laminations. The map $\wh\Psi$ extends continuously to all of $\bdy_\infinity\hyp^2$ in a way which depends on the choice of $\Psi$ but respects $\pi_1 S$ orbits --- letting $p \from \bdy_\infinity \hyp^2 \to \bdy_\infinity\wt S \union \bdy\wt S$ be the extension of the identity map on $\bdy_\infinity \wt S$ which projects each point of $\bdy_\infinity \hyp^2 \setminus  \bdy_\infinity \wt S$ orthogonally to $\bdy \wt S$, define $\wh\Psi(x)$ so that $p(\wh\Psi(\xi)) = \wt\Psi(p(\xi))$.

A \emph{proper geodesic} in $S$ is any of the following: a bi-infinite geodesic, including the cover of a closed geodesic such as a component of $\bdy S$; a finite geodesic arc which hits $\bdy S$ orthogonally at its two endpoints; a geodesic ray which hits $\bdy S$ orthogonally at its one endpoint. The set of proper geodesics, equipped with the weak topology, is a compact space denoted $\wh\B(S)$; it contains $\B(S)$ as a closed subset. The action of $\MCG(S)$ on $\B(S)$ extends to a homeomorphic action on $\wh\B(S)$: if $\gamma \in \wh\B(S)$ is a finite geodesic arc or geodesic ray then $\psi(\gamma)$ is the unique finite arc or geodesic ray in $\wh\B(S)$, respectively, such that for some (any) $\Psi\in\Homeo(S)$ representing $\psi$, there is a \emph{proper homotopy} from $\Psi(\gamma)$ to $\psi(\gamma)$, meaning a homotopy that keeps endpoints in $\bdy S$ and that moves all points along rectifiable paths of uniformly bounded length.

Thurston's classification theorem \cite{CassonBleiler,FLP}, says that for any $\psi \in \MCG(S)$, either $\psi$ has finite order, or there exists a geodesic lamination with compact leaves that is invariant under $\psi$, or $\psi$ is a pseudo-Anosov mapping class. The following fact contains everything we will need to know about pseudo-Anosov mapping classes.

\begin{fact}\label{FactPsAnWeakAttr}
For any pseudo-Anosov $\psi \in \MCG(S)$ there exists a pair of geodesic laminations $\Lambda^s \ne \Lambda^u$ that fill $S$ and are invariant under $\psi$ such that for any proper geodesic $\gamma \in \wh\B(S)$, exactly one of the following holds:
\begin{enumerate}
\item \label{ItemPsAnWeakLeaf}
$\gamma$ is a leaf of $\Lambda^s$.
\item \label{ItemPsAnWeakPrinc}
$\gamma$ is a bi-infinite geodesic or proper ray that is contained in a principal region of $\Lambda^s$; this includes the case that $\gamma$ covers a component of $\bdy S$.
\item \label{ItemPsAnWeakAttr}
$\psi^i(\gamma)$ converges weakly to each leaf of $\Lambda^u$ as $i \to +\infinity$.
\end{enumerate}
\end{fact}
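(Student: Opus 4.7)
The plan is to invoke the classical Nielsen--Thurston theory to produce the laminations and then establish the trichotomy by a standard ping-pong argument based on transverse measure. By the Nielsen--Thurston classification (see \cite{CassonBleiler} or \cite{FLP}), a pseudo-Anosov $\psi$ admits a pair of transverse measured singular foliations $\F^u,\F^s$ on $S$, each $\psi$-invariant, with $\psi$ multiplying the transverse measure of $\F^u$ by a dilatation $\lambda > 1$ and that of $\F^s$ by $1/\lambda$. Straightening the nonsingular leaves in the hyperbolic metric, and collapsing the ``degenerate'' arcs running orthogonally to $\bdy S$, yields geodesic laminations $\Lambda^u$ and $\Lambda^s$. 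The local model at an interior $k$-pronged singularity of $\F^s$ produces an ideal $k$-gon principal region for $\Lambda^s$, while the model at a boundary singularity produces a peripheral ideal crown; this shows that $\Lambda^s$ (and similarly $\Lambda^u$) fills $S$. Transversality of the foliations gives $\Lambda^u \ne \Lambda^s$, and invariance under $\psi$ is inherited from that of $\F^u,\F^s$.

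Next I would establish the trichotomy for $\gamma \in \wh\B(S)$. If $\gamma$ meets no leaf of $\Lambda^s$ transversely, then $\gamma$ is disjoint from the leaves, so either $\gamma$ is itself a leaf (case~\pref{ItemPsAnWeakLeaf}) or $\gamma$ lies in the closure of a complementary region of $\Lambda^s$, i.e.\ in a principal region (case~\pref{ItemPsAnWeakPrinc}); the description of principal regions as ideal polygons or peripheral ideal crowns accommodates all possibilities, including the case that $\gamma$ covers a component of $\bdy S$ (which is then a boundary leaf of the peripheral crown).

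If $\gamma$ transversely meets some leaf of $\Lambda^s$, then $\gamma$ carries positive transverse $\F^s$-measure on some short subarc $\alpha \subset \gamma$. Pulling back by the standard pseudo-Anosov dynamics, the $\F^s$-transverse measure of $\psi^i(\alpha)$ grows like $\lambda^i$, so its $\F^u$-length grows without bound while its $\F^s$-width stays bounded. In terms of the flat structure dual to $(\F^u,\F^s)$, long horizontal strips of $\psi^i(\gamma)$ are forced to shadow leaves of $\Lambda^u$ ever more closely in the hyperbolic metric: given a finite subarc $\beta$ of any leaf of $\Lambda^u$, one locates $\beta$ along some leaf shadow of $\psi^i(\gamma)$ and uses the filling property of $\Lambda^u$ (so that all its leaves are dense in $\Lambda^u$) to conclude that for all sufficiently large $i$, $\psi^i(\gamma)$ contains a subarc $\delta$-parallel to $\beta$; this is case~\pref{ItemPsAnWeakAttr}. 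Mutual exclusivity is immediate: a leaf of $\Lambda^s$ is disjoint from $\Lambda^u$ and is $\psi$-invariant, and a geodesic lying in a principal region of $\Lambda^s$ is likewise disjoint from $\Lambda^u$ with $\psi^i(\gamma)$ remaining in the $\psi^i$-image of that principal region, so neither can accumulate on leaves of $\Lambda^u$.

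The main obstacle will be carrying out the weak-convergence estimate in case~\pref{ItemPsAnWeakAttr} uniformly for all types of proper geodesics, not just bi-infinite ones: when $\gamma$ is a ray or a finite boundary-orthogonal arc, the naive application of $\psi^i$ can produce a geodesic that leaves any given compact set, so one must check that the ``shadowing'' subarcs of $\psi^i(\gamma)$ of bounded hyperbolic length still persist near the base of the ray. This is handled by noting that the transverse $\F^s$-measure of $\alpha$ is preserved in $\psi^i(\gamma)$ regardless of endpoint conditions, and combining this with a compactness argument in $\wh\B(S)$ (using the Nielsen-like action of $\psi$ on $\wh\B(S)$ described just before the statement) to extract the required $\delta$-parallel subarcs.
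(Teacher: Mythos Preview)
Your overall strategy --- deriving $\Lambda^s,\Lambda^u$ from the measured foliations and then using transverse-measure growth to establish case~\pref{ItemPsAnWeakAttr} --- is a legitimate route, but it is genuinely different from the paper's. The paper does not use transverse measure at all: it lifts to $\widetilde S \subset \hyp^2$, fixes a $\Psi$-periodic point $x \in \Lambda^s \cap \Lambda^u$ on a non-boundary leaf, and invokes the Nielsen--Thurston fact that the corresponding lift $\widehat\Psi^a$ acts on $\partial\hyp^2$ with source--sink dynamics on each of the four arcs cut out by $\partial\tilde\ell^s \cup \partial\tilde\ell^u$. For a $\gamma$ not in cases~\pref{ItemPsAnWeakLeaf}--\pref{ItemPsAnWeakPrinc}, $\gamma$ crosses the dense leaf $\ell^s$, so some lift $\tilde\gamma$ crosses $\tilde\ell^s$; the paper then replaces each endpoint of $\tilde\gamma$ on $\partial\widetilde S$ by a compact interval $I(p) \subset \partial\hyp^2$ and uses uniform convergence of $\widehat\Psi^{aj}$ on these compacta to push both ends toward $\partial\tilde\ell^u$. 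This handles bi-infinite geodesics, rays, and finite orthogonal arcs in one stroke, so the ``main obstacle'' you flag simply does not arise. Your measure-theoretic argument buys closeness to Thurston's original picture; the paper's boundary-dynamics argument buys uniformity across all three types of proper geodesics without any separate shadowing estimate.

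Your mutual-exclusivity paragraph, however, contains a real error. Leaves of $\Lambda^s$ are \emph{not} disjoint from $\Lambda^u$ --- the two laminations intersect transversely in a Cantor set, and this intersection is precisely what drives the dynamics --- nor is an individual leaf of $\Lambda^s$ invariant under $\psi$. Likewise, a geodesic in a principal region of $\Lambda^s$ is crossed by many leaves of $\Lambda^u$. The correct argument is: if $\gamma$ is a leaf of $\Lambda^s$ then so is each $\psi^i(\gamma)$, and since $\Lambda^s$ is weakly closed any weak limit lies in $\Lambda^s$; but $\Lambda^u$ is minimal and $\Lambda^u \ne \Lambda^s$, so no leaf of $\Lambda^u$ lies in $\Lambda^s$. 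For case~\pref{ItemPsAnWeakPrinc}, observe that geodesics in principal regions of $\Lambda^s$ never cross $\Lambda^s$ transversely and this property is $\psi$-invariant, whereas any sufficiently long subarc $\delta$-parallel to a leaf of $\Lambda^u$ would force a transverse crossing of $\Lambda^s$. Fix this paragraph and your argument is sound.
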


This fact is a straightforward consequence of standard results in the Nielsen-Thurston theory. We sketch a proof for completeness, referring to Miller's paper \cite{Miller:Nielsen} for Nielsen-Thurston theory; see sections 2--8 of that paper for the closed case, and section~9 for the bounded case. See also Chapter~11 of \cite{FLP} for the measured foliation theory.

\begin{proof} First we describe source-sink dynamics on the circle at infinity of $\hyp^2$.

Choose $\Psi \in \Homeo(S)$ representing $\psi$ which preserves both $\Lambda^s$ and $\Lambda^u$, and so $\Psi$ preserves the Cantor set $C = \Lambda^s \intersect \Lambda^u$; see Theorem~9 of \cite{Miller:Nielsen}. The set of $\Psi$-periodic points in $C$ is dense, with two on each boundary leaf of $\Lambda^s$ and at most one on every other leaf; see \cite{FLP} Expos\'e~10 for the analogous statement in terms of measured foliations, which translates into the statement here using the standard correspondence between geodesic laminations and measured foliations \cite{CassonBleiler,Miller:Nielsen}. Choose a periodic $x \in C$ not contained in a boundary leaf. Let $\ell^s \subset \Lambda^s$, $\ell^u \subset \Lambda^u$ be the leaves containing $x$. Let $\Psi^a$ be a positive power that fixes $x$ and preserves the orientations of $\ell^s$ and $\ell^u$. Choose a lift $\ti x \in \wt S$ of $x$ and a lift $\wt\Psi^a \from \wt S \to \wt S$ of $\Psi^a$ that fixes~$\ti x$. Lift $\ell^s$, $\ell^u$ to leaves $\ti\ell^s \subset \wt\Lambda^s$, $\ti\ell^u \subset \wt \Lambda^u$ passing through~$\ti x$ and invariant under $\wt\Psi^a$, so $\bdy\ti\ell^s, \bdy\ti\ell^u \subset \bdy_\infinity \wt S$ are fixed by $\wh\Psi^a$. 

By Nielsen-Thurston theory \cite{Miller:Nielsen}, the homeomorphism $\wh\Psi^a$ acts on $\bdy\hyp^2$ with source-sink dynamics: each of the four components of $\bdy\hyp^2 - \left(\bdy \ell^s \union  \bdy \ell^u\right)$ has one endpoint in $\bdy\ell^s$ and the other in $\bdy\ell^u$, and every orbit converges to the $\bdy\ell^s$ endpoint in negative time and to the $\bdy\ell^u$ endpoint in positive time.

Let $\gamma$ be a proper geodesic in $S$. Assuming that \pref{ItemPsAnWeakLeaf}---\pref{ItemPsAnWeakPrinc} do not hold, and using that $\Lambda^s$ fills, it follows that $\gamma$ has a transverse intersection point with $\Lambda^s$. Since $\ell^s$ is dense in $\Lambda^s$, it follows that $\gamma$ and $\ell^s$ intersect transversely, so there is a lift $\ti\gamma$ that intersects $\ti\ell^s$ transversely. For each $j \ge 0$ let $\ti\gamma_j$ be the lift of $\psi^{aj}(\gamma)$ that is obtained from $\wt\Psi^{aj}(\ti\gamma)$ by a proper homotopy.

Let $\bdy\ti\gamma = \{p,q\} \subset \bdy_\infinity \wt S \union \bdy_\infinity \wt S \subset \hyp^2 \union \bdy\hyp^2$. Define $I(p), I(q) \subset \bdy\hyp^2$, each a nonempty but possibly degenerate compact subinterval of $\bdy\hyp^2$, as follows. If $p \in \bdy\hyp^2$ then $I(p) = \{p\}$. If $p \in c$, a component of $\bdy\wt S$, then $I(p)$ is the closure of the component of $\bdy\hyp^2 - \bdy c$ that is disjoint from $\bdy_\infinity \wt S$. The set $I(q)$ is defined similarly. Notice that $I(p)$ and $I(q)$ are contained in the closures of opposite components $J(p), J(q)$ of $\bdy\hyp^2 - \bdy \ti\ell^s$, because $\ti\gamma$ crosses $\ti\ell^s$ transversely. Moreover, $I(p)$, $I(q)$ are actually contained in $J(p)$, $J(q)$ themselves, not just their closures, because the endpoints of $I(p)$, $I(q)$ are endpoints of components of $\bdy\wt S$, which cannot be endpoints of $\ti\ell^s$ because the principal region of $\Lambda^s$ incident to each component of $\bdy S$ is a peripheral ideal crown.

By source-sink dynamics, the intervals $J(p)$ and $J(q)$ are invariant by $\wh\Psi^a$, and denoting $\eta_p = \bdy\ell^u \intersect J(p)$ and $\eta_q = \bdy\ell^u \intersect J(q)$, the forward orbit of each point of $J(p)$ acted on by $\wh\Psi^a$ converges to $\eta_p$, and the forward orbit of each point of $J(q)$ converges to $\eta_q$. This convergence is uniform on each the subsets $I(p) \subset J(p)$, $I(q) \subset J(q)$, because these subsets are compact. It follows that if $(\rho_j)$ is a sequence of bi-infinite geodesics in $\hyp^2$ such that $\rho_j$ has one endpoint in $\wh\Psi^{aj}(J(p))$ and the other in $\wh\Psi^{aj}(J(q))$ then $\rho_j$ converges, in the Hausdorff topology on closed sets, to the geodesic $\wt\ell^u$. Letting $\rho_j$ be the bi-infinite extension of $\ti\gamma_j$, it follows that for each $\delta>0$ and $L>0$ there exists $J > 0$ such that if $j>J$ then $\ti\gamma_j$ has a subsegment of length $\ge L$ which is $\delta$ parallel to a subsegment of $\wt\ell^u$, from which it follows that the sequence $\psi^{aj}(\gamma)$, $j \ge 0$, converges weakly to $\ell^u$. Since each leaf of $\Lambda^u$ is dense, it follows that the sequence $\psi^{aj}(\gamma)$ converges weakly to each leaf of $\Lambda^u$. Since $\Lambda^u$ is invariant under~$\psi$, it follows that the sequence $\psi^i(\gamma)$, $i \ge 0$, converges weakly to each leaf of $\Lambda^u$.
\end{proof}





\begin{definition}\label{DefGeometricStratum} \textbf{Geometric Strata.}
Suppose that $\fG$ is a \ct\ representing a rotationless $\phi \in \Out(F_n)$ and that $H_r \subset G$ is an \eg\ stratum. Following Definition~5.1.4 and Section~5.3 of \BookOne\ we say that $H_r$ is a \emph{geometric stratum} if the map $f \restrict G_r$ has a \emph{geometric model}, consisting of a compact $2$-complex $Y$, an embedding $G_r \inject Y$, a deformation retraction $d \from Y \to G_r$, and a homotopy equivalence $h \from Y \to Y$, with the following properties:
\begin{enumerate}
\item \label{ItemY}
There exists $m \ge 0$, a compact, connected, hyperbolic surface $S$ with totally geodesic boundary components $\bdy S = \bigcup_{0 \le i \le m} \bdy_i S$, a collection of annuli $\A = \bigcup_{1 \le i \le m} \, \A_i$, and a collection of circuits $\alpha_i$ in $G_{r-1}$, $1 \le i \le m$, such that $Y$ is obtained from the disjoint union of $S$, $\A_1,\ldots,\A_m$ and $G_{r-1}$ by attaching one component of $\bdy \A_i$ to $\bdy_i S$ and the other to $\alpha_i$. (See below under the heading ``A metric on $Y$'' for a further restriction on the hyperbolic structure of $S$.)
\item $h \from Y \to Y$ represents $\phi$ in the sense that $f \composed d$ and $d \composed h$ are homotopic. 
\item $h \restrict G_{r-1} = f \restrict G_{r-1}$ 
\item \label{ItemStratumPsAn}
The map $\Psi = h \restrict S \from S \to S$ is a homeomorphism whose mapping class $\psi \in \MCG(S)$ is pseudo-Anosov.
\end{enumerate} 
Let $\Lambda^s, \Lambda^u \subset S$ denote the stable and unstable geodesic laminations of $\psi$. Let $D \from S \to G$ be the composition $S \subset Y \xrightarrow{d} G_r \subset G$, inducing a monomorphism $D_* \from \pi_1 S \to \pi_1 G \approx F_n$ that is well-defined up to inner automorphism of $F_n$. Let $[\bdy_i S]$ denote the conjugacy class in $F_n$ of $D_*(\bdy_i S)$, and let $[\pi_1(S)]$ denote the conjugacy class of the subgroup $D_*(\pi_1(S))$. 

Items \pref{ItemY}---\pref{ItemStratumPsAn} constitute Definition~5.1.4 of \BookOne, but for our definition we incorporate one more item which is verified in the proof of Proposition 5.3.1, where the items \pref{ItemY}---\pref{ItemStratumPsAn} are also verified; see specifically the bottom of page 573, the diagram at the top of page 574, and the proof beginning on page~578.
\begin{enumeratecontinue}
\item \label{ItemRhoMappingCylinder} 
With respect to a parameterization $\rho \from [0,1] \to G_r$, there exists a homeomorphism $k \from C_\rho \to Y$ where $C_\rho$ is the mapping cyclinder
$$C_\rho = \bigl[([0,1] \cross [0,1] )\disjunion G_r\bigr] \biggm/ (t,0) \sim \rho(t)
$$
such that $k$ takes $(0 \cross [0,1]) \union ([0,1] \cross 1) \union (1 \cross [0,1])$ to $\bdy_0 S$, $k$ restricts to the identity on $G_r$, and there exists a deformation retraction $d' \from [0,1] \cross [0,1] \to [0,1] \cross 0$ such that $k d' = d k$.
\end{enumeratecontinue}
By Fact~\ref{FactEGStrata}~\pref{ItemEachTwice}, the base point of $\rho_r$ is not contained in $G_{r-1}$, and combining with \pref{ItemRhoMappingCylinder} we obtain the following as well:
\begin{enumeratecontinue}
\item \label{ItemInteriorBasePoint}
$G_r \cap \partial_0 S$ is a single point $v_S$, it is the base point of the closed Nielsen path $\rho_r$, $d(v_S)=v_S$, and there is a parameterization $\rho_S$ of $\bdy_0 S$ based at $v_S$ such that $d\composed \rho_S= \rho_r$.
\end{enumeratecontinue} 
Item~\pref{ItemInteriorBasePoint} will be used in Section~\ref{SectionAsubNA}, in the definition of the geometric model for $\A_\na\Lambda$.
\end{definition}


\begin{fact}\label{FactEGStrata} If $f \from G \to G$ is a \ct\ representing a rotationless $\phi \in \Out(F_n)$, and if $H_r \subset G$ is an \eg\ stratum, then:
\begin{enumerate}
\item \label{ItemFirstLast}
There is at most one indivisible Nielsen path $\rho_r$ of height $r$ (up to orientation reversal). The first and last edges of $\rho_r$ are distinct edges of $H_r$.
\item \label{ItemSomeOnce}
If $H_r$ is not geometric and if $\rho_r$ is an indivisible Nielsen path of height $r$ then $\rho_r$ is not closed, $\rho_r$ crosses some edge of $H_r$ exactly once, and at least one endpoint of $\rho_r$ is not in $G_{r-1}$.
\item \label{ItemEachTwice}
If $H_r$ is geometric then there is an indivisible Nielsen path $\rho_r$ of height $r$, and furthermore: $\rho_r$ is a closed path with basepoint disjoint from $G_{r-1}$; the circuit determined by $\rho_r$ is freely homotopic to $d(\bdy_0 S)$ (following the notation of Definition~\ref{DefGeometricStratum}); and $\rho_r$ crosses each edge of $H_r$ exactly twice. 
\end{enumerate}
\end{fact}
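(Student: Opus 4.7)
For item \pref{ItemFirstLast}, uniqueness of $\rho_r$ up to reversal is immediate from Fact~\ref{FactEGNPUniqueness}, applicable since $H_r$ is aperiodic by Fact~\ref{FactEGAperiodic} and every \ct\ satisfies (EG Nielsen Paths). To show the first edge of $\rho_r$ lies in $H_r$: suppose for contradiction that it lies in $G_{r-1}$, and let $b$ be the maximal initial subpath of $\rho_r$ in $G_{r-1}$, so $\rho_r = b\sigma$ with $\sigma$ beginning with an $H_r$-edge. The turn at the junction is mixed-height and hence legal by RTT-(i); moreover $Df^i$ applied to a mixed-height turn remains mixed-height and nondegenerate, so $\rho_r = b \cdot \sigma$ is a splitting. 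Comparing $f_\#(\rho_r) = f_\#(b) \cdot f_\#(\sigma)$ with $\rho_r = b \cdot \sigma$ and using the maximality of $b$ as the longest initial $G_{r-1}$-segment forces $f_\#(b) = b$ and $f_\#(\sigma) = \sigma$, exhibiting $\rho_r$ as a nontrivial concatenation of two Nielsen paths and contradicting indivisibility. A symmetric argument handles the last edge. For distinctness, the argument reduces to the decomposition $\rho_r = \alpha \beta$ at its unique illegal turn together with the iterative-folding representation of $f \restrict G^0_r$ from (EG Nielsen Paths): coincidence of the first and last edges as unoriented edges would force a degenerate folding sequence incompatible with aperiodicity of $H_r$ (Fact~\ref{FactEGAperiodic}) or with the indivisibility of $\rho_r$.

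For items \pref{ItemSomeOnce} and \pref{ItemEachTwice}, these jointly extract from \BookOne\ Section~5 a combinatorial characterization of geometricity of $H_r$ via the structure of $\rho_r$. In the geometric case \pref{ItemEachTwice}, Definition~\ref{DefGeometricStratum}~(6) directly supplies the closed-path, basepoint, and free-homotopy assertions: $\rho_r = d \composed \rho_S$ is closed, based at $v_S \notin G_{r-1}$, and $\bdy_0 S$ retracts via $d$ to the circuit of $\rho_r$. The twice-traversal property follows from surface topology: each edge $E \subset H_r$ sits inside $Y$ in a band-neighborhood whose two long sides lie on $\bdy_0 S$ and both retract via $d$ onto $E$. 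The non-geometric assertions in \pref{ItemSomeOnce} come from the dichotomy analysis of \BookOne\ Section~5: the iterative folding structure of (EG Nielsen Paths) forces $\rho_r$ to be either a closed path traversing each $H_r$-edge exactly twice (from which the construction of Section~5.3 assembles a geometric model, so this case is precisely \pref{ItemEachTwice}) or a non-closed path with at least one endpoint in $H_r \setminus G_{r-1}$ and some edge of $H_r$ crossed exactly once.

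The main technical obstacle is the distinctness assertion in \pref{ItemFirstLast}: while the $H_r$-height of the first and last edges is forced cleanly by the indivisibility-splitting argument, ruling out coincidence demands a careful combinatorial analysis of the iterative folding built into (EG Nielsen Paths). A secondary delicate point is the dichotomy invoked in \pref{ItemSomeOnce}, which requires verifying that the folding combinatorics admit exactly the two outcomes (closed with twice-crossing versus non-closed with a once-crossed edge and a free end in $H_r \setminus G_{r-1}$), and that a closed twice-crossing Nielsen path always assembles into a geometric model in the sense of Definition~\ref{DefGeometricStratum}.
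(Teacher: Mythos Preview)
Your plan diverges from the paper's approach in a basic way: the paper treats this Fact as a compilation of results from the literature, citing Corollary~4.20 of \recognition\ (which verifies conclusions eg-(i), (ii), (iii) of Theorem~5.1.5 of \BookOne), Lemma~5.1.7 of \BookOne, Lemma~4.18 of \recognition, and Theorem~5.15 of \BH. It does not attempt direct arguments. Your direct argument that the first and last edges of $\rho_r$ lie in $H_r$ via indivisibility is correct and pleasant, but for distinctness and for the dichotomy in \pref{ItemSomeOnce}--\pref{ItemEachTwice} you yourself flag that real work remains; the paper simply delegates exactly those points to \BH\ Theorem~5.15 and to \BookOne\ Section~5.

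There is one genuine error. You invoke Definition~\ref{DefGeometricStratum}~\pref{ItemInteriorBasePoint} to obtain the closed-path, basepoint, and free-homotopy assertions in item~\pref{ItemEachTwice}. But look at the text immediately preceding item~\pref{ItemInteriorBasePoint} in Definition~\ref{DefGeometricStratum}: it is derived \emph{from} Fact~\ref{FactEGStrata}~\pref{ItemEachTwice} (specifically, the basepoint-not-in-$G_{r-1}$ clause) together with item~\pref{ItemRhoMappingCylinder}. So your argument is circular. The paper instead gets the closed-path and free-homotopy claims from eg-(iii) of \BookOne\ Theorem~5.1.5, and the twice-crossing and basepoint claims from the \BH\ Theorem~5.15 dichotomy via \recognition\ Lemma~4.18 and Corollary~4.20. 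If you want a self-contained route, you may use item~\pref{ItemRhoMappingCylinder} of Definition~\ref{DefGeometricStratum} directly (it is justified independently in \BookOne\ Proposition~5.3.1), but not item~\pref{ItemInteriorBasePoint}.
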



\begin{proof} Corollary~4.20 of \recognition\ says that, under our hypotheses, conclusions eg-(i), (ii), and (iii) of Theorem~5.1.5 of \BookOne\ hold. Item~\pref{ItemFirstLast} is just eg-(i). Item~\pref{ItemSomeOnce} is just eg-(ii), together with an application of Lemma~5.1.7 of \BookOne\ to conclude that $\rho_r$ has at least one endpoint that is not in~$G_{r-1}$; the hypotheses of Lemma~5.1.7 are checked by applying Item~\pref{ItemFirstLast}, (Filtration), Fact~\ref{FactEGAperiodic}, and the conclusion of eg-(ii) that $\rho_r$ crosses some edge of $H_r$ exactly once.

Item~\pref{ItemEachTwice} is just eg-(iii), together with the following argument to conclude that $\rho_r$ crosses each edge of $H_r$ exactly twice. Lemma 4.18 of \recognition\ says that, as a consequence of (\eg\ Nielsen paths), the conclusions of \cite{BestvinaHandel:tt} Theorem~5.15 hold. These conclusions imply that either $\rho_r$ crosses each edge of $H_r$ exactly twice in which case $\rho_r$ is closed, or $\rho_r$ crosses some edge of $H_r$ exactly once in which case $\rho_r$ is not closed (this implication is explained in the proof of \recognition\ Corollary~4.20).
%


\end{proof}

The following result is based on Section~6.0.8 of \BookOne. The proof will repeat material from that section which will be useful in what follows. Recall that a lamination is \emph{minimal} if every leaf is dense.


\begin{proposition} 
\label{PropGeomLams} Suppose that $\phi,\phi^\inv \in \Out(F_n)$ are both rotationless and consider a dual lamination pair $\Lambda^\pm \in \L^\pm(\phi)$. Let $f \from G \to G$ be a \ct\ representing $\phi \in \Out(F_n)$, let $H_r \subset G$ be the \eg\ stratum associated to $\Lambda^+$. If $H_r$ is a geometric stratum then, adopting the notation above, we have:
\begin{enumerate}
\item \label{ItemStableIsMinus}
$d_\#(\Lambda^u) = \Lambda^+$, and $d_\#(\Lambda^s) = \Lambda^-$.
\item \label{ItemLambdasMinimal}
$\Lambda^+$ and $\Lambda^-$ are minimal.
\item \label{ItemAllGeneric}
All leaves of $\Lambda^+$ and $\Lambda^-$ are generic.
\end{enumerate}
\end{proposition}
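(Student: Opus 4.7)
The plan is to push forward the pseudo-Anosov laminations $\Lambda^u,\Lambda^s\subset\B(S)$ to $\B$ via the deformation retraction $d\colon Y\to G_r$, and to identify the resulting closed subsets of $\B$ with $\Lambda^+$ and $\Lambda^-$ respectively. By symmetry, applying the whole construction to $\phi^{-1}$ (whose corresponding pseudo-Anosov on $S$ is $\psi^{-1}$, with $\Lambda^u,\Lambda^s$ interchanged in the roles of unstable and stable), it suffices to prove $d_\#(\Lambda^u)=\Lambda^+$ and to deduce minimality of $\Lambda^+$ with all leaves generic; the statements about $\Lambda^-$ follow by running the same argument on $\phi^{-1}$.

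\textbf{Setup.} First I would record that the inclusion $S\hookrightarrow Y$ composed with $d$ induces a $\pi_1$-monomorphism $D_*\colon\pi_1 S\to F_n$, which in turn induces a continuous map $d_\#\colon\B(S)\to\B$ sending each geodesic $\gamma$ to the line in $\B$ whose realization in $G$ is the tightening of $d\composed\gamma$. This is well defined on leaves of $\Lambda^u$ because $\Lambda^u$ fills $S$, so no leaf is peripheral or boundary-parallel and hence no leaf maps to a ray or circuit. The image $d_\#(\Lambda^u)$ is closed in $\B$ by compactness of $\Lambda^u$ and continuity of $d_\#$, and it is $\phi$-invariant because $d\composed h\simeq f\composed d$ and $\psi(\Lambda^u)=\Lambda^u$.

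\textbf{Core step.} Fix any leaf $\ell^u$ of $\Lambda^u$ and set $\ell:=d_\#(\ell^u)$. I claim that $\ell$ is a generic leaf of $\Lambda^+$. By Fact~\ref{FactTwoEndsGeneric} it suffices to show that $\ell$ is birecurrent and that both of its ends have height~$r$. Birecurrence follows from minimality of $\Lambda^u$ (which is automatic for filling pseudo-Anosov laminations, since every leaf is dense in $\Lambda^u$): every finite subarc of $\ell^u$ recurs in both subrays of $\ell^u$, and continuity of $d_\#$ transfers this recurrence to $\ell$. Height~$r$ follows from filling: no end of $\ell^u$ is asymptotic to $\bdy S$, and by the mapping-cylinder structure of Definition~\ref{DefGeometricStratum}\pref{ItemRhoMappingCylinder} together with the fact that $S$ is attached to $G_{r-1}$ only via the annuli $\A_i$, the tightened image $d(\ell^u)$ must cross edges of $H_r$ cofinally in both directions rather than being absorbed into $G_{r-1}$. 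Fact~\ref{FactTwoEndsGeneric} then identifies $\ell$ as a generic leaf of some attracting lamination of $\phi$ whose EG stratum is $H_r$, and by Fact~\ref{FactLamsAndStrata} this lamination must be $\Lambda^+$.

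\textbf{Conclusion.} Minimality of $\Lambda^u$ gives $\ell^u$ dense in $\Lambda^u$; by continuity of $d_\#$ together with the previous paragraph, $d_\#(\Lambda^u)=\overline{d_\#(\ell^u)}=\Lambda^+$, proving~(1). Minimality of $\Lambda^+$ is inherited from $\Lambda^u$ via the continuous surjection $d_\#\colon\Lambda^u\to\Lambda^+$, proving~(2). Since every leaf of $\Lambda^+$ is the image under $d_\#$ of some leaf of $\Lambda^u$ by~(1), and since the core step applies to \emph{every} leaf of $\Lambda^u$ (minimality gives birecurrence of each leaf, and filling gives height~$r$ for each leaf), every leaf of $\Lambda^+$ is generic, proving~(3). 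The main obstacle will be justifying the height~$r$ claim rigorously: one must exploit the attaching-annuli structure of the geometric model to argue that any leaf of $\Lambda^u$ that is not asymptotic to $\bdy S$ must, after tightening through $d$, traverse edges of the stratum $H_r$ cofinally rather than being absorbed into $G_{r-1}$ via an annulus $\A_i$ or the peripheral boundary $\bdy_0 S$.
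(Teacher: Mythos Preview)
Your core step has a genuine gap: Fact~\ref{FactTwoEndsGeneric} is stated only for lines that are \emph{already known} to be leaves of $\Lambda_r$. It says a leaf of $\Lambda_r$ is generic $\iff$ birecurrent of height~$r$ $\iff$ both ends have height~$r$; it does \emph{not} say that an arbitrary birecurrent line with both ends of height~$r$ is a leaf of $\Lambda_r$. A concrete counterexample is the bi-infinite iterate of the closed Nielsen path $\rho_r$: this line is periodic hence birecurrent, has both ends of height~$r$, yet contains the height-$r$ illegal turn and so is not a leaf of $\Lambda^+$ (every leaf of $\Lambda^+$ is $r$-legal, being a weak limit of $r$-legal tiles). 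So your argument that $d_\#(\ell^u)\in\Lambda^+$ is circular: you would need that containment in order to invoke Fact~\ref{FactTwoEndsGeneric}. The paper closes this gap differently: it shows directly that $d_\#(\Lambda^u)$ is a minimal closed $\phi$-invariant set admitting an attracting neighborhood and not carried by a rank-one free factor, hence is by definition an element of $\L(\phi)$; height then pins it down as $\Lambda^+$ via Fact~\ref{FactLamsAndStrata}.

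Your symmetry reduction for $d_\#(\Lambda^s)=\Lambda^-$ is also not as free as you suggest. Running the construction for $\phi^{-1}$ uses a different \ct\ $f'\colon G'\to G'$, a different geometric model with its own surface $S'$ and its own retraction $d'$; what you would obtain is $d'_\#(\Lambda'^u)=\Lambda^-$ for those primed objects, which says nothing about $d_\#(\Lambda^s)$ in the original model. The paper avoids this by a non-symmetric argument: since leaves of $\Lambda^s$ are not weakly attracted to $\Lambda^u$ under $\psi$, their $d_\#$-images are height-$r$ lines not weakly attracted to $\Lambda^+$ under $\phi$, and then the Weak Attraction Theorem (Theorem~6.0.1 of \BookOne, applied to $\phi$ restricted to $\A_\supp(\Lambda^\pm)$) forces $d_\#(\Lambda^s)=\Lambda^-$.
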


By applying \pref{ItemLambdasMinimal} we immediately obtain another proof, independent of Corollary~\ref{CorBottommost}, that $\Lambda^+$ is bottommost.

\begin{proof} We adapt from the proof of Proposition~6.0.8 of \BookOne\ a metric on the 2-complex $Y$ and a class of ``quasilines'' in $Y$. Start with a geodesic metric on the graph $G_{r-1}$. This induces a geodesic metric on one boundary circle of each $\A_i$, which extends to a Euclidean product metric on the annulus $\A_i$, which induces a geodesic metric on each circle $\bdy_i S$, which extends to a hyperbolic structure on $S$ with totally geodesic boundary; henceforth we may assume that this hyperbolic structure is the one referred to in Fact~\ref{FactEGStrata}. The homotopy equivalences $h \from Y \to Y$, $d \from Y \to G_r$, $f \from G_r \to G_r$ have quasi-isometric lifts $\ti h$, $\ti d$, $\ti f$ to universal covers with continuous homeomorphic extensions $\hat h$, $\hat d$, $\hat f$ to Gromov boundaries, such that $\hat d \composed \hat h = \hat f \composed \hat d$. 

Define an \emph{$S$-proper quasiline in $Y$} to be a continuous map $\reals \to Y$ which intersects $\bdy S$ transversely and is a concatenation of pieces that alternate between proper geodesics in $S$ and geodesics in $G_{r-1} \union A$. Every $S$-proper quasiline is disjoint from~$\bdy_0 S$. Any pair $\xi \ne \eta \in \bdy \wt Y \approx \bdy \wt G_r$ is connected either by a geodesic which is a lift of a component of $\bdy S$ or by a unique quasigeodesic, with uniform quasigeodesic constants, which is a lift of an $S$-proper quasiline: start with a geodesic $\gamma$ in $Y$ which lifts to a geodesic connecting $\xi,\eta$, and if no choice of $\gamma$ covers a component of $\bdy S$, homotope $\gamma$ properly staying transverse to $\bdy S$ so as to replace each maximal subsegment of $\gamma$ in $S$ by the corresponding proper geodesic in $S$, and then properly homotope so as to replace each remaining subsegment of $\gamma$ by the corresponding geodesic in $G_{r-1} \union A$. 

The set of $S$-proper quasilines has the \emph{weak topology}: it is the quotient topology inherited from the usual topology on $(\bdy_\infinity \wt Y \cross \bdy_\infinity \wt Y - \Delta) / \Z/2$; also, there is a constant $\delta>0$ and a basis element for each $S$-proper quasiline $\gamma$ and each finite subsegment $\alpha$, consisting of all $S$-proper quasilines $\gamma'$ having a finite subsegment $\alpha'$ such that $\alpha$ and $\alpha'$ are $\delta$-parallel in $Y$. The map $h \from Y \to Y$ induces a self-homeomorphism $\rho \mapsto h_\#(\rho)$ of the set of $S$-proper quasilines in $Y$, and the map $d \from Y \to G_r$ induces a homeomorphism $\rho \mapsto d_\#(\rho) \in \B(G_r)$ from the set of $S$-proper quasilines in $Y$ to the set of lines in $G_r$ that are not bi-infinite iterates of any of $\rho_r, \alpha_1,\ldots,\alpha_m$. The inverse of the map $\rho \mapsto d_\#(\rho)$ is denoted $\sigma \mapsto \sigma^*$. In each case these maps are well-defined by requiring that the endpoints of lifts to the universal cover correspond under the continuous extension to infinity of the lift of the map. Note that $d_\#$ preserves height in the following sense: for each $S$-proper quasiline $\gamma$, $d_\#(\gamma)$ has height $r$ in $G_r$ if and only if at least one piece of $\gamma$ is a proper geodesic in $S$.

\bigskip

Item~\pref{ItemLambdasMinimal} is an immediate consequence of~\pref{ItemStableIsMinus}, continuity of $d_\#$, and minimality of $\Lambda^s,\Lambda^u$. Item~\pref{ItemAllGeneric} is an immediate consequence of~\pref{ItemLambdasMinimal}.

\bigskip

We turn to the proof of~\pref{ItemStableIsMinus}. By Fact~\ref{FactPsAnWeakAttr}, $\Lambda^u$ is a minimal $h_\#$-invariant lamination and it has a neighborhood in the weak topology on $S$-proper quasilines such that each point in this neighborhood is weakly attracted to $\Lambda^u$. Using continuity of $d_\#$ and its inverse ${}^*$ (see also the proof of Proposition~6.0.8 of \BookOne), it follows that for any geodesic line $\sigma$ in $G_r$ which is not the bi-infinite iterate of any of $\rho_r, \alpha_1, \ldots, \alpha_m$, and any geodesic line $\lambda$ in the interior of~$S$, $\sigma^*$ is weakly attracted to $\lambda$ under iteration of $h_\#$ if and only if $\sigma$ is weakly attracted to $d_\#(\lambda)$ under iteration of $f_\#$. It then follows that $d_\#(\Lambda^u)$ is a minimal closed subset of $\B(G)$, and it has an attracting neighborhood for the action of $f_\#$. Also, $d_\#(\Lambda^u)$ is not carried by a free factor of rank one, for if it were then the leaves of $\Lambda^u$ would be closed geodesics in $S$, which they are not. By Definition~3.1.5 of \BookOne, $d_\#(\Lambda^u)$ is an element of $\L(\phi)$. Since the leaves of $d_\#(\Lambda^u)$ have height $r$ in $G_r$, 
it follows that $d_\#(\Lambda^u) = \Lambda^+$, which is the first part of~\pref{ItemStableIsMinus}.

To prove the second part of \pref{ItemStableIsMinus}, by Fact~\ref{FactPsAnWeakAttr} the leaves of $ \Lambda^s$ are not weakly attracted to $ \Lambda^u$ under iteration by $h_\#$, so the leaves of the minimal lamination $d_\#(\Lambda^s)$ are not weakly attracted to $\Lambda^+$ under iteration by~$f$. Since these leaves have height~$r$, by applying the Weak Attraction Theorem~6.0.1 and Remark~6.0.2 of \BookOne\ to the restriction of $\phi$ to $\A_\supp(G_r)$, it follows that $d_\#(\Lambda^s) = \Lambda^-$.

\end{proof}

The following proposition shows that geometricity is an invariant of a dual lamination pair, well-defined independent of the choice of \ct\ and corresponding \eg\ stratum. Because of this proposition, for any $\phi \in \Out(F_n)$ and any $\Lambda^\pm \in \L^\pm(\phi)$, we may define $\Lambda^+$, $\Lambda^-$, or the pair $\Lambda^\pm$ to be \emph{geometric} if condition \pref{ItemBoundaryExists} of the proposition holds.

\begin{proposition}\label{PropGeomEquiv}
For any $\phi \in \Out(F_n)$, any $\Lambda^\pm \in \L^\pm(\phi)$, and any \ct\ $f \from G \to G$ representing a positive power of $\phi$, if $H_r \subset G$ is the \eg\ stratum associated to $\Lambda$ then the following are equivalent:
\begin{enumerate}
\item \label{ItemStratumIsGeom}
$H_r$ is a geometric stratum.
\item \label{ItemBoundaryExists}
There exists a finite set of conjugacy classes $\{[c_1],\ldots,[c_k]\}$ such that the free factor support of this set equals the free factor support of $\Lambda^+$, and none of $[c_1],\ldots,[c_k]$ are weakly attracted to $\Lambda^+$.
\end{enumerate}
Furthermore, if \pref{ItemStratumIsGeom}, \pref{ItemBoundaryExists} hold then for any \ct\ $f' \from G' \to G'$ representing a positive or negative power of~$\phi$ the following holds:
\begin{enumeratecontinue}
\item \label{ItemGeomInv}
If $H'_s \subset G'$ is the \eg\ stratum associated to $\Lambda^+$ or $\Lambda^-$ then $H'_s$ is geometric. 
\item \label{ItemSameNielsen}
If $[G_r] = [G'_s]$ and $[G_{r-1}] = [G'_{s-1}]$, and if $\rho_r$, $\rho'_s$ are the closed indivisible Nielsen paths in $G$, $G'$ of height $r,s$, respectively, then the conjugacy classes $[\rho_r]$, $[\rho'_s]$ are the same up to reversal.
\end{enumeratecontinue}
\end{proposition}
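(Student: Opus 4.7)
The plan is to establish $(1) \Rightarrow (2)$ by producing boundary conjugacy classes from the geometric model, establish $(2) \Rightarrow (1)$ contrapositively using the trichotomy of Fact~\ref{FactEGStrata}, and then deduce $(3)$ and $(4)$ by observing that condition $(2)$ is intrinsic to $\Lambda^\pm$.

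For $(1) \Rightarrow (2)$, assuming $H_r$ geometric with geometric model as in Definition~\ref{DefGeometricStratum}, I would take the finite set of conjugacy classes to be $\{[\bdy_0 S], [\bdy_1 S], \ldots, [\bdy_m S]\}$, viewed in $F_n$ via $D_*$. Since $\Psi$ is pseudo-Anosov it permutes the boundary components of $S$, and because $\phi$ is rotationless (applying Fact~\ref{FactPeriodicIsFixed}~\itemref{ItemPeriodicClassFixed} to periodic conjugacy classes), each $[\bdy_i S]$ is $\phi$-fixed; consequently the cyclically reduced length of each $[\bdy_i S]$ is bounded under iteration, so none is weakly attracted to $\Lambda^+$. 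The free factor support of this set equals $\A_\supp(\Lambda^+)$ by the following observation: by Proposition~\ref{PropGeomLams}, $\A_\supp(\Lambda^+) = \A_\supp(d_\#\Lambda^u) = [D_*\pi_1 S]$ (up to the free factor structure of $[G_r]$ contributed by $G_{r-1}$), and the boundary circles of a compact hyperbolic surface with boundary have free factor support equal to the surface group, while the $\alpha_i$'s pick up the other components of $[G_r]$ inside $[G_{r-1}]$.

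For $(2) \Rightarrow (1)$, I would argue contrapositively. Assume $H_r$ is not geometric. By Fact~\ref{FactEGStrata}~\itemref{ItemSomeOnce}, either no indivisible Nielsen path of height $r$ exists, or the unique one $\rho_r$ is not closed. The key intermediate claim is: \emph{every conjugacy class $[c]$ whose realization has height exactly $r$ is weakly attracted to $\Lambda^+$.} Granting the claim, given any $\{[c_1],\ldots,[c_k]\}$ with free factor support $\A_\supp(\Lambda^+)$, not every $[c_i]$ can be carried by $[G_{r-1}]$ (else the support would be $\sqsubset [G_{r-1}]$, strictly smaller than $\A_\supp(\Lambda^+)$), so some $[c_i]$ has height $r$ and is therefore weakly attracted to $\Lambda^+$, violating~$(2)$. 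To prove the claim I would apply Lemma~\ref{LemmaEGPathSplitting} to the circuit $c$ to obtain $k$ such that $f^k_\#(c)$ splits into edges of $H_r$, indivisible periodic Nielsen paths of height $r$, and paths in $G_{r-1}$; since no \emph{closed} indivisible Nielsen path of height $r$ exists, and a non-closed $\rho_r$ can appear only as an interior segment glued to other terms via legal turns, the number of $H_r$-edges in $f^{k+j}_\#(c)$ grows exponentially in $j$, and by Fact~\ref{FactAttractingLeaves}~\itemref{ItemTilePF} the iterates contain arbitrarily long tiles of $H_r$, forcing weak convergence to generic leaves of $\Lambda^+$ by Fact~\ref{FactAttractingLeaves}~\itemref{ItemTileExhausted}, \itemref{ItemLeafAsLimit}. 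The main obstacle is verifying carefully that in the non-geometric case a non-closed $\rho_r$ cannot ``absorb'' the exponential growth of $H_r$-edges --- this is where the precise form of $\rho_r$ from Fact~\ref{FactEGStrata}~\itemref{ItemSomeOnce} (crossing some edge of $H_r$ exactly once, with an endpoint not in $G_{r-1}$) is essential.

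For $(3)$, I observe that condition $(2)$ is phrased entirely in terms of $\phi$ and $\Lambda^+$, with no reference to any specific \ct, and so holds for $f'$ whenever it holds for $f$, yielding $(1)$ for $f'$ via the already-proven equivalence. When $H'_s$ corresponds to $\Lambda^-$ rather than $\Lambda^+$, I apply the equivalence to $\phi^\inv$ in place of $\phi$: the same boundary classes $[\bdy_i S]$ are $\phi^\inv$-fixed (hence not attracted to $\Lambda^-$), and Proposition~\ref{PropGeomLams} together with Lemma~\ref{containmentSymmetry} gives $\A_\supp(\Lambda^-) = \A_\supp(\Lambda^+)$, so $(2)$ holds for $\Lambda^-$ under $\phi^\inv$ and $(1)$ follows for $H'_s$. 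For $(4)$, I would note that $[\rho_r]$ and $[\rho'_s]$ are both closed, $\phi$-fixed conjugacy classes of height exactly $r = s$ (in the free factor of $[G_r]=[G'_s]$ but not in $[G_{r-1}]=[G'_{s-1}]$), and by Fact~\ref{FactEGStrata}~\itemref{ItemEachTwice} each is freely homotopic to the distinguished boundary $\bdy_0 S$ of a geometric model. Since the pseudo-Anosov $\psi$ on $S$ has fixed conjugacy classes precisely the boundary classes (and their powers), and since exactly one boundary of $S$ (namely $\bdy_0 S$) is not carried by $[G_{r-1}]$, the root-free conjugacy class $[\rho_r]$ is uniquely determined by $\phi$, $\A_\supp(\Lambda^+)$, and the sub-factor system $[G_{r-1}]$; the matching data for $f$ and $f'$ then gives $[\rho_r] = [\rho'_s]$ up to reversal.
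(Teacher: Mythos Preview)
Your overall architecture matches the paper's, but there is a real gap in your $(1)\Rightarrow(2)$ step, and a smaller one in $(2)\Rightarrow(1)$.

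\textbf{The gap in $(1)\Rightarrow(2)$.} You correctly choose the boundary classes $\{[\bdy_0 S],\ldots,[\bdy_m S]\}$ and correctly argue that none is weakly attracted to $\Lambda^+$. The problem is the free factor support equality. Your sentence ``$\A_\supp(\Lambda^+) = \A_\supp(d_\#\Lambda^u) = [D_*\pi_1 S]$'' is not right as written: $D_*\pi_1 S$ is generally \emph{not} a free factor of $F_n$, so the right-hand side is not a free factor system. What you need is the chain
\[
\A_\supp(\Lambda^+) \;\sqsubset\; \A_\supp(\pi_1 S) \;\sqsubset\; \A_\supp(\bdy S) \;\sqsubset\; \A_\supp(\Lambda^+),
\]
and only the first inclusion is immediate from Proposition~\ref{PropGeomLams}. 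The middle inclusion $\A_\supp(\pi_1 S)\sqsubset\A_\supp(\bdy S)$ is not just ``boundary circles fill the surface group'': that is a statement about free factor support \emph{in $F_n$}, and the paper proves it by a Stallings-style transversality/surgery argument (map $S$ into a marked graph realizing $\A_\supp(\bdy S)$ and surger away innermost discs). The last inclusion $\A_\supp(\bdy S)\sqsubset\A_\supp(\Lambda^+)$ is also nontrivial: the paper argues via the peripheral crowns of $\Lambda^u$, using that adjacent boundary leaves of a crown share an ideal endpoint in $\bdy F_n$, together with the fact that boundaries of distinct conjugates of a free factor are disjoint, to trap each $\bdy_i S$ inside the free factor supporting $\Lambda^+$. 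Neither of these arguments is supplied by your sketch.

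\textbf{The difference in $(2)\Rightarrow(1)$.} The paper does not argue directly; it simply invokes the Weak Attraction Theorem~6.0.1 of \BookOne\ (restricted to the component of $G_r$ containing $H_r$), which immediately forces any non-attracted class carried by $[G_r]$ down into $[G_{r-1}]$ in the nongeometric case. Your direct route via Lemma~\ref{LemmaEGPathSplitting} does work, but you stop exactly at the point that needs proof. Here is how to finish it: after splitting $f^k_\#(c)$ into $H_r$-edges, copies of $\rho_r^{\pm 1}$, and $G_{r-1}$-paths, observe that two consecutive Nielsen-path terms would force a degenerate turn (the first and last edges of $\rho_r$ are in $H_r$ by Fact~\ref{FactEGStrata}\pref{ItemFirstLast}), so every $\rho_r^{\pm 1}$ term is cyclically flanked by non-Nielsen terms; if no term were an $H_r$-edge, each $\rho_r^{\pm 1}$ would be flanked by $G_{r-1}$-paths, forcing both endpoints of $\rho_r$ into $G_{r-1}$, contradicting Fact~\ref{FactEGStrata}\pref{ItemSomeOnce}. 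Hence some term is an $H_r$-edge and attraction follows. This argument (in its line version) actually appears later in the paper, in the proof of Lemma~\ref{nonGeometricFullHeightCase}.

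Your treatments of $(3)$ and $(4)$ are essentially correct. For $(4)$ the paper uses the characterization ``carried by $[G_r]$, not by $[G_{r-1}]$, and not weakly attracted to $\Lambda^+$ under $\phi$ nor to $\Lambda^-$ under $\phi^{-1}$'' (again citing Theorem~6.0.1 and Corollary~6.0.10 of \BookOne), which is equivalent to your ``$\phi$-fixed'' characterization but avoids re-analyzing the geometric model for $f'$.
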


\begin{proof} Item \pref{ItemGeomInv} is proved as follows. The property stated in \pref{ItemBoundaryExists} is independent of the choice of \ct\ $f \from G \to G$ representing a positive power of~$\phi$, so~\pref{ItemStratumIsGeom} holds independent of $f$, proving \pref{ItemGeomInv} for positive powers. For negative powers, let $[F]$ be the free factor support of $\Lambda^+$ and $\Lambda^-$. Assuming \pref{ItemBoundaryExists} holds, each of $[c_i]$ is carried by $[F]$. Applying \BookOne\ Corollary~6.0.10, $c_i$ is weakly attracted to $\Lambda^+$ by iteration of $\phi \restrict F$ if and only if it is weakly attracted to $\Lambda^-$ by iteration of $\phi^\inv \restrict F$. The property stated in~\pref{ItemBoundaryExists} is therefore true for $\phi^\inv$ and $\Lambda^-$, and is independent of the choice of \ct\ representing negative powers of $\phi$, so \pref{ItemStratumIsGeom} holds for all such \cts, proving \pref{ItemGeomInv} for negative powers. 

To prove item~\pref{ItemSameNielsen}, by \BookOne\ Theorem~6.0.1 and Corollary~6.0.10, the conjugacy classes represented by $\rho_r$, $\rho'_s$ have the same characterization and are therefore equal (up to reversal): it is the unique conjugacy class that is carried by $[G_r]=[G'_s]$ but not by $[G_{r-1}]=[G'_{s-1}]$, and that is not weakly attracted to $\Lambda^+$ by iterating $\phi$ nor to $\Lambda^-$ by iterating $\phi^\inv$.

To prove \pref{ItemBoundaryExists}$\implies$\pref{ItemStratumIsGeom}, suppose that $H_r$ is not geometric. Consider a finite set of conjugacy classes $\{[c_1],\ldots,[c_k]\}$, none of which is weakly attracted to $\Lambda$. Applying the Weak Attraction Theorem~6.0.1 of \BookOne\ to the restriction of~$f$ to the component of $G_r$ containing $H_r$, it follows that if each of $\{[c_1],\ldots,[c_k]\}$ is supported by $[G_{r}]$ then each is supported by $[G_{r-1}]$, and so the free factor support of this set is not equal to the free factor support of $\Lambda$.

To prove \pref{ItemStratumIsGeom}$\implies$\pref{ItemBoundaryExists}, suppose that $H_r$ is geometric. We adopt the notation of Definition~\ref{DefGeometricStratum}. Consider the following free factor systems: $\A_\supp(\Lambda)$; $\A_\supp(\pi_1 S) = $ the free factor support of the set of lines carried by the subgroup system $[\pi_1(S)]$; and $\A_\supp(\bdy S) = $ the free factor support of $\{[bdy_0 S], [\bdy_1 S], \ldots, [\bdy_m S]\}$. We prove that $\A_\supp(\Lambda) \sqsubset  \A_\supp(\pi_1 S) \sqsubset \A_\supp(\bdy S) \sqsubset \A_\supp(\Lambda)$, and therefore these free factor systems are all equal. The inclusion $\A_\supp(\Lambda) \sqsubset  \A_\supp(\pi_1 S)$ is obvious.

The inclusion $\A_\supp(\pi_1 S) \sqsubset \A_\supp(\bdy S)$ is proved by a surgery argument following Stallings. Let $K$ be a marked graph with a subgraph $L$ such that $[L] = \A_\supp(\bdy S)$. Let $M$ be the set of midpoints of edges of $K-L$. Choose a map $m \from S \to K$ such that $m(\bdy S) \subset L$ and such that $m_* \from \pi_1 S \to \pi_1 K \approx F_n$ is conjugate in $F_n$ to $D_*$; also, choose a perturbation rel $\bdy S$ so that $m$ is transverse to $M$, so $m^\inv(M)$ is a pairwise disjoint collection of embedded circles in the interior of~$S$. Make these choices so that $m^\inv(M)$ has the fewest components. If $m^\inv(M) = \emptyset$ then $m(S) \subset K-M$, and so $m$ is homotopic to a map $S \mapsto L$, proving that $\A_\supp(\pi_1 S) \sqsubset [L]=\A_\supp(\bdy S)$. If $m^\inv(M) \ne \emptyset$ then, since $m_*$ is injective, each component of $m^\inv(M)$ is homotopically trivial and so bounds a closed disc embedded in $S$. Let $D$ be an innermost such disc. Let $D'$ be a slightly larger disc whose interior contains~$D$. Homotope $m$, by a homotopy supported on $D'$, to a map $m'$ such that $m'(D')$ is disjoint from $M$, and so $m'{}^\inv(M)$ has one fewer component than $m^\inv(M)$, contradicting minimality.

To prove $\A_\supp(\bdy S) \sqsubset \A_\supp(\Lambda)$, first we claim that for any free factor $F \subgroup F_n$ and $g \in F_n - F$, the subsets $\bdy F$ and $\bdy(g F g^\inv)$ are disjoint in~$\bdy F_n$. To see why, from Fact~\ref{FactGrushko} it follows that $F \intersect gFg^\inv$ is trivial. Let $T$ be a free, minimal, simplicial $F_n$ tree, and let $S \subset T$ be the minimal $F$-invariant subtree, so $gS$ is the minimal $gFg^\inv$ invariant subtree. The intersection $S \intersect gS$ is a finite tree, for if the number of vertices in $S \intersect gS$ exceeds the square of the number of vertex orbits of $F$ acting on $S$ then there exist two vertices $v,w \in S \intersect gS$ contained in the same $F$ orbit and the same $gFg^\inv$ orbit, producing a nontrivial element of $F \intersect gFg^\inv$. The claim follows.

Let $c$ be a component of $\bdy S$, let $R$ be the peripheral crown of $\Lambda$ containing $c$, and let $\wt R$ be a connected lift of $R$ to $\wt S$. As with the boundary of any finitely generated subgroup of $F_n$, the Cantor set $\bdy(\pi_1 S) \approx \bdy_\infinity \wt S$ embeds in~$\bdy F_n$. Let $\ti c$ be the lift of $c$ in $\bdy\wt R$. Let $\ell_i$, $i \in \Z$, be the remaining components of $\bdy\wt R$ listed in order, so $\ell_i$ and $\ell_{i+1}$ have a common ideal endpoint $\xi_i \in \bdy_\infinity \wt S \subset \bdy F_n$. Let $[F] = \A_\supp(\Lambda)$. Since $\ell_i$ is a leaf of $\Lambda$, there is a free factor $F_i$ conjugate to $F$ such that $\bdy \ell_i \subset \bdy F_i$. Since $\bdy F_i \intersect \bdy F_{i+1}$ contains the point $\xi_i$, it follows by the claim that $F_i =F_{i+1}$ for all $i$, so they are all equal to some free factor $F'$ conjugate to $F$. Since $\bdy \wt c$ is in the closure of the set $\{\xi_i \suchthat i \in \Z\}$ which is a subset of the closed set $\bdy_\infinity F'$, it follows that $\bdy \wt c \subset \bdy F'$, and so $[c] \in [F] = \A_\supp(\Lambda)$. Since $c$ is an arbitrary component of $\bdy S$, we have proved $\A_\supp(\bdy S) \subset \A_\supp(\Lambda)$.
\end{proof}

\section{Singular lines} 
\label{SectionSingular}

In this section, we study  ``singular lines'' of outer automorphisms, by using results from \recognition\ about principal automorphisms and principal lifts of \cts.

We consider singular lines only for a special subclass of outer automorphisms. For the remainder of this section we fix this subclass as follows:
\begin{description}
\item[Standing Assumption:] Let $\phi \in \Out(F_n)$ be rotationless. Suppose that there are no $\phi$-periodic conjugacy classes in~$F_n$; equivalently, by Fact~\ref{FactPeriodicIsFixed}~\pref{ItemPeriodicClassFixed}, no conjugacy class in~$F_n$ is fixed by $\phi$.
\end{description}
Under this assumption, a few extra facts are true. First, for any $\Phi \in \Aut(F_n)$ representing $\phi$, the subgroup $\Fix(\Phi) \subgroup F_n$ is trivial, so its boundary $\bdy\Fix(\phi) \subset \bdy F_n$ is empty. Applying Fact~\ref{LemmaFixPhiFacts} it follows that:
\begin{itemize}
\item For each $\Phi \in \Aut(F_n)$ representing $\phi$, $\Fix_N(\wh\Phi) = \Fix_+(\wh\Phi)$.
\end{itemize}
In the context of the Standing Assumption we mostly stick with the notation $\Fix_+(\wh\Phi)$.
Next, applying (Linear Edges), we have
\begin{itemize}
\item For any \ct\ representing $\phi$, no \neg\ stratum is a linear edge.
\end{itemize}

\begin{definition} \label{def:singular line} A line $\gamma$ is a \emph{singular line} for $\phi$ if there exists $\Phi \in P(\phi)$ and a lift $\ti \gamma$ of $\gamma$ whose endpoints are contained in $\Fix_+(\wh\Phi) \subset \bdy F_n$. The set of singular lines of $\phi$ is denoted~$S_\phi$. 
\end{definition}

If $\phi$ is fully irreducible, which is analogous to a pseudo-Anosov surface homeomorphism $f \from \Sigma \to \Sigma$, a singular line of $\phi$ is analogous to a line in $\Sigma$ which is a concatenation at a singularity of a pair of unstable separatrices of $f$; it is also analogous to a geodesic which is contained in one of the principal regions of the unstable geodesic lamination of $f$ with respect to any hyperbolic structure on~$\Sigma$. This analogy carries over to the broader case where $\phi$ satisfies the standing assumption: we shall prove that the set of singular lines of $\phi$ is finite and fills the free group $F_n$, and we shall characterize the singular lines very explicitly in terms of any \ct\ representing $\phi$.

Our results about singular lines are applied in Proposition~\ref{prop:WA2} and in Section~\ref{SectionLooking}, where the focus is on outer automorphisms that satisfy the stronger property of having a ``universally attracting lamination''. In this section, though, we focus on the standing assumption, since that is all we need in order to develop the theory of singular lines.


\begin{lemma} \label{finite singular set} If $\phi \in \Out(F_n)$ is rotationless and there are no $\phi$-periodic conjugacy classes then:
\begin{enumerate}
\item \label{ItemFixfinite}
$\Fix_+(\wh\Phi)$ is finite for all $\Phi \in \Aut(F_n)$ representing $\phi$. 
\item \label{ItemS_phiFills} 
$S_\phi$ fills
\item \label{ItemS_phiFinite}
$S_\phi$ is finite
\item \label{ItemFFFinite}
The set of $\phi$-invariant free factors is finite. 
\end{enumerate}
\end{lemma}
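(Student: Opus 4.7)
The plan is to prove the four items in order $(1), (3), (2), (4)$. For~(1), the Standing Assumption forces $\Fix(\Phi) = \{1\}$ for every $\Phi \in \Aut(F_n)$ representing $\phi$, so $\bdy\Fix(\Phi) = \emptyset$; combined with Fact~\ref{LemmaFixPhiFacts} this yields the disjoint decomposition $\Fix(\wh\Phi) = \Fix_+(\wh\Phi) \disjunion \Fix_-(\wh\Phi)$. Finiteness of $\Fix_+(\wh\Phi)$ then follows from the index inequality of Gaboriau--Jaeger--Levitt--Lustig \cite{GJLL:index}, which in particular bounds $\#\Fix(\wh\Phi) \le 2n$ whenever $\Fix(\Phi)$ is trivial.

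For~(3), choose a \ct\ $f \from G \to G$ representing $\phi$ via Theorem~\ref{TheoremCTExistence}. Since the vertex set of $G$ is finite, the set of principal vertices of $G$ is finite, and by the correspondence in \recognition\ between isogredience classes in $P(\phi)$ and principal vertices of $G$, the set $P(\phi)$ has only finitely many isogredience classes. By Definition~\ref{def:singular line}, each class $[\Phi]$ contributes at most $\binom{\#\Fix_+(\wh\Phi)}{2}$ lines to $S_\phi$, which is finite by~(1); hence $S_\phi$ is finite.

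For~(2), suppose for contradiction that $\A := \A_\supp(S_\phi)$ is a proper free factor system. By Theorem~\ref{TheoremCTExistence} choose a \ct\ $f \from G \to G$ representing $\phi$ that realizes $\A$ as $[G_r]$ for some $r < N$, so every singular line is carried by $G_r$. We derive a contradiction by locating a principal vertex $v$ together with two fixed directions at $v$ whose corresponding fixed rays concatenate to a singular line that crosses an edge of some stratum $H_s$ with $s > r$, hence is not carried by $G_r$. If the top stratum $H_s$ is \eg, then Fact~\ref{FactEGAperiodic} and the cited Lemma~3.19 of \recognition\ provide a principal vertex $v \in H_s$ with a fixed direction in $H_s$; since the Standing Assumption gives $\Fix_N = \Fix_+$ and $v$ is principal, there exists a second fixed direction at $v$, and the concatenation of the two fixed rays is a singular line that crosses $H_s$. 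If $H_s$ is \neg, then by Fact~\ref{FactNEGEdgeImage} the stratum is a single edge $E_s$ with $f(E_s) = E_s \cdot u_s$ and the terminal endpoint of $E_s$ is principal by (Vertices); combining the fixed ray along $E_s$ with an additional fixed ray at this endpoint (which exists because the endpoint is principal) again produces a singular line crossing $H_s$. This is the main obstacle of the proof: it relies on the explicit CT-characterization of singular lines from Lemma~\ref{LemmaSingularLine} and a careful case analysis of fixed directions at principal vertices of each stratum type.

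For~(4), let $F$ be any $\phi$-invariant free factor and pick $\Phi \in \Aut(F_n)$ with $\Phi(F) = F$. By Fact~\ref{FactPeriodicIsFixed}~(3), $\phi\restrict F$ is rotationless, and since any $\phi\restrict F$-fixed conjugacy class of $F$ is also $\phi$-fixed in $F_n$, the restriction $\phi\restrict F$ inherits the Standing Assumption. Applying~(2) to $\phi\restrict F$ gives $\A_\supp(S_{\phi\restrict F}) = [F]$. Via the natural inclusion $\bdy F \subset \bdy F_n$, each $\Phi' \in P(\phi\restrict F)$ extends, after an inner twist by an element of $F$, to some $\Phi \in \Aut(F_n)$ representing $\phi$ with $\Phi(F) = F$; one checks that $\Fix_+(\wh{\Phi'}) \subset \Fix_+(\wh\Phi)$, since attracting fixed points in $\bdy F$ remain attracting when considered in $\bdy F_n$, and that $\Phi \in P(\phi)$ whenever $\Phi' \in P(\phi\restrict F)$. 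Consequently $S_{\phi\restrict F} \subset S_\phi$, and the map $F \mapsto S_{\phi\restrict F}$ is an injection from the set of $\phi$-invariant free factors into the power set of the finite set $S_\phi$; this proves~(4).
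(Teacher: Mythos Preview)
Your argument is essentially the paper's, with one substantive gap and a couple of stylistic differences worth noting.

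For item~(1), the paper gives the lighter argument: $\Fix(\wh\Phi)$ is closed in the compact space $\bdy F_n$, and both $\Fix_+(\wh\Phi)$ and $\Fix_-(\wh\Phi)$ are discrete, so $\Fix_+(\wh\Phi) = \Fix(\wh\Phi) \setminus \Fix_-(\wh\Phi)$ is closed and discrete, hence finite. Your appeal to the index inequality of \cite{GJLL:index} works too, but is heavier machinery than needed.

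For item~(2), the paper simply cites the proof of Lemma~3.30 of \recognition; your direct argument follows instead the more concrete approach sketched in the paper's Remark after Lemma~\ref{LemmaSingularLine}. That approach is fine, but your \neg\ case has an error. The singular ray associated to $E_s$ via Fact~\ref{FactPrincipalRay} is $E_s \cdot u_s \cdot f_\#(u_s) \cdots$, which begins at the \emph{initial} vertex of $E_s$, not the terminal one; you cannot concatenate it with a ray based at the terminal endpoint. The fix is that the initial vertex of a nonfixed \neg\ edge is also principal: its link contains $E_s$ (in $H_s$) and, since $G$ has no valence~$1$ vertices and (Zero Strata) forbids an \neg\ edge from meeting a zero stratum, at least one edge in a different irreducible stratum, so Fact~\ref{FactUsuallyPrincipal} applies. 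With the initial vertex principal, the corresponding $\Phi \in P(\phi)$ has $\#\Fix_+(\wh\Phi) \ge 2$, giving a second singular ray $R'$ based at a Nielsen-equivalent vertex, and Lemma~\ref{LemmaSingularLine} produces the needed singular line $\overline{R'}\alpha R_{E_s}$ crossing~$H_s$. You should also say a word about the case where the top irreducible stratum is a fixed edge (Fact~\ref{FactNEGEdgeImage} does not apply there); a fixed loop is excluded by the Standing Assumption, and a non-loop fixed edge is itself a Nielsen path, so it can serve as the connecting path $\alpha$ in Lemma~\ref{LemmaSingularLine}.

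Items~(3) and~(4) match the paper's proof. In~(4) the paper simply asserts $S_{\phi\mid F} \subset S_\phi$; your added justification that $\Fix_+(\wh{\Phi'}) \subset \Fix_+(\wh\Phi)$ and that principality is inherited is the right content, though a fully rigorous treatment of why attractors in $\bdy F$ remain attractors in $\bdy F_n$ still requires a short argument.
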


\begin{proof} In the compact space $\bdy F_n$, the set $\Fix(\wh\Phi)$ is closed and $\Fix_+(\wh\Phi)$, $\Fix_-(\wh\Phi)$ are discrete, so $\Fix_+(\wh\Phi) = \Fix(\wh\Phi) - \Fix_-(\wh\Phi)$ is closed and discrete and therefore finite, proving~\pref{ItemFixfinite}. Item~\pref{ItemS_phiFills} follows from the proof of Lemma~3.30 of \cite{FeighnHandel:recognition} which shows that $F_n$ is filled by the set of lines $\gamma$ for which there exists a lift $\ti\gamma$ and $\Phi \in P(\phi)$ such that $\bdy\ti\gamma \subset \Fix_+(\wh\Phi)$ (see the remark after the proof of Lemma~\ref{LemmaSingularLine} for a more concrete proof that $S_\phi$ fills and is finite).

For each $\Phi \in P(\phi)$, there are only finitely many singular lines $\gamma$ having lifts~$\ti\gamma$ whose endpoints are contained in the finite set $\Fix_+(\wh\Phi)$. If $\Phi,\Phi' \in \Aut(F_n)$ are isogredient then, choosing an inner automorphism $i_g$ such that $\Phi' = i_g \Phi i_g^\inv$, it follows that $\Fix_+(\wh\Phi') = \hat g(\Fix_+(\wh\Phi))$, and so the set of lines $\gamma$ which have a lift with endpoints in $\Fix_+(\wh\Phi)$ equals the set of lines which have a lift with endpoints in $\Fix_+(\wh\Phi')$. The number of isogredience classes of principal automorphisms representing $\phi$ is finite by \cite[Remark 3.9]{FeighnHandel:recognition}, and it follows that $S_\phi$ is finite, proving~\pref{ItemS_phiFinite}.

If $[F]$ is a $\phi$-invariant free factor then under the identification of $\bdy F$ with a subset of $\bdy F_n$ we have $S_{\phi \mid F} \subset S_\phi$. By \pref{ItemS_phiFills}, $[F]$ is the free factor support of the subset $S_{\phi \mid F}$. By \pref{ItemS_phiFinite}, $S_\phi$ is finite, so there are only finitely many possibilities for the subset $S_{\phi \mid F}$ and hence only finitely many possibilities for $[F]$.
\end{proof}

We next describe the singular lines of $\phi$ very explicitly in terms of any \ct\ $f \from G \to G$ representing $\phi$. This description is based on Lemma~4.37 of \cite{FeighnHandel:recognition}. For quoting that lemma we rely on the fact that no edge of $G$ is linear, which is a consequence of (Linear Edges) and the standing assumption that $\phi$ has no periodic conjugacy classes. 

Recall the natural bijection between lifts $\ti f \from \wt G \to \wt G$ and automorphisms $\Phi$ representing $\phi$, where $\ti f$ corresponds to $\Phi$ if and only if the extension $\hat f$ to $\bdy\wt G \approx \bdy F_n$ equals the extension $\wh\Phi$ to $\bdy F_n$. We say that $\ti f$ is a \emph{principal lift} if the corresponding $\Phi$ is a principal automorphism. 

\begin{fact}[\recognition\ Lemma 3.21, Corollary 3.22, Corollary 3.27]
\label{FactPrincipalLift}
For any \ct\ $f \from G \to G$, a lift $\ti f \from \wt G \to \wt G$ is principal if and only if there is a principal vertex $v \in G$ and a lift $\ti v \in \wt G$ such that $\ti f(\ti v) = \ti v$. \qed
\end{fact}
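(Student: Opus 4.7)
The plan is to translate both sides of the equivalence into data about fixed vertices and fixed directions in $\wt G$ via an attracting-ray dictionary. The first step is to observe that for any lift $\ti f \from \wt G \to \wt G$, any $\ti f$-fixed vertex $\ti v$, and any $D\ti f$-fixed direction $\ti d$ at $\ti v$, the forward $\ti f$-iterates of the initial edge representing $\ti d$ form a nested family whose union is a ray $\ti R(\ti v, \ti d)$ with terminal endpoint $\xi(\ti v, \ti d) \in \bdy \wt G \approx \bdy F_n$; by construction $\xi(\ti v, \ti d) \in \Fix_+(\hat f) = \Fix_+(\wh\Phi)$, with an attracting neighborhood consisting of boundary points whose associated rays share a sufficiently long initial subpath with $\ti R(\ti v, \ti d)$.

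Using (Rotationless) together with RTT-(i), Fact~\ref{FactNEGEdgeImage}, and the splitting analysis of Facts~\ref{FactFinestSplitting} and~\ref{FactEvComplSplit}, I would show that every periodic direction at a fixed vertex of $\ti f$ is in fact $D\ti f$-fixed, and that the assignment $(\ti v, \ti d) \mapsto \xi(\ti v, \ti d)$ is a bijection from the set of such pairs to $\Fix_+(\wh\Phi) = \Fix_N(\wh\Phi)$ (the last equality being the standing assumption). Two fixed directions at the same $\ti v$ produce the two endpoints of a bi-infinite line through $\ti v$, and by Fact~\ref{FactAttractingLeaves}\pref{ItemLeafAsLimit} this line is a generic leaf of some $\Lambda_r \in \L(\phi)$ precisely when both directions lie in a common EG stratum $H_r$.

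For the reverse direction of the fact, suppose $v$ is a principal vertex with an $\ti f$-fixed lift $\ti v$. By the dictionary, each periodic direction at $\ti v$ gives an element of $\Fix_+(\wh\Phi)$. The definition of principal vertex rules out the two bad configurations, so one of the following must hold: there are at least three periodic directions at $v$; there are exactly two but they do not lie in a common EG stratum; or $v$ is Nielsen-equivalent to another periodic point (whence another principal lift shares some fixed points at infinity with $\ti f$, producing additional structure). In each case $\Fix_N(\wh\Phi)$ either has at least three points or has exactly two points whose joining line is neither the axis of a group element (ruled out by the standing assumption, since $\Fix(\Phi) = 1$) nor a generic leaf lift, so $\Phi$ is principal.

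For the forward direction, suppose $\ti f$ is principal. Then $\Fix_N(\wh\Phi)$ contains some $\xi = \xi(\ti v, \ti d)$, projecting to a fixed vertex $v$ in $G$. If $v$ were non-principal of the circle type, then by the remark following Fact~\ref{FactPrincipalVertices} no such circle exists in a CT, a contradiction. Otherwise non-principality of $v$ would force $v$ to be alone in its Nielsen class with exactly two periodic directions, both in some common EG stratum $H_r$; the bijection above then gives $\Fix_N(\wh\Phi) = \{\xi(\ti v, \ti d_1), \xi(\ti v, \ti d_2)\}$, and by Fact~\ref{FactAttractingLeaves}\pref{ItemLeafAsLimit} these two points are endpoints of a lift of a generic leaf of $\Lambda_r$, contradicting $\Phi$ being principal. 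The main obstacle is precisely this last bookkeeping step: matching up the ``exactly two points'' exceptional case in the definition of principal automorphism (endpoints of an axis or of a generic leaf lift) with the ``exactly two periodic directions in a common EG stratum'' exceptional case in the definition of principal vertex, where the axis alternative is eliminated by the standing assumption and the generic-leaf alternative requires invoking the tile-exhaustion characterization of generic leaves.
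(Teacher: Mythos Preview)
The paper gives no proof here --- it cites \recognition\ Lemma~3.21, Corollary~3.22, Corollary~3.27 and closes with a \qed. Your proposal is an attempt to re-derive those cited results from scratch.

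Your backward direction is sound in outline, though the Nielsen-equivalence branch is underspecified: when $v$ is principal only because it is Nielsen-equivalent to another periodic point $w$, you must verify that the fixed directions at the lift $\ti w$ contribute points of $\Fix_+(\wh\Phi)$ genuinely distinct from those coming from $\ti v$, so that the total is at least three.

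Your forward direction has a real gap. You assert that the assignment $(\ti v,\ti d)\mapsto \xi(\ti v,\ti d)$ is a bijection onto $\Fix_+(\wh\Phi)$, and then pull back a point $\xi$ to a fixed vertex. But surjectivity of this map presupposes that $\ti f$ has a fixed vertex in the first place --- and that is exactly the nontrivial content of \recognition\ Corollary~3.22. None of the facts you invoke ((Rotationless), RTT-(i), Fact~\ref{FactNEGEdgeImage}, Facts~\ref{FactFinestSplitting} and~\ref{FactEvComplSplit}) produces a fixed point of $\ti f$ from boundary data; they describe behavior \emph{at} fixed points or along edge images. A fixed-point-free lift can still have attractors on $\bdy F_n$ (compare Fact~\ref{FactEigenray} later in the paper), so the surjectivity you need is not automatic and requires a separate argument, which is the substance of the cited results.

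A smaller point: you lean on the Standing Assumption of Section~\ref{SectionSingular} (no periodic conjugacy classes) to get $\Fix_N(\wh\Phi)=\Fix_+(\wh\Phi)$ and to dispose of the axis alternative. That is legitimate in the context where this Fact is stated, but the statement is phrased for an arbitrary \ct, and the cited results in \recognition\ hold without that hypothesis.
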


Define a \emph{principal direction} in $G$ to be (the direction of) an oriented, non-fixed edge $E$ whose initial vertex $v$ is principal and whose initial direction is fixed. A \emph{singular ray} is a ray that is obtained by iterating a principal direction, as defined precisely in item~\pref{ItemSingRay} of the following:

\vfill\break

\begin{fact} \label{FactPrincipalRay}
For each principal direction $E$ in $G$ the following hold:
\begin{enumerate}
\item \label{ItemSingRay}
The increasing union of the strictly nested sequence $E \subset f(E) \subset f^2_\#(E) \subset f^3_\#(E) \ldots$ is a completely split ray $R$ that we call a \emph{singular ray}. 
\item \label{ItemSingRayToAttr}
If $\wt E \subset \wt G$ is a lift of $E$ and $\ti f \from \wt G \to \wt G$ is the principal lift of $f$ fixing the initial endpoint of $\ti E$ then the increasing union $\wt E \subset \ti f(\wt E) \subset\ti f^2_\#(\wt E) \subset \ldots$ is a ray $\wt R$ that projects to $R$ and terminates at a point $P \in \Fix_+(\wh\Phi)$, where $\Phi \in P(\phi)$ corresponds to $\ti f$. 
\end{enumerate}
Conversely, for each $\Phi \in P(\phi)$ and each $P \in \Fix_+(\wh\Phi)$:
\begin{enumeratecontinue}
\item \label{ItemAttrToSingRay}
There exists a singular ray in $G$ and a lift of that ray to $\wt G$ that terminates at~$P$. 
\end{enumeratecontinue}
From this we obtain a bijection between the set of singular rays in~$G$ and the set of principal directions in~$G$, and a surjection from this set to the set of $F_n$-orbits of $\union_{\Phi \in P(\phi)} \Fix_+(\wh\Phi)$.
\end{fact}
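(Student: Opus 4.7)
The plan is to prove the three items in sequence and then read off the bijection/surjection as a consequence. For item~\pref{ItemSingRay}, I would first argue that since $E$ is a principal direction, its initial vertex $v$ is principal (hence fixed by $f$) and $Df(E) = E$, so $f(E)$ is a path starting at $v$ whose first direction is $E$. Combining this with the fact that $E$ is non-fixed, and then using the structure theorems for CTs to get a splitting $f(E) = E \cdot \mu$ with $\mu$ nontrivial: for a non-fixed \neg\ stratum this is Fact~\ref{FactNEGEdgeImage}, and for an \eg\ stratum it follows because $f(E)$ is completely split with leading term $E$ (any complete splitting refines any splitting at vertices, by Fact~\ref{FactFinestSplitting}\pref{ItemRefined}). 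Iterating gives $f^{k+1}_\#(E) = f^k_\#(E) \cdot f^k_\#(\mu)$ as a splitting with $f^k_\#(\mu)$ nontrivial, so the sequence is strictly nested. The union $R$ is then a ray, and it is completely split because each $f^k_\#(E)$ is completely split with leading term $E$.

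For item~\pref{ItemSingRayToAttr}, I would lift the picture: the initial vertex $\ti v$ of $\wt E$ is fixed by $\ti f$ by hypothesis, and since $Df$ fixes the direction of $E$, the lift $\ti f(\wt E)$ is a path beginning with $\wt E$. Iterating, $\{\ti f^k_\#(\wt E)\}$ is a nested sequence of paths that projects to $\{f^k_\#(E)\}$, so its union $\wt R$ projects to $R$ and terminates at some $P \in \bdy\wt G$. Because $\ti f$ carries $\ti f^k_\#(\wt E)$ into $\ti f^{k+1}_\#(\wt E)$, the continuous extension $\wh\Phi = \hat f$ fixes $P$, so $P \in \Fix(\wh\Phi)$. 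To upgrade to $P \in \Fix_+(\wh\Phi)$, I would consider the ``shadow'' neighborhood of $P$ in $\bdy\wt G$ consisting of endpoints of rays from $\ti v$ agreeing with $\wt R$ on a long enough initial segment; since $\ti f$ sends any ray starting with $\wt E$ to a ray starting with $\ti f(\wt E)$ (a longer initial segment of $\wt R$), $\wh\Phi$ contracts such shadows into smaller shadows around $P$, exhibiting $P$ as an attractor. The expected obstacle here is subtle only in making the ``shadows'' and the contraction argument rigorous, but the bounded cancellation lemma applied to $\ti f$ makes this routine.

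For the converse item~\pref{ItemAttrToSingRay} --- which I expect to be the hardest step --- I would invoke Lemma~4.37 of \recognition\ essentially as a black box. Given $\Phi \in P(\phi)$ with corresponding lift $\ti f$ and $P \in \Fix_+(\wh\Phi)$, Fact~\ref{FactPrincipalLift} provides a principal vertex $v$ with lift $\ti v$ fixed by $\ti f$. The strategy is to analyze the ray from $\ti v$ to $P$: using that $P$ is attracting, the ray and its $\ti f$-image are asymptotic, and one adjusts $\ti v$ among principal vertices fixed by $\ti f$ so that the ray actually begins at $\ti v$; then its initial edge $\wt E$ projects to an oriented edge $E$ at $v$ whose direction must be fixed by $Df$ (since $Df(E)$ is the initial direction of the ray $\ti f_\#$ applied to the ray, which coincides with the original). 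Thus $E$ is a principal direction, and by item~\pref{ItemSingRayToAttr} the singular ray from $E$ lifted starting at $\ti v$ terminates at $P$. Finally the bijection between singular rays and principal directions follows immediately from~\pref{ItemSingRay} since the principal direction is recovered as the initial edge of the singular ray, and the surjection onto $F_n$-orbits of $\bigcup_{\Phi \in P(\phi)} \Fix_+(\wh\Phi)$ is item~\pref{ItemAttrToSingRay}, with well-definedness of the map following from the compatibility in~\pref{ItemSingRayToAttr} between choice of lift $\wt E$ and choice of principal lift $\ti f$.
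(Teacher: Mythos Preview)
Your proposal is correct and follows essentially the same route as the paper. Your argument for item~\pref{ItemSingRay} matches the paper's almost exactly (strict nesting from non-fixedness, the \neg/\eg\ case split via Fact~\ref{FactNEGEdgeImage} and Fact~\ref{FactFinestSplitting}\pref{ItemRefined}, and the complete splittings being initial segments of one another); for items~\pref{ItemSingRayToAttr} and~\pref{ItemAttrToSingRay} the paper simply cites \recognition\ Lemma~4.37 parts~(1) and~(2) directly, whereas you sketch the contraction-of-shadows argument for~\pref{ItemSingRayToAttr} yourself before invoking Lemma~4.37 for~\pref{ItemAttrToSingRay}---your sketch is sound, but the paper is content to delegate both to the citation.
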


\begin{proof} To prove \pref{ItemSingRay}, since the edge $E$ is not fixed, it follows by induction that each containment $f^k_\#(E) \subset f^{k+1}_\#(E)$ is strict, and so the union $R$ is indeed a ray. We claim that for each $k$ the sequence of terms in the complete splitting of $f^k_\#(E)$ is an initial segment of the sequence of terms in the complete splitting of $f^{k+1}_\#(E)$; from this claim it follows that $R$ has a complete splitting, whose sequence of terms is the union over~$k$ of the sequence of terms of the complete splitting of $f^k_\#(E)$. To prove the claim for $k=0$, let $H_r$ be the stratum containing $E$. If $H_r$ is \neg\ then the claim follows from Fact~\ref{FactNEGEdgeImage}. If $H_r$ is \eg\ then the claim follows by combining Fact~\ref{FactFinestSplitting}~\pref{ItemRefined} with the observation that, by definition of a relative train track map, the decomposition of $f(E)$ into edges of height $r$ and maximal subpaths of height $< r$ is a splitting. To prove the claim for $k>0$, use induction combined with Fact~\ref{FactFinestSplitting}~\pref{ItemRefined}. 

Item \pref{ItemSingRayToAttr} is contained in \cite{FeighnHandel:recognition} Lemma~4.37~(1), and item~\pref{ItemAttrToSingRay} is contained in \cite{FeighnHandel:recognition} Lemma~4.37~(2). 
\end{proof}

\begin{lemma}\label{LemmaSingularLine}
Suppose that $\phi \in \Out(F_n)$ is rotationless and has no periodic conjugacy classes, and $f \from G \to G$ is a \ct\ representing $\phi$.  A line $\ell$ in $G$ is the realization of a singular line for $\phi$ if and only if we can write $\ell = \overline R \alpha R'$ for some singular rays $R \ne R'$ and some path $\alpha$ which is either trivial or a Nielsen path. 
\end{lemma}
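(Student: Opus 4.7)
The plan is to prove both directions by using the explicit correspondence between principal automorphisms, principal lifts of $f$, and singular rays furnished by Facts~\ref{FactPrincipalLift} and~\ref{FactPrincipalRay}.

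For the easier $(\Leftarrow)$ direction, suppose $\ell = \overline{R}\alpha R'$ with singular rays $R \ne R'$ based at principal vertices $v,v'$ and with $\alpha$ either trivial (so $v=v'$) or a Nielsen path from $v$ to $v'$. First I would choose a lift $\ti\alpha \subset \wt G$ and use $f_\#(\alpha)=\alpha$ to produce a lift $\ti f \from \wt G \to \wt G$ fixing both endpoints $\ti v, \ti v'$ of $\ti\alpha$ (when $\alpha$ is trivial, take any lift $\ti f$ fixing a chosen lift $\ti v$ of $v$). Fact~\ref{FactPrincipalLift} then identifies $\ti f$ as a principal lift corresponding to some $\Phi \in P(\phi)$. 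Next, Fact~\ref{FactPrincipalRay}\pref{ItemSingRayToAttr} gives lifts $\wt R, \wt R'$ starting at $\ti v, \ti v'$ and terminating at points $P, Q \in \Fix_+(\wh\Phi)$. The tight concatenation $[\overline{\wt R}\,\ti\alpha\,\wt R']$ is a lift of $\ell$, so since $\ell$ is a line we have $P \ne Q$, and this lift has both endpoints in $\Fix_+(\wh\Phi)$, so $\ell$ is singular.

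For the $(\Rightarrow)$ direction, start with a lift $\ti\ell$ whose endpoints $P \ne Q$ lie in $\Fix_+(\wh\Phi)$ for some $\Phi \in P(\phi)$, and let $\ti f$ denote the corresponding principal lift. Applying Fact~\ref{FactPrincipalRay}\pref{ItemAttrToSingRay} separately to $P$ and $Q$, produce singular rays $R, R'$ in $G$ with lifts $\wt R, \wt R'$ in $\wt G$ terminating at $P, Q$ and starting at $\ti f$-fixed principal vertices $\ti v, \ti v'$. Let $\ti\alpha$ denote the unique geodesic in the tree $\wt G$ from $\ti v$ to $\ti v'$ (the trivial path if $\ti v = \ti v'$). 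Because $\ti f$ fixes both endpoints of $\ti\alpha$, the tightened image $\ti f_\#(\ti\alpha)$ is the unique geodesic between those fixed endpoints, hence equals $\ti\alpha$, so the projection $\alpha$ satisfies $f_\#(\alpha) = \alpha$ and is thus trivial or a Nielsen path. Finally, the tight concatenation $[\overline{\wt R}\,\ti\alpha\,\wt R']$ is the unique geodesic in $\wt G$ from $P$ to $Q$, namely $\ti\ell$, so projecting gives $\ell = [\overline R \alpha R']$.

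The only remaining subtlety, and the place where I expect the most care is required, is verifying that $R \ne R'$. If $R=R'$ as rays in $G$, then $\ti v$ and $\ti v'$ would both be lifts of the same principal vertex $v$, both fixed by $\ti f$. Under the standing assumption that $\phi$ has no periodic conjugacy classes, $\Fix(\Phi)$ is trivial, so a standard equivariance argument ($\ti v' = g\cdot \ti v = \Phi(g)\cdot \ti v$ forces $g \in \Fix(\Phi) = \{1\}$) yields $\ti v = \ti v'$. Since singular rays are determined by the initial principal direction (Fact~\ref{FactPrincipalRay}), this forces $\wt R = \wt R'$ and hence $P = Q$, contradicting $\ti\ell$ being a bi-infinite line. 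The other matter to keep in mind is that in both directions we are tightening $\overline{\wt R}\,\ti\alpha\,\wt R'$, and we rely on $P \ne Q$ to guarantee that the tightening produces a genuine line rather than collapsing; again, this is automatic from $\ti\ell$ being bi-infinite.
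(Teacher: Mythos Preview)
Your argument has a genuine gap in the $(\Rightarrow)$ direction. You produce singular rays $R,R'$ and a Nielsen path $\alpha$ and correctly observe that the geodesic from $P$ to $Q$ in $\wt G$ is the \emph{tightening} $[\overline{\wt R}\,\ti\alpha\,\wt R']$, and you conclude $\ell=[\overline R\alpha R']$. But the lemma asks for an honest decomposition $\ell=\overline R\alpha R'$ with $R,R'$ singular rays and $\alpha$ a Nielsen path, i.e.\ the concatenation must be locally injective at the endpoints of $\alpha$. Nothing in your argument rules out cancellation: the initial edge of $R$ could coincide with the initial edge of $\alpha$, in which case tightening shortens both pieces and the truncated ray is no longer a singular ray, nor is the truncated middle path obviously a Nielsen path. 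So you have not produced the required decomposition.

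The paper closes this gap by exploiting the non-uniqueness of the singular ray converging to a given point of $\Fix_+(\wh\Phi)$. One chooses $R,R'$ among all such rays so that the Nielsen path $\alpha$ joining their basepoints has the fewest indivisible-Nielsen-path/fixed-edge terms; the key claim is that with this minimal choice the concatenation $\overline R\alpha R'$ is already locally injective. The proof of the claim is where the real work is: if, say, $R$ and the first term $\rho_1$ of $\alpha$ share an initial edge $E$, one argues (using (NEG Nielsen Paths), the absence of linear edges, and the structure of the unique height-$r$ indivisible Nielsen path in the relevant EG stratum) that $R''=[\bar\rho_1 R]$ is again a singular ray asymptotic to $R$, and replacing $R$ by $R''$ deletes $\rho_1$ from $\alpha$, contradicting minimality. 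The distinctness $R\neq R'$ then falls out because, with no closed Nielsen paths, the initial directions of $R$ and $R'$ are distinct once the concatenation is locally injective. Your equivariance argument for $R\neq R'$ is correct as far as it goes, but it is addressing the wrong difficulty; the local injectivity is the point that needs attention.
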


\begin{proof} Let $\ell = \overline R  \alpha  R'$ be as described in the lemma. Choose a lift $\ti g \from \wt G \to \wt G$ of $g$ and a lift $\ti\ell = \wt{\overline R}  \ti \alpha  \wt{R'}\subset \wt G$ of $\ti\ell$ so that $\ti g$ fixes the endpoints of $\ti\alpha$. Letting $\Phi \in P(\phi)$ be the principal automorphism corresponding to $\ti g$, it follows that $\wh\Phi$ fixes the infinite ends of both $\wt{\overline R}$ and $\wt{R'}$. By Fact~\ref{FactPrincipalRay}, these ends are in $\Fix_+(\wh\Phi)$, and so $\ell$ is a singular line.

Conversely, let $\ell$ be a singular line, and choose $\Phi \in P(\phi)$ and a lift $\ti\ell$ whose endpoints are contained in $\Fix_+(\wh\Phi)$.
By Fact~\ref{FactPrincipalRay}, we may choose singular rays $R,R'$ with initial edges $E,E'$ and lifts $\wt R, \wt R'$ which converge to the two endpoints of $\ti\ell$ respectively. If $\ti g \from \wt G \to \wt G$ is the lift of $g$ corresponding to $\Phi$ it follows that $\ti g$ fixes the initial points $\ti v, \ti v'$ of $\wt R, \wt R'$, respectively. Let $\ti\alpha$ be the path in $\wt G$ connecting $\ti v$ to~$\ti v'$. The projection of $\ti\alpha$ to $G$ is a Nielsen path $\alpha$ for $g$. Notice that the concatenation $\overline R \alpha R'$ need not be locally injective at the endpoints of $\alpha$. The path $\alpha$ has a unique decomposition into indivisible Nielsen paths and fixed edges, written $\alpha = \rho_1  \ldots  \rho_k$. 

We claim that if $R,R'$ are chosen so that $k$ is minimal then the concatenation $\overline R \alpha R'$ is indeed locally injective at the endpoints of $\alpha$. From this claim it follows that $\ell = \overline R \alpha R'$. Furthermore, since there are no closed Nielsen paths, the initial directions of $R$ and $R'$ are distinct, and so $R$ and $R'$ are distinct, completing the proof of the lemma.

To prove the claim, first note that if $k=0$ then $R,R'$ have distinct initial directions, by \recognition\ Lemma 3.21~(3), and so $\overline R R'$ is locally injective. Suppose that $k \ge 1$ and that $\overline R \alpha R'$ is not locally injective at the endpoints of $\alpha$; by reversing orientation if necessary we may assume that $R$ and $\alpha$ have the same initial edge $E$. From the definition of a singular ray it immediately follows that $E$ is not a fixed edge. Noting that $E$ is the initial edge of $\rho_1$, if $E$ were an \neg\ edge then, by (NEG Nielsen Paths) applied to $\rho_1$, the edge $E$ would be linear, contradicting that $G$ has no linear edges. It follows that $E$ is an edge of some \eg\ stratum $H_r \subset G$, and that $\rho_1$ is the unique indivisible height $r$ Nielsen path, with a decomposition into $H_r$-legal paths as $\rho_1 = \sigma \bar \tau$.

There is an initial segment of $E$ which maps to $\sigma$ under some iterate of $f_\#$, and so $\sigma$ is an initial segment of $R$. Since $R$ is $r$-legal and $\rho_1$ has an $r$-illegal turn at the endpoint of $\sigma$, it follows that the path $\sigma$ is the longest common initial segment of $R$ and $\rho_1$. Furthermore the ray $R'' = [\bar\rho_1 R]$ is a singular ray asymptotic to $R$, and $\tau$ is the longest common initial segment of $\bar\rho_1$ and $R''$. We may therefore replace the choice of $R$ by $R''$, obtaining a concatenation $\overline R'' (\rho_2  \ldots  \rho_k) R$ which contradicts minimality of $k$.
\end{proof}

\paragraph{Remark.} Lemma~\ref{LemmaSingularLine} and its proof gives a more concrete demonstration that $S_\phi$ is finite, because there are only finitely many singular rays and Nielsen paths. It also gives a more concrete proof that $S_\phi$ fills: if not, choose a \ct\ $f \from G \to G$ in which the free factor support of $S_\phi$ is realized by some proper filtration element $G_r$. There is a stratum of height~$>r$, and by (Zero Strata) there is an irreducible stratum $H_s$ of height $s>r$. By \recognition\ Lemma~3.19, there is an oriented edge $E \subset H_s$ whose initial direction is principal. By definition of principal vertex, there is another oriented edge $E' \subset G$ whose initial direction is principal, such that the initial vertices of $E$ and $E'$ are connected by a path $\alpha$ which is either trivial or a Nielsen path. By the proof of Lemma~\ref{LemmaSingularLine}, if $E'$ is chosen to minimize the number of terms in the complete splitting of $\alpha$, then the principal rays associated to $E$ and $E'$ may be concatenated with $\alpha$ in the middle, producing a singular line of height $\ge s$, a contradiction.

\bigskip

The next lemma will be used in Step~2 of Section~\ref{SectionLooking}.

\begin{lemma} 
\label{FactSingularRayDense}
Suppose that $\phi \in \Out(F_n)$ is rotationless and has no periodic conjugacy classes. For any $\Lambda \in \L(\phi)$, some leaf $\ell$ of $\Lambda$ is a singular line, at least one of whose ends is dense in $\Lambda$.
\end{lemma}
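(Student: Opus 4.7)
The plan is to construct $\ell$ from a principal direction in $H_r$: iterate it to produce a singular ray dense in $\Lambda$, extend by compactness to a bi-infinite leaf of $\Lambda$, and then identify the extension as a singular line using the dynamics of a principal lift on $\bdy F_n$.

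Fix a \ct\ $f \colon G \to G$ representing $\phi$ with \eg\ stratum $H_r$ corresponding to $\Lambda$. By \recognition\ Lemma 3.19 (as used in the proof of Fact~\ref{FactEGAperiodic}), $H_r$ contains a principal vertex $v$ whose link contains a principal direction $E$ in $H_r$. By Fact~\ref{FactPrincipalRay}~\pref{ItemSingRay} the singular ray $R = \bigcup_{k \ge 0} f^k_\#(E)$ exists; since $E \subset H_r$, Fact~\ref{FactAttractingLeaves}~\pref{ItemLeafAsLimit} together with Fact~\ref{FactAttractingLeaves}~\pref{ItemTilePF} imply that every finite subpath of every leaf of $\Lambda$ appears cofinally often as a subpath of some tile $f^k_\#(E) \subset R$, so the weak accumulation set of $R$ as a positive end is exactly $\Lambda$; equivalently, the $R$-end is dense in $\Lambda$.

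Next, lift $v$ to $\ti v$ and $R$ to $\wt R \subset \wt G$ based at $\ti v$, and let $\ti f$ be the principal lift of $f$ fixing $\ti v$ (Fact~\ref{FactPrincipalLift}) with corresponding $\Phi \in P(\phi)$; by Fact~\ref{FactPrincipalRay}~\pref{ItemSingRayToAttr}, $\wt R$ terminates at some $P \in \Fix_+(\wh\Phi)$. Density of $R$ in $\Lambda$ lets me choose, for each $n$, a leaf $\gamma_n \in \Lambda$ and a lift $\ti\gamma_n \subset \wt G$ whose one end contains the length-$n$ initial segment of $\wt R$; extracting a weakly convergent subsequence produces a bi-infinite line $\ti\ell^*$ containing all of $\wt R$ as its positive end, whose projection $\ell^*$ lies in the closed set $\Lambda$ and whose other endpoint is some $Q \in \bdy F_n$.

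Finally, I would show $\ell^*$ (or a limit of its iterates under $\ti f$) is a singular line. Because $Df(E) = E$, the path $f(E)$ begins with $E$, so $\wt E \subset \ti f(\wt E)$ and hence $\ti f(\wt R) = \wt R$; consequently for each $n \ge 0$ the lift $\ti f^n(\ti\ell^*)$ is a lift of a leaf of $\Lambda$ with endpoints $P$ and $\wh\Phi^n(Q)$. Under the standing assumption $\bdy\Fix(\Phi) = \emptyset$, so $\Fix(\wh\Phi)$ is the finite set $\Fix_+(\wh\Phi) \cup \Fix_-(\wh\Phi)$; using the convergence dynamics of $\wh\Phi$ on $\bdy F_n$, extract a subsequence so that $\wh\Phi^{n_k}(Q) \to Q^* \in \Fix_+(\wh\Phi) \cup \Fix_-(\wh\Phi)$, and the corresponding $\ti f^{n_k}(\ti\ell^*)$ converges weakly to a lift of a leaf $\ell$ of $\Lambda$ with endpoints $P, Q^*$. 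The repelling case $Q^* \in \Fix_-(\wh\Phi)$ is ruled out by combining Fact~\ref{FactPrincipalRay}~\pref{ItemAttrToSingRay} applied to $\phi^\inv$ (which would make the negative end of $\ell$ a $\phi^\inv$-singular ray dense, via the analogue of the first two steps, in some lamination $\Lambda' \in \L(\phi^\inv)$) with Fact~\ref{FactDualDifferent} via the dual lamination pair structure of $\Lambda$, so $Q^* \in \Fix_+(\wh\Phi)$. Hence $\ell$ is a singular line (of the explicit form $\bar{R'}\alpha R$ given by Lemma~\ref{LemmaSingularLine}), and its $R$-end is dense in $\Lambda$ by construction. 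The main obstacle I expect is this final step — specifically the convergence analysis of $\wh\Phi^n(Q)$ on $\bdy F_n$ and the careful exclusion of the repelling-endpoint case via duality.
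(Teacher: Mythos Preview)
Your setup through the construction of $\ell^*$ is sound, and it is genuinely different from the paper's approach: the paper never passes to $\bdy F_n$ or uses dynamics of $\wh\Phi$ at all. The real content of your argument is in the last paragraph, and you are right to flag it — it does not go through as written.

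The exclusion of $Q^* \in \Fix_-(\wh\Phi)$ has several gaps. First, $\Fix_-(\wh\Phi) = \Fix_+(\wh\Phi^{-1})$, but Fact~\ref{FactPrincipalRay}\pref{ItemAttrToSingRay} requires $\Phi^{-1} \in P(\phi^{-1})$, and there is no reason the inverse of a principal automorphism for $\phi$ should be principal for $\phi^{-1}$ (nor is $\phi^{-1}$ assumed rotationless here). Second, even granting a $\phi^{-1}$-singular ray $R'$ landing at $Q^*$, the negative end of $\ell$ is only \emph{asymptotic} to $R'$ in some \ct\ for $\phi^{-1}$; your ``analogue of the first two steps'' showed density in $\Lambda$ only because $E$ lay in the \eg\ stratum $H_r$, and a general singular ray need not be dense in any single lamination. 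Third, even if you salvage this to get some $\Lambda' \in \L(\phi^{-1})$ with $\Lambda' \subset \Lambda$, Fact~\ref{FactDualDifferent} only rules out $\Lambda^- \subset \Lambda^+$ for a \emph{dual} pair; $\Lambda'$ need not be dual to $\Lambda$, so no contradiction follows directly. Finally, after iterating by $\ti f^{n_k}$ and passing to a limit you have not justified that the limit line still contains $\wt R$: the turn at $\ti v$ between $D\ti f^{n_k}(d_\rho)$ and $\wt E$ may degenerate, causing unbounded cancellation into $\wt R$.

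The paper avoids all of this by working entirely inside the \ct\ for $\phi$. It starts with a generic leaf $\ell$ of $\Lambda$, splits it at an occurrence of your edge $E$, and looks at the complete splitting of the complementary ray: the initial run $\alpha$ of fixed edges and indivisible Nielsen paths is bounded in length (else a closed Nielsen path), so one may choose $\ell$ to maximize it. After iterating to make the next direction fixed, maximality forces the next term to be a nonfixed edge $E'$ in an irreducible stratum with fixed initial direction. Then $\overline{R_E}\,\alpha\,R_{E'}$ is a singular line by Lemma~\ref{LemmaSingularLine}, is a weak limit of generic leaves hence a leaf of $\Lambda$, and its $R_E$-end is dense in $\Lambda$ since $R_E$ is the increasing union of height-$r$ tiles. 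This is considerably more elementary than the boundary-dynamics route.
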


\begin{proof} Choose a \ct\ $g \from G \to G$ representing $\phi$. 

Note that $g$ has only finitely many indivisible Nielsen paths $\rho$, indeed only finitely many of height $r$ for each stratum $H_r \subset G$: by combining (Zero Strata), (NEG Nielsen Paths), and the standing assumption, it follows that $H_r$ is an \eg\ stratum, in which case there is at most one possibile $\rho$ by Fact~\ref{FactEGNPUniqueness}. Note also that amongst the set of paths $\alpha = \alpha_1 \ldots \alpha_m$ which are concatenations of fixed edges and indivisible Nielsen paths, there is an upper bound $M$ to the length $m$ of the concatenation, because if $m$ were sufficiently large then some term in this sum would be repeated with the same orientation, and so we would obtain a circuit which is a Nielsen path, violating the standing assumption.

Let $H_r$ be the \eg\ stratum corresponding to $\Lambda$. By \recognition\ Lemma~3.19, there exists an oriented edge $E \subset H_r$ whose initial direction is principal, and therefore fixed. Choose a generic leaf $\ell$ of $\Lambda$. Since $\ell$ crosses the edge $E$ it follows, by Fact~\ref{FactAttractingLeaves}~\pref{ItemLeafComplSplit}, that $\ell$ has a splitting of the form $\ell = \overline R' \cdot \bar E \cdot R$. Let $\alpha$ be the longest initial segment of the ray $R$ consisting of fixed edges and indivisible Nielsen paths in the complete splitting of $R$, and so we have $\ell = \overline R' \cdot \bar E \cdot \alpha \cdot R''$. Choose $\ell$ to maximize the number $m \ge 0$ of fixed edges and indivisible Nielsen paths in the complete splitting of $\alpha$; this is possible because $m \le M$ for any such $\alpha$. Choose $k$ so that the initial direction of $g^k_\#(R'')$ is fixed, and consider the generic leaf
$$g^k_\#(\ell) = g^k_\#(\overline R') \cdot g^k_\#(\bar E) \cdot \alpha \cdot g^k_\#(R'')
$$
By maximality of $m$, the initial term in the complete splitting of $g^k_\#(R'')$ is not a fixed edge or indivisible Nielsen path, and it is not an exceptional path by our standing assumption, so the only possibility is that it is a nonfixed edge $E'$ in an irreducible stratum. Consider the splitting
$$g^k_\#(\ell) = g^k_\#(R') \cdot g^k_\#(\bar E) \cdot \alpha \cdot E' \cdot R'''
$$
and note that the initial direction of $E'$ is fixed, by choice of $k$.

As we continue to iterate, we obtain a sequence of paths
$$\gamma_i = g^{k+i}_\#(\bar E) \cdot \alpha \cdot g^i_\#(E')
$$
where each $\gamma_i$ is a path in a generic leaf, and the sequence is strictly nested in the sense that $\gamma_i$ is a subpath of the interior of $\gamma_{i+1}$. It follows that the union of the paths $\gamma_i$ is a line $\ell' = \overline R_E \cdot \alpha \cdot R_{E'}$ where $R_E$ and $R_{E'}$ are the singular rays associated to the principal directions $E,E'$, and so by Lemma~\ref{LemmaSingularLine} the line $\ell'$ is singular. Furthermore, $\ell'$ is a weak limit of the generic leaves $g^k_\#(\ell)$ and so $\ell'$ is a leaf of $\Lambda$. Furthermore, by construction the singular ray $R_E$ contains a copy of each of the tiles $g^j_\#(E)$ for each $j \ge 0$, and so the weak closure of $R_E$ equals $\Lambda$, that is, $R_E$ is dense in $\Lambda$.

\end{proof}

The next lemma will be used in Step~2 of Section~\ref{SectionLooking}.

\begin{lemma}
\label{LemmaSingularSingleton}
Suppose that $\phi \in \Out(F_n)$ is rotationless and has no periodic conjugacy classes, and suppose that $\Lambda \in \L(\phi)$ has the property that every other $\Lambda' \in \L(\phi)$ contains $\Lambda$ as a subset. Then for any nonempty subset $L \subset S_\phi$, the free factor support $\A_\supp L$ is a singleton.
\end{lemma}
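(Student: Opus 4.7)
The plan is to argue by contradiction: suppose $\A_\supp L = \{[A_1],\ldots,[A_k]\}$ with $k\ge 2$. Every $\gamma \in L$ is $\phi$-fixed as a line, because a lift $\ti\gamma$ with endpoints in $\Fix_+(\wh\Phi)$ has both endpoints pointwise fixed by $\wh\Phi$, forcing $\wh\Phi(\ti\gamma) = \ti\gamma$. Consequently $L$ is a $\phi$-invariant set of lines, $\A_\supp L$ is a $\phi$-invariant free factor system, and by Fact~\ref{FactPeriodicIsFixed} each individual $[A_i]$ is $\phi$-invariant. Since $\{[A_1],\ldots,[A_k]\}$ extends to a decomposition $F_n = A_1 * \cdots * A_k * B$, conjugates of distinct factors have trivial intersection in $F_n$.

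The heart of the proof is to show that every leaf of $\Lambda$ is carried by each $[A_i]$. First, each $A_i$ has rank $\ge 2$: a rank-one invariant factor, for either element of $\Out(\Z) = \Z/2$, would force a $\phi$-periodic conjugacy class in $F_n$ (directly, or via $\phi^2$ combined with rotationlessness), contradicting the standing assumption. Next, $\phi\restrict A_i$ is rotationless by Fact~\ref{FactPeriodicIsFixed}(3) and inherits from $\phi$ the absence of periodic conjugacy classes. It cannot be polynomially growing, for then some power would be UPG, and any nontrivial UPG outer automorphism of a free group of rank $\ge 2$ has a periodic conjugacy class (any CT for a UPG has only fixed and linear strata, which necessarily contains either a circuit of fixed edges or a Nielsen circuit arising from a linear edge), again contradicting the standing assumption. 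Hence $\L(\phi\restrict A_i)$ is nonempty; pick any $\Lambda_i$ in it. Since $\L(\phi\restrict A_i) \subset \L(\phi)$, the hypothesis of the lemma gives $\Lambda \subset \Lambda_i$, and because every leaf of $\Lambda_i$ is a line in $A_i$ and therefore carried by $[A_i]$, every leaf of $\Lambda$ is carried by $[A_i]$ as well.

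To conclude, fix a single leaf $\ell$ of $\Lambda$. It is carried by both $[A_1]$ and $[A_2]$, so there are lifts $\ti\ell_1$ with endpoints in $\bdy A_1^{g_1}$ and $\ti\ell_2 = h\cdot\ti\ell_1$ with endpoints in $\bdy A_2^{g_2}$ for some $g_1,g_2,h \in F_n$. Consequently the endpoints of $\ti\ell_1$ lie in $\bdy A_1^{g_1} \cap \bdy A_2^{g_2 h} = \bdy\bigl(A_1^{g_1} \cap A_2^{g_2 h}\bigr)$, which is empty because conjugates of distinct factors in a free factor system intersect trivially. This contradicts the fact that $\ti\ell_1$ has two distinct endpoints at infinity. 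The most delicate step is ruling out polynomial growth of $\phi\restrict A_i$, for which both rotationlessness and the standing assumption are essential; the closing intersection argument is then bookkeeping.
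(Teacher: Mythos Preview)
Your proof is correct and takes a genuinely different route from the paper's argument. Both arrive at the same core observation --- that $\Lambda$ is carried by every element of $\A_\supp L$, which is impossible if there is more than one --- but the mechanisms differ.

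The paper works directly with the \ct\ description of singular lines (Lemma~\ref{LemmaSingularLine}): writing $b = \overline{R}_1 \alpha R_2$ for singular rays $R_i$, it proves by induction on stratum height that the weak closure of any singular ray contains some $\Lambda' \in \L(\phi)$, and hence contains $\Lambda$. Since $[F]$ carries $b$ and the set of lines it carries is weakly closed, $[F]$ carries $\Lambda$; this holds for every $[F] \in \A_\supp L$, forcing $\A_\supp L$ to be a singleton.

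Your approach instead exploits invariance under restriction: each $[A_i]$ is $\phi$-invariant, the restriction $\phi \restrict A_i$ inherits rotationlessness and the absence of periodic conjugacy classes, so it cannot be polynomially growing, giving $\Lambda_i \in \L(\phi \restrict A_i) \subset \L(\phi)$ and hence $\Lambda \subset \Lambda_i$ carried by $[A_i]$. This avoids the stratum-by-stratum analysis of singular rays, trading it for structural facts about restrictions. What it buys is independence from the explicit form of singular lines; what the paper's approach buys is that it never leaves the ambient $F_n$ and never invokes the inclusion $\L(\phi \restrict A_i) \subset \L(\phi)$, which you use without comment (it is standard, following from Fact~\ref{FactLamsAndStrata} applied to a \ct\ realizing $\{[A_i]\}$, but worth a word).

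One small correction: your parenthetical that a \ct\ for a UPG element ``has only fixed and linear strata'' is not accurate --- there can be nonfixed, non-linear \neg\ strata of higher polynomial degree. The conclusion you want still holds, though, by looking at the bottom stratum: in any \ct\ for a rotationless element with $\L = \emptyset$ there are no \eg\ strata and (by (Zero Strata)) no zero strata, so $H_1$ is \neg; since $G_0 = \emptyset$, Fact~\ref{FactNEGEdgeImage} forces $H_1$ to be a fixed edge, and by Fact~\ref{FactContrComp} it must be a loop, yielding a fixed conjugacy class and the desired contradiction.
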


\begin{proof} Consider $[F] \in \A_\supp(L)$ and any $b \in L$ supported by $[F]$. Applying Lemma~\ref{LemmaSingularLine}, write $b$ in the form $\overline R_1 \alpha R_2$ where $R_1,R_2$ are singular rays and $\alpha$ is either trivial or a Nielsen path. 

We claim that the weak closure of any singular ray $R$ contains some element of $\Lambda' \in \L(\phi)$. Applying this claim, the weak closure of $b$ contains the weak closure of $R_1$, which contains some $\Lambda' \in \L(\phi)$, and so contains $\Lambda$. Since $[F]$ is weakly closed it follows that $\Lambda$ is supported by $[F]$. But this is true for any $[F] \in \A_\supp(L)$, and $\Lambda$ can be supported by at most one element of a free factor system, so $\A_\supp(L)$ must be a singleton. 

To prove the claim, write $R$ as the increasing union of $E \subset f(E) \subset f^2_\#(E) \subset \cdots$ for some nonfixed edge $E$ whose initial vertex $v$ is principal and whose initial direction is fixed. Let $H_r$ be the stratum containing $E$. If $H_r$ is an \eg\ stratum then clearly $R$ weakly accumulates on the lamination in $\L(\phi)$ associated to $H_r$ and we are done. Suppose that $H_r = E$ is \neg; we proceed by induction on $r$. Since $E$ is not fixed we have $r>1$ and $f(E) = E \cdot u$ for some nontrivial completely split circuit $u$ in $G_{r-1}$. If some term of the complete splitting of $u$ is an edge in an \eg\ stratum $H_s$, then $R$ weakly accumulates on the lamination in $\L(\phi)$ associated to $H_s$ and we are done. It remains to consider the case that no term in the complete splitting of $u$ is an edge in an \eg\ stratum. By (Zero Strata) it follows that no term of $u$ is a maximal subpath in a zero stratum. Since there are no linear strata, it follows that no term of $u$ is an exceptional path. The only remaining possibilities for terms of $u$ are indivisible Nielsen paths and edges of \neg\ strata. Since $E$ is not linear, $u$ is not fixed, and so some term of $u$ must be a nonfixed \neg\ edge $E'$ comprising some \neg\ stratum $H_s$ with $s<r$. Let $R'$ be the principal ray associated to $E'$. By induction, $R'$ weakly accumulates on some lamination $\Lambda' \in \L(\phi)$, and so $R$ weakly accumulates on $\Lambda'$ and we are done.
\end{proof}

\section{Vertex groups and vertex group systems}
\label{SectionVertexGroups}

\paragraph{Vertex groups.} Consider a proper, nontrivial subgroup $A \subgroup F_n$. We say that $A$ is a \emph{vertex group} if there exists a very small $F_n$ tree $T$ such that $A$ is a maximal elliptic subgroup for the action of $F_n$ on $T$ --- to be precise, there exists a point $x \in T$ such that 
\begin{description}
\item[(1) $A$ is elliptic:] $A = \Stab(x)$;
\item[(2) $A$ is maximal:] For each $y \in T$, if $A \subgroup \Stab(y)$ then $A = \Stab(y)$.
\end{description}
In this context we also say that the vertex group $A$ is \emph{realized at $x$ in $T$}. Each free factor is a vertex group, as one sees by constructing the appropriate simplicial tree. 

Note that if $A$ satisfies (1) but not (2) then there exists $q \ne p$ such that $A \subgroup \Stab(q)$, which implies that $A \subgroup \Stab \left( \overline{pq} \right)$, that is, $A$ is an edge stabilizer. In such a case it follows that $\Stab(q)$ is maximal, because $T$ is very small. This also shows that every subgroup that satisfies (1) is contained in one that satisfies (1) and~(2).

\paragraph{The descending chain condition.} Grushko's Theorem implies that the collection of free factors satisfies the descending chain condition: given free factors $A < A'$, we have $\rank(A) \le \rank(A')$ with equality if and only if $A=A'$. We need an analogous result about vertex groups, the proof of which was suggested to us by Mark Feighn.

\begin{proposition}\label{PropVDCC}
There is a bound depending only on the rank $n$ of the length $L$ of any strictly descending chain of vertex groups $A_1 > A_2 > \cdots > A_L$.
\end{proposition}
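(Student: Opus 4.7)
My plan is to find a complexity invariant $c(A) \in \mathbb{Z}_{\ge 0}$ on vertex groups of $F_n$ with two properties: $c(A)$ is bounded above by a function of $n$ alone, and strict inclusion of vertex groups forces strict decrease of $c$. The primary candidate is $c(A) = \rank(A)$. Boundedness comes from the index inequalities for very small $F_n$-trees in \cite{GJLL:index}, combined with the triod-triviality and finite-rank properties of point stabilizers already cited in the excerpt; these give $\rank(\Stab(x)) \le n$ for every point $x$ in every very small $F_n$-tree, and hence $\rank(A) \le n$ for every vertex group $A$.

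For the strict-descent step, suppose $A \subsetneq B$ are both vertex groups. Commensurable maximality of $A$ (noted in the excerpt) forces $[B:A] = \infty$. Realize $A = \Stab_T(x)$ in some very small $F_n$-tree $T$. Because $B$ strictly contains the whole point stabilizer $A$ of $x$, the maximality built into the definition of vertex group prevents $B$ from fixing any point of $T$, so $B$ acts on its minimal invariant subtree $T_B \subset T$ with at least one hyperbolic element. The resulting Bass-Serre decomposition of $B$ has $A$ as a vertex group and edge groups that are arc stabilizers in $T$, hence either trivial or infinite cyclic by very smallness. Computing rank from this decomposition via Euler characteristic gives $\rank(B) > \rank(A)$, at least provided the decomposition is not ``degenerate'' in the sense of consisting entirely of cyclic edges that contribute no new rank.

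The main obstacle will be the delicate case where every edge in the Bass-Serre decomposition of $B$ over $T$ has infinite cyclic stabilizer, since cyclic edges contribute zero to Euler characteristic and could in principle allow $\rank(B) = \rank(A)$. I would handle this by refining the complexity to a lexicographic pair $c(A) = (\rank(A), k(A))$, where $k(A)$ is an auxiliary invariant measuring the branching of $A$'s fixed point, for instance the number of $A$-orbits of directions at $x$ with nontrivial arc stabilizer in a realizing tree. The key lemma to verify is that the maximality of $A$ as an elliptic subgroup, combined with the very smallness of $T$, rules out the existence of a foldable or degenerate cyclic edge in the decomposition, forcing strict decrease in the refined lexicographic order. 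Together with the uniform bound on $c$ from the first step, this strict descent immediately yields a bound on $L$ depending only on $n$.
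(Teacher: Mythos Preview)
Your opening moves are sound and match the paper: rank is bounded by $n$ via Gaboriau--Levitt, and for $A \subsetneq B$ both vertex groups with $\rank(A) \ge 2$, realizing $A$ in a tree $T$ and looking at the minimal $B$-subtree gives $\rank(A) \le \rank(B)$. You are also right that the equal-rank case is exactly when $T_B/B$ is a wedge of circles with infinite cyclic edge groups (this is the content of the paper's Lemma~\ref{LemmaVertexControl}). So the degenerate case you worry about is real, not a phantom to be ``ruled out'' by maximality.

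The gap is in your treatment of that case. Your proposed secondary invariant $k(A)$ --- the number of $A$-orbits of directions at $x$ with nontrivial arc stabilizer in ``a realizing tree'' --- is not a function of $A$: it depends on which very small tree you choose to realize $A$, and different vertex groups in the chain are realized in different trees. There is no evident way to compare $k(A_i)$ and $k(A_{i+1})$, and your ``key lemma'' is stated too vaguely to assess. More seriously, you seem to hope that maximality will forbid the wedge-of-cyclic-circles picture, but it does not: this is precisely the generic situation when ranks are equal, and it must be analyzed, not excluded.

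The paper's secondary invariant is not local to a single $A_i$ but relative to the top of the chain. Fixing $A_0$ and assuming $\rank(A_0) = \rank(A_1) = \rank(A_2)$, one shows
\[
\beta_1\bigl(A_0 / \nc{A_2}\bigr) > \beta_1\bigl(A_0 / \nc{A_1}\bigr),
\]
where $\nc{A_i}$ is the normal closure in $A_0$. Both sides are bounded by $\beta_1(A_0) \le n$, so this secondary descent terminates. The proof identifies $A_0/\nc{A_i}$ with $\pi_1(S_i/A_0)$, where $S_i$ is the minimal $A_0$-subtree of a tree realizing $A_i$; in the equal-rank case each $S_i/A_0$ is a wedge of circles. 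One then builds an $A_0$-equivariant map $S_2 \to S_1$ and finds an embedded loop in $S_2/A_0$ (hence homologically nontrivial) whose image in $S_1/A_0$ is null-homologous, using the strict inclusion $A_2 \subsetneq A_1$ to locate two vertices of $S_2$ with the same image in $S_1$. This is the step your proposal is missing.
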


\begin{proof} We may restrict our attention to vertex groups which are not infinite cyclic, because every infinite cyclic vertex group $A$ is a maximal infinite cyclic subgroup of~$F_n$, being maximal in its commensurability class. 

We need the following result. The ``only if'' portion of the second sentence is \cite{Paulin:AutExt}, Theorem~5.2, and the rest of the result is an immediate consequence of the index inequality which is the main result of \cite{GaboriauLevitt:rank}.

\begin{lemma}
\label{LemmaVertexControl} For any $m \ge 2$, any free group $F$ of rank $m$, any small $F$-tree $T$, and any vertex $x \in T$, the group $\Stab(x)$ has rank $\le m$. If $T$ is very small then $\Stab(x)$ has maximal rank $m$ if and only if $T$ is simplicial and the quotient graph of groups $T/F$ is a wedge of circles each edge of which has an infinite cyclic edge group. \hfill\qed
\end{lemma}

One consequence of Lemma~\ref{LemmaVertexControl} is that for vertex groups $A_1 < A_0 < F_n$ we have $\rank(A_1) \le \rank(A_0)$. To see why, let $A_1$ be realized at $p$ in the very small $F_n$-tree~$T$. Let $S \subset T$ be a minimal $A_0$-subtree of $T$, and so $S$ is either a point or a very small $A_0$-tree. If $p \not\in S$ then $A_1$ stabilizes the shortest arc from $p$ to a point of $S$ and so $\rank(A_1) \le 1 \le \rank(A_0)$. If $\{p\} = S$ then $A_0 \subgroup \Stab(S) = \Stab(p) = A_1$ and so $A_1 = A_0$ and $\rank(A_1) = \rank(A_0)$. If $p \in S$ and $\{p\} \ne S$ then $S$ is a very small $A_0$-tree and so $\rank(A_1) \le \rank(A_0)$ by Lemma~\ref{LemmaVertexControl}.

The argument of the preceding paragraph also shows that if $A_0 > A_1$ is a proper containment of vertex groups with $\rank(A_1) \ge 2$, if $T$ is a very small $F_n$-tree, and if $S \subset T$ is a minimal $A_0$-subtree, then $S$ is not a point but instead is a very small $A_0$ tree, and every point of $T$ with stabilizer $A_1$ is in $S$.


For the rest of the proof consider a chain of proper containments of vertex groups $A_0 > A_1 > A_2$ with $\rank(A_0) \ge \rank(A_1) \ge \rank(A_2) \ge 2$. Let $\nc{A_i}$ denote the normal closure of $A_i$ in the group~$A_0$. Applying the functor $H_1(\,\cdot\,)$ (with $\Z$-coefficients) to the sequence of group epimorphisms $A_0 \surjection A_0 / \nc{A_2} \surjection A_0 / \nc{A_1}$ we obtain 
$$n \ge \beta_1(A_0) \ge \beta_1\bigl(A_0 / \nc{A_2}\bigr) \ge \beta_1\bigl(A_0 / \nc{A_1}\bigr)
$$
where $\beta_1(\cdot)$ is the first Betti number. To prove the proposition it therefore suffices, by induction, to assume that $\rank(A_0) = \rank(A_1) = \rank(A_2)$ and to prove that
$$\beta_1\bigl(A_0 / \nc{A_2} \bigr) > \beta_1\bigl(A_0 / \nc{A_1} \bigr)
$$

For $i=1,2$ let $A_i$ be realized at $p_i$ in the very small $F_n$-tree $T_i$. Let $S_i \subset T_i$ be a minimal $A_0$ subtree, $i=1,2$, so $S_i$ is a very small $A_0$ tree and $p_i \in S_i$. Applying Corollary~\ref{LemmaVertexControl} with $F=A_0$, the tree $S_i$ is simplicial and the graph of groups $S_i / A_0$ is a wedge of circles whose vertex group corresponds to $A_i$ and has rank equal to $\rank(A_0)$, and whose edge groups are all infinite cyclic. In what follows a \emph{vertex of $S_i$} is a point whose valence in $S_i$ is $\ge 3$ (its valence in $T_i$ is not relevant). The vertex set of $S_i$ is therefore equal to the orbit $A_0 \cdot p_i$.

Define the \emph{connecting homomorphism} $A_0 \mapsto \pi_1(S_i / A_0)$, which takes $\gamma \in A_0$ to the element of the fundamental group represented by projecting the path $\overline{p_i \gamma(p_i)}$ to a loop in $S_i / A_0$. This connecting homomorphism is clearly surjective. Since each vertex stabilizer of $S_i$ is conjugate to $A_i$, the kernel of the connecting homomorphism is the normal subgroup generated by the vertex stabilizers of $S_i$, which is just $\nc{A_i}$. We therefore have an induced group isomorphism $A_0 / \nc{A_i} \approx \pi_1(S_i / A_0)$ which induces an abelian group isomorphism $H_1(A_0 / \nc{A_i}) \approx H_1(\pi_1(S_i / A_0))$ and an equation $\beta_1\bigl(A_0 / \nc{A_i} \bigr) = \beta_1(S_i / A_0)$. It therefore suffices to prove that $\beta_1(S_2 / A_0) > \beta_1(S_1 / A_0)$.


Since $A_2 \subgroup A_1$, there is an $A_0$-equivariant map $\phi \from S_2 \to S_1$ such that $\phi(p_2)=p_1$: for each $f,g \in A_0$, if $f p_2$, $g p_2$ are connected by an edge then $\phi$ takes this edge to the path (possibly degenerate) connecting vertices $f p_1$, $g p_1$. Passing to the quotient we obtain a map $\Phi \from S_2 / A_0 \to S_1 / A_0$. The induced homology homomorphism $\Phi_* \from H_1(S_2 / A_0) \to H_1(S_1 / A_0)$ is surjective because it is identified with the inclusion induced homomorphism $H_1(A_0 / \nc{A_2}) \to H_1(A_0 / \nc{A_1})$. It therefore suffices to produce a loop in the graph $S_2 / A_0$ which is homologically nontrivial and whose $\Phi$-image is homologically trivial in $S_1 / A_0$.



\paragraph{Step 1: We may assume no edge of $S_2$ has degenerate image in $S_1$.} Otherwise, the projection of this edge to $S_2 / A_0$ is a loop representing a basis element of homology, whose image in $S_1 / A_0$ is a constant loop, and we are done.


\paragraph{Step 2: Two vertices of $S_2$ are identified in $S_1$.} To see why, the vertex set of $S_i$ equals the orbit $A_0 \cdot p_i$, and $\phi$ restricts to a map $A_0 \cdot p_2 \mapsto A_1 \cdot p_1$ which is clearly surjective. If this map were also injective it would follow that $A_1 = \Stab(p_1) = \Stab(\phi(p_1)) = \Stab(p_2) = A_2$, contradicting that $A_1$ is proper in $A_2$. 

\bigskip

Choose vertices $q,q' \in A_0 \cdot p_2$ such that $\phi(q) = \phi(q')$ and such that the edge path $\overline{qq'}$ has the minimal number of edges among all such choices. Write this edge path as $\overline{qq'} = E_1 E_2 \cdots E_I$. By Step~1, none of its edges has degenerate image, so its image $\phi\left(\overline{qq'}\right)$ is a nondegenerate finite subtree of $S_1$ (after Step~3 we will know that its image is a subarc of $S_1$). Notice that by choice of $q,q'$, no interior vertex of $\overline{qq'}$ is identified with any other vertex of $\overline{qq'}$.

\paragraph{Step 3: The edges of $\overline{qq'}$ all have the same stabilizer.} We start the proof with some notation. Let $Q = \phi(q) = \phi(q')$, and let $q=q_0,\ldots,q_M=q'$ be the points of the finite set $\phi^\inv(Q) \intersect \overline{qq'}$ written in order along $\overline{qq'}$ (the fact that this set is finite is a consequence of Step~1). By choice of $q,q'$, only the points $q_0,q_M$ are vertices: for $m=1,\ldots,M-1$ the point $q_m$ is interior to some edge denoted $E_{i_m}$. Also denote $E_{i_0} = E_0$, the edge whose initial point is $q_0$, and $E_{i_M} = E_I$, the edge whose terminal point is $q_M$.

Consider two edges in the edge path $E_1 \cdots E_I$, say $E_i,E_j$ with $i<j$. Observe first that if $\phi(E_i) \intersect \phi(E_j)$ contains an edge $E'$ of $S_1$ then $\Stab(E_i)=\Stab(E')=\Stab(E_j)$. Observe second that if $\Stab(E_i) = \Stab(E_j)$ then all of the edges in the subpath $E_i E_{i+1} \cdots E_j$ have the same stabilizer. Both of these observations follow from the fact that every edge stabilizer in $S_2$ or $S_1$ is a maximal infinite cyclic subgroup of $F_n$. 

For each $m=1,\ldots,M$ the set $\phi \left( \overline{q_{m-1} q_m} \right)$ is contained in the closure of a single component of $\phi\left(\overline{qq'}\right)-Q$, and in particular the initial and terminal directions of this path at the point $Q$ are equal. From the observations above it therefore follows that each of the edges in the subpath $E_{i_{m-1}} \cdots E_{i_m}$ have the same stabilizer. Chaining these together for $m=1,\ldots,M$ we obtain the desired conclusion, that all the edges $E_1,\ldots,E_I$ have the same stabilizer.

\bigskip

Let $Z$ denote the common stabilizer of the edges in $\overline{qq'}$.

\paragraph{Step 4: No two edges of $\overline{qq'}$ are in the same $A_0$-orbit.} Otherwise there is an edge $E_i \subset \overline{qq'}$ and $\gamma \in F_n$ such that $E_i \ne E_j = \gamma(E_i) \subset \overline{qq'}$. Since $Z$ is maximal infinite cyclic and $\gamma \not\in Z = \Stab(E_i)$ it follows that $\gamma^k \not\in Z$ for all $k \ne 0$. But $\gamma Z \gamma^\inv = \gamma \, \Stab(E_i) \, \gamma^\inv = \Stab(E_j) = Z$, and so the group $\<\gamma,Z\>$ is a semidirect product of two infinite cyclic groups, and therefore has a subgroup of index $\le 2$ isomorphic to the free abelian group of rank~2, which is impossible in a free group.

\bigskip

We now complete the proof of Proposition~\ref{PropVDCC} by observing that the projection of the path $\overline{qq'}$ to $S_2 / A_0$ is a loop which, by Step 4, crosses each edge of graph $S_2 / A_0$ at most once, and is therefore homologically nontrivial. But since $\phi(\overline{qq'})$ is a homotopically trivial loop in the tree $S_1$, the projection of $\overline{qq'}$ to $S_2 / A_0$ has homologically trivial image in $S_1 / A_0$ under the map $\Phi$, and we are done.
\end{proof}

\paragraph{Vertex group systems.} Given a very small $F_n$-tree $T$, the \emph{vertex group system of $T$}, denoted $\A_T$, is the subgroup system consisting of the conjugacy classes of all vertex groups that are realized in~$T$. We note that the cardinality of $\A_T$ is bounded above by a finite constant depending only on the rank~$n$; see \cite{GJLL:index}. 

A subgroup system $\A$ is said to be a \emph{vertex group system} if there exists a very small $F_n$-tree $T$ such that $\A = \A_T$. We also say that $\A$ is \emph{realized in $T$}. Each free factor system is a vertex group system, and can be realized in the appropriate simplicial $F_n$-tree.

\begin{lemma}
\label{LemmaVSElliptics}
A vertex group system $\A$ is uniquely determined by the set $\C$ of conjugacy classes of nontrivial elements of $F_n$ that are carried by $\A$. More precisely, for any subgroup $A \subgroup F_n$, we have $[A] \in \A$ if and only if the following hold:
\begin{itemize}
\item[(1)] the $F_n$-conjugacy class of each nontrivial element of $A$ is an element of $\C$; 
\item[(2)] $A$ is maximal with respect to inclusion among all subgroups satisfying (1).
\end{itemize}
\end{lemma}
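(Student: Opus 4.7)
The plan is to identify $\C$ with the set of conjugacy classes of elements that act elliptically on some very small $F_n$-tree $T$ realizing $\A$, and then to characterize vertex groups of $T$ as exactly the maximal subgroups of $F_n$ all of whose elements are elliptic. Granting this, conditions (1) and (2) of the lemma become the definition of a vertex group in intrinsic (tree-free) terms.

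The key auxiliary tool is that \emph{any finitely generated subgroup $B \subgroup F_n$ every element of which is elliptic on $T$ has a common fixed point in $T$}. This follows from two standard tree ingredients: (i) in any $\reals$-tree, two elliptic isometries with disjoint fixed sets have a product that is hyperbolic with positive translation length equal to twice the distance between the fixed sets, so if every element of $B$ is elliptic then every pair of elements of $B$ has intersecting fixed subtrees; and (ii) by Helly's theorem for trees, finitely many pairwise intersecting closed subtrees of an $\reals$-tree have nonempty common intersection. Applied to the fixed subtrees of a finite generating set, this yields a point fixed by every generator and hence by all of $B$.

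For the forward direction, suppose $[A]\in\A$, so $A=\Stab(x)$ is a maximal point stabilizer. Condition (1) is immediate since every element of $A$ fixes $x$. For (2), suppose $A \subgroup B$ with $B$ satisfying (1), and fix any $b\in B$. Because $A$ has finite rank (by \cite{GJLL:index}), the subgroup $\langle A,b\rangle$ is finitely generated, and each of its elements lies in $B$ hence is elliptic. The key tool produces $y\in T$ fixed by $\langle A,b\rangle$, so $A\subgroup\Stab(y)$; maximality of $A$ as a point stabilizer gives $A=\Stab(y)$, and thus $b\in A$. Hence $B=A$.

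For the converse, assume $A$ satisfies (1) and (2). We first show $A$ is finitely generated: every finitely generated $A_0\subgroup A$ satisfies (1), so by the key tool $A_0$ fixes a point and is contained in a vertex group, whose rank is bounded by a constant depending only on $n$ (again by \cite{GJLL:index}). Since $A$ is a free group (being a subgroup of $F_n$) in which every finitely generated subgroup has bounded rank, $A$ itself has bounded rank and is finitely generated. Applying the key tool now to $A$ yields a point $y\in T$ with $A\subgroup\Stab(y)$; since $\Stab(y)$ satisfies (1), condition (2) forces $A=\Stab(y)$. The same argument applied to any point stabilizer containing $A$ shows $A$ is maximal as a point stabilizer, so $A$ is a vertex group and $[A]\in\A$. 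The main obstacle is securing finite generation of $A$ in this direction; without it, the Helly argument cannot directly produce a common fixed point, and the escape is precisely the uniform rank bound on vertex groups of very small $F_n$-trees.
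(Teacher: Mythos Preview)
Your proof is correct and follows essentially the same strategy as the paper's: identify $\C$ with the elliptic conjugacy classes for a realizing tree $T$, and characterize vertex groups as the maximal subgroups consisting entirely of elliptics. The paper's proof is a two-sentence sketch that simply cites as ``elementary results in the theory of trees'' the two facts you prove, namely that $\C$ equals the elliptic classes and that any subgroup of $F_n$ with all elements elliptic is contained in some $\Stab(p)$; you supply the details the paper omits, in particular the Helly argument for finitely generated subgroups and the finite-generation step via the uniform rank bound on point stabilizers from \cite{GJLL:index}, which is exactly the right way to justify the paper's second cited fact for arbitrary subgroups.
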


\begin{proof} Choose a very small $F_n$-tree $T$ such that $\A = \A_T$. The lemma is an immediate consequence of two elementary results in the theory of trees: $\C$ consists precisely of the conjugacy classes of nontrivial elements of $F_n$ that are elliptic on~$T$; and for each subgroup $A \subgroup F_n$ whose elements are all elliptic on~$T$, there exists $p \in T$ such that $A \subgroup \Stab(p)$.
\end{proof}

\section{The Weak Attraction Theorem Revisited}
\label{SectionWeakAttraction}
Given $\phi \in \Out(F_n)$ and $\Lambda \in \L(\phi)$, a line $\ell \in \B$ is weakly attracted to some generic leaf of $\Lambda$ under iteration of $\phi$ if and only if $\ell$ is weakly attracted to every leaf in $\Lambda$. In this case we say that $\ell$ is \emph{weakly attracted to $\Lambda$}. We also use this terminology when iterating a \ct\ $f \from G \to G$ representing $\phi$ on any path $\ell \in \wh\B(G)$

The Weak Attraction Theorem, Theorem~6.0.1 of \BookOne, answers the question: 
\begin{itemize}
\item Which birecurrent lines $\gamma$ in $F_n$ are weakly attracted to a given topmost $\Lambda \in \L(\phi)$?
\end{itemize}
In this section we extend this result by dropping the hypotheses that $\Lambda$ be topmost and that $\gamma$ be birecurrent. Our main results in this direction are Propositions~\ref{prop:WA1} and~\ref{prop:WA2}, stated and proved in Section~\ref{SectionWAResults}. 

In Section~\ref{SectionAsubNA} we develop some preliminary results that will be needed before we can formulate Propositions~\ref{prop:WA1} and~\ref{prop:WA2}. In particular, we prove that there is a vertex group system $\A_{na}(\Lambda)$ called the ``nonattracting vertex group system'', depending implicitly on $\phi$ as well as on $\Lambda$, with the property that a conjugacy class in $F_n$ is not weakly attracted to $\Lambda$ if and only if it is carried by $\A_{na}(\Lambda)$; see Corollary~\ref{NA is well defined} and Proposition~\ref{PropVerySmallTree}. Furthermore, in the case that $\Lambda$ is not geometric, then we prove that $\A_{na}(\Lambda)$ is a free factor system; see Proposition~\ref{PropVerySmallTree}. 

\subsection{The nonattracting subgroup system $\A_{na}(\Lambda)$}
\label{SectionAsubNA}
 
The Weak Attraction Theorem answers the question posed above in terms of an improved relative train track representative $g \from G \to G$, a ``nonattracting subgraph'' $Z \subset G$, a (possibly trivial) Nielsen path $\hat \rho_r$, and an associated set of paths denoted $\<Z,\hat \rho_r\>$. The construction and properties of $Z$ and $\<Z,\hat \rho_r\>$ are given in \cite[Proposition~6.0.4]{BFH:TitsOne}. 

In Definition~\ref{defn:Z} and the lemmas that follow, we generalize $Z$ and $\<Z,\hat \rho_r\>$ beyond the topmost setting, and we develop their properties in the context of a subgroup system $\A_\na(\Lambda)$, all expressed in terms of a \ct\ $g \from G \to G$ representing $\phi$. Eventually, once we get to Corollary~\ref{CorPMna}, we will show that $\A_\na(\Lambda)$ is independent of the choice of $g$.

\begin{defns} \label{defn:Z} \textbf{The graph $Z$, the path $\hat \rho_r$, the path set $\<Z,\hat \rho_r\>$, the subgroup system $\A_\na(\Lambda)$, and its geometric model.} \quad

Suppose that $\fG$ is a \ct\ representing $\phi \in \Out(F_n)$ and that $\Lambda \in \L(\phi)$ corresponds to the \eg\ stratum $H_r \subset G$. We shall define the \emph{nonattracting subgraph} $Z$ of $G$, a path $\hat\rho_r$, and a graph immersion $h \from K \to G$ which can be thought of as the union of the inclusion map $Z \inject G$ and the path $\hat\rho_r$. We then define the subgroup system $\A_\na(\Lambda)$ using the induced fundamental group injection on each component of~$K$. We also define a groupoid of paths $\<Z,\hat\rho_r\>$, consisting of all concatenations whose terms are edges of $Z$ and copies of the path $\hat\rho_r$ or its inverse.
\end{defns}

\begin{defn*} \textbf{The graph $Z$.} The \emph{nonattracting subgraph} $Z$ of $G$ is defined as follows. Each stratum $H$ is contained either in $Z$ or in $G \setminus Z$. If $H_i$ is an irreducible stratum then $H_i \subset G \setminus Z$ if some (every) edge of $H_i$ is weakly attracted to $\Lambda$, which holds if and only if for some (every) edge $E_i$ of $H_i$ there exists $k \ge 0$ so that some term in the complete splitting of $f^k_\#(E_i)$ is an edge in~$H_r$. If $H_i$ is a zero stratum enveloped by an \eg\ stratum $H_s$ then $H_i \subset Z$ if and only if $H_s \subset Z$. 
\end{defn*}

\begin{remark} \label{RemarkLamContains} If $H_i$ is an \eg\ stratum then by applying Proposition~\ref{PropInclusion} we conclude that $H_i$ is excluded from $Z$ if and only if the lamination $\Lambda_i \in \L(\phi)$ associated to $H_i$ contains the lamination $\Lambda$.
\end{remark}

\begin{remark} \label{geometric strata are bottommost} If $H_i \ne H_r$ is an \eg\ stratum, and if there exists an \iNp\ of height $i$ then by Fact~\ref{FactNielsenBottommost} it follows that $H_i^z = H_i \subset Z$, because for each edge $E \subset H_i$ and each $k \ge 1$, the path $f^k_\#(E)$ splits into edges of $H_i$ and Nielsen paths of height $< i$.
\end{remark}

\begin{remark} \label{edges are taken} Suppose that $H_i$ is a zero stratum enveloped by the \eg\ stratum $H_s$ and that $H_i \subset Z$. Applying the definition of $Z$ to $H_i$ it follows that $H_s \subset Z$. Applying the definition of $Z$ to $H_s$ it follows that no $s$-taken connecting path in $H_i$ is weakly attracted to $\Lambda$. Applying (Zero Strata) it follows that no edge in $Z$ is weakly attracted to $\Lambda$.
\end{remark}

\begin{defn*} \textbf{The path $\hat \rho_r$.} If there is an \iNp\ $\rho_r$ of height $r$ then it is unique and we define $\hat \rho_r = \rho_r$. If there is no \iNp\ of height $r$, then by convention we choose a vertex of $H_r$, and define $\hat \rho_r$ to be the trivial path at that vertex. 
\end{defn*}

\begin{defn*} \textbf{The path set $\<Z,\hat \rho_r\>$.} Let $\<Z,\hat \rho_r\>$ denote those elements of $\wh\B(G)$ --- lines, rays, circuits, and finite paths in $G$ --- that decompose into a concatenation of subpaths, each of which is either an edge in $Z$, the path $\hat \rho_r$ or its inverse $\hat \rho_r^{-1}$.
\end{defn*}

\begin{defn*} \textbf{The subgroup system $\A_\na(\Lambda)$.} If $\hat\rho_r$ is the trivial path, define $K = Z$ and $h :K \to G$ to be the inclusion. Otherwise, define $K$ to be the graph obtained from the disjoint union of $Z$ and an edge $E_\rho$ representing the domain of the map $\rho$, with identifications as follows. If the initial [resp.\ terminal] endpoint of $\rho$ is contained in $Z$ then identify the initial [resp.\ terminal] endpoint of $E_\rho$ with the initial [resp.\ terminal] endpoint of $\rho$. If $\rho_r$ forms a loop and the basepoint of this loop is not contained in $Z$ then identify the endpoints of $E_r$ (these points are already identified if the basepoint is contained in $Z$). Define $h : K \to G$ to be the inclusion\ on $Z$ and to be the map $\rho$ on $E_\rho$. By Fact~\ref{FactEGStrata} item~\pref{ItemFirstLast} the first and last edges of $\rho_r$ are distinct edges in $H_r$, and since no edge of $H_r$ is in $Z$ it follows that the map $h$ is an immersion. The restriction of $h$ to each component of $K$ therefore induces an injection on the level of fundamental groups.

Define $\A_\na(\Lambda)$, the \emph{nonattracting subgroup system}, to be the subgroup system determined by the images of the fundamental group injections induced by the immersion $h \from K \to G$, over all noncontractible components of $K$.

\paragraph{Definition. The geometric model for $\A_\na(\Lambda)$.} In the special case that the stratum $H_r$ is geometric, we adopt the notation of the geometric model for $f \restrict G_R$ as given in Definition~\ref{DefGeometricStratum}, and we extend this to a geometric model for $\A_\na(\Lambda)$ as follows. Let $X$ be the 2-complex obtained from the disjoint union of $Y$ and $G$ by identifying the copy of $G_r$ in $Y$ with the copy of $G_r$ in $G$. The union of the deformation retraction $d \from Y \to G_r$ with the identity map on $G$ induces a map $X \mapsto G$, a deformation retractions with which we mark the 2-complex $X$, and which by extension is denoted $d \from X \to G$. The induced isomorphism $d_* \from \pi_1 X \approx F_n$ is well-defined up to inner automorphism of $F_n$. By Definition~\ref{DefGeometricStratum}~\pref{ItemInteriorBasePoint}, we may identify the 1-complex $K$ with the subcomplex $Z \union \bdy_0 S \subset X$ in such a way that the immersion $K \to G$ is identified with the restriction of the map $d$. Letting $K_1,\ldots,K_m$ be the noncontractible components of $K$, the subgroup system $\A_\na(\Lambda)$ is therefore identified with the subgroup system $\{[d_* (\pi_1 K_1)],\ldots,[d_* (\pi_1 K_m)]\}$.

\end{defn*}

\begin{remark} We show in Corollary~\ref{CorPMna} that $\A_\na(\Lambda)$ depends only on $\phi$ and on $\Lambda \in \L(\phi)$. We often suppress $\phi$ from the notation, although in contexts where more than one element of $\Out(F_n)$ is under discussion we will sometimes affix $\phi$ to the notation, by writing $\A_\na(\Lambda;\phi)$. 
\end{remark}

Item~\pref{Item:Groupoid} in the next lemma states that $\<Z,\hat \rho_r\>$ is a groupoid, by which we mean that the tightened concatenation of any two paths in $\<Z,\hat \rho_r\>$ is also a path in $\<Z,\hat \rho_r\>$ as long as that concatenation is defined. For example, the concatenation of two distinct rays in $\<Z,\hat \rho_r\>$ with the same base point tightens to a line in $\<Z,\hat \rho_r\>$. 


\begin{lemma} \label{ZP is closed} Assuming the notation of Definitions~\ref{defn:Z} 
\begin{enumerate}
\item \label{Item:BEQuivZP}
The map $h$ induces a bijection between $\wh\B(K)$ and $\<Z,\hat \rho_r\>$.
\item \label{Item:Groupoid}
$\<Z,\hat \rho_r\>$ is a groupoid. 
\item \label{item:ZP=NA} The set of lines carried by $\<Z,\hat \rho_r\>$ is the same as the set of lines carried by $\A_{na}(\Lambda)$. 
\item\label{Item:circuits} The set of circuits carried by $\<Z,\hat \rho_r\>$ is the same as the set of circuits carried by $\A_{na}(\Lambda)$. 
\item \label{item:closed}The set of lines carried by $\<Z,\hat \rho_r\>$ is closed in the weak topology.
\end{enumerate}
\end{lemma}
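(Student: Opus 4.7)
The plan is to establish (1) first, from which (2) is immediate; then deduce (3) and (4) using the $\pi_1$-injectivity of $h$ on components of $K$; and finally obtain (5) as a formal consequence of (3) together with Fact~\ref{FactLinesClosed}. The heart of the work is a careful reading of the construction of $K$ in Definition~\ref{defn:Z}, which is arranged precisely so that $h \from K \to G$ is a graph immersion with image exactly $\<Z,\hat\rho_r\>$.

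For (1), the key observation is that $h$ is injective on the vertex set of $K$ (the extra vertices contributed by $E_\rho$ are either identified with vertices of $Z$, in which case they are already vertices of $Z$, or are disjoint from $Z$ and map to a vertex of $H_r$ not in $Z$), and moreover $h$ is locally injective on directions at every vertex. The latter holds because at any vertex where $E_\rho$ meets $Z$, the single direction contributed by $E_\rho$ maps to the initial or terminal edge of $\hat\rho_r$, which by Fact~\ref{FactEGStrata}\pref{ItemFirstLast} lies in $H_r$, while every other direction comes from an edge of $Z \subset G \setminus H_r$. Hence $h$ is an immersion. It follows that any element of $\wh\B(K)$ pushes forward to an edge sequence in $G$ which is a concatenation of edges of $Z$ and copies of $\hat\rho_r^\pm$, so it lies in $\<Z,\hat\rho_r\>$; conversely, any element of $\<Z,\hat\rho_r\>$ admits a unique decomposition into these building blocks, and uniqueness of path lifting through an immersion assembles these into a unique element of $\wh\B(K)$. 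These constructions are mutually inverse. Item (2) is then automatic, since tightening commutes with the bijection $h_\#$ and $K$ is a graph.

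For (3) and (4), by construction $\A_\na(\Lambda) = \{[h_*(\pi_1 K_i)]\}$ as $K_i$ ranges over noncontractible components of $K$, and $h \restrict K_i$ is a $\pi_1$-injective immersion. Lifting $h \restrict K_i$ to universal covers realizes the universal cover $\wt K_i$ as a subtree of $\wt G$ whose ideal boundary is identified with $\bdy(h_*(\pi_1 K_i)) \subset \bdy F_n$. Therefore a line $\ell \in \B$ has a lift with both endpoints in this boundary if and only if some lift is a bi-infinite geodesic in $\wt K_i$, equivalently $\ell$ is the $h$-image of a line in $K_i$. Combined with (1), this exhibits the set of lines carried by $\A_\na(\Lambda)$ as equal to the set of lines in $\<Z,\hat\rho_r\>$, giving (3). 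The analogous argument for conjugacy classes of nontrivial elements of $h_*(\pi_1 K_i)$ yields (4).

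Finally, (5) follows from (3) together with Fact~\ref{FactLinesClosed}: the set of lines carried by the subgroup system $\A_\na(\Lambda)$ is weakly closed in $\B$. The main technical obstacle is the verification in (1) that $h$ is an immersion at the vertices where $E_\rho$ is attached; once this is in hand, the rest is routine bookkeeping involving unique path lifting and the standard correspondence between subgroups and their boundaries. I do not anticipate any serious geometric difficulty, since the whole argument is essentially combinatorial/topological and does not require the dynamics of $f$ beyond what is already encoded in the construction of $Z$ and $\hat\rho_r$.
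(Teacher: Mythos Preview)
Your proposal is correct and follows essentially the same approach as the paper: establish the bijection (1) via the immersion property of $h$ (using that the endpoints of $\hat\rho_r$ lie in $H_r$, disjoint from $Z$), then deduce (2)--(5) formally. The paper's proof is terser---it states (3) as an ``immediate consequence'' of (1) without spelling out the universal-cover/boundary correspondence you give, and it derives (4) from (3) via the bijection between circuits and periodic lines rather than repeating the argument---but the content is the same.
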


\begin{proof} We make use of three evident properties of the immersion $h:K \to G$. The first is that every path in $K$ with endpoints, if any, at vertices is mapped by $h$ to an element of $\<Z,\hat \rho_r\>$. The second is that $h$ induces a bijection between the vertex sets of $K$ and $Z \cup \partial \hat \rho_r$. The last is that $\hat \rho_r$ and each edge in $Z$ lift via $h$ to a unique path in $K$ with endpoints at vertices. Together these imply~\pref{Item:BEQuivZP}. Items~\pref{Item:Groupoid} and~\pref{item:ZP=NA} are immediate consequences. Item~\pref{Item:circuits} follows from \pref{item:ZP=NA} using the natural bijection between periodic lines and circuits. Item~\pref{item:closed} follows from~\pref{item:ZP=NA} and Fact~\ref{FactLinesClosed}.
\end{proof}

The following lemma is based on Proposition~6.0.4 and Corollary~6.0.7 of \BookOne.


\begin{lemma} \label{defining Z} Let $f \from G \to G$ be a \ct\ representing a rotationless $\phi \in \Out(F_n)$, let $\Lambda \in \L(\phi)$, and assume the notation of Definitions~\ref{defn:Z}. We have: 
\begin{enumerate}
\item\label{ItemZPEdgesInv}
If $E$ is an edge of $Z$ then 
$f_\#(E) \in \langle Z,\hat \rho_r \rangle $.
\item\label{ItemZPPathsInv}
$\<Z,\hat \rho_r\>$ is $f_\#$-invariant.
\item\label{ItemZPAnyPaths}
If $\sigma \in\<Z,\hat \rho_r\>$ then $\sigma$ is not weakly attracted to $\Lambda$.
\item\label{ItemZPFinitePaths}
For any finite path $\sigma$ in $G$ with endpoints at fixed vertices, the converse to \pref{ItemZPAnyPaths} holds: if $\sigma$ is not weakly attracted to $\Lambda$ then $\sigma \in \<Z,\hat \rho_r\>$.
\item \label{ItemBijection} $f_\#$ restricts to bijections of the the following sets: lines in $\<Z,\hat \rho_r\>$; finite paths in $\<Z,\hat \rho_r\>$ whose endpoints are fixed by $f$; and circuits in $\<Z, \hat\rho_r\>$.
\end{enumerate}
\end{lemma}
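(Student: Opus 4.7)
Parts (1) and (2) rest on a stratum-by-stratum inspection of complete splittings. Fix an edge $E \subset Z$. When $E$ lies in an irreducible stratum, (Completely Split) makes $f(E) = f_\#(E)$ completely split; when $E$ lies in a zero stratum, $E$ is itself an $s$-taken connecting path of the enveloping \eg\ stratum $H_s$ by (Zero Strata), so again $f_\#(E)$ is completely split. We check that each term $\tau$ of this splitting lies in $\langle Z,\hat\rho_r\rangle$: (i)~if $\tau$ is an edge $E'$ in irreducible stratum $H_j$ then $H_j \subset Z$, since otherwise some iterate $f^m_\#(E')$ contains an $H_r$-edge as a splitting term, and then so does $f^{m+1}_\#(E)$ by Fact~\ref{FactComplSplitStable}, contradicting $E \in Z$; (ii)~if $\tau$ is an indivisible Nielsen path of height $s$, then either $s=r$ and $\tau=\hat\rho_r$, or $s\ne r$, in which case Remark~\ref{geometric strata are bottommost} gives $H_s \subset Z$ and the decomposition provided by Fact~\ref{FactNielsenBottommost} (for \eg\ $H_s$) or (\neg\ Nielsen Paths) (for \neg\ $H_s$) reduces $\tau$ to $Z$-edges together with lower-height Nielsen paths, handled by induction on height; (iii)~exceptional paths reduce to the previous bullets through their linear-edge and Nielsen-path constituents; (iv)~a maximal taken subpath in zero stratum $H_t$ has $H_t,H_s \subset Z$ (else some $H_s$-edge would force $E$ to attract), so all its edges are in $Z$. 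This proves (1). Part (2) follows by decomposing $\sigma \in \langle Z,\hat\rho_r\rangle$ into $Z$-edges and copies of $\hat\rho_r^{\pm 1}$, applying (1) to the edges, using $f_\#(\hat\rho_r)=\hat\rho_r$ (since $\hat\rho_r$ is a Nielsen path or trivial), and invoking the groupoid property of Lemma~\ref{ZP is closed}\pref{Item:Groupoid}.

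For (3), let $\sigma \in \langle Z,\hat\rho_r\rangle$. By (2) every $f^k_\#(\sigma) \in \langle Z,\hat\rho_r\rangle$, so any $H_r$-edge in $f^k_\#(\sigma)$ lies inside a $\hat\rho_r$-copy, and each such copy contributes a bounded number of $H_r$-edges and exactly one $r$-illegal turn (from the decomposition $\rho_r = \alpha\beta$ of \eg-height indivisible Nielsen paths recalled before Fact~\ref{FactEGStrata}). Hence the maximal $r$-legal subpaths of $f^k_\#(\sigma)$ carry a uniformly bounded $H_r$-edge count. Generic leaves of $\Lambda$ are, by Fact~\ref{FactAttractingLeaves}, $r$-legal and exhausted by tiles with unboundedly many $H_r$-edges; so no such subpath can appear in any $f^k_\#(\sigma)$, proving that $\sigma$ is not weakly attracted.

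The main obstacle is part (4). Assume $\sigma$ has endpoints at fixed vertices and is not weakly attracted. By Fact~\ref{FactEvComplSplit}, $f^K_\#(\sigma)$ is completely split for all large $K$. Reversing the dichotomy of part (1), any term of this splitting that is an $H_r$-edge, an edge in some $H_j \not\subset Z$, a zero-stratum subpath with non-$Z$ envelope, or an exceptional path with non-$Z$ linear edge would seed tiles of height $r$ in further iterates and force $\sigma$ to attract; so every term lies in $\langle Z,\hat\rho_r\rangle$, giving $f^K_\#(\sigma) \in \langle Z,\hat\rho_r\rangle$. To recover $\sigma$ itself, lift $\sigma$ and $f^K_\#(\sigma)$ to paths in $\wt G$ sharing the same pair of lifted fixed endpoints (possible since the endpoints of $\sigma$ are fixed by $f$), use the injection $\wt h \colon \wt K \hookrightarrow \wt G$ of Lemma~\ref{ZP is closed}\pref{Item:BEQuivZP} to identify $\langle Z,\hat\rho_r\rangle$-paths with paths in $\wt h(\wt K)$, and observe that $\wt f^K$ is injective on paths with given fixed endpoints and by (2) restricts to a map of $\wt h(\wt K)$ into itself; tracking vertices along the lifted path then forces the lift of $\sigma$ already to lie in $\wt h(\wt K)$. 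Part (5) now follows at once: forward invariance is (1)--(2); injectivity on each of the three classes is inherited from the global injectivity of $f_\#$ on $\B$, on circuits, and on paths with fixed endpoints (both consequences of $f$ being a homotopy equivalence); surjectivity on finite paths with fixed endpoints is (4); and surjectivity on lines and circuits follows by weak-limit approximation using the weak closedness of $\langle Z,\hat\rho_r\rangle$-lines (Lemma~\ref{ZP is closed}\pref{item:closed}) and the bijection between circuits and periodic lines.
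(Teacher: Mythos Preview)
Your arguments for (1), (2), and (3) are essentially correct and parallel the paper's, though organized differently: the paper first proves by induction on height that every indivisible Nielsen path lies in $\langle Z,\hat\rho_r\rangle$, then deduces the same for exceptional paths, and only afterward analyzes the complete splitting of $f_\#(E)$. One small slip: in your case (ii) you invoke Remark~\ref{geometric strata are bottommost} to conclude $H_s\subset Z$ for an \neg\ stratum $H_s$, but that remark applies only to \eg\ strata; for \neg\ $H_s$ the edge in question is linear, and one needs the separate (easy) observation that linear edges always lie in $Z$.

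The genuine gap is in (4). You correctly show $f^K_\#(\sigma)\in\langle Z,\hat\rho_r\rangle$ for large $K$, but your step ``to recover $\sigma$ itself'' is not a proof: the sentence about lifting to $\wt G$, identifying with $\wt h(\wt K)$, and ``tracking vertices along the lifted path'' does not establish that $f^K_\#$-preimages of $\langle Z,\hat\rho_r\rangle$-paths lie in $\langle Z,\hat\rho_r\rangle$. That backward implication is precisely the surjectivity half of (5) for finite paths, so your subsequent derivation of (5) from (4) is circular. The paper breaks the circularity by reversing the dependency: it proves (5) for lines and finite paths by citing Corollary~6.0.7 of \BookOne\ (which amounts to showing that the self-map of $K$ induced by (1) is a homotopy equivalence), and then uses (5) to justify replacing $\sigma$ by $f^k_\#(\sigma)$ in (4). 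Your weak-limit argument for line surjectivity would work once circuit surjectivity is available, but your route to circuit surjectivity again requires the backward implication $f^k_\#(\sigma)\in\langle Z,\hat\rho_r\rangle\Rightarrow\sigma\in\langle Z,\hat\rho_r\rangle$, so the circularity persists there too. To repair this you must either give a direct argument that the induced map $K\to K$ is a homotopy equivalence on each component, or invoke the external result as the paper does.
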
 

\begin{proof} $\<Z,\hat \rho_r\>$ contains each fixed or linear edge by construction. Given an indivisible Nielsen path $\rho_i$ of height $i$, we prove by induction on $i$ that $\rho_i$ is in $\<Z,\hat\rho_r\>$. If $H_i$ is \neg\ this follows from (\neg\ Nielsen Paths) and the induction hypothesis. If $H_i$ is \eg\ then, applying Fact~\ref{FactNielsenBottommost}~\pref{ItemBottommostEdges} and the induction hypothesis, it follows that each edge of $H_i$ is in $Z$; together with another application of the induction hypothesis it follows that $\rho_i$ is in $\<Z,\hat\rho_r\>$.

Since all indivisible Nielsen paths and all fixed edges are contained in $\<Z,\hat\rho_r\>$, it follows that all Nielsen path are contained in $\<Z,\hat\rho_r\>$, which immediately implies that $\<Z,\hat \rho_r\>$ contains all exceptional paths. 

Suppose that $\tau = \tau_1 \cdot \ldots \cdot \tau_m$ is a complete splitting of a finite path that is not contained in a zero stratum. Each $\tau_i$ is either an edge in an irreducible stratum, a taken connecting path in a zero stratum, or, by the previous paragraph, a term which is not weakly attracted to $\Lambda$. If $\tau_i$ is a taken connecting path in a zero stratum $H_t$ that is enveloped by an \eg\ stratum $H_s$ then, by definition of complete splitting, $\tau_i$ is a maximal subpath of $\tau$ in $H_t$; since $\tau \not\subset H_t$ it follows that $m \ge 2$, and by applying (Zero Strata) it follows that at least one other term $\tau_j$ is an edge in $H_s$. In conjunction with Remark~\ref{edges are taken}, this proves that $\tau$ is contained in $\<Z,\hat \rho_r\>$ if and only if each $\tau_i$ that is an edge in an irreducible stratum is contained in $Z$ if and only if $\tau$ is not weakly attracted to $\Lambda$. 
 
We apply this in two ways. First, this proves item \pref{ItemZPFinitePaths} in the case that $\sigma$ is completely split. Second, applying this to $\tau = f_\#(E)$ where $E$ is an edge in $Z$, item \pref{ItemZPEdgesInv} follows in the case that $f_\#(E)$ is not contained in any zero stratum. Consider the remaining case that $\tau=f_\#(E)$ is contained in a zero stratum $H_t$ enveloped by the \eg\ stratum $H_s$. By definition of complete splitting, $\tau=\tau_1$ is a taken connecting path. By Fact~\ref{FactEdgeToZeroConnector} the edge $E$ is contained in some zero stratum $H_{t'}$ enveloped by the same \eg\ stratum $H_s$. Since $E \subset Z$, it follows that $H_s \subset Z$, and so $H^z_s \subset Z$, and so $\tau \subset Z$, proving \pref{ItemZPEdgesInv}.

Item \pref{ItemZPPathsInv} follows from item \pref{ItemZPEdgesInv}, the fact that $f_\#(\hat \rho_r) = \hat \rho_r$ and the fact that $\<Z,\hat \rho_r\>$ is a groupoid. 

 Every generic leaf of $\Lambda$ contains subpaths in $H_r$ that are not subpaths of $\hat \rho_r$ or $\hat \rho_r^{-1}$ and hence not subpaths in any element of $\<Z,\hat \rho_r\>$. Item \pref{ItemZPAnyPaths} therefore follows from item \pref{ItemZPPathsInv}. 
 
To prove \pref{ItemBijection}, for lines and finite paths the implication $\pref{ItemZPPathsInv} \Rightarrow \pref{ItemBijection}$ follows from Corollary~6.0.7 of \BookOne. For circuits, use the natural bijection between circuits and periodic lines, noting that this bijection preserves membership in $\<Z,\hat\rho_r\>$.
 
 It remains to prove \pref{ItemZPFinitePaths}. By \pref{ItemBijection}, there is no loss of generality in replacing $\sigma$ with $f^k_\#(\sigma)$ for any $k \ge 1$. By Fact~\ref{FactEvComplSplit} this reduces \pref{ItemZPFinitePaths} to the case that $\sigma$ is completely split which we have already proved.
\end{proof} 

The following corollary shows that the set of conjugacy classes carried by $\A_{\na}(\Lambda)$ depends only on $\phi$ and $\Lambda$ and not on $\fG$. This is strengthened in Corollary~\ref{CorPMna} which shows that subgroup system $\A_{\na}(\Lambda)$ itself depends only on $\phi$ and~$\Lambda$. 

\begin{corollary}\label{NA is well defined} Let $f \from G \to G$ be a \ct\ representing a rotationless $\phi \in \Out(F_n)$, let $\Lambda \in \L(\phi)$, and assume the notation of Definitions~\ref{defn:Z}. Then a conjugacy class $[a]$ in $F_n$ is not weakly attracted to $\Lambda$ if and only if it is carried by $\A_{\na}(\Lambda)$. \end{corollary}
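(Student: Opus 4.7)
The approach is to parlay the work already done on paths and the groupoid $\<Z,\hat\rho_r\>$ into a statement about conjugacy classes. By Lemma~\ref{ZP is closed}\pref{Item:circuits}, circuits carried by $\A_\na(\Lambda)$ coincide with circuits in $\<Z,\hat\rho_r\>$, so it suffices to show that the circuit $\sigma$ in $G$ representing $[a]$ lies in $\<Z,\hat\rho_r\>$ if and only if $[a]$ is not weakly attracted to~$\Lambda$.

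One direction is immediate: if $\sigma\in\<Z,\hat\rho_r\>$ then Lemma~\ref{defining Z}\pref{ItemZPAnyPaths} says $\sigma$, and therefore $[a]$, is not weakly attracted to~$\Lambda$. For the converse, suppose $[a]$ is not weakly attracted to $\Lambda$. The plan is to iterate $\sigma$ until it is completely split, then exploit that splitting exactly as in the proof of Lemma~\ref{defining Z}\pref{ItemZPFinitePaths}. Specifically, for some $k\ge 1$ the circuit $f^k_\#(\sigma)$ is completely split; each of its terms is an edge in an irreducible stratum, a taken connecting path in a zero stratum, an indivisible Nielsen path, or an exceptional path. The last two types already lie in $\<Z,\hat\rho_r\>$ by the first paragraph of the proof of Lemma~\ref{defining Z}. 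No edge term in an irreducible stratum is weakly attracted to $\Lambda$, since otherwise the whole circuit would be, so every such edge lies in $Z$ by definition. Any taken connecting path $\tau_i$ in a zero stratum $H_t$ enveloped by an \eg\ stratum $H_s$ is flanked in the complete splitting by edges of $H_s$ (by (Zero Strata)); those flanking edges are in $Z$ by the preceding observation, forcing $H_s\subset Z$, hence $H_t\subset Z$, hence $\tau_i\subset Z$. Thus $f^k_\#(\sigma)\in\<Z,\hat\rho_r\>$, and Lemma~\ref{defining Z}\pref{ItemBijection} then yields $\sigma\in\<Z,\hat\rho_r\>$ (the bijection of $\<Z,\hat\rho_r\>$-circuits under $f_\#$ combined with injectivity of $f_\#$ on all circuits ensures the preimage of a $\<Z,\hat\rho_r\>$-circuit lies in $\<Z,\hat\rho_r\>$).

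The main obstacle is a minor technical point: Fact~\ref{FactEvComplSplit} is stated only for finite paths with endpoints at vertices, not for circuits. The plan to circumvent this is to pick any vertex $v$ on $\sigma$ and pass to an iterate $f^j_\#$ large enough that $f^j(v)$ is a fixed vertex (such $j$ exists since the vertex set is finite and each vertex is periodic under some iterate of~$f$, or, failing that, one can replace $v$ by a point on $\sigma$ whose image eventually lies at a fixed vertex). Cutting $f^j_\#(\sigma)$ open at the fixed vertex $f^j(v)$ gives a finite path with endpoints at a vertex to which Fact~\ref{FactEvComplSplit} applies, and the resulting complete splitting of that finite path reassembles into a cyclic complete splitting of the circuit $f^{j+k}_\#(\sigma)$ for $k$ large. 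Once this point is in place, the rest of the argument is a direct transcription of the splitting analysis in Lemma~\ref{defining Z}\pref{ItemZPFinitePaths} from the path setting to the circuit setting.
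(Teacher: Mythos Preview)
Your proof is correct and follows essentially the same route as the paper's: reduce via Lemma~\ref{ZP is closed}\pref{Item:circuits} to circuits in $\<Z,\hat\rho_r\>$, iterate until completely split, and invoke the analysis from Lemma~\ref{defining Z}. The paper organizes the endgame slightly differently: rather than re-analyzing each term of the complete splitting as you do, it takes a further iterate so that some coarsening of the complete splitting has all concatenation points at \emph{fixed} vertices, and then applies Lemma~\ref{defining Z}\pref{ItemZPFinitePaths} directly to those finite subpaths. This avoids repeating the term-by-term case analysis, though both arguments are short.

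One caution on your workaround for the circuit version of Fact~\ref{FactEvComplSplit}: the claim that ``each vertex is periodic under some iterate of $f$'' is not true in a general \ct\ (nonprincipal vertices need not be periodic), so that parenthetical should be dropped. Your hedge is closer to the truth, and in fact the cleanest fix is exactly what the paper does implicitly: once the circuit is completely split, iterate further so that the splitting points (which are vertices, and after iteration land in the forward orbit of principal vertices or endpoints of Nielsen paths) become fixed; this is the standard argument behind the circuit version of complete splitting in \recognition. With that adjustment your argument goes through without change.
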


\begin{proof} By Lemma~\ref{ZP is closed}\pref{item:ZP=NA}, it suffices to show that a circuit in $G$ is not weakly attracted to $\Lambda$ under iteration by $f_\#$ if and only if it is carried by $\<Z,\hat\rho_r\>$. Both the set of circuits in $\<Z,\hat\rho_r\>$ and the set of circuits that are not weakly attracted to $\Lambda$ are $f_\#$-invariant. We may therefore replace $\sigma$ with any $f^k_\#(\sigma)$ and hence may assume that $\sigma$ is completely split. After taking a further iterate, we may assume that some coarsening of the complete splitting of $\sigma$ is a splitting into subpaths whose endpoints are fixed by~$f$. Lemma~\ref{defining Z}(\ref{ItemZPFinitePaths}) completes the proof.
\end{proof}

\begin{corollary}\label{NA is independent of plusminus} Assume that $\phi, \phi^\inv \in \Out(F_n)$ are rotationless, and let $\Lambda^\pm \in \L^\pm(\phi)$ be a dual lamination pair. For each nontrivial conjugacy class $[a]$ in $F_n$, $[a]$ is weakly attracted to $\Lambda^+$ under iteration by $\phi$ if and only if $[a]$ is weakly attracted to $\Lambda^-$ under iteration by~$\phi^{-1}$.
\end{corollary}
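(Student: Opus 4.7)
The plan is to reduce the statement to a comparison between the two nonattracting subgroup systems $\A_\na(\Lambda^+;\phi)$ and $\A_\na(\Lambda^-;\phi^{-1})$ via the immediately preceding corollary. Applying Corollary~\ref{NA is well defined} to the pair $(\phi,\Lambda^+)$ shows that $[a]$ is weakly attracted to $\Lambda^+$ under $\phi$ if and only if $[a]$ is \emph{not} carried by $\A_\na(\Lambda^+;\phi)$; applying the same corollary to $(\phi^{-1},\Lambda^-)$, which is a valid input since $\phi^{-1}$ is also rotationless by hypothesis, yields the analogous characterization. Thus it suffices to show that the set of conjugacy classes carried by $\A_\na(\Lambda^+;\phi)$ equals the set of conjugacy classes carried by $\A_\na(\Lambda^-;\phi^{-1})$.

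To establish this equality I would realize $[a]$ as a circuit $\sigma$, which is automatically birecurrent, and split into two cases according to whether $[a]$ is carried by the common free factor support $[F]=\A_\supp(\Lambda^\pm)$. In the case that $[a]$ is carried by $[F]$, weak attraction to $\Lambda^+$ under $\phi$ is equivalent to weak attraction under $\phi\restrict F$, and similarly for $\Lambda^-$ under $\phi^{-1}\restrict F$, so the desired equivalence follows directly from \BookOne\ Corollary~6.0.10 applied to $\phi\restrict F$. (In the deeper reduction, when $\Lambda^\pm$ is not topmost in $\L(\phi\restrict F)$, one uses \BookOne\ Theorem~6.0.1 together with Lemma~\ref{containmentSymmetry}, which guarantees that topmost-ness is preserved under the duality of lamination pairs, combined with Fact~\ref{FactDualDifferent} to rule out simultaneous attraction to $\Lambda^+$ and $\Lambda^-$, inducting on the partial order on $\L(\phi)$ under inclusion whose finiteness is built into Fact~\ref{FactLamsAndStrata}.)

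In the case that $[a]$ is not carried by $[F]$, I would show that $[a]$ is not weakly attracted to $\Lambda^+$ under $\phi$, and by the symmetric argument that $[a]$ is not weakly attracted to $\Lambda^-$ under $\phi^{-1}$, so both statements are vacuously equivalent. Here one uses that $\phi$ fixes $[F]$ (by Fact~\ref{FactPeriodicIsFixed}(2), since $\phi$ preserves $\Lambda^+$ and therefore its free factor support), together with a direct argument comparing the free factor support of iterates $\phi^n([a])$ to $[F]$: any weak limit $\ell$ of $\phi^n(\sigma)$ that contains arbitrarily long subpaths of a generic leaf of $\Lambda^+$ has free factor support that is jointly controlled by $[F]$ and by the orbit $\phi^n(\A_\supp([a]))$, forcing a relationship between these free factor systems that is incompatible with $[a]$ not being carried by $[F]$.

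The main obstacle will be the case $[a]$ not carried by $[F]$: although intuitively a conjugacy class whose free factor support misses $[F]$ should be unable to accumulate on a lamination supported on $[F]$, a careful argument is needed because weak limits can drop in free factor complexity and because $\phi$-orbits of free factors carrying $[a]$ need not be finite a priori. The cleanest route is likely to realize $\sigma$ in a \ct\ $f\from G\to G$ realizing $[F]$ as a filtration element $G_t$; long subpaths of a generic leaf of $\Lambda^+$ lie in $G_t$, so if infinitely many subsegments of $\phi^n(\sigma)$ occur inside $G_t$, then an application of the Bounded Cancellation Lemma, combined with complete splitting and the structure of zero strata above $G_t$, forces the circuit itself to be carried by $[F]$, which is the contradiction needed.
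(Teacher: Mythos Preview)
Your reduction via Corollary~\ref{NA is well defined} to comparing carried conjugacy classes is correct and matches the paper's setup. However, Case~2 of your argument is based on a false claim. It is \emph{not} true that a conjugacy class $[a]$ not carried by $[F]=\A_\supp(\Lambda^\pm)$ must fail to be weakly attracted to $\Lambda^+$. Take for instance $F_3=\langle a,b,c\rangle$ with $\phi$ restricting to a fully irreducible automorphism of $\langle a,b\rangle$ and $\phi(c)=c\cdot w$ for some nontrivial word $w$ in $a,b$; then $[c]$ is not carried by $[F]=[\langle a,b\rangle]$, yet $f^k_\#(c)=c\, w\, f_\#(w)\cdots f^{k-1}_\#(w)$ contains arbitrarily long leaf segments of $\Lambda^+$, so $[c]$ is weakly attracted to $\Lambda^+$. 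Your proposed bounded-cancellation argument in the final paragraph cannot rule this out: long subpaths of $f^k_\#(\sigma)$ lying in $G_t$ impose no constraint on where $\sigma$ itself lives. The free factor support of $[a]$ simply does not control weak attraction to~$\Lambda^+$; that is precisely why the nonattracting subgroup system $\A_\na(\Lambda^+)$ is needed and is generally strictly smaller than any natural free factor system.

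The paper's proof avoids any case split on $[F]$. It argues the contrapositive directly: if $[a]$ is not weakly attracted to $\Lambda^+$ then, by Corollary~\ref{NA is well defined} and $\phi$-invariance of the nonattracted set, every backward iterate $\phi^{-k}([a])$ is carried by $\langle Z,\hat\rho_r\rangle$; if $[a]$ were also weakly attracted to $\Lambda^-$ under $\phi^{-1}$, then by closedness of the set of lines in $\langle Z,\hat\rho_r\rangle$ (Lemma~\ref{ZP is closed}\pref{item:closed}) a generic leaf $\gamma$ of $\Lambda^-$ would lie in $\langle Z,\hat\rho_r\rangle$. Choosing the \ct\ so that $[G_r]=\A_\supp(\Lambda^\pm)$, the leaf $\gamma$ has height~$r$, and a short direct check (separating the cases $\hat\rho_r$ trivial, $\rho_r$ not closed, $\rho_r$ closed) shows this is impossible. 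This route is both shorter and sidesteps any appeal to \BookOne\ Corollary~6.0.10 or an induction on the inclusion order in $\L(\phi)$.
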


\begin{proof} Fixing $[a]$, by replacing $\phi$ with $\phi^\inv$ it suffices to prove the ``if'' direction. Applying Theorem~\ref{TheoremCTExistence}, choose a \ct\ $ \fG$ representing $\phi$ which realizes the free factor system $\A_\supp(\Lambda^\pm)$. It follows that if $H_r \subset G$ is the \eg\ stratum corresponding to $\Lambda^+$ then $[G_r] = \A_\supp(\Lambda^\pm)$. We adopt the notation of Definitions~\ref{defn:Z}. 

Suppose that $[a]$ is not weakly attracted to $\Lambda^+$ under iteration by $\phi$. Then the same is true for all $\phi^{-k}([a])$ and so $\phi^{-k}([a]) \in \<Z,\hat\rho_r\>$ for all $k \ge 0$ by Corollary~\ref{NA is well defined}. 

Suppose in addition that $[a]$ is weakly attracted to $\Lambda^-$ under iteration by $\phi^{-1}$. Corollary~\ref{ZP is closed}~\pref{item:closed} implies that a generic line $\gamma$ of $\Lambda^-$ is contained in $\<Z,\hat\rho_r\>$. However, since $\A_\supp(\gamma) = \A_\supp(\Lambda^\pm) = [G_r]$, it follows that $\gamma$ has height $r$. If $\hat\rho_r$ is trivial then $\gamma$ is a concatenation of edges of $Z$ none of which has height $r$, a contradiction. If $\hat\rho_r = \rho_r$ is nontrivial then every maximal subpath of $\gamma$ in $H_r$ is a concatenation of copies of $\rho_r$. By Fact~\ref{FactEGStrata} items~\pref{ItemSomeOnce} and~\pref{ItemEachTwice}, at least one endpoint of $\rho_r$ is disjoint from $G_{r-1}$. If $\rho_r$ is not closed then we obtain an immediate contradiction. If $\rho_r$ is closed then $\gamma$ is a bi-infinite iterate of $\rho_r$, but this contradicts \BookOne\ Lemma~3.1.16 which says that no generic leaf of $\Lambda^-$ is periodic. 

We conclude that $[a]$ is not weakly attracted to $\Lambda^-$ under iteration by $\phi^{-1}$. 
\end{proof}

\begin{corollary}
\label{CorollaryCTFullIrr}
For each rotationless $\phi \in \Out(F_n)$ and each $\Lambda \in \L(\phi)$, if the subgroup system $\A_\na(\Lambda)$ is trivial and the free factor system $\A_\supp(\Lambda)$ is not proper then $\phi$ is fully irreducible.
\end{corollary}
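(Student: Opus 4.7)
The plan is to argue by contradiction. Suppose that $\A_{\na}(\Lambda)$ is trivial, that $\A_{\supp}(\Lambda) = [F_n]$, and yet $\phi$ is not fully irreducible. Then some positive power $\phi^k$ fixes the conjugacy class of a proper nontrivial free factor $F$. Since $\phi$ is rotationless, Fact~\ref{FactPeriodicIsFixed}\pref{ItemFreeFactorFixed} applied to the $\phi$-periodic free factor system $\{[F]\}$ implies that $[F]$ is in fact $\phi$-invariant.

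Pick any nontrivial element $a \in F$. Because $[F]$ is $\phi$-invariant, the conjugacy class $\phi^i([a])$ is represented by an element of $F$ for every $i \ge 0$; equivalently, the (periodic) line in $\B$ corresponding to the circuit $\phi^i([a])$ is carried by the free factor $[F]$ for every $i \ge 0$. On the other hand, since the subgroup system $\A_{\na}(\Lambda)$ is trivial and hence carries no conjugacy class, Corollary~\ref{NA is well defined} implies that $[a]$ is weakly attracted to $\Lambda$ under iteration of $\phi$; that is, the sequence of lines corresponding to the circuits $\phi^i([a])$ converges weakly to every leaf of $\Lambda$.

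By Fact~\ref{FactLinesClosed}, the set of lines carried by the subgroup system $[F]$ is closed in the weak topology on $\B$. Therefore every weak limit of the sequence of lines $\phi^i([a])$ must be carried by $[F]$. It follows that every leaf of $\Lambda$ is carried by $[F]$, and hence $\A_{\supp}(\Lambda) \sqsubset [F]$. But $[F]$ is a proper free factor system while $\A_{\supp}(\Lambda) = [F_n]$ is not proper, a contradiction. Thus $\phi$ must be fully irreducible.

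There is no real obstacle here; the argument is a direct combination of the characterization of $\A_{\na}(\Lambda)$ (Corollary~\ref{NA is well defined}), the promotion of periodic free factors to fixed free factors for rotationless outer automorphisms (Fact~\ref{FactPeriodicIsFixed}), and the closedness in the weak topology of the set of lines carried by a subgroup system (Fact~\ref{FactLinesClosed}).
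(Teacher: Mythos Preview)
Your proof is correct, but it takes a genuinely different route from the paper's. The paper argues structurally via a \ct: it chooses a \ct\ $f \from G \to G$ representing $\phi$ with a filtration element $G_r$ realizing $[F]$, observes that since $\A_\supp(\Lambda) = [F_n]$ the stratum of $\Lambda$ is the top stratum, so $G_r \subset G_{s-1} \subset Z$, and then uses that triviality of $\A_\na(\Lambda)$ forces every component of $Z$ to be contractible, contradicting $[G_r]=[F]$ nontrivial. Your argument is instead dynamical: you never pick a \ct\ adapted to $[F]$, but simply take a conjugacy class in $F$, use Corollary~\ref{NA is well defined} to see it is weakly attracted to $\Lambda$, and then invoke closedness (Fact~\ref{FactLinesClosed}) of the set of lines carried by $[F]$ to conclude $\Lambda$ is carried by $[F]$, contradicting $\A_\supp(\Lambda)=[F_n]$. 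Your approach is slightly more elementary in that it sidesteps the \ct\ machinery and the nonattracting subgraph $Z$ entirely, relying only on the weak-attraction characterization of $\A_\na(\Lambda)$; the paper's approach, by contrast, stays closer to the combinatorics already set up in Definitions~\ref{defn:Z} and makes the role of $Z$ explicit.
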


\begin{proof} If $\phi$ is not fully irreducible then by Fact~\ref{FactPeriodicIsFixed}~\pref{ItemFreeFactorFixed} there is a proper, nontrivial, invariant free factor $F$ whose conjugacy class $[F]$ is fixed by $\phi$. Let $f \from G \to G$ be a \ct\ representing $\phi$ with a filtration element $G_r$ such that $[G_r] = [F]$. Since $\A_\supp(\Lambda)$ is not proper, the stratum corresponding to $\Lambda^+$ is the top stratum $H_s$, so $r < s$. Since $\A_\na(\Lambda)$ is trivial it follows, by Corollary~\ref{NA is well defined} and Lemma~\ref{defining Z}~\pref{ItemZPAnyPaths}, that each component of $Z$ is contractible. Since $G_r \subset G_{s-1} \subset Z$ it follows that each component of $G_r$ is contractible. But this contradicts that $[G_r]=[F]$ is nontrivial.
\end{proof}

\subsection{Nonattracting subgroup systems are vertex group systems}
\label{SectionNASysIsVertexSys}

In the next proposition we study the structure of the subgroup system $\A_\na(\Lambda)$ for any attracting lamination of any outer automorphism


\begin{proposition} \label{PropVerySmallTree} 
For any $\phi \in \Out(F_n)$ and any $\Lambda \in \L(\phi)$ the following hold:
\begin{enumerate}
\item \label{ItemA_naNP}
$\A_\na(\Lambda)$ is a vertex group system.
\item \label{ItemA_naNoNP}
If $\Lambda$ is not geometric then $\A_\na(\Lambda)$ is a free factor system.
\end{enumerate}
\end{proposition}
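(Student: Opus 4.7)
The plan is to work with a rotationless power of $\phi$: by Fact~\ref{FactRotationlessPower} choose $K \ge 1$ with $\phi^K$ rotationless, noting $\Lambda \in \L(\phi^K)$ by Fact~\ref{FactPeriodicIsFixed}. Fix a \ct\ $f \colon G \to G$ representing $\phi^K$ with \eg\ stratum $H_r$ corresponding to $\Lambda$, and adopt the notation of Definitions~\ref{defn:Z}. Working throughout with $\phi^K$ is harmless because a vertex group system is determined by its carried conjugacy classes (Lemma~\ref{LemmaVSElliptics}), which by Corollary~\ref{NA is well defined} are exactly the $\phi$-non-attracted ones; so once we prove $\A_\na(\Lambda;\phi^K)$ is a vertex group system, the choice of power is irrelevant.

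For part (2), the non-geometric case: by Fact~\ref{FactEGStrata}~\pref{ItemSomeOnce}, $\hat\rho_r$ is either trivial or a non-closed indivisible Nielsen path $\rho_r$. If $\hat\rho_r$ is trivial, then $K = Z \subset G$ is a subgraph of the marked graph $G$, so $\A_\na(\Lambda) = [Z]$ is a free factor system by the definition of $[\,\cdot\,]$ on subgraphs. Otherwise, $\rho_r$ is a non-closed embedded arc, so the quotient map $q \colon G \to G'$ that collapses $\rho_r$ to a single point is a homotopy equivalence, and after collapsing any valence-$1$ edges thereby produced $G'$ becomes a marked graph. The composition $K \xrightarrow{h} G \xrightarrow{q} G'$ sends $E_\rho$ to a point and realizes the quotient $K/E_\rho$ as a subgraph $\bar K \subset G'$; since collapsing the contractible arc $E_\rho$ in $K$ is a homotopy equivalence, $\pi_1 \bar K \cong \pi_1 K$ componentwise, so $\A_\na(\Lambda) = [\bar K]$ is a free factor system.

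For part (1), the geometric case: adopt the geometric model (Definitions~\ref{DefGeometricStratum} and~\ref{defn:Z}), with $X = Y \cup G$, surface $S$, pseudo-Anosov $\psi$, and measured laminations $\Lambda^s,\Lambda^u$. Construct an $F_n$-$\R$-tree $T$ as an equivariant quotient of $\tilde X$ built from two ingredients: (i) the classical dual $\R$-tree $T_S$ to the lift $\tilde \Lambda^s$ on $\tilde S$, a minimal very small $\pi_1 S$-tree in which each conjugate of $\pi_1(\bdy_i S)$ is realized as a vertex stabilizer; and (ii) an equivariant collapse in $\tilde X$ that sends each component of the preimage of $Z$ to a single point. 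Assemble by placing one copy of $T_S$ per $F_n/\pi_1 S$-coset, using lifted annuli $\tilde{\mathcal A}_i$ as arcs joining the image of $\tilde\bdy_i S$ in $T_S$ to the collapsed $Z$-vertex containing $\tilde\alpha_i$ (recall $G_{r-1} \subset Z$, so $\bdy_i S$ with $i \ge 1$ attach only to collapsed $Z$-vertices); and attach the image of $\bdy_0 S$ at $v_S$ either to the collapsed $Z$-vertex containing $v_S$ (if $v_S \in Z$) or leave it as a boundary point internal to its copy of $T_S$ (if $v_S \notin Z$). Direct verification then identifies the maximal elliptic subgroups of $T$ as conjugates of $\pi_1 K_i$ for each non-contractible component $K_i$ of $K$: for $Z_i \ne Z_0$ the collapsed vertex has stabilizer $\pi_1 Z_i$; the vertex carrying $v_S$ contributes $\pi_1 Z_0 \ast \<\rho_r\>$ (or just $\<\rho_r\>$ when $v_S \notin Z$). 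Hence $\A_\na(\Lambda) = \A_T$ is a vertex group system.

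The hard part will be the rigorous verification that this assembly of $T_S$-subtrees, annulus-arcs, and collapsed-$Z$ vertices really is a very small $F_n$-$\R$-tree: one must check that the quotient of $\tilde X$ is genuinely an $\R$-tree (no unexpected loops appear), that arc stabilizers stay trivial or cyclic across the gluings, and that triod stabilizers are trivial. These properties rely on the filling and principal-region structure of $\Lambda^s$ (Fact~\ref{FactPsAnWeakAttr}) together with the tree-of-pieces structure forced by the gluing via $G_{r-1} \subset Z$.
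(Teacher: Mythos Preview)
Your argument for the non-geometric case with nontrivial $\hat\rho_r$ contains a genuine error: you assert that $\rho_r$ is an embedded arc, but this is false in general. An indivisible Nielsen path of \eg\ height is only an \emph{immersed} path in $G$; its image is typically not a tree. Fact~\ref{FactEGStrata}\pref{ItemSomeOnce} guarantees only that $\rho_r$ crosses \emph{some} edge $E$ of $H_r$ exactly once --- other edges of $H_r$, and the maximal subpaths of $\rho_r$ in $G_{r-1}$, may be crossed many times. Consequently the quotient map collapsing the image of $\rho_r$ to a point is not a homotopy equivalence, $G'$ is not a marked graph, and there is no reason for $K/E_\rho$ to sit inside it as a subgraph. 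The paper's fix exploits exactly the once-crossed edge $E$: one deletes $E$ from $G$ and attaches a new edge $J$ whose endpoints are the endpoints of $\rho_r$, with $J \mapsto \rho_r$ and $E \mapsto \bar\sigma J \bar\tau$ (where $\rho_r = \sigma E \tau$) giving inverse homotopy equivalences. In the resulting marked graph $\wh G$ one identifies $K$ with the honest subgraph $Z \cup J$, and $\A_\na(\Lambda) = [Z \cup J]$ is a free factor system.

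For the geometric case your proposed construction --- dualize $\Lambda^s$ to get a $\pi_1 S$-tree $T_S$ and graft in collapsed-$Z$ vertices --- is a reasonable alternative to the paper's route, which instead builds the tree as a projective limit $T_0\psi^k/\lambda^k$ using Perron--Frobenius eigenlengths (first in the special subcase $G = Z \cup H_r$, then in general by blowing up a simplicial tree at one vertex orbit). Your approach is closer to the surface picture; the paper's is dynamical and never invokes the dual tree to a measured lamination. However, what you have written is a sketch, not a proof: you have not verified that your glued object is an $\R$-tree, nor that the vertex at $v_S$ really has stabilizer $\pi_1 Z_0 * \langle \rho_r \rangle$ rather than two separate vertices with smaller stabilizers joined by an edge, nor that arc and triod stabilizers behave across the gluings. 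The paper's limit construction, while longer, makes these verifications concrete via explicit length functions.
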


\begin{proof} Passing to a power we may assume that $\phi$ is rotationless. Choose a \ct\ $f \from G \to G$ representing $\phi$ with \eg\ stratum $H_r$ corresponding to $\Lambda$, and adopt the notation of Definition~\ref{defn:Z}. By applying Fact~\ref{FactEGStrata} and Proposition~\ref{PropGeomEquiv}, when $\Lambda$ is not geometric then $\hat\rho_r$ is either trivial or a nonclosed Nielsen path, and when $\Lambda$ is geometric then $\hat\rho_r$ is a closed Nielsen path. We consider these three cases of $\hat\rho_r$ separately.

\smallskip

\textbf{Case 1: $\hat\rho_r$ is trivial.} In this case $K=Z$, and $\A_\na(\Lambda)$ is the free factor system associated to the subgraph $Z \subset G$.

\smallskip

\textbf{Case 2: $\hat\rho_r = \rho_r$ is a nonclosed Nielsen path.} We prove that $\A_\na(\Lambda)$ is a free factor system following an argument of \BookOne\ Lemma~5.1.7. By Fact~\ref{FactEGStrata} item~\pref{ItemSomeOnce} there is an edge $E \subset H_r$ that is crossed exactly once by $\rho_r$. We may decompose $\rho_r$ into a concatenation of subpaths $\rho_r = \sigma E \tau$ where $\sigma,\tau$ are paths in $G - E$. Let $\wh G$ be the graph obtained from $G - E$ by attaching an edge $J$, letting the initial and terminal endpoints of $J$ be equal to the initial and terminal endpoints of~$\rho_r$, respectively. The identity map on $G - E$ extends to a map $h \from \wh G \to G$ that takes the edge $J$ to the path $\rho_r$, and to a homotopy inverse $\bar h \from G \to \wh G$ that takes the edge $E$ to the path~$\bar\sigma J \bar\tau$. We may therefore view $\wh G$ as a marked graph, pulling the marking on $G$ back via $h$. Notice that $K$ may be identified with the subgraph $Z \union J \subset \wh G$, in such a way that the map $h \from \wh G \to G$ is an extension of the map $h \from K \to G$ as originally defined. It follows that $\A_\na(\Lambda)$ is the free factor system associated to the subgraph $Z \union J$.

\smallskip

\textbf{Case 3: $\hat\rho_r = \rho_r$ is a closed Nielsen path.} We prove that $\A_\na(\Lambda)$ is a vertex group system, first doing a special case.

%



\subparagraph{Case 3a: $G = Z \union H_r$.} Using an iterative process, we contruct a very small $F_n$ tree $T_\infinity$ in which $\A_\na(\Lambda)$ is realized. We do not iterate the map $f$, instead we iterate a relative train track map $h \from G \to G$ closely related to $f$ in which the dynamics off of the stratum $H_r$ have been simplified as much as possible.

In this proof, since we will be reordering strata of $G$, we will write the subgraph $H_r$ just as $H$, and the path $\rho_r$ as $\rho$.

Define $h \from G \to G$ by $h \restrict Z = \Id$ and $h \restrict H = f \restrict H$. The map is well defined and continuous because $f$ fixes each point of $Z \intersect H$. To see that $h$ is a homotopy equivalence, first define $h' \from G \to G$ by $h' \restrict G_{r} = f \restrict G_r$ and $h' \restrict (G \setminus G_r) = \Id$; by Fact~\ref{FactContrComp} the subgraph $G_r$ has no contractible components, implying that $h' \restrict G_r$ is a homotopy equivalence, and so its extension by the identity is a homotopy equivalence. Next factor $h'$ as $G \xrightarrow{h''} G \xrightarrow{h} G$ where $h'' \restrict G_{r-1} = h' \restrict G_{r-1} = f \restrict G_{r-1}$ and $h'' \restrict (G \setminus G_{r-1}) = \Id$; by Facts~\ref{FactNielsenBottommost} and~\ref{FactContrComp} the subgraph $G_{r-1}$ has no contractible components, implying that $h'' \restrict G_{r-1}$ is a homotopy equivalence, and so its extension by the identity is a homotopy equivalence. Since $h'$ and $h''$ are both homotopy equivalences, so is $h$.

Note that $h$ respects the filtration of $G$. Since the restrictions of $f$ and $h$ to each edge of $H$ are equal, it follows that $f$ and $h$ have the same transition matrix $M$ for their actions on $H$, and so $H$ is an \eg-aperiodic stratum for $h$. It follows easily that $h$ is a relative train track map. Also, the restrictions of $Df$ and $Dh$ to the set of directions in $H$ are equal, implying that $f$ and $h$ have the same legal turns in the stratum $H$. Also, $\rho$ is a Nielsen path for $h$ as well as for $f$, because the restrictions of $h$ and $f$ to each edge of $H$ in $\rho$ are equal, and by Fact~\ref{FactNielsenBottommost} each maximal subpath $\beta$ of $\rho$ in $G_{r-1}$ is a Nielsen path for $f$ and so $f_\#(\beta)$ and $h(\beta)$ are both equal to $\beta$. Furthermore, for each edge $E$ of $H$ and each $i \ge 0$, the paths $f^i_\#(E)$ and $h^i(E)$ are equal and each of its maximal subpaths in $G \setminus H$ is a Nielsen path for $f$ in $G_{r-1}$; for $i=1$ this follows by the same argument as above using Fact~\ref{FactNielsenBottommost}, and the general case follows easily by induction.


Since $h$ fixes each edge of $Z = G \setminus H$, we may subdivide and reorder the strata to obtain a new filtration respected by $h$ in which each edge of $Z$ is a stratum and so that $H$ is the top stratum. With respect to this new filtration, $h$ is still a relative train track map with aperiodic \eg\ stratum $H$. There are only finitely many indivisible Nielsen paths for $h$ which intersect the interior of $H$, by \BookOne\ Lemma~4.2.5, and so we may subdivide $G$ so that each end endpoint of each such Nielsen path is a vertex of $G$.

We claim that $\rho$ is the unique $h$-periodic indivisible Nielsen path which intersects the interior of $H$. Suppose that $\rho'$ is another such path. Applying \BookOne\ Remark 5.1.2 using the new filtration, we can write $\rho' = \alpha' \beta'$ where $\alpha',\beta'$ are each $H$-legal paths. Choose $k$ so that $\rho' = h^k_\#(\rho')$. We need the fact that if $\bar\alpha'(i)$ and $\beta(i)$ are the initial segments of $\bar\alpha'$ and $\beta'$ that are identified by $f^i_\#$ then the paths $\alpha'(i) \beta'(i)$ are an increasing sequence of subpaths of $\rho$ whose union is the interior of $\rho$; this fact is noted in \recognition\ Lemma~2.11, by quoting from the statement and proof of \BH\ Lemma~5.11. Letting $E$ be the initial edge of $\alpha'$, it follows that there exists $l>0$ such that $\alpha'$ is an initial segment of $h^{kl}(E) = f^{kl}_\#(E)$ whose last edge is contained in~$H$, and so $\alpha'$ is a concatenation of edges of $H$ and maximal subpaths in $G \setminus H$ each of which is a Nielsen path for $f$ in $G_{r-1}$. It follows that $h^i(\alpha') = f^i_\#(\alpha')$ for all $i \ge 0$, and the same argument shows that $h^i(\beta') = f^i_\#(\alpha')$. We therefore have $f^k_\#(\rho') = [f^k_\#(\alpha') f^k_\#(\beta')] = [h^k(\alpha') h^k(\beta')] = h^k_\#(\rho') = \rho'$. Since $\rho$ is the unique $f$-periodic Nielsen path in $G_{r}$ which intersects the interior of $H$, it follows that $\rho' = \rho$, proving the claim.

\bigskip

Applying the claim together with Lemma~\ref{LemmaEGPathSplitting}, and using that $H$ is the top stratum of the new filtration, for each circuit $\sigma$ in $G$ there exists $K \ge 0$ such that $h^K_\#(\sigma)$ splits into terms each of which is an edge or a copy of $\rho$ or $\bar\rho$. 

\bigskip

We next construct a tree $T_0$ on which to iterate the outer automorphism $\psi \in \Out(F_n)$ which is represented by $h$. Let $\lambda>1$ be the Perron-Frobenius eigenvalue for $M$ and let $\vec v$ be a positive vector satisfying $\vec v M = \lambda \vec v$. 

Choose $\epsilon > 0$ and assign lengths $L_\epsilon(E)$ to the edges $E \subset G$ as follows. If $E \subset Z$ then $L_\epsilon(E) = \epsilon$. If $E \subset H$ then $L_\epsilon(E) = v_E$ where $v_E$ is the coordinate of $\vec v$ determined by $E$. This defines a function $L_\epsilon$ assigning positive lengths to circuits and finite edge paths in $G$ by $ L_\epsilon( E_1E_2 \ldots E_m) = \sum_{i=1}^m L_\epsilon(E_i)$. The restriction of $L_\epsilon$ to circuits in $G$ defines a positive length function $\wh L_\epsilon$ on conjugacy classes in $F_n$, and $\wh L_\epsilon$ corresponds to a very small $F_n$-tree $T_\epsilon$ which represents the same point in $\CV_n$ as the marked graph $G$ equipped with the path metric given by the values of $L_\epsilon$. 

Next we let $\epsilon$ go to zero. Assign formal lengths $L_0(E)$ to edges in $G$ by $L_0(E) = 0$ for $E \subset Z$ and $L_0(E) = v_E$ for $E \subset H$. For any circuit or finite edge path define $ L_0( E_1E_2 \ldots E_m) = \sum_{i=1}^m L_0(E_i)$. Thus $L_0(h_\#(E)) = \lambda L_0(E)$ for every edge $E$; this is obvious if $E \subset Z$ and follows from our choice of $\vec v$ if $E \subset H$. The restriction of $L_0$ to circuits in $G$ defines a non-negative length function $\wh L_0$ on conjugacy classes in $F_n$, and $\wh L_0$ is the limit, as $\epsilon \to 0$, of the functions $\wh L_\epsilon$. It follows that $\wh L_0$ is the translation length function for some very small $F_n$-tree $T_0$ representing a point in $\overline\CV_n$. 

Define a sequence of very small $F_n$-trees $T_k = T_{0}\psi^k/\lambda^k$. Let $\wh L_k$ be the translation length function on conjugacy classes in $F_n$ corresponding to $T_k$. 

\bigskip

We now let $k \to \infinity$ and prove that the sequence in $\overline\CV_n$ represented by the $F_n$-trees $T_k$ converges to a point in $\overline\CV_n$ represented by a very small $F_n$-tree tree $T_\infinity$ in which $\A_\na(\Lambda)$ is realized.

Consider a circuit $\sigma \subset G$ representing a conjugacy class $[a]$. We have 
$$
\wh L_k([a]) = \wh L_0(\psi^k[a])/\lambda^k = L_0(h^k_\#(\sigma))/\lambda^k
$$
Choose $K$ such that\ $h^K_\#(\sigma)$ splits into terms $\sigma^K_i$ each of which is a single edge or $\rho$ or~$\bar\rho$. Partition these terms into two sets $A$ and $B$, with $A$ containing the terms that are $\rho$ or $\bar\rho$, and $B$ containing all the other terms. The cardinality $\abs{A}$ of the set $A$ is independent of $k\ge K$. Each term $\sigma^K_i \in B$ satisfies $L_0(h_\#(\sigma^K_i)) = \lambda L_0(\sigma^K_i)$. It follows that for all $ k\ge K$
$$ 
\wh L_k([a]) = \sum L_0(h^{k-K}_\#(\sigma^K_i))/\lambda^k \\
 = \abs{A} L_0(\rho)/\lambda^{k} + \sum_{\sigma^K_i \in B} L_0( \sigma^K_i)/\lambda^{K} 
$$
and hence that
$$
\wh L_\infty([a]) := \lim_{k \to \infty}\wh L_k([a]) = \sum_{\sigma^K_i \in B} L_0( \sigma^K_i)/\lambda^{K} 
$$
This proves that the sequence $T_k$ converges to some point $T_{\infty} \in \overline\CV_n$ with corresponding translation length function $\wh L_\infty$. Moreover, the following properties of $\sigma$ are equivalent:
\begin{description}
\item[(i)] $\sigma \in \<Z,\rho\>$
\item[(ii)] $h^K_\#(\sigma) \in \<Z,\rho\>$.
\item[(iii)] No term $\sigma^K_i$ of $h^K_\#(\sigma)$ is an edge in $H$.
\item[(iv)] $L_\infinity([a]) = 0$, that is, the conjugacy class $[a]$ is elliptic for the action of $F_n$ on~$T_\infinity$.
\end{description}
The only possibly unclear equivalence in this list is (i)$\iff$(ii). We use the fact that any circuit in $\<Z,\rho\>$ is clearly fixed by $h_\#$, and therefore by all positive powers $h^k_\#$. This immediately proves (i)$\implies$(ii). Conversely, suppose $h^K_\#(\sigma) \in \<Z,\rho\>$. It follows that $h^{K+1}_\#(\sigma) = h_\#(h^K_\#(\sigma)) = h^K_\#(\sigma)$, and since $h^K_\#$ is a bijection on circuits it follows that $h_\#(\sigma)=\sigma$, implying that $\sigma  = h^K_\#(\sigma) \in \<Z,\rho\>$.








\bigskip
 
We prove that $\A_\na(\Lambda)$ is realized in $T_\infinity$, that is, $\A_\na(\Lambda) = \A_{T_\infinity}$. The equivalence of (i) and (iv), coupled with Lemma~\ref{ZP is closed}~\pref{Item:circuits}, shows that $\A_\na(\Lambda)$ and $\A_{T_\infinity}$ carry the same conjugacy classes. By applying Lemma~\ref{LemmaVSElliptics}, to prove that $\A_\na(\Lambda) = \A_{T_\infinity}$ it remains to show that for each $[A_i] \in \A_\na(\Lambda)$, the subgroup $A_i$ is a maximal elliptic subgroup for the action of $F_n$ on~$T_\infinity$, and we do this in two subcases. Let $v$ denote the base point of $\rho$.

\smallskip

\textbf{Subcase (1):} Suppose that $[A_i]$ is represented by the subgroup $\pi_1(Z_i,w_i)$ of $\pi_1(G,w_i)$, where $Z_i$ is a component of $Z$ such that $v \not\in Z_i$, and $w_i$ is a basepoint in $Z_i$.  Each element of $\pi_1(Z_i,w_i)$ is elliptic in $T_{\infty}$. If $\pi_1(Z_i,w_i)$ is not a maximal elliptic subgroup then there exists $c \in \pi_1(G,w_i)$ that is not in $\pi_1(Z_i,w_i)$ such that every element of $\<\pi_1(Z_i,w_i), c\>$ is elliptic in $T_{\infty}$. Choose $b \in \pi_1(Z_i,w_i)$ such that the circuit $\gamma$ representing $[b c]$ crosses edges in $Z_i$ but is not contained in $Z_i$. Then $\gamma$ is not contained in $\<Z, \rho\>$ in contradiction to the fact that $bc$ is elliptic in $T_\infinity$. 

\smallskip
 
\textbf{Subcase (2):} Suppose that subcase (1) does not hold. Let $Z_i$ be the component of $Z$ that contains $v$ if there is one, or $Z_i = \{v\}$ otherwise. Let $x \in \pi_1(G,v)$ be the element represented by $\rho$. It follows that $[A_i]$ is represented by the subgroup $\<\pi_1(Z_i, v),x\>$ of $\pi_1(G,v)$. As in the previous case, if $c \in \pi_1(G,v)$ is not contained in $\<\pi_1(Z_i, v),x\>$ then there exists $b \in \<\pi_1(Z_i, v),x\>$ such that the circuit representing $[bc]$ crosses $\rho$ but is not contained in $\<Z_i, \rho\>$, and hence is not contained in $\<Z,\rho\>$; if $\<\pi_1(Z_i,v),x,c\>$ is elliptic we again get a contradiction.

\smallskip

This completes Case~3a.

\subparagraph{Case 3b:} We now consider Case 3 in general. To allay some confusion we affix $\phi$ to our notation, writing $\A_\na(\Lambda;\phi)$ for $\A_\na(\Lambda)$.

First note that there exists a single component of $Z \union H_r$ containing the subgraph~$H_r$. This follows from the observation that for any edge $E$ of $H_r$ and any sufficiently high exponent $n$, the connected set $f^n_\#(E) \subset G_r = G_{r-1} \union H_r \subset Z \union H_r$ contains every edge of $H_r$.

List the noncontractible components of $Z \union H_r$ as $C_0,C_1,\ldots,C_K$, letting $C_0$ be the component containing $H_r$. We regard the free group $\pi_1(C_i)$ as a subgroup of $F_n$ with conjugacy class $[\pi_1(C_i)]$ in $F_n$. With this notation we have a free factor system in $F_n$ given by $[Z \union H_r] = \{[\pi_1(C_0)],[\pi_1(C_1)],\ldots,[\pi_1(C_K)]\}$. 

The outer automorphism $\phi_0  = \phi \restrict \pi_1(C_0) \in \Out(\pi_1(C_0))$ is represented by the \ct\ $f \restrict C_0$, which has $H_r$ as an \eg\ geometric stratum with Nielsen path $\rho_r$. As a lamination in the subgraph $C_0 \subset G$, the element of $\L(\phi_0)$ associated to $H_r$ is equal to the lamination $\Lambda$. Also, the nonattracting subgraph of $\Lambda$ with respect to $f \restrict C_0$ is equal to $Z \intersect C_0$. Since $C_0 = (Z \intersect C_0) \union H_r$, it follows that $f \restrict C_0$ falls under the hypothesis of Case~3a, and therefore the nonattracting subgroup system of $\Lambda$ with respect to the action of $\phi_0$, which we shall denote $\A_\na(\Lambda;\phi_0)$, is a vertex group system in $\pi_1(C_0)$. Let $T_0$ denote a very small $\pi_1(C_0)$-tree in which $\A_\na(\Lambda;\phi_0)$ is realized.

In general, consider a free factor $F$ of $F_n$ and a collection of pairwise nonconjugate subgroups $A_1,\ldots,A_I$ in $F$ with conjugacy classes in $F$ defining a subgroup system in $F$ denoted $\{[A_1]_F,\ldots,[A_I]_F\}$. No two of the subgroups $A_i,A_j$, $i \ne j$ are conjugate in~$F_n$, because conjugates of the free factor $F$ by distinct elements of $F_n$ have trivial intersection. We get a subgroup system in $F$ denoted $\{[A_1],\ldots,[A_I]\}$ called the \emph{extension from $F$ to $F_n$ of $\{[A_1]_F,\ldots,[A_I]_F\}$}, and we get a bijection between these two sets.

Let $\A_0$ be the extension of $\A_\na(\Lambda;\phi_0)$ from $\pi_1(C_0)$ to $F_n$. By applying the construction of the nonattracting subgroup system to this situation it follows that $\A_\na(\Lambda;\phi) = \A_0 \union \{[\pi_1(C_1)],\ldots,[\pi_1(C_K)]\}$, which we must show is a vertex group system in $F_n$. We do this by constructing a very small $F_n$ tree $T$ in which this subgroup system is realized.

Let $\wh T$ be a simplicial $F_n$-tree with trivial edge stabilizers in which the free factor system $[Z \union H_r]$ is realized. The tree $\wh T$ has $K+1$ vertex orbits with nontrivial stabilizers, represented by vertices $V_0,V_1,\ldots,V_K$ so that the stabilizer of $V_k$ is the subgroup $\pi_1(C_k) \subgroup F_n$. For concreteness, we may take the tree $\wh T$ to be the quotient of the universal cover of $G$, obtained by collapsing to a point each component of the total lift of each of the subgraphs $C_0,C_1,\ldots,C_K$. 

The desired $F_n$-tree $T$ will be constructed by blowing up each of the vertices in the orbit $F_n \cdot V_0$, replacing each vertex in this orbit by a copy of $T_0$. Each edge of $\wh T$ that was attached to a vertex in $F_n \cdot V_0$ will be reattached to a point in the appropriate copy of $T_0$, and these reattachments will be done in an equivariant manner.

There are finitely many orbits of oriented edges in $\wh T$ whose terminal endpoints are in the set $F_n \cdot V_0$, and $F_n$ acts freely on each such orbit. For the choice of $\wh T$ above, there is one such orbit for each oriented edge of $G$ whose terminal endpoint is in $C_0$. Let $\{E_j \suchthat 1 \le j \le J\}$ be a set of representatives of these orbits, such that the terminal endpoint of $E_j$ is attached to $V_0$. The terminal endpoint of $a \cdot E_j$ is therefore attached to $a \cdot V_0$. 

Construct an $F_n$-forest $\wh \F$ by detaching the terminal endpoint of $a \cdot E_j$ from the point $a \cdot V_0$, for each $a \in F_n$ and each $j=1,\ldots,J$. The components of $\wh \F$ are of two types: its discrete components are the individual points of $F_n \cdot V_0$; its nondiscrete components are the metric completions of the components of $\wh T - F_n \cdot V_0$.

Now we say how to replace each point of $F_n \cdot V_0$ by a copy of $T_0$. Let $F_n \cdot V_0 = \{W_i\}_{i \in I}$, in particular $V_0 = W_{i_0}$ for some $i_0 \in I$. The sets $\{a \in F_n \suchthat a \cdot V_0 = W_i\}$, one for each $i \in I$, are precisely the left cosets of $\pi_1(C_0)$, and this defines bijections between $F_n \cdot V_0$, the index set $I$, and the set of left cosets of $\pi_1(C_0)$. For each $i$ choose a coset representative $a_i$, and so $a_i V_0 = W_i$. The action of $F_n$ on $F_n \cdot V_0$ lifts to an action on the forest $I \cross T_0$ defined as follows: for each $a \in F_n$ and each $(i,t) \in I \cross T_0$, there is a unique $i' \in I$ such that the left cosets $a a_i \pi_1(C_0)$ and $a_{i'} \pi_1(C_0)$ are equal, and we define
$$a \cdot (i,t) = (i',a^\inv_{i'} a a_i \cdot t)
$$
Let $\F$ be the $F_n$ forest obtained from $\wh \F$ by removing the discrete $F_n$-orbit $F_n \cdot V_0$ and inserting the $F_n$-forest $I \cross T_0$.

Finally, construct $T$ from $\F$ as follows. For each $j=1,\ldots,J$, the terminal endpoint of $E_j$ in $\wh T$ was attached to $V_0 = W_{i_0}$. Choose an arbitrary point $x_j \in (i_0,T_0)$ and reattach the terminal endpoint of $E_j$ to $x_j$. Then, for each $a \in F_n$, reattach the terminal endpoint $a \cdot E_j$ to $a \cdot x_j \in I \cross T_0$, and note that this is well-defined because the action of $F_n$ on the orbit of $E_j$ is free.

The $F_n$ action on $\F$ descends to an $F_n$ action on $T$. Any two points $p \ne q \in T$ are endpoints of a unique arc in $T$ which may be written uniquely as a finite concatenation of maximal subarcs in nondiscrete components of $\wh F$ and maximal subarcs in components of $I \cross T_0$. Summing the lengths of these subarcs defines the metric $d(p,q)$, which is clearly an $F_n$-invariant $\reals$-tree metric on $T$. By combining minimality of the $F_n$ action on $\wh T$ with minimality of the $\pi_1(C_0)$ action on $T_0$, it follows that the $F_n$-tree $T$ is minimal. Since the action of $\pi_1(C_0)$ on $T_0$ is very small, and since the $F_n$ stabilizers of all edges in $\wh T$ are trivial, it follows that the $F_n$-action on $T$ is very small. By construction of $T$, the subgroups system $\A_\na(\Lambda;\phi) = \A_0 \union \{[\pi_1(C_1)],\ldots,[\pi_1(C_K)]\}$ is realized in $T$, and is therefore a vertex group system.
\end{proof}

By combining Lemma~\ref{PropVerySmallTree} with Lemma~\ref{LemmaVSElliptics} and with Corollaries~\ref{NA is well defined} and~\ref{NA is independent of plusminus}, we obtain:
 
\begin{corollary} \label{CorPMna}
For any rotationless $\phi \in \Out(F_n)$ and any lamination $\Lambda \in \L(\phi)$, the vertex group system $\A_\na(\Lambda)$ depends only on $\phi$ and $\Lambda$, not on the choice of a \ct\ representing $\phi$. Furthermore, for any dual lamination pair $\Lambda^+ \in \L(\phi)$, $\Lambda^- \in \L(\phi^\inv)$ we have $\A_\na(\Lambda^+;\phi) = \A_\na(\Lambda^-;\phi^\inv)$.
\end{corollary}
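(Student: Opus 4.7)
The plan is to chain together the three cited results in the obvious way. First I would observe that by Proposition~\ref{PropVerySmallTree}, both $\A_\na(\Lambda;\phi)$ (defined via a chosen \ct\ representing $\phi$) and $\A_\na(\Lambda^-;\phi^\inv)$ (defined via a chosen \ct\ representing $\phi^\inv$) are genuine vertex group systems in $F_n$. This is exactly the hypothesis needed to invoke Lemma~\ref{LemmaVSElliptics}, which reduces the entire corollary to identifying, in each case, the set $\C$ of conjugacy classes of nontrivial elements of $F_n$ that are carried by the subgroup system in question.

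Next I would apply Corollary~\ref{NA is well defined}: given a \ct\ $f \from G \to G$ representing the rotationless $\phi$, a nontrivial conjugacy class $[a]$ is carried by $\A_\na(\Lambda;\phi)$ if and only if $[a]$ is not weakly attracted to $\Lambda$ under iteration by $\phi$. The right-hand condition is stated purely in terms of $\phi$ and $\Lambda$, making no reference to the \ct\ $f \from G \to G$ used to construct $\A_\na(\Lambda;\phi)$ via Definitions~\ref{defn:Z}. Consequently the set $\C$ of carried classes depends only on $\phi$ and $\Lambda$. By Lemma~\ref{LemmaVSElliptics} a vertex group system is uniquely determined by its set $\C$ of carried conjugacy classes, so $\A_\na(\Lambda;\phi)$ itself depends only on $\phi$ and $\Lambda$, proving the first assertion.

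For the second assertion, let $\Lambda^\pm$ be a dual lamination pair, so $\Lambda^+ \in \L(\phi)$ and $\Lambda^- \in \L(\phi^\inv)$. Applying Corollary~\ref{NA is well defined} twice, a conjugacy class $[a]$ is carried by $\A_\na(\Lambda^+;\phi)$ iff $[a]$ is not weakly attracted to $\Lambda^+$ under iteration by $\phi$, and $[a]$ is carried by $\A_\na(\Lambda^-;\phi^\inv)$ iff $[a]$ is not weakly attracted to $\Lambda^-$ under iteration by $\phi^\inv$. By Corollary~\ref{NA is independent of plusminus}, these two nonattraction conditions are equivalent for every $[a]$. Hence $\A_\na(\Lambda^+;\phi)$ and $\A_\na(\Lambda^-;\phi^\inv)$ carry exactly the same set of conjugacy classes, and since both are vertex group systems by Proposition~\ref{PropVerySmallTree}, Lemma~\ref{LemmaVSElliptics} forces them to coincide.

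There is really no obstacle here; this corollary is a packaging step in which the earlier work is harvested. The only point worth a sanity check is that Lemma~\ref{LemmaVSElliptics} really does characterize a vertex group system by its carried classes (rather than by some richer data), which is precisely why it was proved in the form given.
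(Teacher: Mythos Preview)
Your proposal is correct and follows exactly the approach the paper takes: the paper's entire proof is the single sentence ``By combining Lemma~\ref{PropVerySmallTree} with Lemma~\ref{LemmaVSElliptics} and with Corollaries~\ref{NA is well defined} and~\ref{NA is independent of plusminus}, we obtain:''. You have simply unpacked that sentence in the intended way.
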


Based on this corollary, henceforth we will write $\A_\na \Lambda^\pm$ for the vertex group system $\A_\na(\Lambda^+;\phi) = \A_\na(\Lambda^-;\phi^\inv)$ (continuing to suppress $\phi$ from the notation unless otherwise needed).

\paragraph{Remark.} In the context of Proposition~\ref{PropVerySmallTree}, one can prove that if $\Lambda^\pm \in \L^\pm(\phi)$ is a geometric lamination pair then $\A_\na \Lambda^\pm$ is not a free factor system. Using the Stallings surgery argument from the proof of Fact~\ref{FactContrComp}, one sees that the free factor support of the set of conjugacy classes $\{[\bdy_0 S],[\bdy_1 S],\ldots,[\bdy_m S]\}$ also supports every conjugacy class that is carried by $[\pi_1(S)]$, but the only conjugacy classes that are carried by both of the subgroup systems $[\pi_1(S)]$ and $\A_\na \Lambda^\pm$ are $[\bdy_0 S],[\bdy_1 S],\ldots,[\bdy_m S]$ and their powers.

\bigskip

We conclude this section with a result that generalizes Lemma~\ref{FactWeakLimitLines}, which we will need in the proof of Proposition~\ref{prop:WA1}. Given an end $\E$, we say that $\E$ is \emph{carried by $\<Z,\hat\rho_r\>$} if some ray representing $\E$ is in $\<Z,\hat\rho_r\>$.

\begin{lemma} \label{ItemNotZPLimit} Assume the notation of Definitions~\ref{defn:Z}. 
\begin{enumerate}
\item Every sequence $\gamma_i$ of lines in $G$ not carried by $\<Z,\hat \rho_r\>$ has a subsequence that weakly converges to a line $\gamma$ not carried by $\<Z,\hat \rho_r\>$. 
\item The weak accumulation set of every end not carried by $\<Z,\hat \rho_r\>$ contains a line not carried by $\<Z,\hat \rho_r\>$.
\end{enumerate}
\end{lemma}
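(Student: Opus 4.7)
The plan is to adapt the proof of Fact~\ref{FactWeakLimitLines}, with the main complication being that non-membership in $\<Z,\hat\rho_r\>$ cannot in general be detected by the presence of a single edge. Instead, I will exhibit a finite collection $\mathcal{W}$ of finite subpaths of $G$ that serves as a set of \emph{witnesses}: every line in $\B(G)$ not carried by $\<Z,\hat\rho_r\>$ contains some $w\in\mathcal{W}$ as a subpath, and conversely every line containing some $w\in\mathcal{W}$ is not carried by $\<Z,\hat\rho_r\>$. Let $V_\star\subset V(G)$ denote the set of vertices of $G$ that are not interior vertices of $\hat\rho_r$. Any vertex of $V_\star$ must be a decomposition boundary in every element of $\<Z,\hat\rho_r\>$ passing through it, since it cannot lie in the interior of a copy of $\hat\rho_r^{\pm 1}$. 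Take $\mathcal{W}$ to consist of all single edges of $G$ lying in neither $Z$ nor $\hat\rho_r$ (which automatically have no lift under the immersion $h\from K\to G$ of Definition~\ref{defn:Z}), together with all minimal bad finite subpaths whose two endpoints lie in $V_\star$; the latter cannot appear as subpaths of any $\<Z,\hat\rho_r\>$-line, since otherwise their $V_\star$-endpoints would be decomposition boundaries and the subpath itself would decompose.

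The main technical step is the finiteness of $\mathcal{W}$, which requires a uniform bound on the length of the minimal bad $V_\star$-aligned paths. I claim there is a constant $L_0$ depending only on $h\from K\to G$ such that every such minimal witness has length at most $L_0$. To prove this, exploit that $h$ is an immersion between finite graphs: lifting a finite path $\sigma$ in $G$ through $h$ begins with at most $|V(K)|$ choices for the lift of its initial vertex, and once an initial choice is fixed the lift extends edge-by-edge deterministically. The surviving set of lifts of an initial segment of $\sigma$ can only shrink as $\sigma$ is extended, and each shrinking event is witnessed by a local failure of matching that can be localized within a window of length at most $|\hat\rho_r|$. Thus a $V_\star$-aligned path of length greater than $|V(K)|\cdot|\hat\rho_r|$ either has already lost all its lifts (and hence contains an earlier bad subpath) or retains a surviving lift (and is therefore good), giving the bound $L_0$.

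Given the finite witness set $\mathcal{W}$, part~(1) follows by a lifting-and-compactness argument. Each $\gamma_i$ contains some $w_i\in\mathcal{W}$, so by pigeonhole a subsequence has $w_i=w$ fixed. Choose a lift $\ti\gamma_i\subset\wt G$ of $\gamma_i$ that places the occurrence of $w_i$ onto a fixed finite subpath $\ti w\subset\wt G$. Since $\ti w$ has positive length, removing its interior from the tree $\wt G$ separates $\wt G$ into two main subtrees $T_+,T_-$ with disjoint compact boundaries in $\bdy F_n\approx\bdy\wt G$, and the two ideal endpoints of $\ti\gamma_i$ lie one in $\bdy T_+$ and one in $\bdy T_-$. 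Compactness of $\bdy F_n$ yields a further subsequence whose endpoints converge to distinct limits $\xi\in\bdy T_+$ and $\eta\in\bdy T_-$, giving a nondegenerate limit line $\ti\gamma\subset\wt G$ through $\ti w$; its projection to $\B(G)$ is the weak limit of the corresponding subsequence of $\gamma_i$, contains $w$, and hence is not in $\<Z,\hat\rho_r\>$.

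For~(2), let $r$ be any ray in $G$ representing the end $\E$ not carried by $\<Z,\hat\rho_r\>$. Since no tail of $r$ lies in $\<Z,\hat\rho_r\>$, witnesses from $\mathcal{W}$ occur in $r$ at positions going to infinity; by pigeonhole some single $w\in\mathcal{W}$ occurs at an unbounded sequence of positions $p_k\to\infinity$. Translating the $k$-th occurrence of $w$ to the fixed lift $\ti w$ and taking a weakly convergent subsequence exactly as in~(1) produces a line $\gamma\in\B(G)$ in the weak closure of $r$, hence in the accumulation set of $\E$, that contains $w$ and so is not in $\<Z,\hat\rho_r\>$. The principal obstacle throughout is the bounded-length claim for $\mathcal{W}$, which requires careful handling of the geometric case (when $\hat\rho_r$ is a closed Nielsen path and $K$ is not simply a subgraph of $G$) and of edges of $G$ having multiple preimages in $K$.
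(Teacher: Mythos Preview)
Your overall strategy is sound and is in spirit what the paper does too: produce a finite set of ``witness'' subpaths so that a line lies outside $\<Z,\hat\rho_r\>$ if and only if it contains a witness, then use pigeonhole and compactness in $\wt G$ exactly as you describe. The compactness argument in your last two paragraphs is correct once such a finite $\mathcal W$ is in hand.

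The gap is in the finiteness of $\mathcal W$. Your bound $L_0=|V(K)|\cdot|\hat\rho_r|$ rests on the assertion that ``each shrinking event is localized within a window of length at most $|\hat\rho_r|$,'' and this is not justified. A lift through the immersion $h$ can survive for arbitrarily many edges before dying; nothing forces the deaths to be spaced at intervals of $|\hat\rho_r|$. Moreover, even once all lifts have died by position $j$, you need a $V_\star$ vertex strictly between $j$ and the terminal endpoint of $\sigma$ to extract a \emph{proper} $V_\star$-aligned bad subpath, and you have not bounded the gaps between consecutive $V_\star$ vertices: a path can pass only through interior vertices of $\hat\rho_r$ without itself being a subpath of any iterate of $\hat\rho_r$, since those vertices may be incident to edges of $G$ other than the ones $\hat\rho_r$ traverses. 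Your closing sentence correctly identifies this as the principal obstacle, but the obstacle is not overcome.

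The paper avoids this by splitting into cases. When $H_r$ is non-geometric, $\A_\na(\Lambda)$ is a free factor system by Proposition~\ref{PropVerySmallTree}, and the lemma follows at once from Fact~\ref{FactWeakLimitLines}; you do not take this shortcut. In the geometric case the paper uses the structure of the closed Nielsen path: writing $\rho_r=\alpha*\beta$ at its unique height-$r$ illegal turn and setting $L=\max\{\Length(\alpha),\Length(\beta)\}$, one shows that any occurrence of that illegal turn in an element of $\<Z,\rho_r\>$, once buffered by $L$ edges on each side, must sit at the center of a genuine copy of $\rho_r$, and two such copies meet at most in an endpoint. Hence the $\<Z,\rho_r\>$-decomposition of a line is determined by its length-$2L$ windows, and the witnesses are exactly the length-$\le 2L$ paths not occurring in any element of $\<Z,\rho_r\>$ --- manifestly a finite set. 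This is the concrete input replacing your abstract immersion-lifting bound.
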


\begin{proof} If $H_r$ is non-geometric then $\A_\na(\Lambda)$ is a free factor system and the lemma follows from Fact~\ref{FactWeakLimitLines}. 

Suppose that $H_r$ is geometric and so $\<Z,\hat \rho_r\> = \<Z, \rho_r\>$. Let $\rho_r = \alpha * \beta$ be concatenated at the unique illegal turn $t = \{d_\alpha,d_\beta\}$ in $H_r$, where $d_\alpha$ is the terminal direction of $\alpha$ and $d_\beta$ is the initial direction of $\beta$. Let 
$$L = \max\{\Length(\alpha),\Length(\beta)\}
$$
Let $\Sigma$ be the set of all paths of length $\le 2L$ that occur as subpaths of an element of $\<Z,\rho_r\>$. Note that for any path of the form $\alpha' * \beta' \in \Sigma$ such that $\Length(\alpha') = \Length(\alpha)$, $\Length(\beta') = \Length(\beta)$, $d_\alpha$ is the terminal direction of $\alpha'$, and $d_\beta$ is the initial direction of $\beta'$, we have $\alpha' = \alpha$ and $\beta' = \beta$ so $\alpha' * \beta' = \rho_r$.

We claim that if $\gamma \in \wh\B(G)$ and if every subpath of $\gamma$ of length $\le 2L$ is contained in $\Sigma$ then there is a subpath $\gamma'$ of $ \gamma$ that is contained in $\<Z, \rho_r\>$ and that contains all of $\gamma$ with the possible exception of initial and terminal subpaths of length $\le L$. 
 
To prove the claim, let $\ti \gamma$ be a lift of $\gamma$ to the universal cover of $G$. Given $\ti t = \{\ti d_\alpha, \ti d_\beta\}$ an illegal turn in $\ti\gamma$ that projects to $t=\{d_\alpha,d_\beta\}$, we say that $\ti t$ is \emph{buffered} if $\ti\gamma$ contains at least $L$ edges on each side of the turn~$\ti t$; if this is the case then $\ti\gamma$ has a subpath which contains the turn~$\ti t$ and is a lift of $\rho_r$. Suppose that $\ti t_1$ and $\ti t_2$ are buffered illegal turns in $\ti \gamma$ that project to $t$, and that there are no other illegal turns that project to $t$ between $\ti t_1$ and $\ti t_2$. Let $\ti \gamma_i \subset \ti\gamma$ be the lift of $\rho_r$ or $\rho_r^{-1}$ that contains $\ti t_i$, and let $\sigma$ be the subpath of $\ti \gamma$ which starts at the turn $\ti t_1$ and ends at the turn $\ti t_2$. If $\sigma$ has length $\le 2L$ then $\sigma$ is a subpath of a path contained in $\<Z, \rho_r\>$, in which case it follows that $\ti\gamma_1$ and $\ti\gamma_2$ intersect in at most an endpoint; and the same evidently follows if $\sigma$ has length $> 2L$.

We may therefore decompose $\ti \gamma$ into subpaths $\ti \gamma_i$ as follows: any subpath of $\ti \gamma$ that projects to either $\rho_r$ or $\rho_r^{-1}$ is a $\ti \gamma _i$; each remaining edge is a $\ti\gamma_i$. If $\ti \gamma$ has an initial vertex, then remove the initial segment preceding the first subpath that projects to $\rho_r$ or $\rho_r^{-1}$ if this segment has length $<L$ and remove the initial segment of length $L$ otherwise. Treat the terminal vertex, if there is one, similary. The resulting subpath of $\ti \gamma$ projects to the desired subpath $ \gamma'$ of $\gamma$. This completes the proof of the claim. 

 If $\gamma_i \in \B(G)$ is a sequence of lines not in $\<Z, \rho_r\>$ then it follows from the claim that each $\gamma_i$ has a subpath $\alpha_i$ of edge length $\le 2L$ that is not in $\Sigma$. There are only finitely many paths of edge length $\le 2L$, so by passing to a subsequence we may assume that $\alpha=\alpha_i$ is independent of $i$. It follows that this subsequence has a weak limit which is a line containing the path $\alpha$, and this line is not in $\<Z,\rho_r\>$.
 
 If $r$ is a ray in $G$ no subray of which is contained in $\<Z,\rho_r\>$ then, focussing on subrays obtained by deleting initial paths of length $> L$, it follows from the claim that $r$ has infinitely many pairwise nonoverlapping subpaths $\alpha_i$ of edge length $\le 2L$ that are not in $\Sigma$. Again, passing to a subsequence, we may assume that $\alpha=\alpha_i$ is independent of $i$. It follows that $r$ has a weak limit which is a line containing the path $\alpha$, and this line is not in $\<Z,\rho_r\>$.
\end{proof}

 \subsection{Statements of weak attraction results}
\label{SectionWAResults}

Our main weak attraction results are the following two propositions, the first of which applies to any attracting lamination $\Lambda$, and the second to any $\Lambda$ whose nonattracting vertex group system $\A_\na(\Lambda)$ is trivial.

\begin{proposition}\label{prop:WA1} Suppose that $\phi \in \Out(F_n)$ is rotationless and that $\Lambda^+ \in \L(\phi)$ is paired with $\Lambda^- \in \L(\phi^{-1})$. For each line $\gamma \in \B$ at least one of the following holds:
\begin{enumerate}
\item $\gamma$ is weakly attracted to $\Lambda^+$.
\item $\gamma$ is carried by $\A_{na}(\Lambda^+)$. 
\item The weak closure of $\gamma$ contains $\Lambda^-$.
\end{enumerate}
Moreover, if $V^+$ and $V^-$ are neighborhoods of generic leaves of $\Lambda^+$ and $\Lambda^-$ respectively, then there exists an integer $m \ge 1$ such that each $\gamma \in \B$ satisfies at least one of the following: \ $\gamma \in V^-$; \ $\phi^m(\gamma) \in V^+$; \ or $\gamma$ is carried by $\A_{na}(\Lambda^+)$. 
\end{proposition}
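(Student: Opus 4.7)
Fix a CT $f \from G \to G$ representing $\phi$, let $H_r$ be the EG stratum corresponding to $\Lambda^+$, and adopt the notation $Z$, $\hat\rho_r$, $\<Z,\hat\rho_r\>$ of Definitions \ref{defn:Z}. By Corollary \ref{CorPMna} and Lemma \ref{ZP is closed}\pref{item:ZP=NA}, alternative (2) is equivalent to the realization of $\gamma$ in $G$ lying in $\<Z,\hat\rho_r\>$; we may therefore assume $\gamma \notin \<Z,\hat\rho_r\>$ throughout and must establish (1) or (3). The plan is first to prove the pointwise statement, then to derive the uniform version via a compactness argument whose key input is Lemma \ref{ItemNotZPLimit}.

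\textbf{Pointwise statement.} The base case is when $\Lambda^+$ is topmost in $\L(\phi)$ and $\gamma$ is birecurrent. This is essentially the Weak Attraction Theorem~6.0.1 of \BookOne, once one checks that the nonattracting subgraph appearing there coincides (modulo the closed Nielsen path $\hat\rho_r$ in the geometric case) with our~$Z$. To extend to non-birecurrent $\gamma$: since $\gamma \notin \<Z,\hat\rho_r\>$, some end of $\gamma$ admits a representative ray not in $\<Z,\hat\rho_r\>$, and by Lemma \ref{ItemNotZPLimit} the weak accumulation of that ray contains a birecurrent line $\gamma'$ not carried by $\A_\na\Lambda^\pm$; the birecurrent base case applied to $\gamma'$ transfers back to $\gamma$ since $\gamma'$ lies in the weak closure of $\gamma$ and the property ``weakly attracted to $\Lambda^+$'' is preserved under passing to weak accumulations that already contain long $H_r$-tiles. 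To extend to non-topmost $\Lambda^+$, induct on the finite poset of laminations in $\L(\phi)$ properly containing $\Lambda^+$. For any $(\Lambda')^\pm$ with $\Lambda^+ \subsetneq (\Lambda')^+$, Lemma \ref{containmentSymmetry} yields $\Lambda^- \subsetneq (\Lambda')^-$, and an inclusion $\A_\na(\Lambda')^\pm \subset \A_\na\Lambda^\pm$ holds on the level of carried lines because failing to be weakly attracted to the larger $(\Lambda')^+$ entails failing to be weakly attracted to $\Lambda^+ \subset (\Lambda')^+$. The inductive hypothesis applied with $(\Lambda')^\pm$ thus gives either weak attraction of $\gamma$ to $(\Lambda')^+$ or $(\Lambda')^- \supset \Lambda^-$ in its weak closure, the latter yielding~(3). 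The former is the delicate subcase: tracking the height-$r$ tile structure of $f^k_\#(\gamma)$ via Facts \ref{FactAttractingLeaves} and \ref{FactNielsenBottommost}, either arbitrarily long $H_r$-tiles appear in $f^k_\#(\gamma)$ giving (1), or the $H_r$-illegal turns of $f^k_\#(\gamma)$ persist without being absorbed into copies of $\hat\rho_r$ (this being forbidden by $\gamma \notin \<Z,\hat\rho_r\>$ together with Lemma \ref{defining Z}\pref{ItemZPFinitePaths}), forcing the orbit of $\gamma$ to weakly accumulate on generic leaves of $\Lambda^-$ and hence giving~(3).

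\textbf{Uniform version.} Argue by contradiction. If no uniform $m$ works, choose for each $m$ a line $\gamma_m$ with $\gamma_m \notin V^-$, $\phi^m(\gamma_m) \notin V^+$, and $\gamma_m$ not carried by $\A_\na\Lambda^\pm$. Shrinking $V^\pm$ to open weak basis neighborhoods of the respective generic leaves, we may assume $V^+$ is an attracting weak neighborhood of a generic leaf of $\Lambda^+$ and that $\B \setminus V^-$ is closed. By Lemma \ref{ItemNotZPLimit} a subsequence converges weakly to a line $\gamma_\infty$ not carried by $\A_\na\Lambda^\pm$, and closedness gives $\gamma_\infty \notin V^-$. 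Apply the pointwise statement to $\gamma_\infty$: alternative (3) would place $\Lambda^-$ in the weak closure of $\gamma_\infty$, hence $\gamma_\infty \in V^-$, a contradiction. So $\gamma_\infty$ is weakly attracted to $\Lambda^+$; pick $m_0$ with $\phi^{m_0}(\gamma_\infty) \in V^+$. By continuity of $\phi^{m_0}$ on $\B$ and openness of $V^+$, $\phi^{m_0}(\gamma_m) \in V^+$ for all sufficiently large $m$, and forward invariance of the attracting neighborhood $V^+$ under $\phi$ gives $\phi^m(\gamma_m) \in V^+$ for every such $m \ge m_0$, contradicting the defining property of our chosen $\gamma_m$. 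The main obstacle is the inductive step in the pointwise statement: pulling back alternatives (1) or (3) from the larger $(\Lambda')^\pm$ to the smaller $\Lambda^\pm$, in the subcase where $\gamma$ is weakly attracted to $(\Lambda')^+$ but not evidently to $\Lambda^+$, requires the fine-grained CT tile analysis described above and is where the bulk of the technical work will be concentrated.
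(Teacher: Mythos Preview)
Your uniform version argument is essentially the paper's, and is fine. The pointwise argument, however, has genuine gaps.

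\textbf{The inclusion of nonattracting systems is backwards.} You claim that lines carried by $\A_\na((\Lambda')^\pm)$ are carried by $\A_\na(\Lambda^\pm)$, arguing that ``failing to be weakly attracted to the larger $(\Lambda')^+$ entails failing to be weakly attracted to $\Lambda^+$''. The correct implication goes the other way: being weakly attracted to $(\Lambda')^+$ means being weakly attracted to every leaf of $(\Lambda')^+$, in particular to the generic leaves of $\Lambda^+\subset(\Lambda')^+$; contrapositively, not weakly attracted to $\Lambda^+$ implies not weakly attracted to $(\Lambda')^+$. Concretely, any circuit contained in $G_{r'-1}$ that crosses $H_r$ legally is weakly attracted to $\Lambda^+$ but not to $(\Lambda')^+$. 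So the inductive hypothesis applied to $(\Lambda')^\pm$ may return alternative~(2) for $\gamma$, i.e.\ $\gamma$ is carried by $\A_\na((\Lambda')^\pm)$, without $\gamma$ being carried by $\A_\na(\Lambda^\pm)$; your induction does not close.

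\textbf{Two further gaps.} First, when you reduce non-birecurrent $\gamma$ to a birecurrent $\gamma'$ in its weak closure via Lemma~\ref{ItemNotZPLimit}, that lemma only produces a line not in $\<Z,\hat\rho_r\>$; it gives no birecurrence. Second, the hypothesis $\gamma\notin\<Z,\hat\rho_r\>$ does not force some end of $\gamma$ to lie outside $\<Z,\hat\rho_r\>$: both ends may be in $\<Z,\hat\rho_r\>$ with a finite middle segment that is not. This case (Case~1 in the paper) needs a direct argument using Lemma~\ref{defining Z}\pref{ItemZPFinitePaths} and cancellation control, and is not reducible to the birecurrent case.

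\textbf{What the paper does instead.} The paper inducts not on the lamination poset but on the \emph{height of $\gamma$} in a fixed CT. The base case (height $\le r$) is handled by the full-height structural Lemmas~\ref{nonGeometricFullHeightCase} and~\ref{geometricFullHeightCase}, which describe every nonattracted height-$r$ line explicitly and do not assume birecurrence. The inductive step (height $u>r$) splits according to whether the ends $\E_\pm$ of $\gamma$ lie in $\<Z,\hat\rho_r\>$ and whether they have height $u$ or less, with separate NEG/EG subcases. The ``delicate subcase'' you allude to is handled there by a cancellation argument showing that long $\Lambda^+$-segments appearing in $f^k_\#$ of finite pieces cannot be destroyed by adjacent $\<Z,\hat\rho_r\>$-pieces; this is where the real work is, and it is not accessible from your inductive scheme.
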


For the next proposition, given a rotationless $\phi \in \Out(F_n)$ and $\Lambda^+ \in \L(\phi)$ such that $\A_\na(\Lambda^+)$ is trivial, define the set of lines
$$\LS(\phi) = S_\phi \union \bigcup_{\Lambda \in \L(\phi)} (\text{generic leaves of $\Lambda$})
$$


\begin{proposition} \label{prop:WA2} Suppose that $\phi \in \Out(F_n)$ is rotationless, that $\Lambda^\pm \in \L^\pm(\phi)$, and that $\A_{na}(\Lambda^\pm)$ is trivial. For each line $\gamma \in \B$ exactly one of the following occurs:
\begin{enumerate}
\item $\gamma$ is weakly attracted to $\Lambda^+$.
\item $\gamma \in \LS(\phi)$. 
\end{enumerate}
\end{proposition}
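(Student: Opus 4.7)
The approach is to deduce the dichotomy from Proposition~\ref{prop:WA1}, applied to both $\phi$ and $\phi^\inv$, exploiting triviality of $\A_\na\Lambda^\pm$ to eliminate the ``carried'' alternative, and then to identify the exceptional lines as elements of $\LS$ via Lemma~\ref{LemmaSingularLine} and the complete-splitting technology for CTs.

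First I would establish the ``at least one'' direction. Suppose $\gamma$ is not weakly attracted to $\Lambda^+$. Since $\A_\na\Lambda^\pm$ is trivial, Lemma~\ref{ZP is closed}(\ref{item:ZP=NA}) ensures that no nontrivial line is carried by $\A_\na\Lambda^\pm$, so the ``carried'' alternative of Proposition~\ref{prop:WA1} vanishes, and the weak closure of $\gamma$ must contain $\Lambda^-$. By Corollary~\ref{CorPMna} the nonattracting system $\A_\na(\Lambda^\pm;\phi^\inv)$ is also trivial, so applying Proposition~\ref{prop:WA1} symmetrically to $\phi^\inv$ with the dual pair $(\Lambda^-,\Lambda^+)$ gives a further dichotomy: either $\gamma$ is weakly attracted to $\Lambda^-$ under $\phi^\inv$, or the weak closure of $\gamma$ also contains $\Lambda^+$.

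In the first sub-case I would realize $\gamma$ in a CT for $\phi^\inv$ with EG stratum corresponding to $\Lambda^-$, then use Facts~\ref{FactFinestSplitting} and~\ref{FactEvComplSplit} together with bounded cancellation to argue that the forward $\phi^\inv$-orbit of $\gamma$ enters arbitrarily small weak neighborhoods of generic leaves of $\Lambda^-$ only when the completely-split structure of $\gamma$ itself is eventually assembled from generic $\Lambda^-$-tiles; together with Fact~\ref{FactTwoEndsGeneric} this concludes that $\gamma$ is a generic leaf of $\Lambda^-$. In the second sub-case --- the ``sandwich'' where the weak closure contains both $\Lambda^+$ and $\Lambda^-$ --- I would invoke Lemma~\ref{LemmaSingularLine} to exhibit $\gamma$ in the form $\overline{R}\,\alpha\,R'$ for singular rays $R,R'$ and a (possibly trivial) Nielsen path $\alpha$. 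The guiding idea is that each infinite end of $\gamma$ must track an attracting fixed point of some principal automorphism (via Facts~\ref{FactPrincipalLift} and~\ref{FactPrincipalRay}), because triviality of $\A_\na$ rules out any other way an end can avoid weak attraction to $\Lambda^+$ while still accumulating on $\Lambda^-$.

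Finally I would address mutual exclusivity. Generic leaves of $\Lambda' \in \L(\phi^\inv)$ have weak closure $\Lambda' \neq \Lambda^+$, so by Fact~\ref{FactDualDifferent} they cannot be weakly attracted to $\Lambda^+$; likewise singular lines realize in the CT as $\overline{R}\,\alpha\,R'$, lacking the $H_r$-tile structure required by Fact~\ref{FactAttractingLeaves}(\ref{ItemLeafAsLimit}) for attraction to $\Lambda^+$. The main obstacle will be the sandwich case: showing that a line whose weak closure fills both $\Lambda^+$ and $\Lambda^-$ must be singular requires a delicate CT analysis combining Fact~\ref{FactPrincipalRay} with the groupoid property of $\<Z,\hat\rho_r\>$ from Lemma~\ref{ZP is closed}, and carefully controlling via triviality of $\A_\na$ the possible ``joining paths'' between the two principal rays produced by the two ends of $\gamma$.
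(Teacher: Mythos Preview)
Your approach has a genuine gap: Proposition~\ref{prop:WA1} applied to $\phi$ and to $\phi^\inv$ does not carry enough information to pin down $\gamma$ as an element of $\LS(\phi^\inv)$, and the key tool you are missing is the Levitt--Lustig theorem on boundary dynamics (Theorem~\ref{LevittLustig} in the paper), which the paper's proof uses essentially.

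Concretely, your first sub-case is not correct. The conclusion ``$\gamma$ is weakly attracted to $\Lambda^-$ under $\phi^\inv$'' is the \emph{generic} behavior of lines, not an exceptional one; it certainly does not force $\gamma$ to be a generic leaf of $\Lambda^-$. For instance, a singular line $\gamma\in S_{\phi^\inv}$ is $\phi^\inv$-fixed, and whenever its weak closure contains $\Lambda^-$ (which happens, e.g., by Lemma~\ref{FactSingularRayDense}) it is trivially ``weakly attracted to $\Lambda^-$'', yet it need not be a generic leaf. Your proposed CT argument --- that weak attraction to $\Lambda^-$ means $\gamma$ is ``eventually assembled from generic $\Lambda^-$-tiles'' --- confuses the forward $\phi^\inv$-orbit of $\gamma$ with the structure of $\gamma$ itself. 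Also note that the two alternatives you extract from Proposition~\ref{prop:WA1} are not mutually exclusive (that proposition says ``at least one of''), so your case split is not a genuine dichotomy.

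Your sandwich case is where the real work lies, and invoking Lemma~\ref{LemmaSingularLine} here is circular: that lemma \emph{characterizes} singular lines, it does not \emph{produce} them from weak-closure hypotheses. Knowing that the weak closure of $\gamma$ contains both $\Lambda^+$ and $\Lambda^-$ gives no mechanism to locate a principal lift $\Phi$ with both endpoints of $\ti\gamma$ in $\Fix_+(\wh\Phi)$. The paper instead lifts $\gamma$ to $\wt G$, picks a specific principal lift $\ti f$ (using either an \neg\ splitting or the singular-ray output of Lemma~\ref{nonGeometricFullHeightCase}), and then applies Levitt--Lustig to each endpoint of $\ti\gamma$: either an endpoint lies in $\Fix_-(\wh\Phi)=\Fix_+(\wh\Phi^\inv)$, yielding membership in $S_{\phi^\inv}$ or in a generic leaf, or its $\wh\Phi$-orbit converges to a point of $\Fix_+(\wh\Phi)$, whose accumulation set contains $\Lambda^+$ by Lemma~\ref{trivial Ana}\pref{ItemEndLimitLambda}, forcing weak attraction to $\Lambda^+$. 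Without this boundary-dynamical ingredient there is no way to close the argument.

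Your mutual-exclusivity sketch is also too loose. A singular line for $\phi^\inv$ is described via a CT for $\phi^\inv$, so its realization in a CT for $\phi$ has no a~priori $H_r$-tile description; and for generic leaves of $\Lambda'\in\L(\phi^\inv)$ with $\Lambda'\ne\Lambda^-$, Fact~\ref{FactDualDifferent} does not directly apply. The paper's argument instead approximates by periodic lines and uses attracting neighborhoods.
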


\begin{remark} The Weak Attraction Theorem, Theorem~6.0.1 of \BookOne, has stronger hypotheses and stronger conclusions than Proposition~\ref{prop:WA1}. The additional hypotheses are that $\gamma$ is birecurrent and that $\Lambda^+$ is topmost.
The additional conclusions are that the three items are mutually exclusive and (3) is replaced by ($3'$): $\gamma$ is a generic leaf of $\Lambda^-$. 

On the other hand, under the added assumption that $\A_\na \Lambda^\pm$ is trivial, Proposition~\ref{prop:WA2} implies the Weak Attraction Theorem. To see why, first note that every birecurrent element of $S_{\phi^\inv}$ is a generic leaf of some $\Lambda' \in \L(\phi^\inv)$. Furthermore, assuming that $\Lambda^+$ is topmost it must be the unique element of $\L(\phi)$ and so its paired lamination $\Lambda^-$ is the unique element of $\L(\phi^\inv)$: if there were a lamination in $\L(\phi)$ different from $\Lambda^+$ then its leaves would be carried by $\A_\na \Lambda^\pm$, contradicting that $\A_\na \Lambda^\pm$ is trivial.
\end{remark}

\subsection{Nonattracted lines: the full height case.}
\label{SectionNonattrFullHeight}

The proofs of Propositions~\ref{prop:WA1} and~\ref{prop:WA2} follow the proof of the Weak Attraction Theorem (Theorem~6.0.1 of \BookOne). 

Lemma~\ref{defining Z} above, which characterizes finite paths with fixed endpoints that are not weakly attracted to $\Lambda^+$, is the analog of Proposition~6.0.4 of \BookOne. 

We now turn to the analog of Proposition~6.0.8 of \BookOne\ in which, given $\phi \in \Out(F_n)$ and $\Lambda^+ \in \L(\phi)$, one is concerned with birecurrent lines $\gamma$ of height $\le$ the height of $\Lambda^+$ which are not weakly attracted to $\Lambda^+$. The proof of Proposition~6.0.8 in \BookOne\ is separated into geometric and non-geometric cases. We drop the hypothesis that $\gamma$ be birecurrent, and to make the statement more digestible we separate the two cases, non-geometric in Lemma~\ref{nonGeometricFullHeightCase} and geometric in Lemma~\ref{geometricFullHeightCase}. The conclusions of these two lemmas describe $\gamma$ in very explicit terms. The key to understanding the conclusions is that one can have a path $\mu$ of height $\le r$ whose intersection with $H_r$ stays constant under iteration, and which can interpolate between various rays, either rays of the dual lamination $\Lambda^-$ or rays of lower height, in order to produce a line that is not weakly attracted to $\Lambda^+$.


\bigskip

Lemmas \ref{nonGeometricFullHeightCase} and \ref{geometricFullHeightCase} share the following setup. Let $\phi,\phi^\inv \in \Out(F_n)$ be rotationless with dual lamination pair $\Lambda^\pm \in \L^\pm(\phi)$. Let $f \from G \to G$ be a \ct\ representing $\phi$ with \eg\ stratum $H_r$ associated to $\Lambda^+$. Applying Theorem~\ref{TheoremCTExistence}, let $f' \from G' \to G'$ be a \ct\ representing $\phi^\inv$ with \eg\ stratum $H'_s$ associated to $\Lambda^-$ such that $[G_r] = [G'_s]$.

\begin{lemma} \label{nonGeometricFullHeightCase} With the notation above, suppose that the stratum $H_r$ is not geometric.  If $\gamma \subset G_r$ is a line of height $r$ that is not weakly attracted to $\Lambda^+$ then the realization of $\gamma$ in $G'$, denoted $\gamma' \subset G'_s$, satisfies at least one of the following:
\begin{enumerate}
\item $\gamma'$ is a generic leaf of $\Lambda^-$.
\item \label{ItemIntermediatePath}
$\gamma'$ decomposes as $\overline R_1 \mu R_2$ where $R_1$ and $R_2$ are singular rays for $\Lambda^-$ and $\mu$ is either the trivial path or a nontrivial path of one of the forms $\alpha$, $\beta$, $\alpha\beta$, $\alpha\beta\bar\alpha$, such that $\alpha$ is a height $s$ indivisible Nielsen path and $\beta$ is a nontrivial path of height $<s$. 
\item $\gamma'$ or $\gamma'{}^\inv$ decomposes as $\overline R_1 \mu R_2$ where $R_1$ is a singular ray for $\Lambda^-$, $R_2$ is a ray of height $<s$, and $\mu$ is either trivial or a height $s$ Nielsen path. 
\end{enumerate}
\end{lemma}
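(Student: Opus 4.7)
The statement generalizes the non-geometric case of Proposition~6.0.8 of \BookOne\ by dropping the birecurrence hypothesis on $\gamma$. The plan is to transport $\gamma$ to the \ct\ $f' \from G' \to G'$ and analyze the realization $\gamma' \subset G'_s$ by examining how often it crosses the \eg\ stratum $H'_s$ on each end. Since $[G_r] = [G'_s]$, the line $\gamma'$ has height exactly $s$ in $G'$, and since $H_r$ is non-geometric Proposition~\ref{PropGeomEquiv}\pref{ItemGeomInv} gives that $H'_s$ is non-geometric as well. Combined with Facts~\ref{FactEGNPUniqueness} and~\ref{FactEGStrata}\pref{ItemSomeOnce}, any height-$s$ indivisible Nielsen path $\rho'_s$ is unique, non-closed, and has at least one endpoint outside $G'_{s-1}$. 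As a first tool I would apply Lemma~\ref{LemmaEGPathSplitting} to $H'_s$ to produce an exponent $k_0$ such that $f'^{k_0}_\#$ splits any bounded height-$s$ subpath of $\gamma'$ into edges of $H'_s$, copies of $\rho'_s$, and paths in $G'_{s-1}$.

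The main case split is whether each end of $\gamma'$ crosses $H'_s$ infinitely or only finitely often. If both ends cross $H'_s$ infinitely often, then by iterating $f'^k_\#$ and applying the splitting from Lemma~\ref{LemmaEGPathSplitting}, both ends of $f'^k_\#(\gamma')$ contain arbitrarily long concatenations of tiles of $H'_s$. By Fact~\ref{FactAttractingLeaves}\pref{ItemTileDecomp}, every finite subpath of $\gamma'$ therefore occurs as a subpath of a generic leaf of $\Lambda^-$, forcing $\gamma'$ to be a leaf of $\Lambda^-$; Fact~\ref{FactTwoEndsGeneric} then yields that $\gamma'$ is generic, giving conclusion~(1). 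The non-attraction hypothesis will be used here to rule out degenerate height-$s$ behavior on either end: if the complete-splitting content in $H'_s$ on some end were to stabilize under iteration, the symmetric dynamics of $f'$ versus $f$ would return us to one of the lower-dimensional analyses.

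For the remaining case, suppose at least one end crosses $H'_s$ only finitely often. Past the last such crossing, the end is a ray in $G'_{s-1}$. The non-attraction hypothesis then forces the direction entering $G'_{s-1}$ to be fixed at a principal vertex: otherwise iteration by $f'^k_\#$ would generate new height-$s$ tiles on that side, returning to the previous case and producing attraction to $\Lambda^+$ by contradiction. Fact~\ref{FactPrincipalRay} then identifies the terminal portion of the end as a singular ray for $\Lambda^-$. The complementary middle path $\mu$ splices this singular ray to the other end, which is either another singular ray (case~(2)) or a ray of height~$<s$ (case~(3)). The main obstacle will be the combinatorial enumeration of admissible $\mu$: because the non-geometric Nielsen path $\rho'_s$ is non-closed, unique, and crosses some edge of $H'_s$ exactly once (Fact~\ref{FactEGStrata}\pref{ItemSomeOnce}), the ways $\mu$ can interpolate between the two reduced ends while respecting the complete-splitting structure (Fact~\ref{FactFinestSplitting}) collapse precisely to the listed forms $\alpha$, $\beta$, $\alpha\beta$, $\alpha\beta\bar\alpha$ in~(2), and to $\mu$ trivial or a height-$s$ Nielsen path in~(3). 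Executing this combinatorial check carefully, using uniqueness of $\rho'_s$ to suppress extraneous configurations and non-attraction to rule out iterates that would grow further height-$s$ content, is the delicate core of the argument.
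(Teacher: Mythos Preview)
Your proposal has a genuine gap: it never uses the hypothesis that $\gamma$ is not weakly attracted to $\Lambda^+$ under $\phi$ in any substantive way. The paper's proof hinges on a fact imported from the proof of Proposition~6.0.8 of \BookOne\ (the part not requiring birecurrence): non-attraction to $\Lambda^+$ in $G$ implies that every finite subpath of $\gamma'$ in $G'$ is a subpath of some $g^{k_i}_\#(\tau'_i)$, where $\tau'_i$ is a line or circuit with a \emph{uniformly bounded} number of $H'_s$-edges. This is what links the dynamical hypothesis (about $\phi$ acting in $G$) to the structural conclusions (about $\gamma'$ in $G'$ relative to $\Lambda^-$). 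One then applies Lemma~\ref{LemmaEGUnifPathSplitting} to split $\tau'_i$, passes to a subsequence with constant splitting pattern, and reads off the singular rays and the middle path $\mu$ from the first, last, and interior terms. Your argument has no analogue of this step.

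Your direct analysis of $\gamma'$ under $f'$ cannot substitute for it. Iterating $f'$ represents iterating $\phi^{-1}$, so properties of $f'^k_\#(\gamma')$ say nothing about non-attraction to $\Lambda^+$ under $\phi$; in particular your sentence ``otherwise iteration by $f'^k_\#$ would generate new height-$s$ tiles \ldots\ producing attraction to $\Lambda^+$'' conflates the two directions. In your first case, knowing that both ends of $\gamma'$ have height $s$ does not by itself make $\gamma'$ a leaf of $\Lambda^-$: Fact~\ref{FactTwoEndsGeneric} presupposes that $\ell$ is already a leaf, so invoking it here is circular. And in your second case the logic is inverted: an end that crosses $H'_s$ only finitely often becomes a ray of height $<s$, not a singular ray for $\Lambda^-$; it is the end of height $s$ that must be shown to be a singular ray, and that identification comes precisely from the $\tau'_i$ construction above, not from Fact~\ref{FactPrincipalRay} applied directly to $\gamma'$.
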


\begin{proof} By Proposition~\ref{PropGeomEquiv}, since $H_r$ is not a geometric stratum, neither is $H'_s$. Let $\alpha$ denote the indivisible Nielsen path of height $s$ in $G'_s$, if it exists, so $\alpha$ is not closed; by Fact~\ref{FactEGStrata} item~\pref{ItemSomeOnce} we may orient $\alpha$ so that its initial endpoint $v$ is an interior point of $H'_s$.

We first show that $\gamma$ has infinitely many edges in $H_r$ by proving that if a line $\gamma$ has height $r$ and only finitely many edges in $H_r$ then $\gamma$ is weakly attracted to $\Lambda^+$. 
To prove this, write $\gamma = \gamma_- \gamma_0 \gamma_+$ where $\gamma_-,\gamma_+ \subset G_{r-1}$ and $\gamma_0$ is a finite path whose first and last edges are in $H_r$. Since $f_\#$ restricts to a bijection on lines of height $r-1$, it follows that $f^k_\#(\gamma)$ has height $r$ for all $k$, and so $f^k_\#(\gamma_0)$ is a nontrivial path of height $r$ for all $k$. By Fact~\ref{FactEvComplSplit} there exists $K \ge 0$ such that $f^K_\#(\gamma_0)$ completely splits into terms each of which is either an edge or Nielsen path of height $r$ or a path in $G_{r-1}$, with at least one term of height $r$. Let $f^K_\#(\gamma_0) = \gamma'_- \gamma'_0 \gamma'_+$ where $\gamma'_-,\gamma'_+$ are in $G_{r-1}$ and $\gamma'_0$ is the maximal subpath of $f^K_\#(\gamma_0)$ whose first and last edges are in $H_r$, so $\gamma'_0$ is nontrivial and completely split. Since $f^K_\#(\gamma) = [f^K_\#(\gamma_-) \, \gamma'_- \, \gamma'_0 \, \gamma'_+ \, f^K_\#(\gamma_+)]$ then, using RTT-(i), it follows that there is a splitting $f^K_\#(\gamma) = \gamma''_- \cdot \gamma'_0 \cdot \gamma''_+$ where $\gamma''_- = [f^K_\#(\gamma_-) \, \gamma'_-]$ and $\gamma''_+= [\gamma'_+ \, f^K_\#(\gamma_+)]$ are in $G_{r-1}$. By Fact~\ref{FactEGStrata} item~\pref{ItemSomeOnce}, each term in this splitting of $f^K_\#(\gamma)$ which is an indivisible Nielsen path of height $r$ is adjacent to a term that is an edge in $H_r$. It follows that at least one term in the splitting of $f^K_\#(\gamma)$ is an edge in $H_r$, implying that $\gamma$ is weakly attracted to $\Lambda^+$. 

Since $\gamma$ contains infinitely many edges in $H_r$, and since $[G_r]=[G'_s]$ and the graphs $G_r, G'_s$ are both core subgraphs, the line $\gamma'$ contains infinitely many edges in $H'_s$. 

In the part of the proof of the nongeometric case of Proposition~6.0.8 of \BookOne\ that does not use birecurrence and so is true in our context, it is shown that there exists $M' > 0$ so that for every finite subpath $\gamma_i'$ of $\gamma'$ there exists a line or circuit $\tau_i'$ in $G'$ that contains at most $M'$ edges of $H'_{s}$ such that $\gamma_i'$ is a subpath of $g_\#^{k_i}(\tau_i')$ for some $k_i\ge 0$. If $G_{r-1} = \emptyset$ then $\tau_i'$ is a circuit; otherwise $\tau_i'$ is a line. (This is proved in two parts. First, in what is called step 2 of that proof, an analogous result is proved in $G$. Then the bounded cancellation lemma is used to transfer this result to $G'$; the case that $G_{r-1} = \emptyset$ is considered after the case that $G_{r-1} \ne \emptyset$.)

Choose a sequence of finite subpaths $\gamma'_i$ of $\gamma'$ that exhaust $\gamma'$ and let $\tau'_i$ and $k_i$ be as above so that $\gamma'_i$ is a subpath of $g^{k_i}_\#(\tau'_i)$ and so that $\tau'_i$ contains at most $M'$ edges of~$H'_s$. Since $\gamma'$ contains infinitely many $H'_s$ edges, we have $k_i \to +\infinity$ as $i \to +\infinity$. 

By Lemma~\ref{LemmaEGUnifPathSplitting} there exists $d>0$ depending only on the bound $M'$ such that $g^d_\#(\tau'_i)$ has a splitting into terms each of which is either an edge or indivisible Nielsen path of height $s$ or a path in $G'_{s-1}$. By taking $i$ so large that $k_i \ge d$ we may replace each $\tau'_i$ by $g^d_\#(\tau'_i)$ and each $k_i$ by $k_i - d$, and hence we may assume that $\tau_i'$ has a splitting 
$$ \tau_i' = \tau'_{i,1} \cdot \ldots \cdot \tau'_{i,l_i}
$$
each of whose terms is an edge or Nielsen path of height $s$ or a path in $G'_{s-1}$. The number of edges that $\tau_i'$ has in $H_s$ is still uniformly bounded, and so $l_i$ is uniformly bounded. Passing to a subsequence, we may assume that $l_i$ and the ordered sequence of height $t$ terms in $\tau'_i$ are independent of~$i$. We may also assume that $l = l_i$ is minimal among all such choices of $\gamma_i'$ and $\tau_i'$.

\subparagraph{Case 1: $l = 1$.} In this case $\tau'_i = E$ is a single edge of $H'_s$ and so, by Fact~\ref{FactAttractingLeaves}~\pref{ItemLeafAsLimit}, $\gamma'$ is a leaf of $\Lambda^-$. If both ends of $\gamma'$ have height $s$ then $\gamma'$ is generic by Fact~\ref{FactTwoEndsGeneric} and case (1) is satisfied. 

Suppose one end of $\gamma'$, say the positive end, has height $\le s-1$. We have $\gamma' = \overline R_1 R_2$ where the ray $R_1$ starts with an edge of $H'_s$ and the ray $R_2$ is contained in $G'_{s-1}$. By dropping finitely many terms of the sequence we may assume $\gamma'_i = \overline R_{1i} R_{2i}$ where $R_{1i}, R_{2i}$ are nontrivial initial segments of $R_1,R_2$. We may subdivide $E = \bar e_1 e_2$ at an interior point so that if $1 \le j \le k_i$ then $g^j_\#(E) = g^j_\#(\bar e_1) g^j_\#(e_2)$, and so that $R_{1i}$ is an initial segment of $g^{k_i}_\#(e_1)$ and $R_{2i}$ is an initial segment of $g^{k_i}_\#(e_2)$. Let $j_i$ be the first exponent such that the subdivision point of $g^{j_i}_\#(E) = g^{j_i}_\#(\bar e_1) g^{j_i}_\#(e_2)$ is a vertex. By RTT-(i), the first edge of $g^{j_i}_\#(e_1)$ is an oriented edge $E'_i$ of $H'_s$, and the first 
term of the splitting of $g^{j_i}_\#(e_2)$ into edges of $H'_s$ and maximal subpaths of $G'_{s-1}$ is a subpath $\sigma'_i$ of $G'_{s-1}$. Furthermore, letting $E''_i$ be the edge of $H'_s$ whose interior contains the subdivision point of $g^{j_i-1}_\#(E) = g^{j_i-1}_\#(\bar e_1) g^{j_i-1}_\#(e_2)$, it follows that $\sigma'_i$ is a term in the splitting of $g_\#(E''_i)$ into edges of $H'_s$ and maximal subpaths in $G'_{s-1}$; this shows that there are only finitely many possibilities for the path $\sigma'_i$. Passing to a subsequence, we may assume that $E'_i = E'$ and $\sigma'_i = \sigma'$ are independent of $i$. Since the length of $R_{2i}$ goes to $\infinity$ as $i \to \infinity$, and since $R_{2i}$ is an initial subpath of $g^{k_i-j_i}(\sigma')$, we have $k_i - j_i \to \infinity$. It follows that $R_1$ is the singular ray corresponding to the principal direction $E'$. This shows that (3) is satisfied with trivial $\mu$.

\subparagraph{Case 2: $l \ge 2$.} Choose a subpath $\nu_i' \subset \tau_i'$, with endpoints not necessarily at vertices, such that $g^{k_i}_\#(\nu_i') = \gamma_i'$. Let $\tau''_i$ be the subpath obtained from $\tau'_i$ by removing the initial segment $\tau'_{i,1}$ and the terminal segment $\tau'_{i,l}$, so either $\tau''_i = \tau'_{i,2} \cdot \ldots \cdot \tau'_{i,l-1}$ or, when $l=2$, $\tau''_i$ is the the trivial path at the common vertex along which $\tau'_{i,1}$ and $\tau'_{i,2}$ are concatenated. After passing to a subsequence, we may assume that $\tau''_i \subset \nu_i'$; if not then we could reduce~$l$ by removing either $\tau'_{i,1}$ or $\tau'_{i,l}$ from $\tau'_i$. For the same reason, we may assume that $\gamma'$ has a finite subpath that contains $g^{k_i}_\#(\tau''_i)$ for all $i$. After passing to a subsequence, we may assume that $\mu = g^{k_i}_\#(\tau''_i)$ is independent of $i$. In particular $\mu$ is either trivial or has a splitting into terms each of which is either $\alpha$, or $\bar\alpha$, or a path in $G'_{s-1}$. Since the endpoints of $\alpha$ are distinct, no two adjacent terms in this splitting can both be $\alpha$ or $\bar\alpha$, and so each subdivision point of the splitting is in $G'_{s-1}$. Since $v$ is an interior point of $H'_s$, for any occurence of $\alpha$ or $\bar\alpha$ as a term of $\mu$ the endpoint $v$ must be an endpoint of $\eta$. It follows that $\mu$ can be written in one of the forms given in item~\pref{ItemIntermediatePath}.

Write $\gamma'$ as $\overline R_1 \mu R_2$. If $\tau'_{1,1}$ is an edge $E$ in $H'_s$ then $E = \tau'_{i,1}$ for all $i$, and the ray $R_1$ is the increasing union of $g^{k_1}_\#(\bar E) \subset g^{k_2}_\#(\bar E) \subset \cdots$, so $R_1$ is a singular ray for $\Lambda^-$. Otherwise $\tau'_{i,1}$ is a path in $G'_{s-1}$ for all $i$ and $R_1$ is a ray in $G'_{s-1}$. Using $\tau'_{i,l}$ similarly in place of $\tau'_{i,1}$, $R_2$ is either a singular ray for $\Lambda^-$ or a ray in $G'_{s-1}$. At least one of $R_1$ and $R_2$ is a singular ray. If they are both singular rays then we are in case (2), otherwise we are in case (3). 
\end{proof}

\begin{lemma} \label{geometricFullHeightCase} Continuing with the notation laid out before Lemma~\ref{nonGeometricFullHeightCase}, suppose that the stratum $H_r$ is geometric. Let $\rho_r$ be the height $r$ closed indivisible Nielsen path in $G_r$. Knowing by Proposition~\ref{PropGeomEquiv} that $H'_s$ is geometric, let $\rho'_s$ be the closed indivisible Nielsen path of height $s$ in $G'_s$. If $\gamma \subset G_r$ is a line of height $r$ that is not weakly attracted to $\Lambda^+$ then the realization of $\gamma$ in $G'$, denoted $\gamma' \subset G'_s$, has at least one of the following forms:
\begin{enumerate}
\item \label{ItemGFHCIterate}
$\gamma'$ or $\bar\gamma'$ is the bi-infinite iterate of $\rho'_s$.
\item \label{ItemGFHCLeaf}
$\gamma'$ is a generic leaf of $\Lambda^-$.
\item \label{ItemGFHCTwoRays}
$\gamma'$ decomposes as $\overline R_1 \mu R_2$ where $R_1$ and $R_2$ are singular rays for $\Lambda^-$ and $\mu$ is either the trivial path, or a finite iterate of $\rho'_s$, or a nontrivial path of height $<s$.
\item \label{ItemGFHCOneRay}
$\gamma'$ or $\bar\gamma'$ decomposes as $\overline R_1 R_2$ where $R_1$ is a singular ray for $\Lambda^-$, and the ray $R_2$ either has height less than~$s$ or is the singly infinite iterate of $\rho'_s$.
\end{enumerate}
\end{lemma}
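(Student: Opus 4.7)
The plan is to adapt the proof of the geometric case of Proposition~6.0.8 of \BookOne\ to the present setting, dropping the birecurrence hypothesis. First I will invoke Theorem~\ref{TheoremCTExistence} to arrange $[G_{r-1}] = [G'_{s-1}]$, so that the realization $\gamma' \subset G'_s$ has height $s$. Set up the geometric model for $f' \restrict G'_s$ following Definition~\ref{DefGeometricStratum}: surface $S'$, annuli attached to circuits $\alpha_i \subset G'_{s-1}$, deformation retraction $d' \from Y' \to G'_s$, and pseudo-Anosov $\psi'$ with stable and unstable geodesic laminations $\Lambda^{s'}, \Lambda^{u'}$, satisfying $d'_\#(\Lambda^{s'}) = \Lambda^+$ and $d'_\#(\Lambda^{u'}) = \Lambda^-$ by Proposition~\ref{PropGeomLams}. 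The preliminary dichotomy for $\gamma'$ is: either $\gamma'$ is a bi-infinite iterate of one of $\rho'_s, \alpha_1, \ldots, \alpha_m$, or $\gamma'$ corresponds via the $d'_\#$-bijection to a genuine $S'$-proper quasiline $\sigma^* \subset Y'$. In the first case, only $\rho'_s$ is compatible with $\gamma'$ having height $s$, giving case~\pref{ItemGFHCIterate}.

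In the remaining case, I will invoke the portion of the \BookOne\ proof of Proposition~6.0.8 that does not use birecurrence (the ``step~2'' also used in the proof of Lemma~\ref{nonGeometricFullHeightCase}): there is a uniform bound $M' > 0$ such that for every finite subpath $\gamma'_i \subset \gamma'$, there exist $\tau'_i \in \wh\B(G')$ containing at most $M'$ edges of $H'_s$ and an exponent $k_i \ge 0$ with $(f')^{k_i}_\#(\tau'_i) \supset \gamma'_i$. Apply Lemma~\ref{LemmaEGUnifPathSplitting} to replace $\tau'_i$ by $(f')^d_\#(\tau'_i)$, obtaining a splitting whose terms are edges of $H'_s$, copies of the unique height~$s$ indivisible Nielsen path $\rho'_s$ or $\bar\rho'_s$, and paths in $G'_{s-1}$. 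Choose $\gamma'_i$ to exhaust $\gamma'$, pass to a subsequence to fix the combinatorial type of the splittings, and choose $(\gamma'_i, \tau'_i)$ to minimize the number $l$ of terms.

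The case analysis proceeds on $l$. If $l = 1$, the sub case $\tau'_i = \rho'_s$ cannot exhaust a line, since iterates of the circuit $\rho'_s$ remain within $\rho'_s$ and $\gamma'$ is not a bi-infinite iterate; so $\tau'_i = E$ is a single edge of $H'_s$, and then $\gamma'$ is a leaf of $\Lambda^-$ by Fact~\ref{FactAttractingLeaves}~\pref{ItemLeafAsLimit}, hence generic by Proposition~\ref{PropGeomLams}~\pref{ItemAllGeneric}, placing $\gamma'$ in case~\pref{ItemGFHCLeaf} when both ends have height $s$ (Fact~\ref{FactTwoEndsGeneric}) or in case~\pref{ItemGFHCOneRay} otherwise via the singular-ray construction from Lemma~\ref{nonGeometricFullHeightCase}~Case~1. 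If $l \ge 2$, the middle terms $\tau'_{i,2}, \ldots, \tau'_{i,l-1}$ stabilize to a fixed path whose $(f')^{k_i}_\#$-image is a fixed path $\mu$; since by Fact~\ref{FactEGStrata}~\pref{ItemEachTwice} the basepoint of $\rho'_s$ lies in the interior of $H'_s$ and is disjoint from $G'_{s-1}$, adjacency constraints force $\mu$ to be trivial, a single iterate $(\rho'_s)^k$, or a path of height $<s$, matching the three options in case~\pref{ItemGFHCTwoRays}. The extremal terms contribute either singular rays for $\Lambda^-$ (when they are edges of $H'_s$, by the limiting argument at the end of Lemma~\ref{nonGeometricFullHeightCase}~Case~2) or rays in $G'_{s-1}$, producing case~\pref{ItemGFHCTwoRays} or case~\pref{ItemGFHCOneRay}.

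The main obstacle will be the subcase of case~\pref{ItemGFHCOneRay} in which $R_2$ is the singly-infinite iterate of $\rho'_s$. This is the genuinely new phenomenon of the geometric setting, absent in Lemma~\ref{nonGeometricFullHeightCase}: because $\rho'_s$ is closed, arbitrarily many copies can concatenate without increasing the $H'_s$-count of a connecting path, so the stabilized extremal portion of $\tau'_i$ can be followed by an unboundedly growing tail of $\rho'_s$-iterates. Careful bookkeeping—using the minimality of $l$ over all subsequence choices, together with the fact that the basepoint of $\rho'_s$ is fixed by $f'$ and disjoint from $G'_{s-1}$—will be needed to establish that the resulting extremal ray is either exactly the singly-infinite iterate of $\rho'_s$ or a ray of height $<s$, and not miscategorized. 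This is the one place where the geometric case genuinely diverges from, and is more delicate than, the nongeometric case.
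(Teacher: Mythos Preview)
Your route diverges sharply from the paper's, and the divergence is exactly where the gap lies. The paper does \emph{not} adapt the splitting argument of Lemma~\ref{nonGeometricFullHeightCase} to the geometric case. Instead it works entirely inside the geometric model: it realizes $\gamma$ as an $S$-proper quasiline $\gamma^*$ in $Y$, observes that the failure of weak attraction to $\Lambda^+$ forces each $S$-piece of $\gamma^*$ to fail the weak attraction to $\Lambda^u$, and then applies the pseudo-Anosov trichotomy of Fact~\ref{FactPsAnWeakAttr} directly to those pieces. Each $S$-piece is therefore a leaf of $\Lambda^s$ or a proper geodesic lying in a principal region of $\Lambda^s$; the case analysis on ideal polygons versus peripheral crowns (and on which peripheral crown, $\bdy_0 S$ versus $\bdy_i S$ for $i\ge 1$) yields the four conclusions immediately. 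In particular the singly-infinite iterate of $\rho'_s$ arises cleanly from a ray in the $\bdy_0 S$ peripheral crown spiralling toward $\bdy_0 S$.

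Your proposal instead invokes the ``step~2'' bound $M'$ from the \emph{nongeometric} case of \BookOne\ Proposition~6.0.8. That bound does not survive in the geometric setting, and the obstruction is precisely the phenomenon you flag as the main obstacle. If $\gamma'$ has an end equal to the singly-infinite iterate of $\rho'_s$, then finite subpaths $\gamma'_i$ deep in that end contain $(\rho'_s)^N$ for arbitrarily large $N$, hence at least $N$ height-$s$ illegal turns. Since the number of $s$-illegal turns is nonincreasing under iteration of $f'_\#$, any $\tau'_i$ with $(f')^{k_i}_\#(\tau'_i)\supset \gamma'_i$ must itself have at least $N$ illegal turns in $H'_s$, forcing unboundedly many $H'_s$-edges in $\tau'_i$ and destroying the uniform bound $M'$. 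So the foundation you import from the nongeometric argument is unavailable, and no amount of downstream bookkeeping with minimal $l$ recovers it: your subsequence-and-stabilize scheme presupposes bounded combinatorial type, which is exactly what fails. You set up the geometric model for $f'\restrict G'_s$ in your first paragraph but then abandon it; the paper's lesson is that in the geometric case the surface model, via Fact~\ref{FactPsAnWeakAttr}, is the whole argument, not merely a preliminary.
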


\begin{proof} For the proof of this lemma we continue with the notation used in Section~\ref{SectionGeometric}. Since $\gamma$ has height $r$ we know that it is not the bi-infinite iterate of any of $\alpha_1,\ldots,\alpha_M$. Also, we may assume that \pref{ItemGFHCIterate} does not hold, and so $\gamma$ is not the bi-infinite iterate of $\rho_r$. There exists, therefore, an $S$-proper quasiline $\gamma^*$ in $Y$ corresponding to~$\gamma$. Since $\gamma$ has height $r$, the decomposition of $\gamma^*$ contains at least one piece which is a proper geodesic in $S$. Since $\gamma$ is not weakly attracted to $\Lambda^+$ under iteration of~$\phi$, it follows that $\gamma^*$ is not weakly attracted to $\Lambda^u$ under iteration of $h_\#$, and so no piece of $\gamma^*$ which is a proper geodesic in $S$ is weakly attracted to $\Lambda^u$ under iteration of $\psi \in \MCG(S)$. Applying Fact~\ref{FactPsAnWeakAttr} to each such piece, we see that none of them is a finite proper geodesic, so either $\gamma^*$ is entirely contained in $S$ or its pieces which are proper geodesics in $S$ consist of one or two proper geodesic rays, and furthermore one of the following cases holds. 

In one case, $\gamma^*$ is a leaf of $\Lambda^s$, in which case conclusion~\pref{ItemGFHCLeaf} holds. 

In another case, $\gamma^*$ is a bi-infinite geodesic contained in a principal region $P$ of~$\Lambda^s$. If $P$ is an ideal polygon then conclusion~\pref{ItemGFHCTwoRays} holds with $\mu$ trivial. Suppose that $P$ is a peripherial crown, in which case at least one end of $\gamma^*$ converges to an ideal point of~$P$. If $\bdy_0 S \subset P$ then: if two ends of $\gamma^*$ converge to ideal points then \pref{ItemGFHCTwoRays} holds with $\mu$ trivial or a finite iterate of $\rho'_s$; if one end converges to an ideal point then \pref{ItemGFHCOneRay} holds with $R_2$ a singly infinite iterate of $\rho'_s$. If one of $\bdy_1 S, \ldots,  \bdy_m S$ is contained in $P$ then: if two ends of $\gamma^*$ converge to ideal points then \pref{ItemGFHCTwoRays} holds with $\mu$ trivial or of height $<s$; and if one end converges to an ideal point then \pref{ItemGFHCOneRay} holds with $R_2$ a singly infinite iterate of one of $\alpha_1,\ldots,\alpha_M$ and so of height $<s$.

In another case $\gamma^*$ has two pieces which are proper geodesics in $S$, each a ray contained in a peripheral crown of $\Lambda^s$, connected by a piece which is a finite geodesic in $G_r \union A$. In this case conclusion~\pref{ItemGFHCTwoRays} holds with $\mu$ of height $<s$.

In the remaining case $\gamma^*$ has one piece which is a proper geodesic ray in a peripheral crown of $\Lambda^s$, the remaining piece being a ray in $G_r \union A$, in which case conclusion~\pref{ItemGFHCOneRay} holds with $R_2$ of height $<s$.

\end{proof}

\subsection{Nonattracted lines: the general case}
\label{SectionNonattrGeneral}

In this section we prove Propositions~\ref{prop:WA1} and~\ref{prop:WA2}.

\begin{proof}[Proof of Proposition~\ref{prop:WA1}] Choose a \ct\ $\fG$ representing $\phi$ with \eg\ stratum $H_r$ corresponding to $\Lambda^+$. Let $Z$ and $\hat \rho_r$ in $G$ be as in Definition~\ref{defn:Z}. Recall from Lemma~\ref{ZP is closed}\pref{item:ZP=NA} that the set of lines carried by $\<Z,\hat \rho_r\>$ is the same as the set of lines carried by $\A_{na}(\Lambda^\pm)$. 

First we show that the uniform version of the proposition, stated in the sentence ``Moreover\ldots'', is a consequence of the nonuniform version stated before that sentence. Arguing by contradiction, if the uniform version fails then there are neighborhoods $V^+,V^-$ of generic leaves of $\Lambda^+,\Lambda^-$ respectively, a sequence of lines $\gamma_i \in \B$, and a sequence of positive integers $m_i \to +\infinity$, such that for all $i$ we have: $\gamma_i \not\in V^-$; $f_\#^{m_i}(\gamma_i) \not\in V^+$; and $\gamma_i$ is not carried by $\A_\na(\Lambda^+)$. We may assume that $V_+$ has the property $f_\#(V_+) \subset V_+$, because generic leaves of $\Lambda^+$ have a neighborhood basis of such sets. By Lemma~\ref{ItemNotZPLimit}, some subsequence of $\gamma_i$ has a weak limit $\gamma'$ that is not carried by $\A_\na(\Lambda^+)$. Since $V_-$ is open, $\gamma' $ is not contained in $V_-$ and so the weak closure of $\gamma'$ does not contain $\Lambda^-$. To obtain a contradiction with the nonuniform version, it remains to show that $\gamma'$ is not weakly attracted to $\Lambda^+$, i.e.\ that the sequence $f_\#^m(\gamma')$ does not weakly converge to $\Lambda^+$. If it does then $f_\#^M(\gamma') \in V^+$ for some~$M$. Since $V^+$ is open there exists $I$ such that $f_\#^M(\gamma_i) \in V^+$ for all $i \ge I$. Since $f_\#(V^+) \subset V^+$, it follows that $f_\#^m(\gamma_i) \in V^+$ for all $m \ge M$ and $i \ge I$. We can choose $i \ge I$ so that $m_i \ge M$, and it follows that $f_\#^{m_i}(\gamma_i) \in V^+$, a contradiction. 

\bigskip

We turn now to the proof that an arbitrary line $\gamma$ satisfies one of the conclusions (1), (2) or (3), arguing by induction on the height of $\gamma$. 

Applying Theorem~\ref{TheoremCTExistence}, choose a \ct\ $g \from G' \to G'$ representing $\phi^\inv$ such that the free factor systems represented by the filtration elements of $G$ and of $G'$ are the same. Letting $H'_s$ be the \eg\ stratum corresponding to $\Lambda^-$, we have $[G_r] = [G'_s]$. By Proposition~\ref{PropGeomEquiv}, the stratum $H_r$ is geometric if and only if the stratum $H'_s$ is geometric, and if so then their closed indivisible Nielsen paths $\rho_r$, $\rho'_s$ represent the same conjugacy class up to orientation reversal. 

The basis step of the induction is the case where $\gamma$ has height $\le r$, in which case the realization of $\gamma$ in $G'$ is contained has height $\le s$. Suppose that conclusion (1) does not hold: $\gamma$ is not weakly attracted to $\Lambda^+$. In the case that the strata $H_r$ and $H'_s$ are not geometric, by applying Lemma~\ref{nonGeometricFullHeightCase} it follows that either conclusion (3) holds, i.e.\ the weak closure of $\gamma$ contains $\Lambda^-$, or $\gamma$ has height $<r$ in~$G$, i.e.\ $\gamma$ is carried by $G_{r-1} = Z \intersect G$ and so conclusion (2) holds. In the case that the strata $H_r$ and $H'_s$ are geometric, by applying Lemma~\ref{geometricFullHeightCase} it follows that either conclusion (3) holds or the realization of $\gamma$ in $G$ is either in $G_{r-1}$ or is the bi-infinite iterate of $\rho_r$ or $\bar\rho_r$ and so conclusion (2) holds.

We assume now that $\gamma$ has height $u>r$ and that the proposition holds for all lines of height $< u$. Since no line has height corresponding to a zero stratum, the stratum $H_u$ is irreducible.

Let $\E_-$ and $\E_+$ be the ends of $\gamma$. 

\paragraph{Case 1: $\E_-$ and $\E_+$ are carried by $\<Z,\hat \rho_r\>$.} It follows that $\gamma$ has the form $\overline R_- \gamma_0 R_+$ where $R_-, R_+\in \<Z,\hat \rho_r\>$ are rays and $\gamma_0$ is a finite subpath. After replacing $\gamma$ with $f^m_\#(\gamma)$ for some $m \ge 0$, we may assume that the endpoints of $\gamma_0$ are fixed. In the case that $\gamma_0 \in \<Z,\hat\rho_r\>$, it follows that $\gamma \in \<Z,\hat\rho_r\>$ by Lemma~\ref{ZP is closed}~\pref{Item:Groupoid}, and so conclusion (2) holds by Lemma~\ref{defining Z}~\pref{ItemZPAnyPaths}. In the case that $\gamma_0 \not \in \<Z,\hat \rho_r\>$, it follows that $\gamma_0$ is weakly attracted to $\Lambda^+$ by Lemma~\ref{defining Z}~\pref{ItemZPFinitePaths}, and so $f^k_\#(\gamma_0)$ contains longer and longer subpaths of $\Lambda^+$ as $k$ increases. If conclusion (1) does not hold then these long subpaths of $\Lambda^+$ must be cancelled when $f^k_\#(R_-) f_\#^k(\gamma_0) f^k_\#( R_+)$ is tightened to $f^k_\#(\gamma)$. It follows that at least one of $R_-$ or $R_+$ must be weakly attracted to $\Lambda^+$, but by Lemma~\ref{defining Z}~\pref{ItemZPAnyPaths} this contradicts the assumption that $R_-, R_+\in\<Z,\hat \rho_r\>$, thereby demonstrating that conclusion (1) indeed holds.

\paragraph{Case 2: $\E_+$ (or $\E_-$) is not carried by $\<Z,\hat \rho_r\>$ and has height less than $u$.} By Lemma~\ref{ItemNotZPLimit}, the accumulation set of the end $\E_+$ of $\gamma$ contains a line $\sigma \not \in \<Z,\hat \rho_r\>$, and $\sigma$ has height less than~$u$. The inductive hypothesis therefore applies to $\sigma$. It follows that either $\sigma$, and hence $\gamma$ by Remark~\ref{basin is open}, is weakly attracted to $\Lambda^+$; or the weak closure of $\sigma$, and hence the weak closure of $\gamma$, contains~$\Lambda^-$.

\paragraph{Case 3: $\E_+$ (or $\E_-$) is not carried by $\<Z,\hat \rho_r\>$ and has height $u$.} Suppose that at least one end of $\gamma$, say $\E_+$, has height $u$ and is not in $\<Z,\hat \rho_r\>$. There are three subcases. 

In the first subcase, $H_u$ is an \noneg\ stratum. Applying \BookOne\ Lemma~4.1.4 it follows that $\gamma$ has a splitting in which ${\cal E}_+$ splits as a concatenation of finite subpaths with fixed endpoints. Lemma~\ref{defining Z} implies that $\gamma$ is weakly attracted to $\Lambda^+$. 


In the second subcase, $H_u$ is \eg\ and $H_u \not\subset Z$. Let $\Lambda'{}^+ \in \L(\phi)$ correspond to $H_u$ and let $\Lambda'{}^- \in \L(\phi^\inv)$ be its dual lamination. It follows that $\Lambda^+ \subset \Lambda'{}^+$, and then by Lemma~\ref{containmentSymmetry} that $\Lambda^- \subset \Lambda'{}^-$. Applying Lemma~\ref{ItemNotZPLimit}, let $\sigma$ be a line in the accumulation set of $\E_+$ such that $\sigma \not\in \<Z,\hat \rho_r\>$. If $\sigma$ has height $<u$ then we proceed as in Case~2, applying the inductive hypothesis to $\sigma$ to obtain the desired conclusion about $\gamma$. Suppose then that $\sigma$ has height~$u$. Letting $H'_v$ be the \eg\ stratum of $G'$ corresponding the $\Lambda'{}^-$, we have $[G_u] = [G'_v]$. By Proposition~\ref{PropGeomEquiv}, $G_u$, $G'_v$ are either both geometric or both nongeometric. Applying Lemma~\ref{nonGeometricFullHeightCase} or Lemma~\ref{geometricFullHeightCase} as appropriate, the conclusion is that either $\sigma$ is weakly attracted to $\Lambda'{}^+$, implying that $\gamma$ is weakly attracted to $\Lambda'{}^+$ and so also to $\Lambda^+$, or the weak closure of $\sigma$ contains~$\Lambda'{}^-$, implying that the weak closure of $\gamma$ contains $\Lambda'{}^-$ and so also contains~$\Lambda^-$. 

In the third and final subcase, $H_u$ is \eg\ and $H_u \subset Z$. By case 2, we may assume that ${\cal E}_-$ either has height $u$ or is carried by $\<Z,\hat \rho_r\>$, and so there are two subsubcases. 

In the first subsubcase, ${\cal E}_-$ is carried by $\<Z,\hat \rho_r\>$. We decompose $\gamma$ as follows. First, write $\gamma = \overline R_- R_+$ where $\overline R_-$ is the maximal ray representing $\E_-$ such that $\overline R_- \in \<Z,\hat \rho_r\>$. Next, write $R_+ = \alpha'_1 \beta'_1 \alpha'_2 \beta'_2 \ldots$ where each $\alpha'_j$ has height $<u$ and each $\beta'_j$ is in $H_u \subset Z$; note that by maximality of $R_-$, $\alpha'_1$ is nontrivial and $\alpha'_1 \not\in \<Z,\hat \rho_r\>$. Every other $\alpha'_j$ and $\beta'_j$ is also nontrivial, since $\E_+$ has height $u$, $H_u \subset Z$, and $\E_+$ is not carried by $\<Z,\hat\rho_r\>$. Next, for each $j$, if $\alpha'_j \in \<Z,\hat \rho_r\>$ then incorporate it into the adjacent~$\beta$'s, resulting in a nontrivial decomposition $R_+ = \alpha_1 \beta_1 \alpha_2 \beta_2 \ldots$ where each $\alpha_j \not\in \<Z,\hat \rho_r\>$ has height $<u$, and each $\beta_j \in \<Z,\hat \rho_r\>$ has height~$u$ and has initial and final directions in~$H_u$. 

By (Zero Strata) and the definition of $Z$, none of the paths $\alpha_j$ is contained in a zero stratum, and so by Fact~\ref{FactUsuallyPrincipal} each concatenation point of an $\alpha$ and a $\beta$ is principal. Also, by maximality of $\overline R_-$ and Fact~\ref{FactUsuallyPrincipal}, the concatenation point of $\overline R_-$ and $\alpha_1$ is principal. Applying (Rotationless), the endpoints of each $\alpha_j$ are fixed.

In each of the paths $f^k_\#(R_-)$ and $f^k_\#(\beta_j)$ for $k \ge 0$ and $j \ge 1$, there is a uniform upper bound for the number of height $r$ edges in any subpath that occurs as a subpath of a leaf of $\Lambda^+$ --- indeed each of these paths is in $\<Z,\hat\rho_r\>$ by Lemma~\ref{defining Z}~\pref{ItemZPPathsInv}, and so any occurence of a height $r$ edge $E$ in any of these paths is a subpath of an occurrence of $\rho_r$, and therefore the height $r$ illegal turn occurs within a bounded distance of the occurrence of $E$. On the other hand, for each $j$ the sequence $f^k_\#(\alpha_j)$ has height $<u$ and weakly limits on~$\Lambda^+$, by Lemma~\ref{defining Z}~\pref{ItemZPFinitePaths}, and so as $k \to +\infinity$ the path $f^k_\#(\alpha_j)$ contains segments of~$\Lambda^+$ with arbitrarily many height $r$ edges.

We claim that when $f^k_\#(\overline R_-) f^k_\#(\alpha_1) f^k_\#(\beta_2) f^k_\#(\alpha_2) \ldots$ is tightened to $f^k_\#(\gamma)$, if $j < j'$ then no subsegments of $f^k_\#(\alpha_j)$ and $f^k_\#(\alpha'_j)$ cancel with each other. It follows from this claim that the long leaf segments of $\Lambda^+$ that occur in the $f^k_\#(\alpha_j)$ subpaths are not cancelled when $f^k_\#(\overline R_-) f^k_\#(\alpha_1) f^k_\#(\beta_2) f^k_\#(\alpha_2) \ldots$ is tightened to $f^k_\#(\gamma)$, and so $\gamma$ is weakly attracted to $\Lambda^+$, completing the proof in the first subsubcase. 

To prove the claim, suppose on the contrary that $f^k_\#(\alpha_j)$ and $f^k_\#(\alpha_{j'})$ have cancelling subsegments, with $j<j'$. Letting $\delta = \beta_{j} \alpha_{j+1} \cdots \alpha_{j'-1} \beta_{j'-1}$, we can write
$\alpha_j = \alpha^1_j \alpha^2_j $ and 
$\alpha_{j'} = \alpha^1_{j'} \alpha^2_{j'} $ 
so that
$f^k_\#(\alpha_j^2\delta \alpha_{j'}^1)$ is trivial. It follows that $f^k_\#(\delta)$ has height $<s$. Since the restriction of $f$ to the component $C$ of $G_{s-1}$ containing $f^k_\#(\delta)$ is a homotopy equivalence and since the endpoints of $f^k_\#(\delta)$ are fixed, there is a path $\zeta$ in $C$ with the same endpoints as $f^k_\#(\delta)$ and such that $f^k_\#(\zeta) = f^k_\#(\delta)$. But then $[\delta \zeta]$ is a non-trivial closed path whose image under $f^k_\#$ is trivial. This contradiction proves the claim.

In the second subsubcase, ${\cal E}_-$ is not carried by $\<Z,\hat \rho_r\>$ and so has height $u$. There is a bi-infinite alternating decomposition into $\alpha_j$'s and $\beta_j$'s as above and the same argument shows that $\gamma$ is weakly attracted to $\Lambda^+$. 
\end{proof}

\begin{proof}[Proof of Proposition~\ref{prop:WA2}] Recall the hypotheses: $\phi \in \Out(F_n)$ is rotationless, $\Lambda^\pm \in \L^\pm(\phi)$, and $\A_\na \Lambda^\pm$ is trivial. Passing to a power of $\phi$ we may also assume that $\phi^\inv$ is rotationless. Corollaries~\ref{NA is well defined} and~\ref{NA is independent of plusminus} together imply that every nontrivial conjugacy class is weakly attracted to $\Lambda^+$ by iteration of $\phi$ and to $\Lambda^-$ by iteration of $\phi^\inv$. Since no nontrivial conjugacy class is fixed by $\phi$ or $\phi^\inv$, which is the standing assumption of Section~\ref{SectionSingular}, all the results of that section apply. In particular, for each $\Phi \in P(\phi)$ we have $\Fix_+(\wh\Phi) = \Fix_N(\wh\Phi)$, and for each $\Psi \in P(\phi^\inv)$ we have $\Fix_+(\wh\Psi) = \Fix_N(\wh\Psi)$. Also, no \ct\ representing a power of $\phi$ or $\phi^\inv$ has a geometric \eg\ stratum.

Choose a \ct\ $f \from G \to G$ representing $\phi$, and let $Z$ and $\hat\rho_r$ be as in Definition~\ref{defn:Z} with respect to $\Lambda^+$. Consider the lowest stratum $H_1$ of $G$.


\begin{lemma} \label{trivial Ana} 
With notation as above we have:
\begin{enumerate}
\item \label{ItemEGLowest}
$H_1$ is \eg\ and corresponds to $\Lambda^+$.
\item \label{ItemEGLamInAll}
$\Lambda^+$ is contained in each element of $\L(\phi)$.
\item \label{ItemEndLimitLambda}
For each $\Phi \in \Aut(F_n)$ representing $\phi$ the accumulation set of each $P \in \Per_+(\wh\Phi)$ contains $\Lambda^+$.
 \end{enumerate}
 \end{lemma}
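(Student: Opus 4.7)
The proof will proceed in three parts, each drawing on the basic consequence of Corollary~\ref{NA is well defined} that triviality of $\A_\na\Lambda^\pm$ is equivalent to every nontrivial conjugacy class of $F_n$ being weakly attracted to $\Lambda^+$. In particular no conjugacy class is $\phi$-fixed, so the Standing Assumption of Section~\ref{SectionSingular} is in force. I will also use repeatedly that the auxiliary graph $K$ built in Definitions~\ref{defn:Z} is a forest, because by construction the noncontractible components of $K$ determine $\A_\na\Lambda^\pm$ and the latter is trivial.

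For part (\ref{ItemEGLowest}) I will rule out the alternatives to $H_1$ being the \eg\ stratum associated to $\Lambda^+$. First $H_1$ cannot be a zero stratum, since zero strata must be enveloped by a higher \eg\ stratum. If $H_1$ were \neg, then Fact~\ref{FactNEGEdgeImage} forces $H_1$ to be a single edge $E_1$ with $f(E_1)=E_1\cdot u_1$; but $u_1$ has height $<1$ so $u_1$ is trivial, and then (Periodic Edges) forces $E_1$ to be a loop, producing a $\phi$-fixed conjugacy class and contradicting the Standing Assumption. So $H_1$ is \eg. If the stratum $H_r$ associated to $\Lambda^+$ satisfied $r>1$, then $f^k_\#(E)\subset G_1=H_1$ for every $E\in H_1$ and every $k$, so no edge of $H_1$ is weakly attracted to $\Lambda^+$ and therefore $H_1\subset Z$; but Fact~\ref{FactEGAperiodic} makes $G_1=H_1$ a core graph, forcing $K$ to have a noncontractible component and contradicting triviality of $\A_\na\Lambda^\pm$. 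Hence $r=1$.

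For part (\ref{ItemEGLamInAll}) I will argue by contradiction: suppose $\Lambda'\in\L(\phi)$ has corresponding \eg\ stratum $H_s$ with $\Lambda^+\not\subset\Lambda'$. Remark~\ref{RemarkLamContains} forces $H_s\subset Z$. For any edge $E$ of $H_s$, Lemma~\ref{defining Z}\pref{ItemZPPathsInv} gives $f^k_\#(E)\in\<Z,\hat\rho_r\>$ for all $k\ge 0$, so via the bijection of Lemma~\ref{ZP is closed}\pref{Item:BEQuivZP} each $f^k_\#(E)$ lifts to a locally injective edge path in the forest $K$ between the preimages of its endpoints $f^k(u)$ and $f^k(v)$, where $u,v$ denote the endpoints of $E$. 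Because $f$ acts on the finite vertex set of $G$, the forward orbits of $u$ and $v$ are finite, so these preimages range over a finite set of vertex pairs in the finite forest $K$; and since locally injective edge paths in a forest are determined by their endpoints, the $K$-lengths of these lifts are bounded independently of $k$. This bounds the length of $f^k_\#(E)$ in $G$ uniformly in $k$, contradicting the exponential growth forced by $H_s$ being \eg. Hence $H_s\not\subset Z$, and Remark~\ref{RemarkLamContains} then delivers $\Lambda^+\subset\Lambda'$.

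For part (\ref{ItemEndLimitLambda}), given $P\in\Per_+(\wh\Phi)$ pick $k$ with $P\in\Fix_+(\wh\Phi^k)$, noting that $\phi^k$ is rotationless with the same trivial nonattracting subgroup system and that $\L(\phi^k)=\L(\phi)$, so parts (\ref{ItemEGLowest}) and (\ref{ItemEGLamInAll}) apply to $\phi^k$ as well. If $\Phi^k$ is principal for $\phi^k$, Fact~\ref{FactPrincipalRay}\pref{ItemAttrToSingRay} produces a singular ray $R$ in $G$ whose lift terminates at $P$; the induction argument from the proof of Lemma~\ref{LemmaSingularSingleton} shows the weak closure of $R$ contains some $\Lambda\in\L(\phi^k)$, and part (\ref{ItemEGLamInAll}) yields $\Lambda\supset\Lambda^+$. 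Otherwise $\Phi^k$ is non-principal, and then the two points of $\Fix_+(\wh\Phi^k)$ must be the endpoints of a lift of a generic leaf of some $\Lambda\in\L(\phi^k)$ (axis-endpoints being excluded by the Standing Assumption), so the ray converging to $P$ is one half of a birecurrent generic leaf and its weak closure equals $\Lambda\supset\Lambda^+$. The main obstacle will be the technical bookkeeping in part (\ref{ItemEGLamInAll})---identifying $f^k_\#(E)$ with a path in the forest $K$ via the bijection of Lemma~\ref{ZP is closed} and exploiting uniqueness of paths in a forest---and the case analysis of principal versus non-principal $\Phi^k$ in part (\ref{ItemEndLimitLambda}).
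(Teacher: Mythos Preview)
Your arguments for parts \pref{ItemEGLowest} and \pref{ItemEGLamInAll} are correct. Part \pref{ItemEGLowest} is essentially the paper's argument. For part \pref{ItemEGLamInAll} you take a genuinely different route: the paper simply observes that $\<Z,\hat\rho_r\>$ carries no lines (since $\A_\na\Lambda^\pm$ is trivial), so if $H_s\subset Z$ then any generic leaf of $\Lambda_s$ would be a line in $\<Z,\hat\rho_r\>$ by Lemma~\ref{defining Z}\pref{ItemZPPathsInv}, a contradiction; this is shorter than your forest-path-counting argument, though yours is also valid.

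Part \pref{ItemEndLimitLambda} has a real gap. In your non-principal case you assert that $\Fix_+(\wh\Phi^k)$ consists of \emph{two} points, but nothing prevents it from being the singleton $\{P\}$. The definition of ``principal'' only excludes $\Fix_N$ having three or more points, or exactly two points of a certain kind; a single attractor is perfectly consistent with non-principality, and with the Standing Assumption. In that case there is no generic leaf to fall back on, and the accumulation set of $P$ must be analyzed by a different mechanism. The paper handles this by showing that the corresponding lift $\ti f$ is fixed-point-free and then invoking an ``eigenray'' construction (extracted from the proof of \BookOne\ Proposition~5.4.3): one finds a vertex $v$ and a path $u$ from $v$ to $f(v)$ so that $R=u\,f_\#(u)\,f^2_\#(u)\cdots$ is a ray whose lift converges to $P$, and then a circuit built from finitely many consecutive $f^i_\#(u)$'s has its $f_\#$-iterates occurring as subpaths of the lift of $R$. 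Since every circuit is weakly attracted to $\Lambda^+$, the accumulation set of $P$ contains $\Lambda^+$. This eigenray step is not a formality --- it is exactly the content missing from your argument.
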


\begin{proof} To prove \pref{ItemEGLowest}, the stratum $H_1$ is a core graph by (Filtration), so it is noncontractible, and it is not a zero stratum, nor a fixed stratum because that would lead to a fixed conjugacy class. Neither can $H_1$ be a nonfixed \neg\ stratum because $H_0 = \emptyset$. It follows that $H_1$ is an \eg\ stratum. If $\Lambda^+$ corresponded to some \eg\ stratum higher than $H_1$ then circuits in $H_1$ would not be weakly attracted to $\Lambda^+$, a contradiction. 

To prove~\pref{ItemEGLamInAll}, by Lemma~\ref{ZP is closed}~\pref{item:ZP=NA}, $\<Z,\hat\rho_r\>$ contains no lines. For each edge $E$ of each \eg\ stratum $H_r \subset G$, it follows that $E \not\subset Z$, for otherwise by Lemma~\ref{defining Z}~\pref{ItemZPPathsInv} any leaf of the lamination associated to $H_r$ would be a line in $\<Z,\hat\rho_r\>$. Applying the definition of $Z$ and Proposition~\ref{PropInclusion} it follows that $\Lambda^+$ is a subset of each element of~$\L(\phi)$.

To prove \pref{ItemEndLimitLambda}, let $\ti f \from \Gamma \to \Gamma$ be the lift of $f$ corresponding to $\Phi$, and consider $P \in \Per_+(\hat f) = \Per_+(\wh\Phi)$.

Suppose first that $\ti f$ is a principal lift, so $P \in \Fix_+(\hat f)$. Apply Fact~\ref{FactPrincipalRay} to get a principal direction $E$ with corresponding singular ray $R$, and a lift $\wt R \subset \wt G$ of $R$ that terminates at $P$. If $E$ belongs to an \eg\ stratum $H_s$ then, by \recognition\ Lemma~3.26~(2), the accumulation set of $P$ equals the accumulation set of $E$ which equals the attracting lamination $\Lambda^+_s$ corresponding to $H_s$, and so by item~\pref{ItemEGLamInAll} the accumulation set contains $\Lambda^+$. If $E$ is \noneg\ then, by Fact~\ref{FactNEGEdgeImage}, $f(E) = E \cdot u$ where $u$ is a closed path that forms a circuit and where the turn $(u, \bar u)$ is legal. Corollary~\ref{NA is well defined} implies that $\Lambda^+$ is in the accumulation set of the sequence $f^k_\#(u)$. Item~\pref{ItemEndLimitLambda} in the \noneg\ case therefore follows from $R = E \cdot u \cdot f_\#(u) \cdot f^2_\#(u) \cdot \ldots$. 

Suppose now that $\ti f$ is not a principal lift, so $\Per_+(\hat f)$ has two points or one point. If $\Per_+(\hat f)$ has two points then they are endpoints of a lift of a generic leaf of some lamination in $\L(\phi)$, and so by item~\pref{ItemEGLamInAll} the accumulation set of each of these two points contains $\Lambda^+$. 

We may therefore assume that $\Per_+(\hat f) = \Fix_+(\hat f) = \{P\}$. It follows that $\ti f$ is fixed point free, because if it had a fixed point then that point would have at least two directions fixed by $\ti f^q$ for some $q \ge 1$, which leads to at least two points in $\Fix_+(\hat f^q)$; this follows from \recognition\ Lemma~3.26~(1) applied to $f^q$, where we rule out the possibility $\Fix(\hat f^q) = \{T_c^\pm\}$ because no conjugacy class is $\phi$-periodic.

We next extract a fact from the proof of \BookOne\ Proposition~5.4.3:

\begin{fact}\label{FactEigenray} Under the notation above, there exists a periodic vertex $v \in G$, a path $u$ from $v$ to $f(v)$ with periodic initial and terminal directions, and a lift $\ti v$ of $v$, such that the concatenation $R = u f_\#(u) f^2_\#(u) f^3_\#(u) \ldots$ is a ray, and such that the path $\ti u = [\ti v, \ti f(\ti v)]$ is a lift of $u$, and so $\wt R = \ti u \ti f_\#(\ti u) \ti f^2_\#(\ti u) \ti f^3_\#(\ti u) \ldots$ is a lift of $R$.
\end{fact}
\begin{proof} The role of the current map $\ti f \from \wt G \to \wt G$ is played by the map $h \from \Gamma \to \Gamma$ in the proof of \BookOne\ Proposition~5.4.3. Starting from the first paragraph on page 582 where it says ``We assume now that $h$ is fixed-point free'', and continuing through the second paragraph on page 583, what is proved is the existence of a vertex $v \in G$ and a lift $\ti v \in \wt G$ such that the sequence of points $\ti v, \ti f(\ti v), \ti f^2(\ti v), \ldots$ is an ordered subset of an infinite path in $\wt G$. After replacing $v$ and $\ti v$ by $f^i(v)$ and $\ti f^i(\ti v)$ for some $i$, the path $\ti u = [\ti v, \ti f(\ti v)]$ and its projection $u$ clearly satisfy the stated conclusions.
\end{proof}

To apply Fact~\ref{FactEigenray}, let $\xi$ be the ideal endpoint of $\wt R$. Since $\ti f_\#(\wt R) \subset \wt R$ it follows that $\xi \in \Fix(\hat f)$. By \recognition\ Lemma~3.15~(2) we have $\xi \in \Fix_N(\hat f)$, and so $\xi=P$. Letting $k$ be a common period of $v$ and the initial and terminal directions of~$u$, consider the circuit $\sigma = u f_\#(u) \ldots f^{k-1}_\#(u)$. Note that $f^i_\#(\sigma)$ lifts to a subpath $\ti f^i_\#(\ti u) f^{i+1}_\#(u) \ldots f^{i+k-1}_\#(u)$ of $\wt R$ for each $i$. It follows that the weak accumulation set of $P$ contains every line to which $\sigma$ is weakly attracted, which by Corollary~\ref{NA is well defined} includes every line in $\Lambda^+$. This completes the proof of Lemma~\ref{trivial Ana}.
\end{proof}

To prove Proposition \ref{prop:WA2}, we first prove that items (1) and (2) are mutually exclusive by assuming that $\gamma \in \LS(\phi^\inv) = S_{\phi^\inv} \union \bigcup_{\Lambda \in \L(\phi^\inv)} \text{(generic leaves of $\Lambda$)}$, and proving that $\gamma$ is not weakly attracted to~$\Lambda^+$. If $\gamma \in S_{\phi^\inv}$, choose $\Psi \in P(\phi^\inv)$ and a lift $\ti \gamma$ whose endpoints belong to $\Fix_+(\wh\Psi)$. Choosing an element of $F_n$ whose axis has endpoints arbitrarily close to the endpoints of $\ti\gamma$, the projection of that axis gives a closed path $\alpha$ that is weakly attracted to $\gamma$ under iteration by~$\phi^{-1}$. Like any closed path, $\alpha$ is not a generic leaf of $\Lambda^+$, by \BookOne\ Lemma 3.1.16. Since $\Lambda^+$ has arbitrarily small neighborhoods that are mapped into themselves by~$\phi$, it can not be that $\alpha$ is weakly attracted to $\Lambda^+$ under iteration by $\phi^{-1}$. It follows that $\gamma$ is not a leaf of $\Lambda^+$ and hence, since it is fixed by $\phi$, is not weakly attracted to $\Lambda^+$ under iteration by $\phi$. This proof also applies to any $\phi$-periodic generic leaf $\gamma$ of any $\Lambda_j^- \in \L(\phi^{-1})$, after passing to a positive power of~$\phi$. Since any two generic leaves of $\Lambda_j^-$ have the same weak closure, under iteration they also have the same weak accumulation set, and so no generic leaf of $\Lambda_j^-$ is weakly attracted to~$\Lambda^+$.

\bigskip

Now we prove that each $\gamma \in \B$ satisfies (1) or (2). Our proof uses the theorem of Levitt and Lustig \cite{LevittLustig:PeriodicDynamics}, specialized to the present situation where $\phi \in \Out(F_n)$ has no periodic conjugacy classes, and so by applying Fact~\ref{LemmaFixPhiFacts} to any of its representatives $\Phi \in \Aut(F_n)$ we have $\Per(\Phi) = \Per_-(\Phi) \union \Per_+(\Phi)$.

\begin{theorem}\label{LevittLustig} \cite[Theorem I]{{LevittLustig:PeriodicDynamics}} 
If $\phi \in \Out(F_n)$ has no periodic conjugacy classes in $F_n$ then there exists an integer $q \ge 1$ such that for each $\Phi \in \Aut(F_n)$ representing $\phi$ and each $\xi \in \bdy F_n$, one of the following holds:
\begin{enumerate}
\item \label{ItemRepeller}
$\xi \in \Fix_-(\wh\Phi^q)$.
\item \label{ItemAttracted}
The sequence $\wh\Phi^{qi}(\xi)$ converges to a point in $\Fix_+(\wh\Phi^q)$.
\end{enumerate}
\end{theorem}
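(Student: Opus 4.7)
The plan is to reduce to the rotationless setting and then argue combinatorially via a \ct\ representative, using the dynamics of singular rays established in Fact~\ref{FactPrincipalRay}. By Fact~\ref{FactRotationlessPower} there is $K = K(n)$ such that $\phi^K$ is rotationless, and the standing hypothesis that $\phi$ has no periodic conjugacy classes is inherited by $\phi^K$. A suitable $q$ will be a multiple of $K$ chosen large enough so that, for every $\Phi$ representing $\phi$, every periodic point of $\wh\Phi$ in $\bdy F_n$ is fixed by $\wh\Phi^q$; such $q$ exists because there are only finitely many isogredience classes of principal automorphisms representing $\phi$, and each contributes only finitely many periodic boundary points (Lemma~\ref{finite singular set}\itemref{ItemFixfinite} applied to all suitable powers). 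After this reduction I may assume $\phi$ itself is rotationless with $q=1$. Fix a \ct\ $f \from G \to G$ representing $\phi$, let $\ti f$ be the lift corresponding to $\Phi$, and identify $\wh\Phi$ with the boundary extension $\hat f$ of~$\ti f$ under $\bdy F_n \approx \bdy_\infinity \wt G$.

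Given $\xi \in \bdy_\infinity \wt G$, pick a vertex $\ti v_0 \in \wt G$ and let $\wt R$ be the unique ray from $\ti v_0$ with terminal end $\xi$, with projection $R \subset G$. I would analyze the sequence of rays $\wt R_i = \ti f^i_\#(\wt R)$ from $\ti f^i(\ti v_0)$ to $\hat f^i(\xi)$. Every finite initial segment of $R$ eventually has a completely split image under iteration by Fact~\ref{FactEvComplSplit}, and under the standing hypothesis (so no linear or exceptional path terms occur that are non-fixed), once this happens, each term of the complete splitting of $f^i_\#(R)$ that is a fixed edge, an indivisible Nielsen path, or an edge in an irreducible stratum whose direction is principal, remains a term of all later iterates in the same location. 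The initial segments of the iterates $\wt R_i$ are therefore built up, in a combinatorially controlled way, out of tiles of the form $\ti f^j_\#(\ti E)$ for initial edges $\ti E$ of directions that track the initial direction of $\wt R$.

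The dichotomy then emerges from two cases for the initial direction of $\wt R$. In Case~(a), after replacing $\wt R$ by a cofinal subray I may assume that the initial edge $\ti E$ of $\wt R$ determines a principal fixed direction, so that $\ti f(\ti E)$ is a genuine initial segment of $\wt R_1$; then Fact~\ref{FactPrincipalRay}\itemref{ItemSingRay}--\itemref{ItemSingRayToAttr} supply an attracting point $P \in \Fix_+(\hat f)$ such that the increasing union $\bigcup_i \ti f^i_\#(\ti E)$ is a singular ray converging to~$P$, and a bounded cancellation argument then forces $\hat f^i(\xi) \to P$. In Case~(b), $\ti f(\ti E)$ is not an initial segment of $\wt R_1$, so tightening introduces cancellation between $\ti f(\ti E)$ and the rest of $\ti f(\wt R)$; iterating this I either eventually fall into Case~(a) for some later $\wt R_i$, or the cancellation persists indefinitely, in which case $\wt R$ is forward-asymptotic to the lift of a singular ray of $\phi^\inv$ based at a principal fixed vertex of $\ti f^\inv$, and so $\xi \in \Fix_-(\hat f)$ by Fact~\ref{FactPrincipalRay}\itemref{ItemSingRayToAttr} applied to $\ti f^\inv$.

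The main obstacle is making Case~(b) rigorous: ``cancellation persists indefinitely'' must be converted into an actual identification of $\xi$ as a repelling fixed point. The key tool is the Bounded Cancellation Lemma, which bounds the distance the sequence $\ti f^i(\ti v_0)$ can back-track along $\wt R$ at each step; this forces either that the backtracking stops (putting us in Case~(a)) or that $\ti f^i(\ti v_0)$ converges along $\wt R$ to $\xi$, in which case the sequence $\ti f^{-i}(\ti f^i(\ti v_0)) = \ti v_0$ viewed through the lens of $\ti f^\inv$ identifies $\xi$ as the attracting limit of a principal direction for $\ti f^\inv$, i.e.\ $\xi \in \Fix_-(\hat f)$. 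The hypothesis of no periodic conjugacy classes is used crucially here: it rules out a third possibility in which $\xi$ lies on the axis of a $\phi$-periodic conjugacy class, which would give a bounded $\hat f$-orbit neither attracted nor repelled.
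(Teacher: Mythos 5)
There is a mismatch of scope here that you should be aware of at the outset: the paper does not prove this statement at all --- it is quoted verbatim as Theorem~I of Levitt--Lustig \cite{LevittLustig:PeriodicDynamics} and used as a black box (in the proof of Proposition~\ref{prop:WA2}). So you are not reconstructing an argument from the paper; you are proposing a new proof of a substantial external theorem, and as written the proposal has genuine gaps. The most serious one is that your entire Case~(a)/(b) mechanism is anchored on Fact~\ref{FactPrincipalRay}, which requires a \emph{principal} lift $\ti f$ fixing the initial vertex of the edge $\wt E$. The theorem, however, is asserted for \emph{every} $\Phi \in \Aut(F_n)$ representing $\phi$, and most lifts are not principal; in particular $\ti f$ may act on $\wt G$ with no fixed point at all, in which case there is no fixed principal direction for the initial segments of $\wt R_i$ to track, and your tile-by-tile build-up has no anchor. (The paper itself has to treat the fixed-point-free lift case by a separate device, Fact~\ref{FactEigenray}, quoting the proof of \BookOne\ Proposition~5.4.3, even for the much weaker statements it needs.) Relatedly, your reduction to a uniform exponent $q$ is not established: finiteness of isogredience classes of principal automorphisms controls $\Fix_N$ of principal lifts, but the uniform $q$ in Levitt--Lustig must work for all representatives $\Phi$, including non-principal ones whose periodic boundary points (endpoints of axes, of lifts of generic leaves, etc.) are not covered by Lemma~\ref{finite singular set}~\itemref{ItemFixfinite}; and in any case conclusion~(2) is a convergence statement about every $\xi \in \bdy F_n$, not merely about periodic points, so uniformizing periods does not by itself give the reduction to $q=1$.

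Case~(b) is also not an argument in its current form. ``Cancellation persists indefinitely'' is never converted into the existence of a fixed point of $\wh\Phi$ at $\xi$, let alone a repelling one; the displayed identity $\ti f^{-i}(\ti f^i(\ti v_0)) = \ti v_0$ carries no information about $\xi$. Worse, the branch in which ``$\ti f^i(\ti v_0)$ converges along $\wt R$ to $\xi$'' points in the wrong direction: if the forward orbit of a basepoint converges to $\xi$, then (after checking continuity at infinity) $\xi$ is a \emph{non-repelling} fixed point, so this branch cannot yield $\xi \in \Fix_-(\wh\Phi)$; for a genuine repeller the orbit $\ti f^i(\ti v_0)$ moves away from $\xi$. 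You also conflate the ray $\wt R$ (ending at $\xi$) with its images $\wt R_i$ (ending at $\wh\Phi^i(\xi)$) when discussing backtracking, and per-iterate bounded cancellation does not control cumulative backtracking over infinitely many iterates. Finally, even in Case~(a) the assertion that bounded cancellation ``forces $\wh\Phi^i(\xi) \to P$'' is exactly the hard convergence statement of the theorem and cannot be waved through: the tail of $\wt R$ beyond the growing singular segment can a priori interact with it in ways that move $\wh\Phi^i(\xi)$ off every neighborhood of $P$ along a subsequence. If you want a proof within the paper's toolkit you would need, at minimum, a separate treatment of fixed-point-free lifts and a genuine argument that every non-fixed $\xi$ is attracted, which is the content of Levitt--Lustig's paper rather than a corollary of the \ct\ facts assembled here.
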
 

\bigskip

Let $\gamma \in \B$ be realized in $G$ with height $s$. Consider the stratum $H_s$, which cannot be a zero stratum.

\subparagraph{Case A: $H_s$ is \neg.} The stratum $H_s$ is a single edge $E_{s}$ with principal initial vertex $v$. Choose a lift $\ti \gamma$ and an edge $\wt E_{s}$ in $\ti \gamma$ that projects to $E_{s}$. The lift $\ti f : \Gamma \to \Gamma$ that fixes the initial vertex $\ti v$ of $\wt E_{s}$ is principal by Fact~\ref{FactPrincipalLift}; let $\Phi \in P(\phi)$ correspond to $\ti f$. Let $P$ and $Q$ be the endpoints of $\ti \gamma$. The decomposition of $\ti \gamma$ into two rays that meet at $\ti v$ is a splitting for $\ti f$ by Lemma~4.1.4 of \BookOne. It follows that $\ti f^i_\#(\ti \gamma)$ contains $\wt E_s$ for all $i \ge 0$,
and so the sequences $\wh\Phi^i(P)$ and $\wh\Phi^i(Q)$ share no accumulation points in $\bdy F_n$. We apply Theorem~\ref{LevittLustig} to these two sequences, with one of the following outcomes. If either $P$ or $Q$ is not in $\Per_-(\wh\Phi)$ then by Theorem~\ref{LevittLustig} the sequence $\ti f^i_\#(\ti\gamma)$ weakly accumulates on a line $\ti\sigma$ that has an endpoint in $\Fix_+(\wh\Phi^q)=\Fix_+(\wh\Phi)$. Lemma~\ref{trivial Ana}~\pref{ItemEndLimitLambda} implies that the closure of $\ti \sigma$ contains $\Lambda^+$ and hence $\gamma$ is weakly attracted to $\Lambda^+$. Suppose instead that conclusion~\pref{ItemRepeller} of Theorem~\ref{LevittLustig} applies to both of $P,Q$. If $\Phi^\inv$ is principal then $\Per_-(\Phi) = \Fix_-(\Phi)$ and so $\gamma \in S_{\phi^\inv}$. If $\Phi^\inv$ is not principal then the set $\Per_-(\Phi)$, which contains both of $P$ and $Q$, cannot contain any other points, and moreover the line $\ti\gamma$ between those two points is lift of a generic leaf of a lamination in $\L(\phi^\inv)$.


\subparagraph{Case B: $H_s$ is \eg.} By Lemma~\ref{trivial Ana}~\pref{ItemEGLamInAll}, the lamination $\Lambda_{s}^+$ associated to $H_{s}$ contains $\Lambda^+$. If $\gamma$ is weakly attracted to $\Lambda^+_s$ then it is weakly attracted to $\Lambda^+$ and we are done. We are also done if $\gamma$ is a generic leaf of~$\Lambda^-_s$. Letting $f' \from G' \to G'$ be a \ct\ representing $\phi^\inv$ with \eg\ stratum $H'_t$ such that $[G_s] = [G'_t]$, and letting $\gamma'$ be the realization of $\gamma$ in $G'_t$, we can apply Lemma~\ref{nonGeometricFullHeightCase} because $H_s$ is nongeometric. We are therefore reduced to the case that $\gamma'$ contains a singular ray $R$ of~$\phi^\inv$. Choose a lift $\wt R$ of~$R$, a lift $\ti\gamma'$ of $\gamma'$ containing $\wt R$, and a principal lift $\ti f'$ of $f'$ such that $\ti f'(\wt R) = \wt R$. Let $\Psi \in P(\phi^\inv)$ correspond to $\ti f'$, and let $\Phi = \Psi^\inv$. Let $P \in \Fix_+(\wh\Psi)$ be the endpoint of $\wt R$. Let $Q$ be the endpoint of $\ti\gamma'$ opposite $P$. If $Q \in \Fix_+(\wh \Psi)$ then $\gamma \in S_{\phi^\inv}$ and we are done. If $Q \not\in \Fix_+(\wh\Psi) = \Fix_-(\wh\Phi)$ then by Theorem~\ref{LevittLustig} the sequence $\wh\Phi^{qi}(Q)$ converges to some $Q' \in \Fix_+(\wh\Phi^q)$ for some $q \ge 1$. By Lemma~\ref{trivial Ana}~\pref{ItemEndLimitLambda} the accumulation set of $Q'$ contains $\Lambda^+$. Since $P \in \Fix_-(\wh\Phi^q)$ and $Q' \in \Fix_+(\wh\Phi^q)$ are not equal it follows that $\ti f^{qi}_\#(\ti\gamma)$ accumulates on the line $\wt\sigma$ with endpoints $P,Q'$, which projects to a line $\sigma$ whose weak limit set contains $\Lambda^+$, and so $\gamma$ is weakly attracted to~$\Lambda^+$.
\end{proof}

The next corollary is used in Step~3 of Section~\ref{SectionLooking}.
 
\begin{corollary} \label{minimal set} For any rotationless $\phi \in \Out(F_n)$ and $\Lambda^\pm \in \L^\pm(\phi)$ such that $\A_\na \Lambda^\pm$ is trivial, the set $\LS(\phi)$ is closed and is equal to $S_\phi \union \bigcup_{\Lambda \in \L(\phi)} \Lambda$. Furthermore, the closure of every element of $\LS(\phi)$ contains $\Lambda^+$.
\end{corollary}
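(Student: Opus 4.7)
The plan is to derive all three conclusions from Proposition \ref{prop:WA2} together with the openness of the basin of attraction of $\Lambda^+$ (Remark \ref{basin is open}) and the results about singular rays already established. First I would use the ``exactly one'' clause in Proposition \ref{prop:WA2} to identify $\LS(\phi)$ as the complement in $\B$ of the set of lines weakly attracted to $\Lambda^+$; since the latter set is open by Remark \ref{basin is open}, this immediately shows that $\LS(\phi)$ is closed.

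For the equality $\LS(\phi) = S_\phi \union \bigcup_{\Lambda \in \L(\phi)} \Lambda$, the $\subseteq$ inclusion is immediate from the definition of $\LS(\phi)$. For $\supseteq$, we have $S_\phi \subseteq \LS(\phi)$ tautologically, and for each $\Lambda \in \L(\phi)$ any generic leaf lies in $\LS(\phi)$ and has weak closure equal to $\Lambda$; since $\LS(\phi)$ is closed this forces $\Lambda \subseteq \LS(\phi)$. As a consequence (not needed for the proof but worth noting), every leaf of every $\Lambda \in \L(\phi)$ must in fact be either a singular line or a generic leaf of some, possibly different, attracting lamination.

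For the last clause, I would argue from the original definition of $\LS(\phi)$, treating the two types of elements separately. If $\gamma$ is a generic leaf of some $\Lambda \in \L(\phi)$, then the weak closure of $\gamma$ is exactly $\Lambda$, which contains $\Lambda^+$ by Lemma \ref{trivial Ana}\pref{ItemEGLamInAll}. If instead $\gamma \in S_\phi$, then Lemma \ref{LemmaSingularLine} lets me write $\gamma = \overline R \alpha R'$ with $R, R'$ singular rays; picking a lift $\ti R$ of $R$ and a principal $\Phi \in P(\phi)$ so that $\ti R$ terminates at a point $P \in \Fix_+(\wh\Phi)$ (Fact \ref{FactPrincipalRay}\pref{ItemSingRayToAttr}), Lemma \ref{trivial Ana}\pref{ItemEndLimitLambda} says that the accumulation set of $P$, which equals the weak accumulation set of the ray $R$, contains $\Lambda^+$; since this is contained in the weak closure of $\gamma$, we are done. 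The only subtle point is ensuring that the equality in the second clause properly accommodates non-generic leaves of an attracting lamination, and this is precisely the content of the closedness argument above; once closedness is in hand, no further structural analysis of non-generic leaves is needed.
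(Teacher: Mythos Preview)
Your overall strategy is correct and matches the paper's, but there is a directional slip in your first step. Proposition~\ref{prop:WA2}, applied literally to $\phi$, identifies the complement of the basin of attraction of $\Lambda^+$ as $\LS(\phi^{-1})$, not $\LS(\phi)$. To conclude that $\LS(\phi)$ is closed you must instead apply Proposition~\ref{prop:WA2} with $\phi^{-1}$ in place of $\phi$ (and $\Lambda^-$ in place of $\Lambda^+$): then $\LS(\phi)$ is the complement of the set of lines weakly attracted to $\Lambda^-$ under iteration by $\phi^{-1}$, and it is \emph{that} basin which is open (Remark~\ref{basin is open} applied to a relative train track representative of $\phi^{-1}$). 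This is precisely what the paper does.

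Once this correction is made, your arguments for the equality clause and for the final clause are correct and essentially coincide with the paper's. Your treatment of the final clause is slightly more explicit: where the paper simply cites Lemma~\ref{trivial Ana}\pref{ItemEGLamInAll} and~\pref{ItemEndLimitLambda}, you unpack the singular-line case via Lemma~\ref{LemmaSingularLine} and Fact~\ref{FactPrincipalRay}\pref{ItemSingRayToAttr} before invoking Lemma~\ref{trivial Ana}\pref{ItemEndLimitLambda}. Both routes amount to the same thing.
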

 
\begin{proof} The set of lines weakly attracted to~$\Lambda^-$ under iteration of $\phi^\inv$ is an open subset of $\B$ which, by Proposition~\ref{prop:WA2}, is the complement of $\LS(\phi)$, proving that $\LS(\phi)$ is closed. Clearly $\LS(\phi) \subset S_\phi \union \bigcup_{\Lambda \in \L(\phi)} \Lambda$, and the opposite inclusion follows since $\LS(\phi)$ is closed and each $\Lambda \in \L(\phi)$ is the closure of any of its generic leaves. By Lemma~\ref{trivial Ana}~\pref{ItemEGLamInAll} and~\pref{ItemEndLimitLambda}, the closure of each element of $\LS(\phi)$ contains $\Lambda^+$.
\end{proof}

The next corollary, a uniform version of Proposition~\ref{prop:WA2}, is used in Step~2 of Section~\ref{SectionLooking}.

\begin{corollary} \label{cor:WA2} Suppose that $\phi \in \Out(F_n)$ is rotationless, that $\Lambda^\pm \in \L^\pm(\phi)$ and that $\A_{na}(\Lambda^\pm)$ is trivial. Let $V_+$ be an attracting neighborhood of $\Lambda^+$. If $(\gamma_n)_{n \ge 1}$ is a sequence of lines and $i_n \to +\infinity$ is a sequence such that $\phi^{i_n}(\gamma_n) \not \in V_+$, then every weak limit of the sequence $(\gamma_n)$ is in $\LS(\phi^\inv)$. 
\end{corollary}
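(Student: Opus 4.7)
The plan is to reduce to Proposition~\ref{prop:WA2} by a continuity argument. Let $\gamma$ be a weak limit of the sequence $(\gamma_n)$; by Proposition~\ref{prop:WA2}, either $\gamma$ is weakly attracted to $\Lambda^+$ under $\phi$, or $\gamma \in \LS(\phi^\inv)$, and I will show the first alternative is ruled out by the hypothesis $\phi^{i_n}(\gamma_n) \notin V_+$.

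First I would replace $V_+$ by a smaller attracting neighborhood $V'_+ \subset V_+$ of a generic leaf of $\Lambda^+$ satisfying $\phi(V'_+) \subset V'_+$. Such neighborhoods exist by the observation made in the first paragraph of the proof of Proposition~\ref{prop:WA1}: generic leaves of $\Lambda^+$ admit a weak neighborhood basis of $\phi$-invariant attracting neighborhoods. Since $V'_+ \subset V_+$, the hypothesis immediately yields $\phi^{i_n}(\gamma_n) \notin V'_+$ for all~$n$.

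Next, supposing for contradiction that $\gamma$ is weakly attracted to $\Lambda^+$ under $\phi$, I would argue as follows. Fix $m$ large enough that $\phi^m(\gamma) \in V'_+$. By continuity of $\phi^m$ acting on $\B$ together with openness of $V'_+$, there is a weak neighborhood $W$ of $\gamma$ with $\phi^m(W) \subset V'_+$. Since $\gamma_n \to \gamma$ weakly, $\gamma_n \in W$ for all sufficiently large~$n$, whence $\phi^m(\gamma_n) \in V'_+$ for such~$n$. The $\phi$-invariance of $V'_+$ then gives $\phi^j(\gamma_n) \in V'_+$ for every $j \ge m$, and choosing $n$ large enough that $i_n \ge m$ yields $\phi^{i_n}(\gamma_n) \in V'_+$, contradicting our choice of $V'_+$. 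Hence $\gamma \in \LS(\phi^\inv)$.

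The main (minor) subtlety is the need to first pass to a $\phi$-invariant attracting neighborhood before invoking continuity: without forward invariance, continuity only places $\phi^m(\gamma_n)$ in $V_+$ for one fixed $m$, and that does not suffice to match the diverging exponents $i_n$. All other ingredients, namely Proposition~\ref{prop:WA2} itself and the continuity of the $\Out(F_n)$ action on $\B$, are already available from the preceding material.
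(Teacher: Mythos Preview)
Your proof is correct and follows essentially the same approach as the paper's: assume a weak limit $\gamma$ is weakly attracted to $\Lambda^+$, use continuity to get $\phi^k(\gamma_n)\in V_+$ for large $n$, then use forward invariance to push the exponent up to $i_n$, contradicting the hypothesis, and finish with Proposition~\ref{prop:WA2}. The only difference is that you explicitly pass to a smaller forward-invariant attracting neighborhood $V'_+\subset V_+$, whereas the paper simply uses the forward invariance of $V_+$ directly; your extra care here is harmless (and arguably cleaner, given that the definition of attracting neighborhood in the paper does not literally include forward invariance).
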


\begin{proof} Consider a weak limit $\gamma$ of the sequence $(\gamma_n)$, and suppose that $\gamma$ weakly attracted to $\Lambda^+$, so there exists $k \ge 0$ such that $\phi^k(\gamma) \in V^+$. If $n$ is sufficiently large it then follows that $\phi^k(\gamma_n) \in V^+$. In particular, choosing $n$ so that $i_n \ge k$, it follows that $\phi^{i_n}(\gamma_n) \in V_+$, in contradiction to the hypothesis. We conclude that $\gamma$ is not weakly attracted to $\Lambda^+$ and so the corollary follows from Proposition~\ref{prop:WA2}.

 \end{proof}

\section{Invariant vertex groups and invariant free factors} \label{transversality}
In certain situations we have an element of $\Out(F_n)$ with only an invariant vertex group system of the form $\A_\na\Lambda^\pm$, but we want an invariant free factor system, which is supplied by Proposition~\ref{PropVertToFree}. Corollary~\ref{ZP is not invariant} will be used in the proof of Proposition~\ref{PropUniversallyAttracting}.  

Recalling that $\Out(F_n)$ acts on conjugacy classes of subgroups of $F_n$, it also acts on subgroup systems. The stabilizer of a subgroup system is therefore a well-defined subgroup of $\Out(F_n)$.

\begin{proposition} 
\label{PropVertToFree}
If $\phi \in \Out(F_n)$ is rotationless and reducible, if $\Lambda^\pm \in \L^\pm(\phi)$, and if $\A_\na\Lambda^\pm$ is nontrivial, then there exists a proper, nontrivial free factor system $\B$ and a finite index subgroup $K \subgroup \Stab(\A_\na\Lambda^\pm)$ such that:
\begin{enumerate}
\item $K \subgroup \Stab(\B)$ 
\item \label{ItemNonGeomFF}
If $\Lambda^\pm$ is not geometric then $\B = \A_\na \Lambda^\pm$.
\item \label{ItemGeomFF}
If $\Lambda^\pm$ is geometric then, choosing a \ct\ $f \from G \to G$ representing $\phi$ with geometric \eg\ stratum $H_r$ corresponding to $\Lambda^+$, and 
adopting the notation of Definition~\ref{DefGeometricStratum}, we have:
\begin{enumerate}
\item \label{ItemOnlyOne}
If $m = 0$ then $\B = \A_\supp [\pi_1 S]$.
\item \label{ItemTwoOrMore}
If $m \ge 1$ then $\B = \A_\supp\{[\bdy_1 S],\ldots,[\bdy_m S]\}$
\end{enumerate}
\end{enumerate}
\end{proposition}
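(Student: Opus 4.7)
My plan divides according to items (2), (3a), (3b) of the proposition, with (3b) the focus of the delicate transversality argument advertised in the introduction.

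For the nongeometric case (2), the result is immediate from Proposition~\ref{PropVerySmallTree}\pref{ItemA_naNoNP}, which says $\A_\na\Lambda^\pm$ is itself a free factor system. Set $\B = \A_\na\Lambda^\pm$ and $K = \Stab(\A_\na\Lambda^\pm)$. Condition~(1) is tautological, $\B$ is nontrivial by hypothesis, and $\B$ is proper because a generic leaf of $\Lambda^+$ is weakly attracted to $\Lambda^+$ and hence not carried by $\A_\na\Lambda^\pm$ (Corollary~\ref{NA is well defined}), so $\A_\na\Lambda^\pm \ne [F_n]$.

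For the geometric case, fix a \ct\ $f \from G \to G$ and adopt the notation of Definition~\ref{DefGeometricStratum}. Since $\A_\na\Lambda^\pm$ is a finite set, the permutation action of $\Stab(\A_\na\Lambda^\pm)$ on it has a finite-index kernel $K_0$ that fixes every conjugacy class in $\A_\na\Lambda^\pm$. In particular, the cyclic conjugacy class $[\bdy_0 S] = [\rho_r]$, which is the fundamental group of the loop component $E_\rho$ of the graph $K$, lies in $\A_\na\Lambda^\pm$ and is fixed by $K_0$. In sub-case (3a) with $m = 0$, the chain of equalities established inside the proof of Proposition~\ref{PropGeomEquiv} collapses to $\A_\supp[\pi_1 S] = \A_\supp\{[\bdy_i S] : 0 \le i \le m\} = \A_\supp[\rho_r]$; since $\A_\supp$ depends only on the conjugacy class of the cyclic subgroup $\<\rho_r\>$, $K_0$ already stabilizes $\B$ and we take $K = K_0$.

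Sub-case (3b), $m \ge 1$, is the main obstacle. Each class $[\bdy_i S] = [\alpha_i]$ with $i \ge 1$ sits as a specific primitive conjugacy class inside a vertex group $[\pi_1 C_i] \in \A_\na\Lambda^\pm$, where $C_i \subset Z$ is the component of the nonattracting subgraph containing the circuit $\alpha_i \subset G_{r-1}$; a priori $\theta \in K_0$ fixes $[\pi_1 C_i]$ but can move $[\alpha_i]$ to any conjugacy class of $\pi_1 C_i$, so fixing the vertex groups is not enough. The plan is to characterize the finite collection $\{[\alpha_i] : 1 \le i \le m\}$, up to permutation and inversion, intrinsically from $\A_\na\Lambda^\pm$, by using the very small $F_n$-tree $T_\infinity$ realizing $\A_\na\Lambda^\pm$ constructed in the proof of Proposition~\ref{PropVerySmallTree} (Case~3). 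The idea is that the minimal $\pi_1 S$-invariant subtree of $T_\infinity$ is isometric to the tree dual to the geodesic lamination $\Lambda^u$ on $S$ (via the geometric model and Proposition~\ref{PropGeomLams}), whose point stabilizers are precisely the peripheral cyclic subgroups $\<\bdy_i S\>$, $0 \le i \le m$; the classes $[\alpha_i]$ with $i \ge 1$ appear as those peripheral stabilizers sitting inside the various $\pi_1 C_i$. Once this intrinsic detection is established, any $\theta \in K_0$ permutes the finite collection up to inversion, and the stabilizer of this collection inside $K_0$ is the desired finite-index subgroup $K$. Nontriviality of $\B$ holds since $m \ge 1$, and properness follows from $\B \sqsubset [G_{r-1}]$, which is a proper free factor system because the \eg\ stratum $H_r$ is nonempty.

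The main obstacle is rigorously establishing the ``intrinsic detection'' of the peripheral classes $[\alpha_i]$ from $T_\infinity$ alone --- showing that although $[\pi_1 S]$ is not itself a vertex group of $T_\infinity$, the action of $\pi_1 S$ on a canonical minimal subtree of $T_\infinity$ has enough rigidity to single out its parabolic cyclic subgroups up to the finite ambiguity accommodated by $\Stab(\A_\na\Lambda^\pm)$. This is the delicate transversality rigidity statement promised in the introduction; its proof will combine the pseudo-Anosov dynamics of $\psi$ on $S$ (Fact~\ref{FactPsAnWeakAttr}), the filling property of $\Lambda^u$, and the combinatorics of the blow-up construction that produces $T_\infinity$ from the vertex groups of $\A_\na\Lambda^\pm$.
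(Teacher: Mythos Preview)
Your handling of the nongeometric case is fine and matches the paper. The geometric case, however, diverges from the paper's argument in a way that leaves a genuine gap.

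In sub-case (3b) you propose to detect the classes $\{[\alpha_i]\}$ ``intrinsically from $\A_\na\Lambda^\pm$'' via the tree $T_\infty$ and the minimal $\pi_1 S$-subtree. The difficulty is that neither $T_\infty$ nor $[\pi_1 S]$ is an invariant of the subgroup system $\A_\na\Lambda^\pm$: many non-isometric very small trees realize the same vertex group system, and $[\pi_1 S]$ is defined from $\phi$ and the geometric model, not from $\A_\na\Lambda^\pm$. An element $\theta\in\Stab(\A_\na\Lambda^\pm)$ carries $T_\infty$ to some other tree with the same vertex group system, and there is no a priori reason $\theta$ should fix $[\pi_1 S]$; indeed, proving $\theta([\pi_1 S])=[\pi_1 S]$ is precisely the content of the transversality argument you are trying to circumvent. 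So your plan, as stated, is circular: the ``intrinsic detection'' you hope for would already require knowing what the proposition is meant to prove. (There is a smaller issue in (3a) as well: your claim that $E_\rho$ is a loop \emph{component} of $K$, so that $[\langle\rho_r\rangle]\in\A_\na\Lambda^\pm$, requires the basepoint of $\rho_r$ to lie outside $Z$, which is not automatic when $Z$ contains edges of strata above $H_r$.)

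The paper's route is topological rather than tree-theoretic. One works in the $2$-complex $X$ built from the geometric model, in which $K=Z\cup\bdy_0 S$ embeds and $S'$ maps via $j$. A given $\theta\in\Stab(\A_\na\Lambda^\pm)$ is represented by a homotopy equivalence of pairs $\Psi\from(X,K)\to(X,K)$. Putting a locally $\CAT(-1)$ metric on $X$ and making $\Psi\circ j\restrict S'$ transverse to the midpoints of the free edges of $X$, one shows the preimage $1$-manifold has no essential pieces (circles are inessential by $\pi_1$-injectivity; nonperipheral arcs are ruled out by the $\CAT(-1)$ uniqueness of geodesics comparing $K\subset L$ against $j(\interior S')$). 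Pushing off, one gets $\Psi(j(S''))\subset j(\interior S')$, hence $\theta[\pi_1 S]\sqsubset[\pi_1 S]$, hence equality by Scott's result. Finally $[\bdy S]$ is characterized as the set of conjugacy classes carried by both $[\pi_1 S]$ and $\A_\na\Lambda^\pm$, so $\theta$ permutes it; passing to the kernel of this permutation gives the finite-index $K$ fixing $\{[\bdy_1 S],\ldots,[\bdy_m S]\}$. This argument supplies exactly the rigidity (invariance of $[\pi_1 S]$) that your tree-based sketch assumes without proof.
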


It immediately follows that:

\begin{corollary} \label{ZP is not invariant} If $H \subgroup \Out(F_n)$ is fully irreducible, if $H_0 \subgroup H$ has finite index, if $\phi \in H$ is rotationless and reducible, if $\Lambda^\pm \in \L^\pm(\phi)$, and if $\A_{\na} \Lambda^\pm$ is nontrivial, then there exists $\psi \in H_0$ such that $\psi(\A_\na \Lambda^\pm) \ne \A_\na \Lambda^\pm$. 
\qed\end{corollary}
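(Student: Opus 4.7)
The plan is to argue by contraposition: assume that every $\psi \in H_0$ satisfies $\psi(\A_\na\Lambda^\pm) = \A_\na\Lambda^\pm$, i.e.\ that $H_0 \subgroup \Stab(\A_\na\Lambda^\pm)$, and derive a contradiction with the full irreducibility of $H$ by producing a finite index subgroup of $H$ which preserves some proper nontrivial free factor system.

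First I would apply Proposition~\ref{PropVertToFree} to $\phi$ and $\Lambda^\pm$, which applies precisely because $\phi$ is rotationless and reducible and $\A_\na\Lambda^\pm$ is assumed nontrivial. This hands over a proper nontrivial free factor system $\B$ and a finite index subgroup $K \subgroup \Stab(\A_\na\Lambda^\pm)$ with $K \subgroup \Stab(\B)$. Note that the existence of $\B$ and $K$ does not depend on $H$ or $H_0$ in any way; they are determined purely by the reducible element $\phi$ and its lamination pair.

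Next, set $H_1 = H_0 \cap K$. Under the contrapositive assumption $H_0 \subgroup \Stab(\A_\na\Lambda^\pm)$, the subgroup $K$ has finite index in $\Stab(\A_\na\Lambda^\pm)$ and therefore $H_1 = H_0 \cap K$ has finite index in $H_0$. Since $H_0$ has finite index in $H$ by hypothesis, $H_1$ has finite index in $H$ as well. By construction $H_1 \subgroup K \subgroup \Stab(\B)$, so $H_1$ is a finite index subgroup of $H$ which preserves the proper nontrivial free factor system $\B$, that is, $H_1$ is reducible. This contradicts the assumption that $H$ is fully irreducible, which by definition means that every finite index subgroup of $H$ is irreducible. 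Hence there must exist $\psi \in H_0$ with $\psi(\A_\na\Lambda^\pm) \ne \A_\na\Lambda^\pm$.

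Since Proposition~\ref{PropVertToFree} does the essential work of passing from a vertex group system to a free factor system, there is no real obstacle here; the argument is just bookkeeping with finite index subgroups and the definition of full irreducibility. The only minor point worth stating carefully is that one uses $K$ to have finite index in $\Stab(\A_\na\Lambda^\pm)$ (rather than in $\Out(F_n)$) together with the containment $H_0 \subgroup \Stab(\A_\na\Lambda^\pm)$ to conclude that $H_0 \cap K$ is still of finite index in $H_0$.
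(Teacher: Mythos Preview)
Your argument is correct and is exactly the one the paper has in mind: the corollary is stated immediately after Proposition~\ref{PropVertToFree} with only ``It immediately follows that'' as justification, and your contrapositive plus finite-index bookkeeping is the intended (and only natural) way to cash that out.
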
 

\begin{proof}[Proof of Proposition \ref{PropVertToFree}] When $\Lambda^\pm$ is nongeometric then by Proposition~\ref{PropVerySmallTree} the vertex group system $\A_\na\Lambda^\pm$ is a free factor system, and it is clearly proper, so the proposition follows with $\B = \A_\na\Lambda^\pm$ and $K = \Stab(\A_\na\Lambda^\pm)$.

Henceforth we assume that $\Lambda^\pm$ is geometric. Choose a \ct\ $f \from G \to G$ with \eg\ geometric stratum $H_r$ corresponding to $\Lambda^+$, and adopt the notation of Definition~\ref{DefGeometricStratum}. For the proof we use $[\bdy S]$ as a shorthand notation for the finite set of conjugacy classes $\{[\bdy_0 S],[\bdy_1 S],\ldots,[\bdy_m S]\}$.

We shall prove:
\begin{itemize}
\item The action of $\Stab(\A_\na\Lambda^\pm)$ on conjugacy classes of elements of $F_n$ preserves the set $[\bdy S]$, and the action on conjugacy classes of subgroups fixes $[\pi_1 S]$.
\end{itemize}
We use this to finish the proof as follows.

In case \pref{ItemOnlyOne}, $\pi_1 S$ is a nontrivial free factor of $F_n$, and so $\A_\supp[\pi_1 S]=[\pi_1 S]$. Moreover $\pi_1 S$ is a proper free factor, because the restriction of $\phi$ to $\Out(\pi_1 S)$ is fully irreducible, whereas $\phi$ is not fully irreducible as an element of $\Out(F_n)$. Since $\Stab(\A_\na\Lambda^\pm)$ preserves $[\pi_1 S]$, we are finished.

In case \pref{ItemTwoOrMore} clearly the free factor system $\A_\supp\{[\bdy_1 S],\ldots,[\bdy_m S]\}$ is nontrivial, and it is proper since each of the conjugacy classes $[\bdy_1 S],\ldots,[\bdy_m S]$ is supported in $[G_{r-1}]$. Since the group $\Stab(\A_\na\Lambda^\pm)$ preserves the set $\{[\bdy_0 S],[\bdy_1 S],\ldots,[\bdy_m S]\}$, the kernel of its action on this finite set is a finite index subgroup $K$ that preserves the set $\{[\bdy_1 S],\ldots,[\bdy_m S]\}$. It follows that $K$ preserves the free factor system $\A_\supp\{[\bdy_1 S],\ldots,[\bdy_m S]\}$ and we are done.

\bigskip

The remainder of the proof is mostly focussed on showing that $\Stab(\A_\na\Lambda^\pm)$ fixes $[\pi_1 S]$, after which it will quickly follow that the set $[\bdy S]$ is preserved as well.

Recall the geometric model for $\A_\na\Lambda^\pm$ given in Section~\ref{SectionAsubNA}, consisting a 2-complex $X$ obtained by gluing $Y$ and $G$ along $G_r$, and an extension of the deformation restriction $d \from Y \to G_r$ to a deformation retraction $d \from X \to G$, allowing us to identify $K = Z \union \bdy_0 S \subset X$, and to identify the immersion $K \to G$ with the restriction of $d$. We therefore have an equation of subgroup systems $\A_\na\Lambda^\pm = [d_* \pi_1(K)]$.

Fix $\psi \in \Stab(\A_\na\Lambda^\pm)$. Since $\A_\na \Lambda^\pm = [d_* \pi_1(K)]$ it follows that $\psi$ is represented by a homotopy equivalence of pairs $\Psi \from (X,K) \to (X,K)$. As the proof proceeds, we will apply various homotopies to $\Psi$, always homotoping through maps of pairs $(X,K) \mapsto (X,K)$.

Our goal being to prove that $\psi$ stabilizes $[S]$, we shall study the map $S \to X \xrightarrow{\Psi} X$. It will be easier to work with the surface $S'$ that is obtained from $S \union \A$ by making the same identification of $\bdy_i S$ with a component of $\bdy \A_i$ as is made under the quotient map $S \union \A \union G_{r-1} \mapsto Y$. Each annulus $\A_i$ embeds as a collar neighborhood of a component of $\bdy S'$ denoted $\bdy_i S'$, and there is one more component of $\bdy S'$, denoted $\bdy_0 S'$, that coincides with $\bdy_0 S$. Denote $\bdy_- S' = \union_{1 \le i \le m} \bdy_i S'$. Let $j \from S' \to X$ be the map induced by composing the quotient map to $Y$ with the inclusion $Y \subset X$. Note that $j(\bdy S') \subset K$. With appropriate choice of base points, the image of the inclusion $\pi_1(S) \inject F_n$ is equal to $j_*(\pi_1 S')$, and so the conjugacy classes of these subgroups are equal, an equation which we notate by writing $[\pi_1 S] = [\pi_1 S']$. Also, for $i=1,\ldots,m$ the restriction of $j$ to $\bdy_i S'$ is a parameterization of the circuit $\alpha_i$, so the subgroup system denoted either $[\alpha_i]$ or $[\bdy_i S]$ can also be denoted $[\bdy_i S']$. 

Let $L = X - \interior (j(S')) = (G \setminus H_r) \union \bdy_0 S$, a subcomplex of $X$ containing $K=Z\union \bdy_0 S$. Note also that $L$ contains every free edge of $X$, meaning an edge $E$ such that $E - \bdy E$ is an open subset of $X$; these are precisely the edges that are not contained in $j(S')$.

Notice that although the set 
$$j(\interior(S')) = (\text{the $j$-image of the manifold interior of $S'$})
$$
contains the set 
$$\interior(j(S')) = (\text{the interior of $j(S')$ relative to $X$})
$$
these sets need not be equal. In fact we have
$$j(\interior(S')) - \interior(j(S')) = j(\interior(S')) \intersect L = j(\interior(S')) \intersect (\union_{s > r} H_s)
$$
which is a finite subset of the vertex set of $H_r$ that we call the \emph{attaching set} of $j(\interior(S'))$, it being where the higher strata of $G$ are attached to $j(\interior(S'))$ in $X$.

We shall need a metric on $X$ whose restriction to $Y$ is slightly different from the metric previously described in Section~\ref{SectionGeometric}. Start with any geodesic metric on $G_{r-1}$. This induces a metric on each component of $\bdy_- S'$, which extends to a hyperbolic structure with totally geodesic boundary on $S'$. Now extend to a geodesic metric on $\union_{s>r} H_s$. The advantage of this metric is that it is locally $\CAT(-1)$ (unlike the previous metric on $Y$, in which the annuli $\A$ were Euclidean products), and so every nontrivial conjugacy class in $F_n$ is represented by a unique closed geodesic in $Y$ \cite{BridsonHaefliger}. 

 We shall repeatedly make use of the following observations. First, for any conjugacy class of $F_n$ that is represented by a loop in~$L$, the corresponding closed geodesic in $X$ is contained in $L$; the same is true for any subcomplex of $L$, such as $K$. Second, for any conjugacy class that is in $[S] - \union_{0 \le i \le m} [\bdy_i S]$ the corresponding closed geodesic is not contained in~$L$, because that conjugacy class is represented by a loop that lifts to $S'$ and is not homotopic into $\bdy S'$, and can therefore be straightened to a geodesic in the hyperbolic structure on $S'$ which is entirely contained in $\interior(S')$, and then mapped back via $j$ to a geodesic in $X$ whose intersection with $L$ is contained in the attaching set of $j(\interior(S'))$. These observations follow from the fact that each of $L$ and $j(\interior(S'))$ are locally convex with respect to the $\CAT(-1)$ metric on $X$.

Let $S'' \subset S$ be a surface obtained by removing from $S$ a collar neighborhood of $\bdy_0 S$, and so $S''$ is obtained from $S'$ by removing a collar neighborhood of $\bdy S'$.

There are now two stages in our strategy for proving that $\psi[S]=[S]$: 
\begin{description}
\item[(I)] We may choose $\Psi$ within its homotopy class so that $\Psi\composed j(S'') \subset j(S')$.
\item[(II)] We may homotope the map $\Psi \composed j \from S'' \to j(S')$ to a map $S'' \mapsto j(\interior(S'))$.
\end{description}
Clearly (II) implies that $\psi[S] \sqsubset [S]$, using that $S''$ is a deformation retract of $S$ which is in turn a deformation retract of $S'$, and that $j$ restricts to a $\pi_1$-injective embedding of $\interior(S')$. By a result of Peter Scott found in Lemma~6.0.6 of \BookOne, it follows that $\psi[S]=[S]$.

Note that (II) does not follow immediately from (I), because even though $j(\interior(S')) \intersect j(\bdy S') = \emptyset$, nonetheless we do not expect either of the maps $j \restrict \bdy S'$ or $\Psi \composed j \restrict S''$ to be injective, so even if (I) is known to be true there is still work to do to pull the map $\Psi \composed j \restrict S''$ off of $j(\bdy S')$.

\bigskip

First we prove (I). Let $\E$ be the set of midpoints of all free edges of $X$. By standard differential topology methods \cite{Hirsch:DiffTop} we may perturb the map $\Psi$ by a homotopy so that the map $\Psi \composed j \from S' \to X$ is transverse to the set $\E$, which implies that $\tau = (\Psi \composed j)^\inv(\E)$ is a properly embedded compact 1-manifold in the compact surface~$S'$.

We claim that if $\Psi \from (X,K) \to (X,K)$ is chosen in its homotopy class so $\Psi \composed j \from S' \to X$ is transverse to $\E$ and $\tau$ has the minimal number of components, then each component of $\tau$ is a peripheral arc, meaning a properly embedded arc in $S'$ that is isotopic rel endpoints into $\bdy S'$. 

Once this claim is proved, we can then homotope $\Psi$ to arrange that $\tau$ is contained in any collar neighborhood of $\bdy S'$, in particular in $S' - S''$. After this homotopy, $\Psi(j(S''))$ is a compact subset of $X$ disjoint from $\E$, and therefore disjoint from some regular neighborhood $N(\E)$. Then we use the fact that $j(S')$ is a deformation retract of $X-N(\E)$; in fact there is a map of triples $(X,K,X-N(\E)) \mapsto (X,K,j(S'))$ which restricts to the identity on $j(S')$ and which is homotopic to the identity. Composing $\Psi$ with this map, we obtain a further homotopy of $\Psi$ after which $\Psi(j(S'')) \subset j(S')$, thereby proving~(I).

We prove the claim by descent: assuming that $c$ is a component of~$\tau$ which is not a peripheral arc, we shall homotope $\Psi$ to eliminate $c$ and reduce the number of components of $\tau$. This component $c$ is either a circle or a nonperipheral arc, and we consider these two cases separately. Let $e \in \E$ be the point for which $\Psi(j(c))=e$. 

\subparagraph{Case 1:} Suppose that $c$ is a circle. Since $\Psi \composed j \from S' \to X$ is $\pi_1$-injective, $c$ must be homotopically trivial, and so it bounds a closed disc $D$ contained in the interior of $S'$. Since $c \intersect \bdy S' = \emptyset$, the disc $D$ embeds in $X$ and we identify it with its image in $X$. By perturbing $\Psi$ we may assume that $c$ is disjoint from the attaching points of $j(\interior(S'))$, and so any regular neighborhood of $c$ in the 2-complex $X$ is an annulus disjoint from the attaching points of $j(\interior(S'))$. By transversality we may choose such a regular neighborhood $N(c)$, and we may choose a regular neighborhood $N(e)$ of $e$, such that $N(e)$ is an arc and the restriction of $\Psi$ is a fibration of $N(c)$ over $N(e)$ with circle fibers, the fiber over $e$ being $c = \bdy D$. Consider the disc $D' = D \union N(c)$, so $\Psi \restrict \bdy D'$ is a constant, equal to one of the two endpoints of $N(e)$. Since the 2-complex $X$ is aspherical, the map $\Psi \restrict D'$ is homotopic rel $\bdy D'$ to a constant. Furthermore, by the homotopy extension lemma \cite{Spanier}, this homotopy extends to a homotopy of all of $\Psi$ which is stationary outside of the union of $D'$ with an arbitrarily small neighborhood of those attaching points of $j(\interior S'))$ that lie in $\interior(D)$. The homotoped map $\Psi'$ is still transverse to $\E$, and $(\Psi' \composed j)^\inv(\E)$ is the union of those components of $\tau=(\Psi \composed j)^\inv(\E)$ that are not contained in $D$, a collection of components that does not include $c$. This completes the proof by descent in Case~1.

Note that $(\Psi')^\inv(\E)$ may have more points in the interiors of free edges of $X$ than $\Psi^\inv(\E)$ has, these new points being located in a neighborhood of those attaching points of $j(\interior S'))$ that lie in $\interior(D)$, but this is inconsequential to our proof by descent. 


\subparagraph{Case 2:} Suppose that $c$ is nonperipheral arc. Construct a closed curve $\gamma$ in $S'$ as follows. Let $\bdy c = \{x_1,x_2\}$ and orient $c$ from $x_1$ to $x_2$. If $x_1,x_2$ are in the same component of $\bdy S'$, let $\beta$ be an oriented arc in $\bdy S'$ from $x_2$ to $x_1$, and let $\gamma = c * \beta$; there are two choices for $\beta$, and for at least one of those choices the curve $\gamma$ is not freely homotopic to a closed curve in $\bdy S'$, because $S'$ supports a pseudo-Anosov homeomorphism and so is not a 3-holed sphere. If $x_1,x_2$ are in different components of $\bdy S'$, let $\beta_i$ be a loop based at $x_i$ parameterizing a component of $\bdy S'$, and let $\gamma = \{c * \beta_2 * \bar c * \beta_1\}$, and again since $S'$ is not a 3-holed sphere it follows that $\gamma$ is not freely homotopic to a closed curve in $\bdy S'$. In either case, $\gamma$ is freely homotopic to a closed geodesic contained in $\interior(S')$. It follows that $j(\gamma)$ is homotopic to a closed geodesic not contained in $L$.

On the other hand, 
\begin{align*}\Psi(j(\gamma)) \quad = \quad &\Psi(j(c)) * \Psi(j(\beta)) \\
 \text{or} \quad &\Psi(j(c)) * \Psi(j(\beta_2)) * \overline{\Psi(j(c))} * \Psi(j(\beta_1))
\end{align*}
but $\Psi(j(c))$ is a constant in $K$ and $\Psi(j(\beta))$, $\Psi(j(\beta_i))$ are closed curves in~$K$, so $\Psi(j(\gamma))$ is a closed curve in $K$. Since $\Psi$ restricts to a homotopy equivalence of $K$, there is a circuit $\gamma'$ in $K$ such that $\Psi(\gamma')$ and $\Psi(j(\gamma))$ are freely homotopic in $K$. Since $\Psi$ is a homotopy equivalence of $X$ it follows that $\gamma'$ and $j(\gamma)$ are freely homotopic in $X$. But $\gamma'$ is a closed geodesic in $K \subset L$, and $j(\gamma)$ is homotopic to a closed geodesic not contained in $L$, contradicting that $X$ is locally $\CAT(-1)$. This contradiction shows that Case~2 cannot occur.

This completes the proof of (I).
%


\bigskip

Now we prove (II). First we homotope $\Psi$ so that $\Psi \composed j(\bdy S'') \subset K$; this can be done by a homotopy of $\Psi$ supported in a neighborhood of $j(\bdy S'')$, given that each component $j(\bdy_i S'')$ is homotopic to the closed curve $j(\bdy_i S')$ in $K$ and that $\Psi(K) \subset K$. Next we subdivide $X$ so that $S''$ is a subcomplex, and then we further subdivide the domain and range of $\Psi$ (using not necessarily the same subdivision) and homotope $\Psi$ so that $\Psi$ is a simplicial map; this can be done by applying the simplicial approximation theorem \cite{Spanier}. Pulling back via $j$ we obtain a subdivision of $S''$ such that $\Psi \composed j \restrict S''$ is a simplicial map. Let
$$R = (\Psi \composed j \restrict S'')^\inv(j(\bdy S')) = (\Psi \composed j \restrict S'')^\inv(K) = (\Psi \composed j\restrict S'')^\inv(L) 
$$
where the equality of these sets follows by combining (I) with the fact that $j(\bdy S') = j(S') \intersect K = j(S') \intersect L$. Note that $R$ is a subcomplex of $S''$ containing $\bdy S''$. 

The key fact about $R$ is that any closed curve $\gamma$ contained in $R$ is either homotopically trivial or peripheral in $S''$. For if not then $j(\gamma)$ is freely homotopic in $X$ to a closed geodesic contained in $j(\interior(S'))$. However, $\Psi(j(\gamma)) \subset K$, and as in the proof of Case~3 of (I) above, using that $\Psi \from (X,K) \to (X,K)$ is a homotopy equivalence, it follows that $j(\gamma)$ is freely homotopic in $X$ to a closed geodesic contained in $K \subset L$, a contradiction.

Let $N(R)$ be a regular neighborhood of $R$ in the surface $S''$, and so $N(R)$ is a compact subsurface of $S''$ containing $\bdy S''$. Notice that each component of $\bdy N(R)$ is homotopic to a closed curve in $R$ and so is either homotopically trivial or peripheral in~$S''$. Let $N'(R)$ be the union of $N(R)$ with the discs bounding the homotopically trivial components of $\bdy N(R)$, so each component of $N'(R)$ is either a disc or a peripheral annulus in $S''$. 

We exhibit a collar neighborhood $C_0$ of $\bdy S''$ whose interior contains the subsurface $N'(R)$. First construct a collar neighborhood $C_2$ of $\bdy S''$ which contains each peripheral annulus component $A$ of $N'(R)$: each $A$ is contained in a unique annulus $C_2(A)$ having one boundary circle in $\bdy S''$ and the other in $\bdy A$, any two of the annuli $C_2(A)$ are disjoint or nested, and so the union of the $C_2(A)$ is the desired collar neighborhood $C_2$ of $\bdy S''$. Next consider the disc components $D$ of $N'(R)$ that are not already contained in $C_2$, let $C_1$ be the union of these discs $D$ with $C_2$, and let $\alpha_D$ be a pairwise disjoint collection of arcs properly embedded in the subsurface $\overline{S''-C_1}$ such that $\alpha_D$ has one endpoint on $\bdy (C_1)$ and the other on $\bdy D$. Then take $C_0$ to be a regular neighborhood of the union of $C_1$ and the arcs $\alpha_D$.

Now precompose the map $\Psi \composed j \restrict S''$ by a deformation retraction $S'' \mapsto \overline{S'' - C_0}$ to get the desired map $S'' \mapsto j(\interior(S'))$, finishing the proof of~(II).

\bigskip

The only task remaining in the proof of Proposition~\ref{PropVertToFree} is to show that the set $[\bdy S]$ is preserved by $\psi$. For any closed geodesic $\gamma$ in $X$ representing a conjugacy class $[\gamma]$, we have $[\gamma] \in [S]$ if and only if $\gamma$ lifts via $j \from S' \to X$ to a closed geodesic in $S'$, whereas $[\gamma] \in [K]$ if and only if $\gamma$ is a closed curve in $K$. The only geodesics that satisfy both of these conditions are the ones which lift to $\bdy S'$. In other words, a conjugacy class is an element of both of the subgroup systems $[S]$ and $[K]$ if and only if it is an element of the set $[\bdy S]$. Since $\psi$ preserves both $[S]$ and $[K]$ it follows that $\psi$ preserves $[\bdy S]$.
\end{proof}

\vfill\break

\section{Ping pong on geodesic lines}
\label{SectionPingPong}

In this section we develop ping pong methods for constructing new exponentially growing outer automorphisms from old ones, and in particular we prove Proposition~\ref{PropUniversallyAttracting}. 

\subsection{Finding attracting laminations}
\label{SectionFindingAttrLams}

The main result of this section is Lemma~\ref{FindingEG} which gives a method for finding attracting laminations that is independent of relative train track technology. We also prove Lemma~\ref{independence of plus or minus} which gives an extension of the duality relation of lamination pairs.

\begin{definition} \label{def:DoubleSharp} Suppose that $f \from G_1 \to G_2$ is a homotopy equivalence and that $\beta$ is a path. Choose lifts $\ti f : \ti G_1 \to \ti G_2$ and $\ti \beta \subset \ti G_1$ to the universal covers. Define $\ti f_{\#\#}(\ti \beta) \subset \ti f_\#(\beta) \subset \ti G_2$ to be the intersection of all paths $\ti f_\#(\ti \gamma)$ as $\gamma$ ranges over all paths in $\ti G_1$ that contain $\ti \beta$ as a subpath and define $f_{\#\#}(\beta)\subset G_2 $ to be the projected image of $\ti f_{\#\#}(\ti \beta)$. Since $\ti f_{\#\#}(\ti \beta)$ depends equivariantly on the choices of $\ti f$ and $\ti \beta$, it easily follows that $f_{\#\#}(\beta)$ does not depend on the choice of lifts $\ti f$ and $\ti \beta$. The bounded cancellation lemma implies that there is a uniform bound to the length of the initial and terminal segments that are removed from $f_\#(\beta)$ to form $f_{\#\#}(\beta)$.
\end{definition}

\begin{lemma}\label{lem:DoubleSharp} Suppose that $f \from G_1 \to G_2$ and $g \from G_2 \to G_3$ are homotopy equivalences. 
\begin{enumerate}
\item \label{item:double sharp containment} If $\alpha$ is a subpath of $ \beta$ in $G_1$ then $f_{\#\#}(\alpha)$ is a subpath of $f_\#(\beta)$. 
\item \label{item: double sharp composition} If $\beta$ is a path in $G_1$ then $g_{\#\#}(f_{\#\#}(\beta))$ is a subpath of $(g \composed f)_{\#\#}(\beta)$.
\item \label{item:disjoint copies} If $\sigma$ is a path in $G_1$ that decomposes into (possibly trivial) subpaths as 
$$\sigma = \alpha_1 \, \beta_1 \, \alpha_2 \, \beta_2\, \ldots \, \beta_m \, \alpha_{m+1}
$$
then there is a decomposition 
$$f_\#(\sigma) = \gamma_1 \, f_{\#\#}(\beta_1) \, \gamma_2 \, f_{\#\#}(\beta_2) \, \ldots \, f_{\#\#}(\beta_m) \, \gamma_{m+1}
$$
for some (possibly trivial) subpaths $\gamma_i$.
\end{enumerate}
 \end{lemma}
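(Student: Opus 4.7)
The plan is to work in the universal covers, where $f_{\#\#}$ has the equivariant form given in Definition~\ref{def:DoubleSharp}. Fix once and for all lifts $\ti f \from \ti G_1 \to \ti G_2$, $\ti g \from \ti G_2 \to \ti G_3$, and a lift $\ti\sigma$ of each path under discussion, and project at the end.

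For item~\pref{item:double sharp containment}, let $\ti\alpha \subset \ti\beta$ be compatible lifts of $\alpha \subset \beta$. By definition, $\ti f_{\#\#}(\ti\alpha) = \bigcap_{\ti\gamma \supset \ti\alpha} \ti f_\#(\ti\gamma)$. Since $\ti\beta$ is one of the paths containing $\ti\alpha$, the path $\ti f_\#(\ti\beta)$ is one of the sets appearing in the intersection, so $\ti f_{\#\#}(\ti\alpha) \subseteq \ti f_\#(\ti\beta)$. Connectedness and the fact that $\ti f_{\#\#}(\ti\alpha)$ is a subpath (noted in the paragraph following Definition~\ref{def:DoubleSharp}) make this a subpath containment. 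Project to $G_2$.

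For item~\pref{item: double sharp composition}, let $\ti\beta \subset \ti G_1$ be a lift and let $\ti\gamma \subset \ti G_1$ be any path containing $\ti\beta$. Applying item~\pref{item:double sharp containment} to $\ti\beta \subset \ti\gamma$, we have that $\ti f_{\#\#}(\ti\beta)$ is a subpath of $\ti f_\#(\ti\gamma)$, i.e. $\ti f_\#(\ti\gamma)$ is one of the paths appearing in the intersection that defines $\ti g_{\#\#}(\ti f_{\#\#}(\ti\beta))$. Hence
\[
\ti g_{\#\#}(\ti f_{\#\#}(\ti\beta)) \; \subseteq \; \ti g_\#(\ti f_\#(\ti\gamma)) \; = \; \widetilde{(g\composed f)}_\#(\ti\gamma),
\]
with the last equality coming from our fixed compatible choice of lifts. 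Taking the intersection over all such $\ti\gamma \supset \ti\beta$ gives $\ti g_{\#\#}(\ti f_{\#\#}(\ti\beta)) \subseteq \widetilde{(g\composed f)}_{\#\#}(\ti\beta)$, which projects to item~\pref{item: double sharp composition}.

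For item~\pref{item:disjoint copies}, lift the decomposition of $\sigma$ to a decomposition $\ti\sigma = \ti\alpha_1 \ti\beta_1 \cdots \ti\beta_m \ti\alpha_{m+1}$ in $\ti G_1$. Item~\pref{item:double sharp containment} applied to each $\ti\beta_i \subset \ti\sigma$ shows that $\ti f_{\#\#}(\ti\beta_i)$ is a subpath of $\ti f_\#(\ti\sigma)$, so to conclude it suffices to see that the $\ti f_{\#\#}(\ti\beta_i)$ are pairwise disjoint and appear in $\ti f_\#(\ti\sigma)$ in the same order as the $\ti\beta_i$ do in $\ti\sigma$; we then define $\ti\gamma_i$ to be the (possibly trivial) subpath of $\ti f_\#(\ti\sigma)$ between $\ti f_{\#\#}(\ti\beta_{i-1})$ and $\ti f_{\#\#}(\ti\beta_i)$ (with the natural conventions at $i=1$ and $i=m+1$) and project. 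The disjointness/ordering point is the only real content. Here is where I expect the subtlety to lie: consider the pre-tightening concatenation $\ti f(\ti\sigma) = \ti f(\ti\alpha_1) \, \ti f(\ti\beta_1) \, \ti f(\ti\alpha_2) \cdots \ti f(\ti\alpha_{m+1})$, in which the copies $\ti f(\ti\beta_i)$ appear in order and are disjoint (even when some $\ti\alpha_j$ is trivial, they still share only the single endpoint, and even that endpoint does not lie in either $\ti f_{\#\#}$). By the bounded cancellation remark following Definition~\ref{def:DoubleSharp}, $\ti f_{\#\#}(\ti\beta_i)$ is obtained from $\ti f(\ti\beta_i)$ by trimming bounded initial and terminal segments, and by construction it consists of exactly those points that survive tightening in every extension; hence it survives tightening of $\ti f(\ti\sigma)$ in particular, and lands at a well-defined location inside $\ti f_\#(\ti\sigma)$ inherited from the position of $\ti f(\ti\beta_i)$ inside $\ti f(\ti\sigma)$. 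Because the $\ti f(\ti\beta_i)$ appear in $\ti f(\ti\sigma)$ in order and share no interior points, and tightening only removes points without reordering them, the same holds for the trimmed subpaths $\ti f_{\#\#}(\ti\beta_i)$ inside $\ti f_\#(\ti\sigma)$. This is the step I would spell out most carefully.
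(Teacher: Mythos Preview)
Your arguments for \pref{item:double sharp containment} and \pref{item: double sharp composition} are correct and coincide with the paper's: \pref{item:double sharp containment} is immediate since $\ti\beta$ is one of the paths in the intersection defining $\ti f_{\#\#}(\ti\alpha)$, and \pref{item: double sharp composition} follows by applying \pref{item:double sharp containment} and then intersecting over all $\ti\gamma\supset\ti\beta$.

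For \pref{item:disjoint copies}, you correctly observe (via \pref{item:double sharp containment}) that every $\ti f_{\#\#}(\ti\beta_i)$ is a subpath of $\ti f_\#(\ti\sigma)$, and you correctly isolate disjointness and ordering as the only real content. But your justification of that point is not yet rigorous. The assertion that ``the $\ti f(\ti\beta_i)$ appear in $\ti f(\ti\sigma)$ in order and share no interior points'' is true only of the \emph{parametrized} path $\ti f(\ti\sigma)$; as subsets of the tree $\ti G_2$, the images $\ti f(\ti\beta_i)$ may well overlap. Likewise ``tightening only removes points without reordering them'' is an intuition about the parametrization, whereas the conclusion you need is about the positions of the arcs $\ti f_{\#\#}(\ti\beta_i)$ inside the arc $\ti f_\#(\ti\sigma)$ in the tree. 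As written, nothing rules out that $\ti f_{\#\#}(\ti\beta_i)$ and $\ti f_{\#\#}(\ti\beta_j)$, both subarcs of $\ti f_\#(\ti\sigma)$, overlap.

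The paper closes this gap with one extra use of \pref{item:double sharp containment}. Given $i<j$, split $\ti\sigma=\ti\sigma_1\ti\sigma_2$ at the terminal endpoint of $\ti\beta_i$. Then $\ti\beta_i\subset\ti\sigma_1$ and $\ti\beta_j\subset\ti\sigma_2$, so by \pref{item:double sharp containment} we have $\ti f_{\#\#}(\ti\beta_i)\subset\ti f_\#(\ti\sigma_1)$ and $\ti f_{\#\#}(\ti\beta_j)\subset\ti f_\#(\ti\sigma_2)$. Since both are also contained in $\ti f_\#(\ti\sigma)$, and since in a tree $\ti f_\#(\ti\sigma_1)\cap\ti f_\#(\ti\sigma)$ and $\ti f_\#(\ti\sigma_2)\cap\ti f_\#(\ti\sigma)$ are the two halves of $\ti f_\#(\ti\sigma)$ meeting only at the median of the three endpoints, the subpaths $\ti f_{\#\#}(\ti\beta_i)$ and $\ti f_{\#\#}(\ti\beta_j)$ have disjoint interiors and occur in the correct order. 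This is precisely the ``spell out most carefully'' step you flagged; once you insert this splitting argument, your proof is complete and matches the paper's.
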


\begin{proof} \pref{item:double sharp containment} is an immediate consequence of the definition of $f_{\#\#}(\alpha)$. If $\ti \beta$ is a subpath of $ \ti \gamma$ then $\ti f_{\#\#}(\beta)$ is a subpath of $\ti f_\#(\ti \gamma)$ and so $g_{\#\#}(\ti f_{\#\#}(\ti \beta))$ is a subpath of $\ti g_\#(\ti f_\#(\ti \gamma)) = (\ti g\composed \ti f)_\#(\ti \gamma) $. This proves \pref{item: double sharp composition}. 

\pref{item:double sharp containment} follows from the fact that if $\ti \alpha \subset \ti \beta$ and $\ti \gamma$ contains $\ti \beta$ then $\ti \gamma$ contains $\ti \alpha$. \pref{item: double sharp composition} follows from the fact that $g_{\#}\composed f_{\#} = (g \composed f)_{\#}$. 

For \pref{item:disjoint copies}, choose a lift $\ti \sigma = \ti \alpha_1 \ti \beta_1 \ti \alpha_2 \ti \beta_2\ldots \ti \beta_m \ti \alpha_{m+1}$. Given $i < j$, write $\ti \sigma = \ti \sigma_1 \ti \sigma_2$ where $\ti \sigma_1$ is the initial subpath of $\ti \sigma$ that terminates with $\ti \beta_i$. It is an immediate consequence of the definition of $f_{\#\#}$ that $\ti f_\#(\ti \sigma_1)$ contains $\ti f_{\#\#}(\ti \beta_i)$ and that $\ti f(\ti \sigma_1)$ is disjoint from the interior of $\ti f_{\#\#}(\ti \beta_j)$. Similarly $\ti f_\#(\ti \sigma_2)$ contains $\ti f_{\#\#}(\ti \beta_j)$ and $\ti f(\ti \sigma_2)$ is disjoint from the interior of $\ti f_{\#\#}(\ti \beta_i)$. It follows that $\ti f_{\#\#}(\ti \beta_i)$ and $\ti f_{\#\#}(\ti \beta_j)$ are subpaths of $\ti f_\#(\ti \sigma)$ with disjoint interiors and that former precedes the latter. Since $i < j$ are arbitrary this proves \pref{item:disjoint copies}.
\end{proof}
 
 \begin{lemma} \label{BufferedSplitting} Suppose that $f:G \to G$ is a relative train track map and that $\Lambda^+$ is an attracting lamination for $f$ corresponding to the \eg\ stratum $H_r$. Then there is a positive constant $C$ so that if $\tau \subset G$ is a subpath of a leaf of $\Lambda^+$ and if $\tau = \tau_1 \tau_2 \tau_3$ is a decomposition into subpaths where $\tau_1$ and $\tau_3$ contain at least $C$ edges of $H_r$ and where the initial and terminal edges of $\tau_2$ are contained in $H_r$ then 
$ f^k_\#(\tau_2)$ is a subpath of $f^k_{\#\#}(\tau) $ for all $k \ge 0$.
 \end{lemma}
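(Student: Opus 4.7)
The plan is to prove the lemma in two stages: first the base case $k=1$ by a direct bounded cancellation argument, then bootstrap to all $k\ge 0$ by induction using Lemma~\ref{lem:DoubleSharp}\pref{item: double sharp composition}. The key preliminary is that $\tau=\tau_1\cdot\tau_2\cdot\tau_3$ is a genuine splitting. Since $\tau$ is a subpath of a leaf of $\Lambda^+$, which is completely split by Fact~\ref{FactAttractingLeaves}\pref{ItemLeafComplSplit} with each edge of $H_r$ occurring as its own term, the junctions between $\tau_1,\tau_2,\tau_3$ --- namely the initial vertex of the first $H_r$-edge of $\tau_2$ and the terminal vertex of its last $H_r$-edge --- are splitting points of the ambient leaf. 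Hence $f^k_\#(\tau)=f^k_\#(\tau_1)\cdot f^k_\#(\tau_2)\cdot f^k_\#(\tau_3)$ for every $k\ge 0$.

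For the case $k=1$, let $B=\BCC(f)$ and fix an arbitrary extension $\gamma=\gamma_0\tau\gamma_4$. Bounded cancellation bounds the initial and terminal trimmings of $f_\#(\tau)$ inside $f_\#(\gamma)$ by $B$. Since every edge of $H_r$ has image length bounded below by some $\lambda_{r,\min}>0$ under $f$, and $\tau_1$ contains at least $C$ edges of $H_r$, we have $|f_\#(\tau_1)|\ge C\lambda_{r,\min}$. Choosing $C$ large enough that $C\lambda_{r,\min}>B$ forces each trimming to stay inside $f_\#(\tau_1)$ or $f_\#(\tau_3)$ without reaching $f_\#(\tau_2)$, and intersecting over all extensions $\gamma$ gives $f_\#(\tau_2)\subset f_{\#\#}(\tau)$.

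For $k\ge 2$, I would induct. The inductive hypothesis provides $f^{k-1}_\#(\tau_2)\subset f^{k-1}_{\#\#}(\tau)$, so we may write $f^{k-1}_{\#\#}(\tau)=\pi_1\cdot f^{k-1}_\#(\tau_2)\cdot\pi_3$ where $\pi_1$ and $\pi_3$ are the portions of $f^{k-1}_\#(\tau_1)$ and $f^{k-1}_\#(\tau_3)$ that survive the $\BCC(f^{k-1})$-bounded trimming at each end. The path $f^{k-1}_{\#\#}(\tau)$ is a subpath of the leaf $f^{k-1}_\#(\ell)$, where $\ell\supset\tau$ is the ambient leaf; the initial and terminal edges of $f^{k-1}_\#(\tau_2)$ remain in $H_r$ by RTT-(i); and by Fact~\ref{FactEGAperiodic} the stratum $H_r$ is aperiodic, so Perron-Frobenius gives that the $H_r$-content of $f^{k-1}_\#(\tau_1)$ grows like $\lambda_r^{k-1}$, which for $C$ chosen sufficiently large at the outset dominates the trimming, so $\pi_1$ and $\pi_3$ still contain at least $C$ edges of $H_r$. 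Applying the base case to $f^{k-1}_{\#\#}(\tau)$ with this decomposition, and chaining through Lemma~\ref{lem:DoubleSharp}\pref{item: double sharp composition}, yields $f^k_\#(\tau_2)=f_\#(f^{k-1}_\#(\tau_2))\subset f_{\#\#}(f^{k-1}_{\#\#}(\tau))\subset f^k_{\#\#}(\tau)$, closing the induction.

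The main obstacle is arranging that a single $C$ works uniformly in $k$: this requires that the Perron-Frobenius exponential growth of the $H_r$-content in $f^{k-1}_\#(\tau_1)$, at rate $\lambda_r^{k-1}$, uniformly dominates the bounded cancellation trimming $\BCC(f^{k-1})$ along the induction. Verifying these combined growth estimates, and in particular that the Perron-Frobenius reserve in $\tau_1$ is never exhausted under iteration, is the quantitatively delicate point of the argument.
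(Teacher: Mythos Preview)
Your inductive strategy has a genuine gap at exactly the point you flag as ``quantitatively delicate.'' You need the $H_r$-content of $f^{k-1}_\#(\tau_1)$, growing at rate $\lambda_r^{k-1}$, to dominate the trimming $\BCC(f^{k-1})$. But $\BCC(f^{k-1})$ is controlled by the overall Lipschitz constant of $f^{k-1}$, which grows like $\lambda_{\max}^{k-1}$ where $\lambda_{\max}$ is the largest Perron--Frobenius eigenvalue among \emph{all} EG strata of $f$. If some stratum $H_s$ above $H_r$ has $\lambda_s>\lambda_r$, the inequality you need fails for large $k$, and no choice of $C$ at the outset rescues it. So the induction, as framed, does not close.

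The paper's proof avoids growth-rate comparisons entirely. It simply lifts $\tau=\tau_1\tau_2\tau_3$ and an arbitrary extension $\ti\gamma\supset\ti\tau$ to the universal cover, writes $\ti\gamma=\ti\gamma_1\ti\tau_2\ti\gamma_3$ with $\ti\tau_1\subset\ti\gamma_1$ and $\ti\tau_3\subset\ti\gamma_3$, and invokes Lemma~4.2.2 of \BookOne, which supplies a constant $C$ (uniform in $k$, $\tau$, and $\gamma$) such that $\ti f^k_\#(\ti\gamma)=\ti f^k_\#(\ti\gamma_1)\,\ti f^k_\#(\ti\tau_2)\,\ti f^k_\#(\ti\gamma_3)$ for all $k\ge 0$; the conclusion then drops out of the definition of $f^k_{\#\#}$. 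The mechanism in that lemma is not a growth comparison but the $r$-legal structure: cancellation between $\ti f^k(\ti\gamma_1)$ and $\ti f^k_\#(\ti\tau_2)$ can only progress through illegal turns in $H_r$, and the $r$-legal buffer $\tau_1$ blocks this for all iterates simultaneously. That is the key idea your argument is missing. (Minor point: the facts you cite for complete splitting and aperiodicity are stated for CTs, whereas the lemma assumes only a relative train track map; the $r$-legality of leaves, which is what you actually need, holds in that generality.)
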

 
 \begin{proof} Suppose that $\tau = \tau_1 \tau_2 \tau_3$ is as in the statement of the lemma for an as yet unspecified constant $C$. Choose lifts $\ti f :\ti G \to \ti G$ and $\ti \tau = \ti \tau_1 \ti \tau_2 \ti \tau_3$. Given a path $\ti \gamma \subset \ti G$ that contains $\ti \tau$ as a subpath, decompose $\ti \gamma$ into subpaths $\ti \gamma = \ti \gamma_1 \ti \tau_2 \ti \gamma_3$ where $\ti \tau_1$ is a terminal subpath of $\ti \gamma_1$ and $\ti \tau_3$ is an initial subpath of $\ti \gamma_3$. By Lemma~4.2.2 of \BookOne\  we may choose $C$, independent of both $\tau$ and $\gamma$, so that $\ti f^k_\#(\ti \gamma) = \ti f^k_\#(\ti \gamma_1)\ti f^k_\#(\ti \tau_2) \ti f^k_\#(\ti \gamma) $ is a decomposition into subpaths for all $k \ge 0$. The lemma then follows from the definition of $f^k_{\#\#}(\tau) $.
 \end{proof}
 
Recall that if $\beta$ is a finite path in a marked graph $G$ then $N(G,\beta)$ is the weak basis element of $\B$ consisting of all lines whose realization in $G$ contains $\beta$ as a subpath.

\begin{lemma} \label{FindingEG} 
Let $f \from G \to G$ be a homotopy equivalence representing $\phi \in \Out(F_n)$, and suppose that $\beta \subset G$ is a finite path such that $f_{\#\#}(\beta)$ contains three disjoint copies of $\beta$. Then there exists $\Lambda \in \L(\phi)$ such that $\Lambda$ is $\phi$-invariant, each generic leaf of $\Lambda$ contains $\beta$ as a subpath, and $N(G,\beta)$ is an attracting neighborhood for $\Lambda$.
\end{lemma}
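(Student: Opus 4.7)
The plan is to construct a birecurrent line $\ell$ containing $\beta$, take $\Lambda$ to be the weak closure of $\ell$, and verify that $\Lambda\in\L(\phi)$ with $N(G,\beta)$ as an attracting weak neighborhood. The key quantitative input is that the three-disjoint-copies hypothesis, fed into Lemma~\ref{lem:DoubleSharp}\pref{item:disjoint copies}, produces exponential growth of the number of disjoint copies of $\beta$ inside the iterates $f^k_{\#\#}(\beta)$.

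First I would establish by induction on $k$ that $f^k_{\#\#}(\beta)$ contains at least $3^k - c$ pairwise disjoint copies of $\beta$, for a constant $c$ depending only on $f$ and $\beta$. For the inductive step, write $f^k_{\#\#}(\beta) = \mu_0\,\beta\,\mu_1\,\beta\cdots\mu_{N-1}\,\beta\,\mu_N$ displaying the $\beta$-copies and apply Lemma~\ref{lem:DoubleSharp}\pref{item:disjoint copies} to obtain $f_\#(f^k_{\#\#}(\beta)) = \gamma_0\,f_{\#\#}(\beta)\,\gamma_1\,f_{\#\#}(\beta)\cdots f_{\#\#}(\beta)\,\gamma_N$ with $N$ disjoint copies of $f_{\#\#}(\beta)$, hence $3N$ disjoint copies of $\beta$. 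Passing to $f^{k+1}_{\#\#}(\beta)$ via Lemma~\ref{lem:DoubleSharp}\pref{item: double sharp composition} strips at most a uniformly bounded amount from each end (by the bound in Definition~\ref{def:DoubleSharp}), removing at most a bounded number of $\beta$-copies; working with a central copy of $\beta$ absorbs this loss and preserves the exponential growth.

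Next I would build $\ell$ as a subsequential weak limit of $f^k_{\#\#}(\beta)$, centering each path so that a chosen central $\beta$-copy sits over a fixed lift $\wt\beta \subset \wt G$. Weak compactness of $\B$ yields the limit, and by construction $\ell$ contains $\beta$ at the distinguished location with unboundedly many further disjoint copies of $\beta$ on each end, so $\ell$ is birecurrent. A recentering argument using the construction shows $f_\#(\ell)=\ell$ as lines (up to translation), so $\Lambda := \mathrm{cl}(\ell)$ is $\phi$-invariant. Moreover the exponential length growth $|f^k_{\#\#}(\beta)| \to\infty$, combined with the unboundedness of gaps between consecutive $\beta$-copies, rules out $\ell$ being periodic; in particular no lift of $\ell$ is the axis of any nontrivial element of $F_n$, \emph{a fortiori} not of a generator of a rank-one free factor.

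For the attracting neighborhood claim, let $\gamma\in N(G,\beta)$ and let $\tau$ be any finite subpath of $\ell$. Since $\beta$ is a subpath of the realization of $\gamma$, Lemma~\ref{lem:DoubleSharp}\pref{item:double sharp containment}--\pref{item: double sharp composition} give $f^k_\#(\gamma)\supseteq f^k_{\#\#}(\beta)$ for every $k$. After re-centering, $f^k_{\#\#}(\beta)$ is a finite centered subpath of $\ell$ whose length tends to infinity; hence $\tau \subset f^k_{\#\#}(\beta) \subset f^k_\#(\gamma)$ for all sufficiently large $k$, so $f^k_\#(\gamma) \to \ell$ weakly. Thus $N(G,\beta)$ is an attracting weak neighborhood of $\ell$ and $\Lambda \in \L(\phi)$. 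Finally, every leaf of $\Lambda$ is a weak limit of $F_n$-translates of $\ell$; because $\beta$ appears in $\ell$ with unbounded multiplicity on both ends, every such weak limit contains $\beta$, so in particular every generic leaf of $\Lambda$ contains $\beta$ as a subpath.

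The main obstacle I expect is the aperiodicity of $\ell$ together with the subtlety that the sequence $f^k_{\#\#}(\beta)$ a priori converges only along a subsequence: both are handled by exploiting that $f_\#$ carries $N(G,\beta)$ into itself and that the central copy of $\beta$ evolves in a $\phi$-equivariant way, forcing all subsequential weak limits of properly centered $f^k_{\#\#}(\beta)$ to coincide with $\ell$ up to the $F_n$-action.
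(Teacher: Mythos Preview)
Your overall strategy --- build a birecurrent $f_\#$-invariant line in $N(G,\beta)$ and invoke Definition~3.1.5 of \BookOne --- matches the paper's, but your construction of that line via a \emph{subsequential weak limit} of centered $f^k_{\#\#}(\beta)$ introduces several gaps that the paper avoids by a more direct device. The paper chooses the lift $\ti f$ so that the \emph{middle} of the three copies of $\ti\beta_0$ inside $\ti f_{\#\#}(\ti\beta_0)$ is $\ti\beta_0$ itself, defines $\ti\beta_1$ to be the subpath spanned by all three copies, and then shows inductively that $\ti f_{\#\#}(\ti\beta_i)$ again contains three disjoint copies of $\ti\beta_i$ (left, middle, right), with the middle one equal to $\ti\beta_i$. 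This yields a genuinely \emph{nested} sequence $\ti\beta_0 \subset \ti\beta_1 \subset \ti\beta_2 \subset \cdots$ whose union is the line $\ti\lambda$ --- no limits, no subsequences, no centering ambiguity. From this nesting, $f_\#$-invariance of $\lambda$ is immediate, and the attracting-neighborhood property follows from the clean implication ``$\gamma$ contains $\beta_0 \Rightarrow f^i_\#(\gamma)$ contains $\beta_i$'' for every $i$, not just along a subsequence.

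Concretely, your argument has the following soft spots. First, your birecurrence claim (``unboundedly many copies of $\beta$ on each end, so $\ell$ is birecurrent'') is a non sequitur: birecurrence requires every finite subpath of $\ell$, not just $\beta$, to recur in both ends. The paper gets this because each ray of $\ti\lambda$ contains a translate of $\ti\beta_i$ for \emph{every} $i$, and the $\ti\beta_i$ exhaust $\ti\lambda$. Second, your attracting-neighborhood step asserts $\tau \subset f^k_{\#\#}(\beta)$ for all sufficiently large $k$, but you only built $\ell$ as a subsequential limit; you need the full sequence to converge, which you have not established (your final paragraph acknowledges but does not resolve this). Third, the ``recentering argument'' for $f_\#(\ell)=\ell$ is asserted, not proved; with a subsequential construction it is genuinely delicate. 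Finally, your last sentence is simply wrong: weak limits of $\ell$ need not contain $\beta$ (think of limits taken along the gaps between $\beta$-copies), so ``every leaf of $\Lambda$ contains $\beta$'' fails; the correct claim is only about generic leaves, and the paper handles it by exhibiting the particular generic leaf $\lambda$, which contains $\beta_0=\beta$ by construction. All of these issues evaporate once you replace the subsequential-limit construction with the paper's nested-union construction.
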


\begin{proof} For inductive reasons we write $\beta_0 = \beta$. Choose a lift $\ti \beta_0$ of $\beta_0$ to the universal cover $\ti G$ and a lift $\ti f: \ti G \to \ti G$ such that 
$$
\ti f_{\#\#}(\ti \beta_0) = \ti \alpha_{0,1} \, \ti \beta_{0,L} \, \ti \alpha_{0,2} \, \ti \beta_{0} \, \ti \alpha_{0,3} \, \ti \beta_{0,R} \, \ti \alpha_{0,4}
$$
where $\ti \beta_{0,L}$ and $\ti \beta_{0,R}$ are translates of $\ti \beta_0$. (The notation is chosen to emphasize the fact that $\ti \beta_{0,L}$ and $\ti \beta_{0,R}$ are to the left and right of $\ti \beta_0$ in $\ti f_{\#\#}(\ti \beta_0) $.) Define 
$$
\ti \beta_1 = \ti \beta_{0,L} \, \ti \alpha_{0,2}  \, \ti \beta_{0} \, \ti \alpha_{0,3}  \, \ti \beta_{0,R} \subset \ti f_{\#\#}(\ti \beta_0). 
$$
By Lemma~\ref{lem:DoubleSharp}~\pref{item:disjoint copies} we may write $\ti f_{\#\#}(\ti\beta_1)$ as
$$
\ti f_{\#\#}(\ti \beta_1) = \ti \alpha_{1,1}  \, \ti \beta_{1,L} \, \ti \alpha_{1,2}  \, \ti \beta_{1 \, }\ti \alpha_{1,3}  \, \ti \beta_{1,R} \, \ti \alpha_{1,4}
$$
where $\ti \beta_{1,L} \subset \ti f_{\#\#}(\ti \beta_{0,L})$ and $\ti \beta_{1,R} \subset \ti f_{\#\#}(\ti \beta_{0,R})$ are translates of $\ti \beta_1 \subset \ti f_{\#\#}(\ti \beta_0)$. Assuming by induction that 
 $$
\ti f_{\#\#}(\ti \beta_i) = \ti \alpha_{i,1}  \,  \ti \beta_{i,L}  \, \ti \alpha_{i,2}   \, \ti \beta_{i}  \, \ti \alpha_{i,3}  \,  \ti \beta_{i,R}  \, \ti \alpha_{i,4}
$$ 
where $\ti \beta_{i,L} \subset \ti f_{\#\#}(\ti \beta_{i-1,L})$ and $\ti \beta_{i,R} \subset \ti f_{\#\#}(\ti \beta_{i-1,R})$ are translates of $\ti \beta_i \subset \ti f_{\#\#}(\ti \beta_{i-1})$, define 
$$
\ti \beta_{i+1} = \ti \beta_{i,L}\ti \alpha_{1,2} \ti \beta_{i}\ti \alpha_{i,3} \ti \beta_{i,R} \subset \ti f_{\#\#}(\ti \beta_i)
$$ 
and apply Lemma~\ref{lem:DoubleSharp}~\pref{item:disjoint copies} to complete the induction step. 

The union of the nested sequence $ \ti \beta_0 \subset \ti \beta_1\subset \ti \beta_2 \subset\cdots$ is an $\ti f_\#$-invariant line $\ti \lambda$. Each ray $\ti R$ in $\ti \lambda$ contains a translate of $\ti \beta_i$ for all sufficiently large $i$ and so contains a translate of $\ti \beta_i$ for all $i$. Thus $\ti \lambda$ is birecurrent. If a line $\ti \gamma$ contains $\ti \beta_0$ as a subpath then $\ti f_\#(\ti \gamma)$ contains $\ti f_{\#\#}(\ti \beta_0)$ by Lemma~\ref{lem:DoubleSharp}~\pref{item:double sharp containment} and so contains $\ti \beta_1$. The obvious induction argument shows that $\ti f^i_\#(\ti \gamma)$ contains $\ti \beta_i$ for all $i$. This proves that $N(G,\beta)$ is an attracting neighborhood for $\lambda$ in the space of lines with respect to the action of $\phi$. Since the length of $\ti f_\#(\ti \beta_i)$ is at least three times the length of $\ti \beta_i$, \ $\ti \lambda$ is not the axis of a covering translation. By Definition~3.1.5 of \BookOne\ the closure $\Lambda$ of $\lambda$ is an attracting lamination for $\phi$ and $\lambda$ is a generic leaf of $\Lambda$. Since $\ti \lambda$ is $\ti f_\#$-invariant and $N(G,\beta)$ is an attracting neighborhood for $\lambda$, $\Lambda$ is $\phi$-invariant and $N(G,\beta)$ is an attracting neighborhood for $\Lambda$. 
 \end{proof}
 
Recall that for any $\phi \in \Out(F_n)$, dual elements of $\L(\phi)$ and $\L(\phi^\inv)$ are supported by the same free factor system. The following lemma extends this fact in certain situations, replacing free factor systems by the vertex groups systems constructed in Section~\ref{SectionVertexGroups}.

\begin{lemma} \label{independence of plus or minus} Suppose that $\phi, \xi \in \Out(F_n)$ are rotationless, and that $\Lambda^\pm_\phi \in \L^\pm(\phi)$ and $\Lambda^\pm_\xi \in \L^\pm(\xi)$. Suppose further that the following holds: 
\begin{itemize}
\item[$(*)$] $\Lambda^\pm_\phi$ is nongeometric \, \emph{or} \, $\Lambda^\pm_\xi$ is geometric.
\end{itemize}
Then $\Lambda^+_\xi$ is carried by the subgroup system $\A_\na(\Lambda^\pm_\phi)$ if and only if $\Lambda^-_\xi$ is carried by $\A_\na(\Lambda^\pm_\phi)$.
\end{lemma}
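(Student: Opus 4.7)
The statement is symmetric under $\xi \mapsto \xi^\inv$ (which swaps $\Lambda^+_\xi$ and $\Lambda^-_\xi$ but preserves $\A := \A_\na(\Lambda^\pm_\phi)$ by Corollary~\ref{CorPMna}), so it suffices to show that $\Lambda^+_\xi$ being carried by $\A$ implies $\Lambda^-_\xi$ is too. I split into the two cases of hypothesis $(*)$. \emph{Case A: $\Lambda^\pm_\phi$ nongeometric.} By Proposition~\ref{PropVerySmallTree}\pref{ItemA_naNoNP}, $\A$ is a free factor system $[F]$. For a free factor system, a set of lines is carried by $[F]$ if and only if its free factor support is $\sqsubset [F]$ (the third equivalent condition for ``carries'' in Section~\ref{SectionTheBasics}). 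Since dual lamination pairs share free factor support, $\A_\supp(\Lambda^+_\xi) = \A_\supp(\Lambda^-_\xi)$ (Section~\ref{SectionLams}), so $\Lambda^+_\xi$ is carried by $[F]$ iff $\Lambda^-_\xi$ is.

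\emph{Case B: both $\Lambda^\pm_\phi$ and $\Lambda^\pm_\xi$ are geometric} (the only remaining subcase of $(*)$). Apply Proposition~\ref{PropGeomLams} to $\xi$: there is a surface $S$ with pseudo-Anosov $\psi$ such that $\Lambda^+_\xi = d_\#(\Lambda^u)$ and $\Lambda^-_\xi = d_\#(\Lambda^s)$, where $\Lambda^u,\Lambda^s$ are filling minimal laminations on $S$; all leaves are generic, with endpoints in $\bdy(\pi_1 S) \subset \bdy F_n$ after identifying $\pi_1 S$ with its image $d_*(\pi_1 S)$. Let $T$ be a very small $F_n$-tree realizing $\A$ and let $T_S$ denote the minimal $\pi_1 S$-invariant subtree (a single fixed vertex, when $\pi_1 S$ has a global fixed point in $T$). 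The key claim is that $T_S$ must be a point; given this, $\pi_1 S$ is contained in some vertex stabilizer $A^\ast$ with $[A^\ast] \in \A$, so $\bdy(\pi_1 S) \subset \bdy A^\ast$, and every leaf of $\Lambda^-_\xi$ (with endpoints in $\bdy(\pi_1 S)$) is carried by $[A^\ast]$, giving the conclusion.

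To prove the claim, suppose for contradiction that $T_S$ is non-trivial. For any $F_n$-vertex $v \in T$ with $A_v = \Stab_{F_n}(v) \in \A$, the $\pi_1 S$-stabilizer of $v$ is $\pi_1 S \cap A_v$. Because $T$ is very small, $\Fix(\gamma)=\Fix(\gamma^i)$ for all $i \ge 1$, so any $\gamma \in \pi_1 S$ with a power in $A_v$ is elliptic on $T$. If some $\pi_1 S \cap A_v$ had finite index in $\pi_1 S$, then every element of $\pi_1 S$ would be elliptic on $T$, forcing $T_S$ to be a point and contradicting our assumption. So each $\pi_1 S \cap A_v$ has infinite index in $\pi_1 S$, and its boundary $\bdy(\pi_1 S \cap A_v) = \bdy A_v \cap \bdy(\pi_1 S)$ is a nowhere-dense Cantor subset of $\bdy(\pi_1 S)$. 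The hypothesis that $\Lambda^+_\xi$ is carried by $\A$ forces every leaf of $\Lambda^u$ to have endpoints in the $\pi_1 S$-invariant set $\bigcup_v \bdy(\pi_1 S \cap A_v)$, which is a countable union of nowhere-dense sets, hence meager in $\bdy(\pi_1 S)$. But filling minimality of $\Lambda^u$ implies that the endpoints of its leaves are dense in $\bdy(\pi_1 S)$, contradicting Baire's theorem.

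The main obstacle is Case B: coordinating the very small structure of $T$ (governing $\A$) with the geometric model of $\xi$ (governing $\Lambda^\pm_\xi$) to extract a Baire-category contradiction. The argument also relies on a few auxiliary facts that must be verified or cited: that $\bdy(H\cap K) = \bdy H \cap \bdy K$ for finitely generated $H,K \subset F_n$, that infinite-index finitely generated subgroups of free groups have nowhere-dense boundary, and that filling minimal laminations on surfaces with boundary have leaf-endpoints dense in the boundary of the surface group. Case A and the symmetry reduction, by contrast, are essentially bookkeeping.
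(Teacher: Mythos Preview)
Your symmetry reduction and Case~A coincide with the paper's argument and are correct.

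Case~B is a genuinely different approach from the paper's, and it contains a real gap in the Baire step. You show that every leaf endpoint of $\wt\Lambda^u$ lies in the countable union $\bigcup_v \bdy(\pi_1 S \cap A_v)$ of nowhere-dense closed subsets of $\bdy(\pi_1 S)$, and then assert that density of the leaf-endpoint set in $\bdy(\pi_1 S)$ contradicts Baire's theorem. But it does not: a dense subset of a Baire space can certainly be meager (e.g.\ $\Q$ in~$\reals$). Baire's theorem only says that the \emph{whole space} $\bdy(\pi_1 S)$ cannot be meager; it places no restriction on which dense subsets can be meager. To make this argument work you would need the leaf-endpoint set to be comeager (or all of $\bdy(\pi_1 S)$), and you have not established this; in fact the endpoints of lifts of components of $\bdy S$ already lie in $\bdy(\pi_1 S)$ but are not leaf endpoints, and it is not clear that the complement of the leaf-endpoint set is only countable. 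The auxiliary facts you flag (intersection of limit sets, nowhere-density of infinite-index limit sets) are fine for finitely generated subgroups of free groups, but they do not close this gap.

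The paper's Case~B avoids category arguments entirely. It works on the $\xi$ side via the \emph{span} construction of \BookOne\ (Lemmas~7.0.6 and~7.0.7): every $\xi$-geometric circuit (one whose geodesic representative in the geometric model lies in $S$) is in the span of a generic leaf $\lambda^+_\xi$, and by Lemma~7.0.6 any circuit in that span is carried by $\A_\na(\Lambda^\pm_\phi)$. Since $\Lambda^-_\xi$ is a weak limit of $\xi$-geometric circuits and the set of lines carried by $\A_\na(\Lambda^\pm_\phi)$ is closed (Lemma~\ref{ZP is closed}\pref{item:closed}), the conclusion follows. This route uses the specific groupoid structure of $\langle Z,\hat\rho_r\rangle$ underlying $\A_\na(\Lambda^\pm_\phi)$, rather than only its realization as a vertex group system.
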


The proof of this lemma in the case that $\Lambda^\pm_\phi$ and $\Lambda^\pm_\xi$ are both geometric makes use of the span construction from Lemma~7.0.6 of \BookOne\ applied to $\Lambda^\pm_\xi$. We do not know of any analogue of the span construction for nongeometric laminations, and we do not know if the lemma is true when $\Lambda^\pm_\phi$ is geometric and $\Lambda^\pm_\xi$ is nongeometric.

\begin{proof} If $\Lambda^\pm_\phi$ is non-geometric then $\A_\na(\Lambda^\pm_\phi)$ is a free factor system by Proposition~\ref{PropVerySmallTree}, and the lemma follows from the fact (Definition 3.2.3 and Lemma~3.2.4 of \BookOne) that $\A_\supp(\Lambda^+_\xi) = \A_\supp(\Lambda^-_\xi)$ is the smallest free factor system carrying $\Lambda^+_\xi$ and is the smallest carrying~$\Lambda^-_\xi$. 
 
We may therefore assume that both $\Lambda^\pm_\phi$ and $\Lambda^\pm_\xi$ are geometric. By symmetry it suffices to assume that $\Lambda^+_\xi$ is carried by $\A_\na(\Lambda^\pm_\phi)$ and prove that $\Lambda^-_\xi$ is carried by $\A_\na(\Lambda^\pm_\phi)$. Let $\fG$ be a \ct\ representing $\xi$ with geometric \eg\ stratum $H_r$ corresponding to $\Lambda^+_\xi$, and adopt the notation of Definition~\ref{DefGeometricStratum} for a geometric model $Y$ of $f \restrict G_r$. A circuit in $G_r$ whose geodesic representative in $Y$ is contained in the surface $S$ is said to be a \emph{$\xi$-geometric circuit}. Let $f' \from G' \to G'$ be a \ct\ representing $\phi$ with geometric \eg\ stratum $H'_s$ corresponding to $\Lambda^+_\phi$.

Let $\lambda^+_\xi$ be a generic leaf of $\Lambda^+_\xi$. A circuit $\alpha$ in $G$ is in the \emph{span} of $\lambda^+_\xi$ if for every integer $L > 0$, $\alpha$ is freely homotopic to a concatenation of the form $\mu_1 * .... * \mu_k$, where each $\mu_i$ is a subsegment of $\lambda^+_\xi$, and for each $i=1,...,k$ the subsegment of $\lambda^+_\xi$ of length $L$ centered on the terminal endpoint of $\mu_i$ and the subsegment of $\lambda^+_\xi$ of length $L$ centered on the initial endpoint of $\mu_{i+1}$ (with indices taken mod~$k$) represent the same path in $G$ up to possible reversal of orientation. Lemma 7.0.7 of \BookOne\ says that every $\xi$-geometric circuit $\alpha$ is in the span of $\lambda^+_\xi$. Knowing that $\lambda^+_\xi$ is carried by $\A_\na(\Lambda^\pm_\phi)$, Lemma 7.0.6 of \BookOne, applied to the restriction of $f'$ to the component of $G'_s$ containing $H'_s$, says that any circuit in the span of $\lambda^+_\xi$ is also carried by $\A_\na(\Lambda^\pm_\phi)$. Combining these it follows that every $\xi$-geometric circuit is carried by $\A_\na(\Lambda^\pm_\phi)$. Items \pref{item:ZP=NA} and \pref{item:closed} of Lemma~\ref{ZP is closed} imply that the set of lines carried by $\A_\na(\Lambda^\pm_\phi)$ is closed in the weak topology. Since $\Lambda^-_\xi$ is a weak limit of $\xi$-geometric circuits, it is carried by $\A_\na(\Lambda^\pm_\phi)$.
 \end{proof}
 
\subsection{The ping-pong argument}
\label{SectionPingPongArgument}
 
In this section we state and prove Proposition~\ref{PropSmallerComplexity}, a technical statement in which our ping-pong arguments are packaged. Proposition~\ref{PropSmallerComplexity} will be used in the inductive step of the proof of Proposition~\ref{PropUniversallyAttracting} given in Section~\ref{SectionProofUnivAttr}, and in the proof of Theorem~\ref{thm:main} given in Section~\ref{SectionLooking}.

When Proposition~\ref{PropSmallerComplexity} is applied, the outer automorphisms $\theta$ and $\omega$, and all hypotheses and conclusions concerning them, are used only if an element that we construct in the subgroup $\<\phi,\psi\>$ has a lamination with a geometric stratum. They could be removed entirely if we knew that the conclusions of Lemma~\ref{independence of plus or minus} held without assuming hypothesis $(*)$. 

\begin{proposition}
\label{PropSmallerComplexity} Suppose that $ \phi,\psi,\theta,\omega \in \Out(F_n)$ are rotationless and exponentially growing with lamination pairs $\Lambda^\pm_\phi$, $\Lambda^\pm_\psi$, $\Lambda^\pm_\theta$, $\Lambda^\pm_\omega$, respectively, and suppose that $W^-_\theta$ is an attracting neighborhood of $\Lambda^-_\theta$ and that $W^+_\omega$ is an attracting neighborhood of $\Lambda^+_\omega$ such that the following hold:
\begin{itemize}
\item $\A_\na\Lambda^\pm_\phi \subgroup \A_\na\Lambda^\pm_ \theta$ and $\Lambda^-_\phi \subset W^-_\theta$.
\item $\A_\na\Lambda^\pm_\psi \subgroup \A_\na\Lambda^\pm_ \omega$ and $\Lambda^+_\psi\subset W^+_\omega$.
\item Either every lamination pair of every element of $\< \phi, \psi\>$ is a geometric lamination pair, or both of the pairs $\Lambda^\pm_\omega$ and $\Lambda^\pm_\theta$ are non-geometric.
\end{itemize}
 Suppose further that the following properties are satisfied. 
\begin{itemize}
\item [(i)]$\Lambda^+_\psi$ is weakly attracted to $\Lambda^+_\phi$ under iteration by $\phi$. 
\item [(ii)]$\Lambda^-_\psi$ is weakly attracted to $\Lambda^-_\phi$ under iteration by $\phi^{-1}$. 
\item [(iii)] $\Lambda^+_\phi$ is weakly attracted to $\Lambda^+_\psi$ under iteration by $\psi$. 
\item [(iv)]$\Lambda^-_\phi$ is weakly attracted to $\Lambda^-_\psi$ under iteration by $\psi^{-1}$. 

\end{itemize}
 Then there exists an integer $M>0$, attracting neighborhoods $V^\pm_\phi$ of $\Lambda^\pm_\phi$, and attracting neighborhoods $V^\pm_\psi $ of $\Lambda^\pm_\psi$, such that for any integers $m,n \ge M$, the outer automorphism $\xi = \psi^m \phi^n$ is exponentially growing and has a lamination pair $\Lambda^\pm_\xi$ that satisfies the following:
\begin{description}
\item [$(1)$] Any conjugacy class carried by $ \A_\na \Lambda^\pm_\xi $ is carried by both $\A_\na \Lambda^\pm_\phi$ and $\A_\na \Lambda^\pm_\psi $. 
\item [$(2^+)$]$\psi^m(V^+_\phi) \subset V^+_\psi$.
\item [$(2^-)$] $\phi^{-n}(V^-_\psi) \subset V^-_\phi$.
\item [$(3^+)$]$\phi^n(V^+_\psi) \subset V^+_\phi$.
\item [$(3^-)$] $\psi^{-m}(V^-_\phi) \subset V^-_\psi$.
\item [$(4^+)$]$ V^+_\xi:= V^+_\psi $ is an attracting neighborhood of $\Lambda^+_{\xi}$ and $ V^+_\xi\subset W^+_\omega$.
\item [$(4^-)$] $ \label{item:4-} V^-_\xi := V^-_\phi$ is an attracting neighborhood for $\Lambda^-_\xi$ and $V^-_\xi \subset W^-_\theta$.
 \end{description}
\end{proposition}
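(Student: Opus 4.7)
The plan is a standard ping-pong construction, using Lemma~\ref{FindingEG} to produce the attracting laminations $\Lambda^+_\xi \in \L(\xi)$ and $\Lambda^-_\xi \in \L(\xi^{-1})$, with basic neighborhoods of generic leaves of $\Lambda^\pm_\psi$ and $\Lambda^\pm_\phi$ playing the role of attracting neighborhoods throughout.

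\textbf{Setup and ping-pong inclusions.} Fix a marked graph $G$ and realize $\phi,\psi$ by homotopy equivalences so that $\xi = \psi^m \phi^n$ is realized as a homotopy equivalence too. Fix generic leaves $\lambda^\pm_\phi, \lambda^\pm_\psi$ of the four laminations and sufficiently long finite subpaths $\beta^\pm_\phi \subset \lambda^\pm_\phi$, $\beta^\pm_\psi \subset \lambda^\pm_\psi$, defining basic neighborhoods $V^\pm_\phi := N(G,\beta^\pm_\phi)$ and $V^\pm_\psi := N(G,\beta^\pm_\psi)$. By taking the subpaths long enough we may arrange that each $V$ is an attracting neighborhood of its lamination under the appropriate positive or negative power, and in addition that $V^+_\psi \subset W^+_\omega$ and $V^-_\phi \subset W^-_\theta$. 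Hypotheses (i)--(iv), combined with the openness of weak attraction (Remark~\ref{basin is open}) and its uniform version, then yield an integer $M$ such that for all $m,n \ge M$ the four inclusions $(2^\pm)$, $(3^\pm)$ hold.

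\textbf{Constructing $\Lambda^\pm_\xi$.} Set $\beta = \beta^+_\psi$. The key claim is that, after possibly enlarging $M$, the path $\xi_{\#\#}(\beta)$ contains three pairwise disjoint copies of $\beta$. Indeed, for $n \ge M$ the path $\phi^n_{\#\#}(\beta)$ lies very close to generic leaves of $\Lambda^+_\phi$ and hence contains arbitrarily many disjoint copies of any fixed defining subpath $\alpha$ of $V^+_\phi$, each flanked by enough buffer material to apply Lemma~\ref{BufferedSplitting} and Lemma~\ref{lem:DoubleSharp}\pref{item:disjoint copies}; applying $\psi^m_\#$ for $m \ge M$ and using the inclusion $\psi^m(V^+_\phi) \subseteq V^+_\psi$, each such copy of $\alpha$ contributes a disjoint copy of $\beta$ to $\xi_{\#\#}(\beta)$. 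Lemma~\ref{FindingEG} then produces a $\xi$-invariant attracting lamination $\Lambda^+_\xi$ with $V^+_\xi := V^+_\psi = N(G,\beta)$ as an attracting neighborhood and every generic leaf containing $\beta$, giving $(4^+)$ (including $V^+_\xi \subset W^+_\omega$) and the exponential growth of $\xi$. Applying the same reasoning to $\xi^{-1} = \phi^{-n}\psi^{-m}$ with $\beta = \beta^-_\phi$ produces $\Lambda^-_\xi \in \L(\xi^{-1})$ with $V^-_\xi := V^-_\phi \subset W^-_\theta$, giving $(4^-)$.

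\textbf{Duality and conclusion (1).} After enlarging $M$ to a multiple of a uniform rotationalizing exponent (Fact~\ref{FactRotationlessPower}), we may assume $\xi$ and $\xi^{-1}$ are rotationless, so $\Lambda^+_\xi$ and $\Lambda^-_\xi$ are legitimate elements of $\L(\xi)$ and $\L(\xi^{-1})$. By Lemma~3.2.4 of \BookOne, their pairing reduces to $\A_\supp(\Lambda^+_\xi) = \A_\supp(\Lambda^-_\xi)$: in the case that $\Lambda^\pm_\omega$ and $\Lambda^\pm_\theta$ are non-geometric, the nonattracting systems of $\Lambda^\pm_\phi,\Lambda^\pm_\psi$ are free factor systems and the equality of supports follows from Lemma~\ref{independence of plus or minus}; in the case every lamination pair in $\langle \phi,\psi\rangle$ is geometric, the span construction in the proof of that lemma applies directly to $\Lambda^\pm_\xi$. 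For conclusion $(1)$, let $[a]$ be a conjugacy class carried by $\A_\na\Lambda^\pm_\xi$, so by Corollary~\ref{NA is well defined} $[a]$ is not weakly attracted to $\Lambda^+_\xi$ under $\xi$. If $[a]$ were not carried by $\A_\na\Lambda^\pm_\phi$, then $[a]$ would be attracted to $\Lambda^+_\phi$ under $\phi$; consequently $\phi^n([a]) \in V^+_\phi$, whence $\xi([a]) = \psi^m\phi^n([a]) \in V^+_\psi = V^+_\xi$ by $(2^+)$, and the inclusions $(3^+),(2^+)$ combine to give $\xi(V^+_\xi) \subseteq V^+_\xi$, forcing $\xi^k([a])$ to converge weakly to $\Lambda^+_\xi$, a contradiction. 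The parallel argument using $\xi^{-1}$, the inclusions $(2^-),(3^-)$, and Corollary~\ref{NA is independent of plusminus} shows $[a]$ is carried by $\A_\na\Lambda^\pm_\psi$.

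The main obstacle will be the duality step above --- verifying that the independently constructed $\Lambda^+_\xi$ and $\Lambda^-_\xi$ have equal free factor support. This is precisely where the hypothesis on the auxiliary automorphisms $\theta,\omega$ and the geometric/non-geometric dichotomy enters, so that Lemma~\ref{independence of plus or minus} or its span-based extension becomes available. Everything else --- the ping-pong inclusions, the Lemma~\ref{FindingEG} construction of $\Lambda^\pm_\xi$, and conclusion $(1)$ --- flows in a fairly routine way from the setup.
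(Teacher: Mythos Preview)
Your overall strategy---ping-pong inclusions, then Lemma~\ref{FindingEG} to build $\Lambda^\pm_\xi$, then duality, then~(1)---matches the paper's. But two steps have genuine gaps.

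\textbf{The duality step.} You correctly flag this as the main obstacle, but your sketch does not resolve it. First, knowing that $\Lambda^\pm_\theta$ and $\Lambda^\pm_\omega$ are nongeometric tells you nothing about whether $\Lambda^\pm_\phi$ or $\Lambda^\pm_\psi$ is geometric, so you cannot conclude that $\A_\na\Lambda^\pm_\phi$, $\A_\na\Lambda^\pm_\psi$ are free factor systems. Second, even granting that, Lemma~\ref{independence of plus or minus} only says that $\Lambda^+_\xi$ and $\Lambda^-_\xi$ are simultaneously carried or not carried by a given $\A_\na$; it does not yield $\A_\supp(\Lambda^+_\xi)=\A_\supp(\Lambda^-_\xi)$. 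The paper's argument is quite different and uses the line-level version of your conclusion~(1) \emph{before} duality is known. Writing $\Lambda^+_\xi=\Lambda^+_i$, $\Lambda^-_\xi=\Lambda^-_j$ with $i\ne j$, one splits into the cases $\Lambda^+_i\not\subset\Lambda^+_j$ and $\Lambda^+_i\subset\Lambda^+_j$. In the first case a generic leaf of $\Lambda^+_j$ is attracted to neither $\Lambda^+_\xi$ nor $\Lambda^-_\xi$, hence is carried by $\A_\na\Lambda^\pm_\phi\sqsubset\A_\na\Lambda^\pm_\theta$; Lemma~\ref{independence of plus or minus} (with $\theta$ in the role of $\phi$, so that hypothesis~$(*)$ is guaranteed by the third bullet) then forces a generic leaf of $\Lambda^-_j=\Lambda^-_\xi$ into $\A_\na\Lambda^\pm_\theta$, contradicting $\Lambda^-_\xi\subset V^-_\xi\subset W^-_\theta$. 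The second case is symmetric via Lemma~\ref{containmentSymmetry} and $\omega$. The auxiliary automorphisms $\theta,\omega$ are needed precisely so that Lemma~\ref{independence of plus or minus} applies; you never invoke them.

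\textbf{Conclusion (1) and uniformity.} Your step ``$[a]$ is attracted to $\Lambda^+_\phi$, consequently $\phi^n([a])\in V^+_\phi$'' fails: the threshold for entering $V^+_\phi$ depends on~$[a]$, while $n$ is fixed. The paper fixes this by invoking the \emph{uniform} clause of Proposition~\ref{prop:WA1} to obtain an $m_4$ (and then enlarges~$M$) such that every line not in $V^-_\phi$ and not carried by $\A_\na\Lambda^\pm_\phi$ lands in $V^+_\phi$ after $m_4$ iterates of~$\phi$. This has three alternatives, not two, so one must also rule out membership in $V^-_\phi=V^-_\xi$; that comes from knowing $[a]$ is not attracted to $\Lambda^-_\xi$ under $\xi^{-1}$ either, which is why the paper first proves the two-sided statement ``any line attracted to neither $\Lambda^+_\xi$ nor $\Lambda^-_\xi$ is carried by both $\A_\na\Lambda^\pm_\phi$ and $\A_\na\Lambda^\pm_\psi$'' and only then, \emph{after} duality is established, specializes to conjugacy classes via $\A_\na\Lambda^+_\xi=\A_\na\Lambda^-_\xi$.

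Finally, the remark about enlarging $M$ to make $\xi$ rotationless is both unnecessary (Lemma~\ref{FindingEG} already places $\Lambda^+_\xi$ in $\L(\xi)$) and ineffective (no choice of $M$ makes $\psi^m\phi^n$ rotationless for all $m,n\ge M$).
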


\begin{proof} Choose \cts\
$$g_\phi \from G_\phi \to G_\phi \quad\text{and}\quad g_\psi \from G_\psi \to G_\psi
$$
representing $\phi$ and $\psi$, respectively. Let $H_\phi \subset G_\phi$, $H_\psi \subset G_\psi$ denote the \eg\ strata corresponding to $\Lambda_\phi$, $\Lambda_\psi$, respectively. Let $C_1$ be the constant of Lemma~\ref{BufferedSplitting} applied to $g_\phi \from G_\phi \to G_\phi$ and to $H_{\phi}$. Let $\lambda^+_{\phi}$ and $\lambda^+_{\psi}$ be realizations of generic leaves of $\Lambda^+_\phi$ and $\Lambda^+_\psi$ in $G_{\phi}$ and $ G_{\psi}$ respectively. Pick homotopy equivalences $h_\psi : G_\phi \to G_\psi$ and $h_\phi : G_\psi \to G_\phi$ that respect the markings. 

By \emph{(i)} we may choose a finite subpath $\alpha_0 \subset G_{\phi}$ of $\lambda^+_{\phi}$ that contains at least $2C_1+1$ edges of $H_\phi$ and such that ${h_\psi}_\#(\alpha_0) \subset G_\psi$ is weakly attracted to $\lambda^+_{\psi}$ under iteration by~$g_\psi$. Decompose $\alpha_0$ as an initial subpath with $C_1$ edges in $H_\phi$ followed by a central subpath $\alpha$ that begins and ends in $H_\phi$ followed by a terminal subpath with $C_1$ edges in $H_\phi$. 

By \emph{(iii)} there exists $m_1$ so that $(g_{\phi}^{m_1}h_{\phi})_\#(\lambda^+_{\psi})$ contains $\alpha_0$ as a subpath. Let $C_2$ be the bounded cancellation constant for $g_{\phi}^{m_1}h_\phi$. Then choose a subpath $\beta$ of $\lambda^+_{\psi}$ that begins and ends in $H_\psi$ and such that $(g_{\phi}^{m_1}{h_\phi})_\#(\beta)$ contains a subpath that decomposes as an initial subpath with at least $C_2$ edges followed by $\alpha_0$ followed by a terminal subpath with at least $C_2$ edges. By lengthening $\beta$ if necesssary, we may assume that 
 $V^+_\psi = N(G_\psi,\beta) $ is an attracting neighborhood for the action of $\psi$ on $\Lambda^+_\psi$ and that $V^+_\psi \subset W^+_\omega$. 
 
 If $\sigma$ is any path in $G_{\psi}$ that contains $\beta$ as a subpath then ${(g_{\phi}^{m_1}h_\phi})_\#(\sigma)$ contains $\alpha_0$ as a subpath. Lemma~\ref{BufferedSplitting} implies that
 ${(g_{\phi}^{m_1+k}h_\phi})_\#(\sigma) = {g_{\phi}^{k}}_\#((g_{\phi}^{m_1}h_\phi)_\#(\sigma))$ contains ${g_\phi^k}_\#(\alpha)$ for all $k \ge 0$. In other words, ${(g_{\phi}^{m_1+k}h_\phi})_{\#\#}(\beta) \supset {g_\phi^k}_\#(\alpha)$ for all $k \ge 0$. 
 
 By a symmetric argument with the roles of roles of $\phi$ and $\psi$ reversed there exists $m_2$ and a subpath $\gamma$ of $\lambda^+_{\phi}$ such that ${(g_{\psi}^{m_2+k}h_\psi})_{\#\#}(\gamma) \supset {g_\psi^k}_\#(\beta)$ for all $k \ge 0$ and such that $V^+_\phi = N(G_\phi,\gamma)$ is an attracting neighborhood for the action of $\phi$ on $\Lambda^+_\phi$.

Now choose $m_3$ so that ${g^k_\phi}_\#(\alpha)$ contains three disjoint copies of $\gamma$ and ${g_\psi^k}_\#(\beta)$ contains $\beta$ for all $k \ge m_3$. If $n \ge m_1 + m_3$ then ${(g_{\phi}^{n} \, h_\phi})_{\#\#}(\beta)$ contains three disjoint copies of $\gamma$. If in addition, $m \ge m_2 + m_3$ then $(g_{\psi}^{m} \, {h_\psi} \, g_{\phi}^{n} \, {h_\phi})_{\#\#}(\beta)$ contains three disjoint copies of $\beta$ by Lemma~\ref{lem:DoubleSharp}~\pref{item: double sharp composition}. Let $f= g_{\psi}^{m} \, {h_\psi} \, g_{\phi}^{n} \, {h_\phi}: G_{\psi} \to G_{\psi}$. Then $f$ represents $\xi = \psi^m \phi^n$ and by applying Lemma~\ref{FindingEG} it follows that $V^+_\xi := V^+_\psi= N(G_\psi,\beta)$ is an attracting neighborhood of an attracting lamination $\Lambda_\xi^+$ of $\xi$ each of whose generic leaves, when realized in $G_\psi$, contains $\beta$ as a subpath. 

Taking any integer $M \ge \max\{m_1+m_3,m_2+m_3\}$, for any $m,n \ge M$ we have now produced an exponentially growing $\xi = \psi^m \phi^n$ and an attracting lamination $\Lambda^+_\xi$ satisfying~$(2^+), (3^+)$ and $(4^+)$. 

By a completely symmetric argument, with the roles of $\phi,\psi$ played by $\psi^\inv$, $\phi^\inv$ respectively, we may increase $M$ if necessary and find attracting neighborhoods $V^-_\phi \subset W^-_\theta$ of $\Lambda^-_\phi$ and $V^-_\psi$ of $\Lambda^-_\psi$, so that if $m,n \ge M$ then $\xi^\inv = \phi^{-n} \psi^{-m}$ is exponentially growing and has an attracting lamination $\Lambda^-_\xi$ that satisfies ~$(2^-), (3^-)$ and $(4^-)$. 

 By Proposition~\ref{prop:WA1} there exists $m_4$ so that if $\nu$ is a line that is neither an element of $V^-_\phi = V^-_\xi$ nor carried by $\A_\na \Lambda^\pm_\phi$ then $\phi^k_\#(\nu) \in V^+_\phi$ for all $k \ge m_4$. In addition to the constraints previously placed on $M$, we now assume also that $m,n\ge M >m_4$. Then $\xi_\#(\nu) = \psi^m_\# \phi^n_\#(\nu) \in \psi^m_\#(V^+_\phi) \subset V^+_\xi$ which proves that $\nu$ is weakly attracted to~$\Lambda_\xi^+$. We will use this in the following form. Every line that is not contained in $V^-_\xi$ and is not weakly attracted to $\Lambda^+_\xi$ is carried by $\A_\na\Lambda^\pm_\phi$. 
 
 By a completely symmetric argument, we may assume, after increasing $M$ further if necessary, that every line that is not contained in $V^+_\xi$ and is not weakly attracted to $\Lambda^-_\xi$ is carried by $\A_\na\Lambda^\pm_\psi$. 
 
Any line that is weakly attracted to neither $\Lambda_\xi^+$ nor $\Lambda_\xi^-$ is disjoint from both $V^+_\xi$ and $V^-_\xi$ and so is carried by both $\A_\na\Lambda^\pm_\phi$ and $\A_\na\Lambda^\pm_\psi$. Restricting to periodic lines we conclude that a conjugacy class carried by both $\A_\na \Lambda^+_\xi $ and $\A_\na \Lambda^-_\xi $ is carried by both $\A_\na \Lambda^\pm_\phi$ and $\A_\na \Lambda^\pm_\psi$. If we knew that $\Lambda^-_\xi$ and $\Lambda^+_\xi$ were dual laminations then by Corollary~\ref{CorPMna} it would follow that $\A_\na\Lambda^-_\xi = \A_\na\Lambda^+_\xi$, so~(1) would follow and the proposition would be proved.
 
It therefore remains to show that $\Lambda_\xi^+$ and $\Lambda_\xi^-$ are dual laminations for $\xi$. 

Let the set of all lamination pairs of $\xi$ be indexed as $\{\Lambda^\pm_i\}_{i \in I}$. Assuming that $\Lambda^+_\xi$ and $\Lambda^-_\xi$ are not dual, we have 
$$\Lambda^+_\xi = \Lambda^+_i \quad\text{and}\quad \Lambda^-_\xi = \Lambda^-_j \quad\text{for some}\quad i \ne j \in I
$$
From this we derive a contradiction. There are two cases, depending on whether $\Lambda^+_i \subset \Lambda^+_j$.

Consider the case that $\Lambda^+_i \not\subset \Lambda^+_j$. Let $\sigma$ be a generic leaf of $\Lambda^+_j$, so $\sigma$ is not weakly attracted to $\Lambda^+_i=\Lambda^+_\xi$ by forward iteration of~$\xi$. The lamination $\Lambda_j^+$ is closed, $\xi$-invariant, and by Fact~\ref{FactDualDifferent} it does not contain $\Lambda_j^-$, so $\sigma$ is not weakly attracted to $\Lambda^-_j=\Lambda^-_\xi$ by backward iteration of~$\xi$. It follows that $\sigma$ is carried by $\A_\na\Lambda^\pm_\phi$ and hence is carried by $\A_\na\Lambda^\pm_\theta$. Lemma~\ref{independence of plus or minus} implies that a generic leaf $\tau$ of $\Lambda^-_j = \Lambda_\xi^-$ is carried by $\A_\na\Lambda^\pm_\theta$ which contradicts the fact that $\tau$ is contained in $V_\xi^- =V_\phi^- \subset W^-_\theta$ which is an attracting neighborhood of $\Lambda^-_\theta$. 

In the remaining case, $\Lambda^+_i \subset \Lambda^+_j$. By Lemma~\ref{containmentSymmetry} we have $\Lambda^-_i \subset \Lambda^-_j$ which implies that a generic leaf $\sigma$ of $\Lambda^-_i$ is not weakly attracted to $\Lambda^-_j$ under iteration of~$\xi^\inv$. Since $\Lambda_i^-$ is closed, $\xi$-invariant and does not contain $\Lambda_i^+$, $\sigma$ is not weakly attracted to $\Lambda^+_i$ under iteration of~$\xi$. It follows that $\sigma$ is carried by $\A_\na\Lambda^\pm_\psi$ and hence is carried by $\A_\na\Lambda^\pm_\omega$. Lemma~\ref{independence of plus or minus} implies that a generic leaf $\tau$ of $\Lambda^+_i = \Lambda_\xi^+$ is carried by $\A_\na\Lambda^\pm_\omega$ which contradicts the fact that $\tau$ is contained in $V_\xi^+=V_\psi^+ \subset W^+_\omega$ which is an attracting neighborhood of $\Lambda^+_\omega$. 
\end{proof}

\subsection{The proof of Proposition~\ref{PropUniversallyAttracting}}
\label{SectionProofUnivAttr}

The next two propositions are used in producing an appropriate conjugator, an outer automorphism that conjugates one element to second one so that the ping-pong arguments can be applied to the resulting pair.

\begin{proposition}\label{PropConjugator} For any subgroup $H \subgroup \Out(F_n)$, any element $\phi \in H$, and any dual lamination pair $\Lambda^\pm_\phi \in \L^\pm(\phi)$, there exists a finite index subgroup $H_0 \subgroup H$ such that for any $\theta \in H_0$ and any generic lines $\gamma^\pm$ for $\Lambda^\pm_\phi$, neither $\theta(\lambda^+)$ nor $\theta(\lambda^-)$ is carried by $\A_\na\Lambda^\pm_\phi$.
\end{proposition}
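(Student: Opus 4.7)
The plan is as follows, in three steps.

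First, I would observe that the condition that $\theta(\lambda^+)$ be carried by $\A_\na\Lambda^\pm_\phi$ does not depend on the choice of generic leaf $\lambda^+$ of $\Lambda^+_\phi$. Indeed, the set $V \subset \B$ of lines carried by the subgroup system $\A_\na\Lambda^\pm_\phi$ is closed in the weak topology (Fact~\ref{FactLinesClosed}, together with Lemma~\ref{ZP is closed}~\pref{item:closed}), and the weak closure of any generic leaf of $\Lambda^+_\phi$ equals the full lamination $\Lambda^+_\phi$. Hence the condition $\theta(\lambda^+) \in V$ is equivalent to $\theta(\Lambda^+_\phi) \subset V$, and likewise for $\Lambda^-_\phi$. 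Furthermore, $\Lambda^+_\phi$ and $\Lambda^-_\phi$ are themselves disjoint from $V$: a generic leaf of $\Lambda^+_\phi$ possesses an attracting neighborhood for $\phi$, whereas by Lemma~\ref{defining Z}~\pref{ItemZPAnyPaths} and Corollary~\ref{NA is well defined} no element of $V$ is weakly attracted to $\Lambda^+_\phi$; the corresponding statement for $\Lambda^-_\phi$ follows from Corollary~\ref{CorPMna}.

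Second, I would note that it is therefore enough to exhibit a finite-index subgroup $H_0 \subgroup H$ contained in $\Stab_{\Out(F_n)}(\A_\na\Lambda^\pm_\phi)$. Any $\theta$ fixing $\A_\na\Lambda^\pm_\phi$ also preserves the closed set $V$, so $\theta(\Lambda^\pm_\phi) \subset V$ would force $\Lambda^\pm_\phi \subset V$, which we have just ruled out.

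Third, and this is the main obstacle, I would establish that $H \cap \Stab_{\Out(F_n)}(\A_\na\Lambda^\pm_\phi)$ has finite index in $H$. The strategy is to exhibit a finite set on which $H$ acts through a finite quotient and whose pointwise stabilizer controls $\A_\na\Lambda^\pm_\phi$. The natural first candidate is the free factor support $\A$ of $\A_\na\Lambda^\pm_\phi$, a free factor system by Fact~\ref{FactFFSupport}; the $\<\phi\>$-orbit of $\A$ is finite because $\phi^K$ fixes $\A_\na\Lambda^\pm_\phi$ for some $K$ by Fact~\ref{FactRotationlessPower} and Fact~\ref{FactPeriodicIsFixed}, and one bounds the $H$-orbit using the descending chain condition for free factor systems (Fact~\ref{FactGrushko}) together with rank bounds. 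Having produced a finite-index subgroup $H_1 \subgroup H$ fixing $\A$, one then descends from $\Stab_{H_1}(\A)$ to $\Stab_H(\A_\na\Lambda^\pm_\phi)$ by applying the vertex-group descending chain condition (Proposition~\ref{PropVDCC}) to iterates $\theta^k(\A_\na\Lambda^\pm_\phi)$ for $\theta \in H_1$, each of which is a vertex group system with free factor support $\A$ and therefore lies in a finite set. The delicate piece is the geometric case, where $\A_\na\Lambda^\pm_\phi$ is strictly larger than a free factor system; here one invokes Proposition~\ref{PropVertToFree} and the transversality arguments of Section~\ref{transversality} to translate control over the canonical free factor system associated to the geometric stratum into control over $\A_\na\Lambda^\pm_\phi$ itself.
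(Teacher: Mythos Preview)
Your Step 1 is fine, but the reduction in Step 2 is fatally strong, and Step 3 attempts to prove something that is false. You propose to find a finite-index $H_0 \subgroup H$ contained in $\Stab(\A_\na\Lambda^\pm_\phi)$. But Corollary~\ref{ZP is not invariant} says precisely that this \emph{fails} whenever $H$ is fully irreducible, $\phi$ is reducible, and $\A_\na\Lambda^\pm_\phi$ is nontrivial: every finite-index subgroup of $H$ contains an element moving $\A_\na\Lambda^\pm_\phi$ off itself. In fact the application of Proposition~\ref{PropConjugator} in the proof of Proposition~\ref{PropUniversallyAttracting} requires choosing $\zeta \in H_0$ that simultaneously satisfies the conclusion of Proposition~\ref{PropConjugator} \emph{and} moves $\A_\na\Lambda^\pm_\phi$; if $H_0 \subset \Stab(\A_\na\Lambda^\pm_\phi)$ this would be impossible. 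Your Step 3 sketch also confuses two different finiteness statements: the descending chain condition (Fact~\ref{FactGrushko}, Proposition~\ref{PropVDCC}) bounds the length of strictly nested chains, not the size of an $H$-orbit of a free factor system or vertex group system, which is typically infinite.

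The paper's argument is entirely different and does not pass through $\Stab(\A_\na\Lambda^\pm_\phi)$. One chooses a \ct\ $f \from G \to G$ with $[G_r] = \A_\supp(\Lambda^\pm_\phi)$ and reduces, via Lemma~\ref{ZP is closed}~\pref{item:ZP=NA}, to showing that $\theta(\lambda^\pm)$ is not carried by $\<Z,\hat\rho_r\>$. This is exactly the statement of Lemma~7.0.3 of \BookOne, proved there under the additional hypothesis that $\Lambda^\pm_\phi$ is topmost; the paper observes that the proof goes through verbatim here because the only property of $Z$ it uses is $Z \cap G_r = G_{r-1}$, which holds for the generalized $Z$ of Definition~\ref{defn:Z}. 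The finite-index subgroup $H_0$ thus arises from the mechanism internal to that lemma, not from stabilizing the nonattracting system.
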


\begin{proof} Passing to a positive power we assume $\phi$ is rotationless. Applying Theorem~\ref{TheoremCTExistence}, let $\fG$ be a \ct\ representing $\phi$ with \eg\ stratum $H_r$ corresponding to $\Lambda^\pm_\phi$ chosen so that $[G_r] = \A_\supp(\Lambda^\pm_\phi)$. Assume the notation of Definition~\ref{defn:Z}. By Lemma~\ref{ZP is closed}~\pref{item:ZP=NA}, it suffices to show that the realizations of $\theta(\lambda^+)$ and $\theta(\lambda^-)$ in $G$ are not carried by~$\<Z,\hat\rho_r\>$. Lemma~7.0.3 of \BookOne\ is the special case of this under the additional hypothesis that the lamination pair $\Lambda^\pm_\phi$ is topmost, that being a requirement for defining the subgraph $Z$ in \BookOne. In our present setting, using our general definition of $Z$, the exact same proof works: the only property needed of $Z$ is that $Z \intersect G_r = G_{r-1}$, which holds here as it does in \BookOne.
\end{proof}

 We also need the following simple result about group actions on sets.

\begin{lemma} \label{action on sets} Suppose that a group $H$ acts on sets $X_1,\ldots,X_M$ and that $x_m \in X_m$ are points whose stabilizers in $H$ have infinite index. Then there is an infinite sequence $\{g_l\}$ in $H$ such that $g_l(x_m) \ne g_k(x_m)$ for $1 \le m \le M$ and for $l \ne k$. 
\end{lemma}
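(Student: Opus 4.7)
The plan is to build the sequence $\{g_l\}$ one element at a time, using the condition on stabilizers to guarantee that a suitable choice always exists. Set $H_m = \Stab_H(x_m) \subgroup H$, which by hypothesis has infinite index in $H$ for each $m = 1,\ldots,M$. For any $g,g' \in H$ we have $g(x_m) = g'(x_m)$ if and only if $g^\inv g' \in H_m$, equivalently $g' \in g H_m$. Consequently the requirement ``$g_l(x_m) \ne g_k(x_m)$ for all $m$ and all $l \ne k$'' is the same as requiring that the elements $g_1,g_2,\ldots$ lie in pairwise distinct left cosets of each of the subgroups $H_1,\ldots,H_M$ simultaneously.

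I would proceed by induction. Suppose $g_1,\ldots,g_l \in H$ have already been chosen so that for each $m$ the elements $g_1,\ldots,g_l$ lie in pairwise distinct left $H_m$-cosets. To extend, it suffices to find
$$ g_{l+1} \in H \setminus \bigcup_{k=1}^{l}\bigcup_{m=1}^{M} g_k H_m. $$
The right-hand union is a finite union of left cosets of the subgroups $H_1,\ldots,H_M$, each of which has infinite index in $H$. Hence, by B.H.~Neumann's lemma (a group covered by finitely many cosets of subgroups must have one of those subgroups of finite index), this union is a proper subset of $H$, and some $g_{l+1}$ may be selected from the complement. This $g_{l+1}$ then lies in a left $H_m$-coset distinct from each of $g_1 H_m,\ldots,g_l H_m$, for every $m$, completing the inductive step and producing an infinite sequence with the required property.

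The only nontrivial input is B.H.~Neumann's coset covering lemma; once it is invoked, both the indexing over $m$ and the induction on $l$ are routine. There is no real obstacle beyond citing this lemma, since the whole statement reduces cleanly to the combinatorial fact that finitely many cosets of infinite-index subgroups cannot exhaust the ambient group.
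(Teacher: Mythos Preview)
Your proof is correct. The reformulation in terms of cosets is right, and B.~H.~Neumann's lemma does exactly what you need at each stage of the induction on~$l$.

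The paper takes a different route: it inducts on $M$ rather than on the length of the sequence. Assuming a sequence $\{\hat g_l\}$ that already works for $x_1,\ldots,x_{M-1}$, the paper examines the values $\hat g_l(x_M)$. If these take infinitely many values one simply passes to a subsequence. If not, one passes to a subsequence on which $\hat g_l(x_M) = \bar x_M$ is constant, then uses the infinite orbit of $\bar x_M$ to find elements $h_s$ with $h_s(\bar x_M)$ pairwise distinct, and finally sets $g_j = h_j \hat g_{\alpha(j)}$ for a carefully chosen increasing function $\alpha$, arguing by a diagonal selection that the first $M-1$ coordinates can be kept pairwise distinct as well. This argument is entirely self-contained and avoids any appeal to Neumann's lemma, at the cost of a somewhat fiddly bookkeeping step. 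Your argument is shorter and more transparent once one is willing to quote Neumann; the paper's is more elementary in that it uses nothing beyond the infinite-index hypothesis directly.
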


\proof The proof is by induction on $M$ with the $M=1$ case being an immediate consequence of the assumption that the stabilizer of $x_i$ in $H$ has infinite index. 

For the inductive step, assume that there is an infinite sequence $\{\hat g_l\}$ in $H$ such that $\hat g_l(x_m) \ne \hat g_k(x_m)$ for $1 \le m \le M-1$ and for $l \ne k$. If $\{\hat g_l(x_M)\}$ is an infinite set then, after passing to a subsequence, we may assume that $\hat g_l(x_M) \ne \hat g_k(x_M)$ for $l \ne k$. In this case we choose $g_l = \hat g_l$. 
 
We may therefore assume, after passing to a subsequence, that $\hat g_l(x_M) = \bar x_M$ is independent of $l$. Since the $H$-orbit of $\bar x_M$ equals the $H$-orbit of $x_M$, there is an infinite sequence $\{h_s\}$ in $H$ such that $h_s(\bar x_M) \ne h_t(\bar x_M)$ for $s \ne t$. 

We define by induction an increasing function $\alpha \from \N \to \N$ such that $h_{J} \hat g_{\alpha(J)}(x_m) \ne h_j \hat g_{\alpha(j)}(x_m)$ for $j<J$ and $1 \le m \le M-1$. Assume that $\alpha(1),\ldots,\alpha(J-1)$ are defined. For $1 \le m \le M-1$, the points $\hat g_{\alpha(J-1)+k}(x_m)$ take infinitely many values as $k \ge 1$ varies, and so the points $h_{J} \hat g_{\alpha(J-1)+k}(x_m)$ take infinitely many values. We may therefore pick $k \ge 1$ and set $\alpha(J) = \alpha(J-1)+k$ so that for $1 \le m \le M-1$, the point $h_J \hat g_{\alpha(J)}(x_m)$ is different from each of $h_1 \hat g_{\alpha(1)}(x_m),\ldots,h_{J-1} \hat g_{\alpha(J-1)}(x_m)$. This completes the definition of $\alpha$.

Setting $g_j = h_{j} \hat g_{\alpha(j)}$ completes the proof.
\endproof

\begin{proof}[Proof of Proposition \ref{PropUniversallyAttracting}] Consider a subgroup $H \subgroup \Out(F_n)$ that is fully irreducible. Applying the main result of \BookTwo, fix once and for all an exponentially growing $\theta \in H$ and a lamination pair $\Lambda^\pm_\theta$ for $\theta$. We may assume that either $\Lambda^\pm_\theta$ is non-geometric or that every lamination pair for every element of $H$ is geometric. Choose attracting neighborhoods $W_\theta^\pm$ of $\Lambda^\pm_\theta$.

Let $X$ be the set of lamination pairs $\Lambda^\pm_\phi$ for elements $\phi \in H$ with the following properties.
\begin{itemize}
\item [(a)] $\Lambda^-_\phi \subset W^-_\theta$ and $\A_\na\Lambda^\pm_\phi \subgroup \A_\na\Lambda^\pm_ \theta$.
\item [(b)] There exists $\mu \in H$ such that $\Lambda^+_\phi \subset \mu(W^+_\theta)$ and $\A_\na\Lambda^\pm_\phi \subgroup \mu( \A_\na\Lambda^\pm_ \theta)$. 
\end{itemize}
Note that $X$ contains $\Lambda^\pm_\theta$, using $\mu = \Id$, and so is non-empty. Note also that (a) is the same as the first bullet in the hypotheses of Proposition~\ref{PropSmallerComplexity}.

We claim that for any $\phi \in H$ with lamination pair $\Lambda^\pm_\phi \in X$ such that $\A_\na\Lambda^\pm_\phi$ is non-trivial, there exists $\xi \in H$ with a lamination pair $\Lambda^\pm_\xi \in X $ for $\xi$ that is properly contained in $\A_\na\Lambda^\pm_\phi$, that is, $\A_\na\Lambda^\pm_\xi \sqsubset \A_\na\Lambda^\pm_\phi$ and $\A_\na\Lambda^\pm_\xi \ne \A_\na\Lambda^\pm_\phi$. Proposition~\ref{PropVDCC} and the obvious induction argument imply that $H$ contains an element which has a lamination pair $\Lambda^\pm$ such that $\A_\na\Lambda^\pm = \emptyset$, thereby completing the proof of Proposition~\ref{PropUniversallyAttracting}.

To prove the claim, let $\gamma^\pm_\phi$ be generic lines of $\Lambda^\pm_\phi$, respectively. By Proposition~\ref{PropConjugator}, there is a finite index subgroup $H_0 \subgroup H$ such that for all $\zeta \in H_0$, neither $\zeta(\gamma_\phi^+)$ nor $\zeta(\gamma_\phi^-)$ is carried by $\A_\na\Lambda^\pm_\phi$. After passing to a further finite index subgroup, still called $H_0$, we may assume that the stabilizer of $\Lambda^ +_\phi$ either contains $H_0$ or intersects $H_0$ in a infinite index subgroup and the same is true of the stabilizer of $\Lambda^ -_\phi$. By Corollary~\ref{ZP is not invariant} and Lemma~\ref{action on sets}, we may choose $\zeta \in H_0$ so that $\zeta(\A_\na\Lambda^\pm_\phi) \ne \A_\na\Lambda^\pm_\phi$ and so that $\zeta(\Lambda^+_\phi) \ne \Lambda^-_\phi$ and $\zeta(\Lambda^-_\phi) \ne \Lambda^+_\phi$.
 
 Define $\psi = \zeta \phi \zeta^\inv$ and $W^+_\psi = \zeta(W^+_\theta)$. Then $\gamma^+_\psi = \zeta(\gamma^+_\phi)$ and $\gamma^-_\psi = \zeta(\gamma^-_\phi)$ are generic lines for a lamination pair $\Lambda^+_\psi =\zeta(\Lambda^+_\phi) \ne \Lambda^-_\phi$ and $\Lambda^-_\psi =\zeta(\Lambda^-_\phi) \ne \Lambda^+_\phi$ for $\psi$, $W^+_\psi$ is an attracting neighborhood for $\Lambda^+_\psi$ and $\A_\na\Lambda^\pm_\psi = \zeta(\A_\na\Lambda^\pm_\phi) \ne \A_\na\Lambda^\pm_\phi$. 
Since the maximum length of a strictly decreasing sequence of vertex group systems beginning with $\A_\na\Lambda^\pm_\phi$ is the same as the maximum length of a strictly decreasing sequence of vertex group systems beginning with $\A_\na\Lambda^\pm_\psi$, \ $\A_\na\Lambda^\pm_\phi \not \sqsubset \A_\na\Lambda^\pm_\psi$. It follows that any vertex group system that is contained in both $\A_\na\Lambda^\pm_\psi$ and $\A_\na\Lambda^\pm_\phi$ is properly contained in $\A_\na\Lambda^\pm_\phi$.

 If the weak closure of $\gamma^+_\psi$ contains a generic line of $\Lambda^-_\phi$ then $\Lambda^-_\phi \subset \Lambda^+_\psi$. Since $\A_\supp \Lambda^-_\phi$ and $\A_\supp \Lambda^+_\psi$ have the same rank, Lemma~3.1.15 of \BookOne\ implies that $ \Lambda^-_\phi = \Lambda^+_\psi$, which is a contradiction. We conclude that the weak closure of $\gamma^+_\psi$ does not contain a generic line of $\Lambda^-_\phi$. Since $\xi \in H_0$, $\gamma^+_\psi$ is not carried by $\A_\na\Lambda^\pm_\phi$. Proposition~\ref{prop:WA1} implies that $\gamma^+_\psi$ is weakly attracted to $\Lambda^+_\phi$ under iteration by $\phi$. This verifies ($i$) of Proposition~\ref{PropSmallerComplexity}. 
 The symmetric arguments show that items ($ii$), ($iii$) and ($iv$) of Proposition~\ref{PropSmallerComplexity} are also satisfied. 
 
 Choose $\mu$ as in (b), let $\omega= (\zeta\mu)\theta(\zeta\mu)^{-1}$ and let $W^+_\omega =(\zeta\mu)(W^+_\theta)$. Then $W^+_\omega$ is an attracting neighborhood for $\Lambda^+_\omega = (\zeta\mu)\Lambda^+_\theta$ that contains $ \zeta(\Lambda^+_\phi) = \Lambda^+_\psi $ and $\A_\na\Lambda^\pm_\psi = \zeta(\A_\na\Lambda^\pm_\phi) \subgroup \zeta \mu \A_\na\Lambda^\pm_\theta = \A_\na\Lambda^\pm_\omega $. Thus the second bullet in the hypotheses of Proposition~\ref{PropSmallerComplexity} is satisfied. Since $\omega= (\zeta\mu)\theta(\zeta\mu)^{-1}$ and $\Lambda^\pm_\omega = (\zeta\mu)\Lambda^\pm_\theta$, the lamination pair $\Lambda^\pm_\omega$ is geometric if and only if the lamination pair $\Lambda^\pm_\theta$ is geometric. This implies that the third bullet in the hypotheses of Proposition~\ref{PropSmallerComplexity} is satisfied.
 
 We have now verified all of the hypotheses of Proposition~\ref{PropSmallerComplexity}. From the conclusion of that proposition, there exists $\xi \in H$ satisfying \begin{itemize}
 \item [(1)] $ \A_\na \Lambda^\pm_\xi \subgroup \A_\na \Lambda^\pm_\phi $ and $\A_\na \Lambda^\pm_\xi \subgroup \A_\na \Lambda^\pm_\psi$
 \item [(4$^-$)] $\Lambda^-_\xi \subset W^-_\theta$.
 \item [(4$^+$)] $\Lambda^+_\xi \subset W^+_\omega= (\zeta\mu)(W^+_\theta)$. 
\end{itemize}
Combining (1) with (a) and (b) for $\phi$, we have
\begin{itemize}
 \item [(5)] $ \A_\na \Lambda^\pm_\xi \subgroup \A_\na \Lambda^\pm_\phi \subgroup \A_\na \Lambda^\pm_\theta$
\item [(6)] $ \A_\na \Lambda^\pm_\xi \subgroup \A_\na \Lambda^\pm_\psi =\zeta (\A_\na\Lambda^+_\phi) \subgroup \zeta \mu (\A_\na\Lambda^\pm_\theta)$
\end{itemize}
Items (4$^-$) and (5) show that $\Lambda^\pm_\xi $ satisfies (a). Items (4$^+$) and (6) show that $\Lambda^\pm_\xi $ satisfies (b). This proves that $\Lambda^\pm_\xi \in X$. Item (1) completes the proof of the claim as noted above, and so completes the proof of Proposition~\ref{PropUniversallyAttracting}.
\end{proof}

\section{Finding fully irreducible outer automorphisms}
\label{SectionLooking}

In this section we prove Theorem~\ref{thm:main}. The strategy is to consider a certain pair of elements $\phi,\psi \in \Out(F_n)$ with which to carry out a ping-pong argument. If ping-pong is successful then we produce a fully irreducible element of the form $\psi^m \phi^n$ for sufficiently large $m,n$. If ping-pong fails, instead we produce outer automorphisms of the form $\psi^m \phi^n$ for arbitrarily large values of $m,n$, each of which has a positive power having a nontrivial proper invariant free factor. In the latter situation, using our main ping-pong argument, Proposition~\ref{PropSmallerComplexity}, we prove that these proper free factors exhibit stronger and stronger filling properties, as $m,n \to +\infinity$. But then we will show that these filling properties are impossibly strong, and so there is indeed a fully irreducible element of the form $\psi^m \phi^n$. 

This strategy is carried out under the following assumptions on the pair $\phi,\psi \in \Out(F_n)$:
\begin{description}
\item[(I)] $\phi,\psi$ are rotationless and reducible.
\item[(II)] No proper, nontrivial free factor system is invariant under both $\phi$ and $\psi$.
\item[(III)] There exist lamination pairs $\Lambda^\pm_\phi \in \L^\pm(\phi)$, $\Lambda^\pm_\psi \in \L^\pm(\psi)$ such that $\A_\na(\Lambda^\pm_\phi)$ and $\A_\na(\Lambda^\pm_\psi))$ are both trivial.
\end{description}

\paragraph{Step 1:} Every fully irreducible subgroup $H \subgroup \Out(F_n)$ either contains a fully irreducible element or contains a pair $\phi,\psi \in H$ satisfying (I)--(III).

\bigskip

To prove this, first apply Theorem~1.1 of \BookTwo\ to obtain an element of $H$ with exponential growth. Proposition~\ref{PropUniversallyAttracting} now implies that there is a rotationless $\phi \in H$ and a lamination pair $\Lambda_\phi^{\pm} \in \L^\pm(\phi)$ such that $\A_\na(\Lambda_\phi^{\pm}) $ is trivial. The group $H$ acts on the set $X$ of proper nontrivial free factors (up to conjugacy), and the stabilizer of each point of this set has infinite index in $H$. The set $\{x_1,\ldots,x_M\}$ of conjugacy classes of $\phi$-invariant proper, nontrivial free factors is finite by Lemma~\ref{finite singular set}. Applying Lemma~\ref{action on sets} with $X=X_1=\ldots=X_M$, there exists $\eta \in H$ such that $\eta\{x_1,\ldots,x_M\} \cap \{x_1,\ldots,x_M\} = \emptyset$. Defining $\psi = \eta \phi \eta^{-1} \in H$, properties (II), (III) follow. If $\phi$ is fully irreducible then we are done. Otherwise, applying Fact~\ref{FactPeriodicIsFixed}, $\phi$ is reducible since it is rotationless, and so (I) follows, completing the proof of Step~1.
 
\paragraph{Step 2:} For any $\phi,\psi \in \Out(F_n)$ satisfying (I) --- (III), one of the following two properties holds:

\begin{enumerate}
\item [(a)] There exists an integer $M>0$ such that for any integers $m,n \ge M$ the outer automorphism $\xi = \psi^m \phi^n$ is fully irreducible.
\item [(b)] There exist filling sets $B_{\phi^{-1}} \subset S_{\phi^{-1}}$ and $B_{\psi} \subset S_\psi$ so that for any choice of weak neighborhoods $U(b) \subset \B$, one for each $b \in B_{\phi^\inv} \union B_\psi$, there exists a proper free factor $K$ that carries an element of each $U(b)$. 
 \end{enumerate}

To start the proof, if (a) fails then for all $M$ there exist integers $m,n \ge M$ such that some positive power of $\xi_M = \psi^m \phi^n$ has an invariant proper free factor $K_M$ (the dependence of $m$, $n$ on $M$ will be mostly suppressed through the proof, but where emphasis is needed we write $m=m(M)$, $n=n(M)$). Assuming this we will prove~(b). 



We first verify the hypotheses of 
Proposition~\ref{PropSmallerComplexity}. Since $\A_\na(\Lambda_\phi^{\pm})$ and $\A_\na(\Lambda_\psi^{\pm})$ are trivial, $\Lambda_\phi^{\pm}$ and $\Lambda_\psi^{\pm}$ are non-geometric. Let $\theta = \phi$ and $\omega = \psi$, let $\Lambda_\theta^{\pm} = \Lambda_\phi^{\pm}$ and $\Lambda_\omega^{\pm}= \Lambda_\psi^{\pm}$ and let $W_\theta^-$ be an attracting neighborhood of $\Lambda_\theta^-$ and $W_\omega^+$ an attracting neighborhood of $\Lambda_\omega^+$. The three bulleted items in the hypotheses of 
Proposition~\ref{PropSmallerComplexity} are satisfied. Since $\A_\na(\Lambda_\psi^{\pm})$ is trivial and $\psi$ is reducible, $\A_\supp(\Lambda_\psi^{\pm})$ is a proper free factor and so is not $\phi$-invariant. It follows that $\Lambda^+_\psi$ is not $\phi$ -invariant and hence that a generic leaf of $\Lambda^+_\psi$ is neither a generic leaf of an element of $\L(\phi^{-1})$ nor a singular leaf of $\phi^{-1}$. Proposition~\ref{prop:WA2} therefore implies that $\Lambda^+_\psi$ is weakly attracted to $\Lambda^+_\phi$ under iteration by $\phi$ which is item ($i$) in the hypotheses of 
Proposition~\ref{PropSmallerComplexity}. Items ($ii$), ($iii$) and ($iv$) are proved similarly. 

Having verified the hypotheses of Proposition~\ref{PropSmallerComplexity}, we now adopt notation from the conclusions of Proposition~\ref{PropSmallerComplexity}: $V^+_\phi$ and $V^+_\psi$ are attracting neighborhoods of $\Lambda^+_\phi$ and $\Lambda^+_\psi$ respectively; for all sufficiently large $M$, $\Lambda_{\xi_M}^\pm$ is an attracting lamination pair for $\xi_M$ such that $\A_\na(\Lambda_{\xi_M}^{\pm})$ trivial; and $V^+_{\xi_M}$ is an attracting neighborhood of $\Lambda^+_{\xi_M}$ such that
 \begin{equation} 
 \psi^{m(M)}(V^+_\phi) \subset V^+_\psi \subset V^+_{\xi_M}
 \end{equation} 
These two set inclusions are contained in conclusions $(2^+)$ and $(4^+)$ of Proposition~\ref{PropSmallerComplexity}.
 
No conjugacy class in $F_n$ is carried by every neighborhood of $\Lambda^+_{\xi_M}$. It follows that no free factor is carried by every neighborhood of $\Lambda^+_{\xi_M}$. Let $R$ be the rank of the ambient free group. The free factor $\xi^R_M K_M$, being invariant under some positive power of $\xi_M$, is therefore not carried by the attracting neighborhood $V^+_{\xi_M}$ of $\Lambda^+_{\xi_M}$. We may therefore choose a line $\sigma_M$ that is carried by $K_M$ such that $\xi^R_M(\sigma_M) \not\in V^+_{\xi_M}$. It follows that 
\begin{equation} 
 \xi^{p+1}_M(\sigma_M) \not\in V^+_{\xi_M} \mbox{ for all } 0 \le p \le R-1 
 \end{equation}
 which along with (1) implies that
 \begin{equation} 
 \phi^{n(M)}(\xi^p_M\sigma_M) \not\in V^+_\phi \mbox{ for } 0 \le p \le R-1
 \end{equation}

Suppose first that $p=0$. By combining (3) with Corollary~\ref{cor:WA2} it follows that some subsequence of $(\sigma_M)$ weakly converges to some line $\ell$ which is an element of $S_{\phi^\inv}$ or a generic leaf of some lamination in $\L(\phi^\inv)$. But in the latter case, by Lemma~\ref{trivial Ana}~\pref{ItemEGLamInAll}
 the weak closure of $\ell$ contains each leaf of $\Lambda^-_\phi$, one of which is an element of $S_{\phi^\inv}$ by Lemma~\ref{FactSingularRayDense}. In either case, therefore, some element of $S_{\phi^\inv}$ is a weak limit of a subsequence of $(\sigma_M)$. After passing to a subsequence of $(\sigma_M)$ we may assume that this element is a weak limit of the entire sequence $(\sigma_M)$. If some other element of $S_{\phi^{-1}}$ is a weak limit of some subsequence of $(\sigma_M)$ then after passing to a further subsequence we may assume that both elements of $S_{\phi^{-1}}$ are weak limits of $(\sigma_M)$. Continuing on in this manner, using that $S_{\phi^\inv}$ is finite, we construct a nonempty subset $B_{0,\phi^\inv} \subset S_{\phi^\inv}$ such that each element of $B_{0,\phi^\inv}$ is a weak limit of $(\sigma_M)$, and no element of $S_{\phi^\inv} - B_{0,\phi^\inv}$ is a weak limit of any subsequence of $(\sigma_M)$. Furthermore, these properties of $B_{0,\phi^\inv}$ remain true if $(\sigma_M)$ is replaced by any subsequence. 

By the same argument applied inductively to $p=1,\ldots,R-1$, we may pass to a further subsequence of $(\sigma_M)$ and find nonempty subsets $B_{p,\phi^{-1}} \subset S_{\phi^{-1}}$ such that each element of $B_{p,\phi^{-1}}$ is a weak limit of the sequence $({\xi^p_M}\sigma_M)$ and such that no element of $S_{\phi^{-1}} - B_{p,\phi^\inv}$ is a weak limit of some subsequence of $({\xi^p_M}\sigma_M)$. 


By Lemma~\ref{LemmaSingularSingleton}, the free factor support of any nonempty subset of $S_{\phi^\inv}$ is a singleton. Applying this to $B_{p,\phi^\inv}$, we may write its free factor support as $\{[F_p]\}$. 

We claim that if the nontrivial free factor $F_p$ is proper then the rank of $F_{p+1}$ is strictly less than the rank of $F_{p}$. Putting off for a moment the proof of this claim, we use it to prove conclusion~(b). 

If $F_0$ were proper then, from the claim, we would have $R > \rank(F_0) > \rank(F_1) > \ldots > \rank(F_{R-1}) > 0$, which is impossible. Therefore, the free factor $F_0$ is not proper, and so $B_{0,\phi^{-1}}$ is filling. Define $B_{\phi^{-1}} = B_{0,\phi^{-1}}$. Note that for any $b \in B_{\phi^\inv}$ and any weak neighborhood $U(b)$, if $M$ is sufficiently large then $U(b)$ contains $\sigma_M$, which is carried by the proper free factor $K_M$. 

A completely symmetric argument, using $\xi_M^{-1}= \phi^{-n}\psi^{-m}$ instead of $\xi_M = \psi^m \phi^n$ and items $(2^-)$ and $(4^-)$ of Proposition~\ref{PropSmallerComplexity} instead of items $(2^+)$ and $(4^+)$, shows that after passing to a further subseqence of $(\sigma_M)$ there exist lines $\mu_M$ that are carried by $K_M$ and non-empty finite subsets $B_{p,\psi} \subset S_\psi$ such that for each $p=0,\ldots,R-1$ each element of $B_{p,\psi}$ is a weak limit of the sequence $({\xi^{-p}_M} \mu_M)$ and such that no element of $S_\psi - B_{p,\psi}$ is a weak limit of any subsequence of $({\xi^{-p}_M}\mu_M)$. Define $B_\psi = B_{0,\psi}$ and argue as above that $B_\psi$ is filling. Also note as above that for any $b \in B_\psi$ and any weak neighborhood $U(b)$, if $M$ is sufficiently large then $U(b)$ contains $\mu_M$. This proves that $B_{\phi^{-1}} , B_{\psi}$ satisfy conclusion~(b).

\bigskip

It remains to prove the claim: assuming that $F_p$ is a proper free factor, we prove that $\rank(F_p) > \rank(F_{p+1})$. For $p=0,\ldots,R-1$ consider the $\psi$-invariant free factor system which is the support of the set of all elements of $S_{\psi^\inv}$ that are carried by $[F_p]$. By Lemma~\ref{LemmaSingularSingleton} this free factor system is a singleton which we write as $\{[F'_p]\}$. By construction we have $[F_p] \sqsupset [F'_p]$. Since $[F_p]$ is $\phi$-invariant and $[F'_p]$ is $\psi$-invariant, and since $\phi$ and $\psi$ have no common invariant free factor system, we have $\rank(F_p) > \rank(F'_p)$.

To finish proving the claim it suffices to prove that $[F'_p] \sqsupset [F_{p+1}]$. For notational simplicity we restrict our attention to $p = 0$.  

We show that $[F_0]$ carries $\sigma_M$ for sufficiently large $M$. To prove this, by Fact~\ref{FactWeakLimitLines} it suffices to show that $[F_0]$ carries every line $\sigma$ that is a weak limit of a subsequence of $(\sigma_M)$. By Corollary~\ref{cor:WA2}, $\sigma$ is either an element of $ S_{\phi^{-1} }$ or a generic leaf of some attracting lamination $\Lambda^-_s$ of~$\phi^{-1}$. In the former case, $\sigma \in B_{\phi^{-1}}$ by construction and so is carried by $[F_0]$. Suppose then that $\sigma$ is a generic leaf of $\Lambda^-_s$. Every leaf of $\Lambda^-_s$ is in the weak closure of $\sigma$, and so is a weak limit of a subsequence of $(\sigma_M)$. By Fact~\ref{FactSingularRayDense}, some leaf $\ell$ of $\Lambda^-_s$ is a singular line, at least one of whose ends is dense in $\Lambda^-_s$. Since $\ell$ is a weak limit of a subsequence of $(\sigma_M)$, it follows that $\ell \in B_{\phi^\inv}$, and so $\ell$ is carried by $[F_0]$. Furthermore, $\sigma$ is in the weak closure of $\ell$, so $\sigma$ is carried by $[F_0]$. This completes the proof that $[F_0]$ carries $\sigma_M$ for sufficiently large~$M$. Note that $[F_0]$ can be characterized as the smallest free factor that carries $\sigma_M$ for sufficiently large~$M$ (and, similarly, $[F_p]$ is the smallest free factor that carries $\xi^p_M\sigma_M$ for sufficiently large $M$, for any $p=0,\ldots,R-1$).

We next show, by a similar proof, that $[F'_0]$ carries the line $\tau_M = \phi^{n(M)}\sigma_M$ for sufficiently large~$M$. Again, by Fact~\ref{FactWeakLimitLines}, it suffices to show that  $[F'_0]$ carries every line $\tau$ that is a weak limit of a subsequence of $(\tau_M)$. Note that $\tau_M$ is carried by $[F_0] = \phi^{n(M)}[F_0]$ for sufficiently large $M$, and so $\tau$ is carried by $F_0$. By (2) and the second inclusion in (1), $\psi^{m(M)} \tau_M = {\xi_M} \sigma_M$ is not contained in~$V^+_\psi$. By Corollary~\ref{cor:WA2}, $\tau$ is either an element of $S_{\psi^\inv}$ or a generic leaf of some attracting lamination $\Lambda^-_t$ of $\psi^\inv$ that is carried by $[F_0]$. In the former case, $[F'_0]$ carries $\tau$ by definition of $[F'_0]$. Suppose then that $\tau$ is a generic leaf of $\Lambda^-_t$,
and so $\A_\supp(\Lambda^-_t) = \A_\supp(\tau) \sqsubset [F_0]$. By Fact~\ref{FactSingularRayDense} some leaf $\ell'$ of $\Lambda^-_t$ is an element of $S_{\psi^\inv}$ such that at least one  end of $\ell'$ is dense in $\Lambda^-_t$. It follows that $[F_0]$ carries $\ell'$, and so $[F'_0]$ carries $\ell'$, again by definition of $[F'_0]$. Since $\tau$ is in the weak accumulation set of $\ell'$, it follows that $[F'_0]$ carries $\tau$. This completes the proof that $[F'_0]$ carries $\tau_M$ for sufficiently large $M$.

Since $\psi^{m(M)}[F'_0] = [F'_0]$ it follows that $[F'_0]$ carries ${\xi_M}(\sigma_M) = \psi^{m(M)}(\tau_M)$, for sufficiently large $M$. But $[F_1]$ is the smallest free factor carrying $\xi_M(\sigma_M)$ for all sufficiently large~$M$, which implies that $[F'_0] \sqsupset [F_1]$, completing the proof of the claim. 

This concludes the proof of Step 2.

 \paragraph{Step 3:} For any $\phi,\psi \in \Out(F_n)$ satisfying (I)---(III), property~(b) of Step~2 is impossible.
 
 \bigskip
 
To prove this, first we show that $\cl(S_{\phi^{-1}}) \cap \cl(S_{\psi})= \emptyset$. Recall the closed set of lines $\LS(\psi)$, the union of the set $S_\psi$ with the set of generic leaves of each lamination in $\L(\psi)$, and similarly recall $\LS(\phi^\inv)$. If $\cl(S_{\phi^{-1}}) \cap \cl(S_{\psi})$ is not empty then, applying Corollary~\ref{minimal set} to any leaf in $\cl(S_{\phi^{-1}}) \cap \cl(S_{\psi})$, it follows that $\LS(\psi) \intersect \LS(\phi^\inv)$ contains both $\Lambda^-_\phi$ and $\Lambda^+_\psi$. Applying Corollary~\ref{minimal set} to a generic leaf of $\Lambda^-_\phi$ regarded as a leaf of $\LS(\psi)$ it follows that $\Lambda^+_\psi \subset \Lambda^-_\phi$, and a symmetric argument yields the opposite inclusion. Both $\phi$ and $\psi$ therefore fix $\A_\supp(\Lambda^-_\phi) = \A_\supp(\Lambda^+_\psi)$ which, by~(I), (III), and Corollary~\ref{CorollaryCTFullIrr}, is a proper, nontrivial free factor system. This contradicts~(II).
 
\medskip
 
The proof of Step 3 is completed by the following result: 
 
\begin{proposition} \label{two filling sets} Suppose that $B_1, B_2 \subset \B$ are filling sets of lines and that $\cl(B_1) \cap \cl(B_2) = \emptyset$. Then there exist weak neighborhoods $U(b) \subset \B$, one for each $b \in B_1 \cup B_2$, such that for each proper free factor $F$ there exists $b_F \in B_1 \cup B_2$ such that $[F]$ does not carry a line in $U(b_F)$. 
 \end{proposition}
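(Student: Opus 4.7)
The plan is to reduce the problem to a countable enumeration of proper free factor conjugacy classes and handle them one at a time. The key observation is that the collection of conjugacy classes of proper free factors of $F_n$ is countable: every free factor is finitely generated (indeed of rank strictly less than $n$ by Grushko), and $F_n$ has only countably many finitely generated subgroups. Enumerate these conjugacy classes as $[F_1], [F_2], \ldots$.

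By Fact~\ref{FactLinesClosed}, for each $k$ the set $L_k \subset \B$ of lines carried by $[F_k]$ is weakly closed. Since $B_1$ is filling, $\A_\supp(B_1) = [F_n]$, and since $[F_k]$ is a proper free factor, $B_1$ is not entirely contained in $L_k$; in particular $(B_1 \cup B_2) \setminus L_k$ is nonempty, so I may pick $b_k \in (B_1 \cup B_2) \setminus L_k$. Because $L_k$ is closed, $b_k$ has a weak open neighborhood $U(b_k) \subset \B$ disjoint from $L_k$. For each $b \in B_1 \cup B_2$ that is not one of the chosen $b_k$'s, set $U(b) = \B$ (any choice will do since it will never be selected).

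Given any proper free factor $F$ of $F_n$, its conjugacy class equals some $[F_k]$ in the enumeration, so the set of lines carried by $[F]$ is exactly $L_k$; by construction $U(b_k) \cap L_k = \emptyset$, so $[F]$ carries no line in $U(b_k)$, and taking $b_F = b_k$ yields the required conclusion.

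I expect no substantial obstacle in carrying this out, beyond noting that despite $\B$ being non-Hausdorff, the standard fact that closed subsets have open complements still applies, which is what converts $b_k \notin L_k$ into a separating neighborhood. It is worth remarking that this argument does not actually invoke the hypothesis $\cl(B_1) \cap \cl(B_2) = \emptyset$; that disjointness is established in the preceding part of Step~3 (using Corollary~\ref{minimal set} and Lemma~\ref{FactSingularRayDense}) in order to \emph{produce} the two filling sets $B_{\phi^{-1}}$ and $B_\psi$ in the first place, and so records context rather than being a necessary ingredient in the proof of the proposition itself.
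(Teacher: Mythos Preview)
Your argument has a genuine gap. The proposition requires you to assign \emph{one} neighborhood $U(b)$ to each $b \in B_1 \cup B_2$, and then show that this fixed family works for every proper free factor. In your construction the neighborhood depends on the index $k$, not just on the line: if $b_k = b_j$ for $k \ne j$, the neighborhood you chose at stage $k$ (disjoint from $L_k$) need not be disjoint from $L_j$. To get a well-defined $U(b)$ you would have to intersect the countably many neighborhoods attached to those $k$ with $b_k = b$, and a countable intersection of weak neighborhoods is not in general a neighborhood. This is not a minor bookkeeping issue: in the application (Step~2(b)) the sets $B_{\phi^{-1}} \subset S_{\phi^{-1}}$ and $B_\psi \subset S_\psi$ are \emph{finite} by Lemma~\ref{finite singular set}, so some single $b$ must handle infinitely many free factors with one fixed neighborhood, and your enumeration gives no control over that.

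This is also why your closing remark is mistaken: the hypothesis $\cl(B_1) \cap \cl(B_2) = \emptyset$ is essential to the paper's proof, not mere context. The paper extracts from it a constant $K$ such that no path of length $>K$ in $R_n$ is a subpath of both a line in $B_1$ and a line in $B_2$. This separation constant is what makes a \emph{uniform} choice of neighborhoods possible: the paper represents each proper free factor by a triple $(G,S,\rho)$ with $\rho \from G \to R_n$ immersing $S$, and uses Stallings folds to reduce to the case where either all edges of $G$ have length $\le 2K$ (finitely many equivalence classes, handled by a maximum of bounded cancellation constants) or some long edge forces the image path to miss one of $B_1, B_2$ entirely. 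The resulting constant $C$, and hence the neighborhoods $U(b) = U(b,C)$, depend only on $B_1$ and $B_2$, not on the particular free factor; that uniformity is exactly what your enumeration cannot supply.
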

 
\begin{proof} Let $L(\cdot)$ denote the edge length of paths in the rose $R_n$. Let each $b \in B_1 \union B_2$ be realized by a line in $R_n$ with a chosen base point. Let $\wh U(b,C) \subset \wh\B(R_n)$ denote the weak neighborhood of $b$ consisting of paths in $R_n$ that have a subpath which is equal to the subpath of $b$ of edge length~$2C$ centered on the base point. Let $U(b,C) = \wh U(b,C) \intersect \B(R_n) \subset \B(R_n) \approx \B$.

Since $\cl(B_1) \cap \cl(B_2) = \emptyset$, there exists a constant $K > 0$ such that for each $b_1 \in B_1$, $b_2 \in B_2$, the lines $b_1,b_2$ do not have a common subpath of edge length $> K$ in~$R_n$. It follows that for each path $\gamma$ in $R_n$ with $L(\gamma) > K$ there exists $i=1,2$ such that $\gamma$ is not a subpath of the realization in $R_n$ of any line in $B_i$.

Given a proper free factor $F$, for each $i=1,2$, since $B_i$ is filling we may choose $b_i(F) \in B_i$ which is not carried by $F$. Since the set of lines carried by $F$ is closed, we may then choose a constant $C_i(F) > 0$ such that the neighborhood $U(b_i(F),C_i(F))$ does not contain any line that is carried by $F$. The same is therefore true of the neighborhood $U(b_i(F),C)$ for any constant $C \ge C_i(F)$.

Consider triples $(G,S,\rho)$ consisting of a marked graph $G$, a proper connected subgraph $S \subset G$ with no valence~1 vertices, and a map $\rho \from G \to R_n$ which is a homotopy inverse of the marking of $G$, such that $\rho$ takes vertices to vertices, $\rho$ is an immersion on each edge of $G$, and $\rho$ is an immersion on the subgraph $S$; it follows that the restriction of $\rho$ to any path in $S$ is a path in $R_n$. Such a triple is called a \emph{representative} of a proper free factor $F$ if $[F]=[S]$. We put a metric on each edge of $G$ by pulling back the metric on $R_n$ under the map $\rho$, and we extend the length notation $L(\cdot)$ to this setting, and so for each edge $E \subset G$ we have $L(E)=L(\rho(E))$. Note that a line $\ell \in \B$ is carried by $[F]$ if and only if the realization $\ell_G$ in $G$ is contained in $S$, in which case the restriction of $\rho$ to $\ell_G$ is an immersion whose image is the realization of $\ell$ in $R_n$.

We claim that every proper free factor $F$ has a representative $(G,S,\rho)$. To see why, it is evident that there exists a triple $(G,S,\rho)$ that satisfies all the required properties except that $\rho$ need not be an immersion on $S$. Factor $\rho  \from G \to R_n$ as a composition of folds $G = G_0 \xrightarrow{p_1} G_1 \xrightarrow{p_2} \dots \xrightarrow{p_N}G_N = R_n$. Each intermediate graph $G_j$ is marked by a homotopy inverse of $\rho_j = p_Np_{N-1}\cdots p_{j+1}:G_j \to R_n$. Let $P_j = p_j p_{j-1}\cdots p_1 : G \to G_j$. By giving precedence to folds involving two edges of $S$, we may assume that there exists $J \ge 1$ such that $S_J := P_J(S)$ is a proper subgraph of $G_J$, the map $P_J$ restricts to a homotopy equivalence from $S$ to $P_J(S)$, and the map $\rho_J \restrict S_J$ is an immersion. After replacing $(G,S,\rho)$ by $(G_J,S_J,\rho_J)$ we obtain a representative of $F$.

Consider a proper free factor $F$ and a representative $(G,S,\rho)$ of $F$. Consider also the line $b_i(F)$ and the constant $C_i(F)$, for each $i=1,2$. By applying the Bounded Cancellation Lemma to the map $\rho$, letting $C_i(G,S,\rho)$ equal the sum of $C_i(F)$ and the bounded cancellation constant for $\rho$, and using that any path in $S$ extends to a line in $S$, we may ensure that for any finite path $\sigma \subset S$, no subpath of $\rho(\sigma)$ is equal to the basepoint-centered subpath of $b_i(F)$ with edge length $2C_i(G,S,\rho)$. In other words, $\wh U(b_{i}(F),C_i(G,S,\rho))$ does not contain the path~$\rho(\sigma)$. Let $C(G,S,\rho)$ equal the maximum of $C_i(G,S,\rho)$, $i=1,2$.

Two triples $(G,S,\rho)$, $(G',S',\rho')$ are said to be \emph{equivalent} if there exists a homeomorphism $h : (G,S) \to (G',S') $ such that $\rho' h = \rho$. Clearly the constants $C_i(G,S,\rho)$ and their maximum $C(G,S,\rho)$ depend only on the equivalence class of $(G,S,\rho)$. There are only finitely many equivalence classes of triples $(G,S,\rho)$ for which $L(E) \le 2K$ for all edges $E$ of $G$. Define $C =4K+\max\{2 \, C(G,S,\rho)\}$ where the maximum is taken over these finitely many equivalence classes, and note that $C$ depends only on $B_1$ and~$B_2$. For each $b \in B_1 \union B_2$ take $U(b) = U(b,C)$.

Now fix a free factor $F$, and a representative $(G,S,\rho)$ of $F$. To prove the proposition we consider two cases.

In the case that $L(E) \le 2K$ for all edges $E$ of $G$, it follows that $C > C(G,S,\rho) > C_i(F)$ for $i=1,2$, and the conclusion of the proposition holds for both $b_F = b_{1}(F)$ and $b_F = b_{2}(F)$. 
 
In the remaining case, factor $\rho \from G \to R_n$ as a composition of folds $G = G_0 \xrightarrow{p_1} G_1 \xrightarrow{p_2} \dots \xrightarrow{p_N}G_N = R_n$, and as above denote $\rho_j = p_Np_{N-1}\cdots p_{j+1}:G_j \to R_n$ and $P_j = p_j p_{j-1}\cdots p_1 : G \to G_j$, and so $\rho = \rho_j P_j$. Let $I \in \{1,\ldots,N\}$ be the minimal value such that $L(E) \le 2K$ for all edges $E$ of $G_I$. Note that some edge $E_I \subset G_I$ has length $L(E_I) > K$, because if not then, using that the fold map $p_I \from G_{I-1} \to G_I$ takes each edge of $G_{I-1}$ to a path of at most two edges in $G_I$, each edge of $G_{I-1}$ would have length $\le 2K$, contradicting minimality of~$I$. Choose $i \in \{1,2\}$ so that $\rho_I(E_I)$ is not a subpath of the realization in $R_n$ of any element of~$B_i$. Let $b_F = b_i(F)$. To conclude the proof we assume that the realization $\sigma$ in $G$ of some line in $U(b_F) = U(b_F,C)$ is contained in $S$ and argue to a contradiction. We use the fact that $\rho \restrict \sigma$ is an immersion, which implies that $P_I \restrict \sigma$ and $\rho_I \restrict P_I(\sigma)$ are also immersions. It follows that there is a finite subpath $\sigma_0$ of $\sigma$ (with endpoints not necessarily at vertices) such that $\rho(\sigma_0)$ is the basepoint-centered subpath of $b_F$ in $R_n$ with edge length~$L(\rho(\sigma_0)) = 2C$. Thus $P_I(\sigma_0)$ does not contain $E_I$ as a subpath. It follows that there is a subpath $\sigma_0' \subset \sigma_0$ such that $P_I(\sigma_0')$ is contained in the subgraph $G_I \setminus E_I$ of $G_I$ and such that $\rho(\sigma_0')=\rho_I(P_I(\sigma_0'))$ is the basepoint centered subpath of $b_F$ in $R_n$ obtained from $\rho(\sigma_0)$ by removing initial and or terminal segments of length less than $2K$. Our choice of~$C$ guarantees that 
\begin{align*}
L(\rho(\sigma_0)) &< L(\rho(\sigma_0')) + 4K = L(\rho_I(P_I(\sigma_0')) + 4K \\
&< 2 \, C_i(G_I,G_I \setminus E_I, \rho_I) + 4K \\
&\le 2 \, C(G_I,G_I \setminus E_I,\rho_I) + 4K \\
& \le C
\end{align*}
which is the desired contradiction. 
 \end{proof}
 
 This concludes the proof of Step 3. 
 
 \paragraph{Proof of Theorem~\ref{thm:main}} Combine Steps~1, 2, and~3. \hfill\qed

\bibliographystyle{amsalpha} 
\bibliography{mosher} 

 \end{document}